\newcommand{\N}{\mathbb{N}}
\newcommand{\Z}{\mathbb{Z}}
\newcommand{\R}{\mathbb{R}}
\newcommand{\Np}{\mathbb{N}_{>0}}
\newcommand{\MV}{{\mathsf{MV}}} %MV-algebras%
\newcommand{\MVM}{\mathsf{MVM}} %MVM-algebras
\newcommand{\ULG}{{\mathsf{u} \ell \mathsf{G}}} %unital l-groups
\newcommand{\ULM}{{\mathsf{u} \ell \mathsf{M}}} %unital l-monoids
\newcommand{\ULMP}{{\mathsf{u} \ell \mathsf{M}^+ }} %positive-unital l-monoids
\newcommand{\cG}{\overline{\mathsf{G}}} %divisible Archimedean metrically complete Abelian u$\ell$-groups
\newcommand{\CH}{\mathsf{CompHaus}}
\newcommand{\G}{\tilde{\Gamma}}
\newcommand{\X}{\tilde{\Xi}}
\newcommand{\T}{\mathds{T}}
\newcommand{\GS}{\mathds{G}}
\newcommand{\U}{\mathds{U}}
\newcommand{\Id}{1}
\newcommand{\mv}{MV-algebra}
\newcommand{\mvs}{MV-algebras}
\newcommand{\amv}{an MV-algebra}
\newcommand{\mvm}{MV-monoidal algebra}
\newcommand{\mvms}{MV-monoidal algebras}
\newcommand{\amvm}{an MV-monoidal algebra}
\newcommand{\ulg}{unital Abelian $\ell$-group}
\newcommand{\ulgs}{unital Abelian $\ell$-groups}
\newcommand{\aulg}{a unital Abelian $\ell$-group}
\newcommand{\extulg}{unital Abelian lattice-ordered group}
\newcommand{\extulgs}{unital Abelian lattice-ordered groups}
\newcommand{\lm}{commutative $\ell$-monoid}
\newcommand{\alm}{a commutative $\ell$-monoid}
\newcommand{\extlm}{commutative lattice-ordered monoid}
\newcommand{\ulm}{unital commutative $\ell$-monoid}
\newcommand{\ulms}{unital commutative $\ell$-monoids}
\newcommand{\aulm}{a unital commutative $\ell$-monoid}
\newcommand{\extulm}{unital commutative lattice-ordered monoid}
\newcommand{\extulms}{unital commutative lattice-ordered monoids}
\newcommand{\extaulm}{a unital commutative lattice-ordered monoid}
\newcommand{\pulm}{{positive-unital commutative $\ell$-monoid}}
\newcommand{\pulms}{{positive-unital commutative $\ell$-monoids}}
\newcommand{\apulm}{{a positive-unital commutative $\ell$-monoid}}
\renewcommand{\leq}{\leqslant}
\renewcommand{\geq}{\geqslant}
\newcommand{\seq}{\subseteq}
\newcommand{\df}{\coloneqq}
\newcommand{\0}{\mathbf{0}}
\newcommand{\1}{\mathbf{1}}
\newcommand{\eps}{\varepsilon}
\newcommand{\cat}[1]{\mathsf{#1}}
\newcommand{\gs}[1]{\mathbf{#1}} 
\newcommand{\by}[1]{\text{(#1)}} % for citations in align environment
\declaretheorem[name = Theorem, refname = {Theorem,Theorems}, Refname = {Theorem,Theorems}, numberwithin = section,]{theorem}
\declaretheorem[name = Proposition, refname = {Proposition,Propositions}, Refname = {Proposition,Propositions}, sibling = theorem,]{proposition}
\declaretheorem[name = Lemma, refname = {Lemma,Lemmas}, Refname = {Lemma,Lemmas}, sibling = theorem,]{lemma}
\declaretheorem[name = Corollary, refname = {Corollary,Corollaries}, Refname = {Corollary,Corollaries}, sibling = theorem,]{corollary}
\declaretheorem[name = Definition, refname = {Definition,Definitions}, Refname = {Definition,Definitions}, sibling = theorem, style = definition,]{definition}
\declaretheorem[name = Notation, refname = {Notation,Notations}, Refname = {Notation,Notations}, sibling = theorem, style = definition,]{notation}
\declaretheorem[name = Example, refname = {Example,Examples}, Refname = {Example,Examples}, sibling = theorem, style = definition,]{example}
\declaretheorem[name = Remark, refname = {Remark,Remarks}, Refname = {Remark,Remarks}, sibling = theorem, style = definition,]{remark}
\crefname{section}{Section}{Sections}
\Crefname{section}{Section}{Sections}
\crefname{axiom}{Axiom}{Axioms}
\Crefname{axiom}{Axiom}{Axioms}
\crefname{appendix}{Appendix}{Appendices}
\Crefname{appendix}{Appendix}{Appendices}
\begin{document}

\title{Equivalence \`a la Mundici for commutative lattice-ordered monoids}

\author[M.\ Abbadini]{Marco Abbadini}
\address{Department of Mathematics, University of Salerno,
Via Giovanni Paolo II, 132 Fisciano (SA), Italy}
\email{mabbadini@unisa.it}

\subjclass{Primary: 06F05. Secondary: 54F05, 03C05.}
\keywords{lattice-ordered monoids, lattice-ordered groups, categorical equivalence, MV-algebras, compact ordered spaces, continuous order-preserving functions.}

\begin{abstract}
	We provide a generalization of Mundici's equivalence between {\extulgs} and {\mvs}: the category of \emph{\extulms} is equivalent to the category of \emph{\mvms}.
	Roughly speaking, the structures we call {\extulms} are {\extulgs} without the unary operation $x \mapsto -x$.
	The primitive operations are $+$, $\lor$, $\land$, $0$, $1$, $-1$.
	A prime example of these structures is $\mathbb{R}$, with the obvious interpretation of the operations.
	Analogously, {\mvms} are {\mvs} without the negation $x \mapsto \lnot x$.
	The primitive operations are $\oplus$, $\odot$, $\lor$, $\land$, $0$, $1$.
	A motivating example of {\mvm} is the negation-free reduct of the standard {\mv} $[0, 1]\subseteq \mathbb{R}$.
	We obtain the original Mundici's equivalence as a corollary of our main result.
\end{abstract}

\maketitle

%%%%%%%%%%%%%%%%%%%%%%%%%%%%%%%%%%% SECTION %%%%%%%%%%%%%%%%%%%%%%%%%%%%%%%%%%%%

\section{Introduction}

In \cite{Mundici}, Mundici proved that the category of {\extulgs} (\ulgs, for short) is equivalent to the category of {\mvs}.
In \cref{t:G is equivalence}, our main result, we establish the following generalization: \emph{The category of {\extulms} is equivalent to the category of {\mvms}}.

Roughly speaking, {\extulms} (\emph{{\ulms}}, for short) are {\ulgs} without the unary operation $x \mapsto -x$, whereas {\mvms} are {\mvs} without the negation $x \mapsto \lnot x$ (precise definitions will be given in \cref{s:definition}).
The operations of {\ulms} are $+$, $\lor$, $\land$, $0$, $1$, $-1$, whereas the operations of {\mvms} are $\oplus$, $\odot$, $\lor$, $\land$, $0$, $1$.
A motivating example of {\ulm} is $\R$, with the obvious interpretation of the operations, whereas a motivating example of {\mvm} is the negation-free reduct of the standard {\mv} $[0, 1]$.
Furthermore, for every topological space $X$ equipped with a preorder, the set of bounded continuous order-preserving functions from $X$ to $\R$ is an example of a {\ulm}, whereas the set of continuous order-preserving functions from $X$ to $[0, 1]$ is an example of an {\mvm}.
The author's interest for {\ulms} originated from these last examples, as we now illustrate in some detail.

Given a compact Hausdorff space $X$, the set $C(X, \R)$ of continuous functions from $X$ to $\R$ is a divisible Archimedean {\ulg}, complete in the uniform metric.
In fact, we have a duality between the category $\CH$ of compact Hausdorff spaces and continuous maps and the category $\cG$ of divisible Archimedean metrically complete {\ulgs} (see \cite{Yosida,Stone}).	Similarly, we may consider on the set $C(X,[0, 1])$ of continuous functions from $X$ to $[0, 1]$ pointwise-defined operations inherited from $[0, 1]$; for example, the operations of {\mvs}.
Developing this idea, one can show that $\CH$ is dually equivalent to a variety $\Delta$ of (infinitary) algebras (see \cite{Duskin,Isbell,MarraReggio}).
These algebras can be thought of as {\mvs} with an additional operation of countably infinite arity satisfying some additional axioms.	In fact, we have an equivalence between $\cG$ and $\Delta$, which is essentially a restriction of the equivalence between {\ulgs} and {\mvs}.

A \emph{compact ordered space} is a compact space $X$ endowed with a partial order $ \leq $ on $X$ so that the set $\{\,(x, y) \in X \times X \mid x \leq y\,\}$ is closed in $X \times X$ with respect to the product topology. This notion was introduced by Nachbin \cite{Nachbin}.
If we replace compact Hausdorff spaces by compact ordered spaces in the aforementioned discussion involving $\CH$, $\Delta$ and $\cG$, then we may accordingly replace Mundici's equivalence with our \cref{t:G is equivalence}.
Given a compact ordered space $X$, let us consider the set $C_{\scriptscriptstyle \leq }(X,[0, 1])$ of continuous order-preserving functions from $X$ to $[0, 1]$:
we can endow $C_{\scriptscriptstyle \leq }(X,[0, 1])$ with pointwise-defined operations $\oplus$, $\odot$, $\lor$, $\land$, $0$, $1$ (which are the operations of {\mvms}).
Pursuing a similar idea, in \cite{HofmannShort} it was proved that the category of compact ordered spaces and continuous order-preserving maps is dually equivalent to a quasi-variety of infinitary algebras (\cite{Abbadini,AbbadiniReggio2020} show that this quasi-variety actually is a variety).
However, the operations are somewhat unwieldy, and one might want to investigate the set $C_{\scriptscriptstyle \leq }(X, \R)$ of continuous order-preserving real-valued functions, instead.
In fact, $C_{\scriptscriptstyle \leq }(X, \R)$ is a {\ulm}.
The main motivation of this paper is to make the connection between $C_{\scriptscriptstyle \leq }(X, \R)$ ({\ulms}) and $C_{\scriptscriptstyle \leq }(X,[0, 1])$ ({\mvms}) explicit.

There are both pros and cons in working with {\ulms} or {\mvms}.
On one hand, it is easier to work with the axioms of {\ulms} rather than those of {\mvms}.
On the other hand, the category of {\mvms} is a variety of finitary algebras axiomatized by a finite number of equations, so the tools of universal algebra apply.
The equivalence established here allows to transfer the pros of one category to the other one.

Our result specializes to Mundici's equivalence between {\ulgs} and {\mvs} (\cref{s:restriction}).
We remark that, in contrast to the proof of Mundici's equivalence in \cite{Mundici}, we do not use the axiom of choice to prove the equivalence between {\ulms} and {\mvms}.

We sketch the proof of our main result, \cref{t:G is equivalence}.
In order to obtain an equivalence 
\[
	\begin{tikzcd}
		\ULM \arrow[yshift = .45ex]{r}{\G}	& \MVM \arrow[yshift = -.45ex]{l}{\X}
	\end{tikzcd}
\]
between the category $\ULM$ of {\ulms} and the category $\MVM$ of {\mvms}, we show that we have two equivalences
\[
	\begin{tikzcd}
		\ULM \arrow[yshift = .8ex, bend left]{rr}{\G} \arrow[yshift = .45ex]{r}{(-)^+ }
		& \ULMP \arrow[yshift = .45ex]{r}{\U} \arrow[yshift = -.45ex]{l}{\T}
		& \MVM \arrow[yshift = -.45ex]{l}{\GS} \arrow[yshift = -.8ex, bend left]{ll}{\X}.
	\end{tikzcd}
\]
Here $\ULMP$ is the category of `{\pulms}' (\cref{d:positive-unital}), which are the positive cones of {\ulms}.
The functor $\G$ maps a {\ulm} $M$ to its `unit interval' $\G(M)$ (\cref{s:Gamma}).
We construct a quasi-inverse in two steps.
As a first step, given an {\mvm} $A$, we define the set $\GS(A)$ of `good sequences in $A$' (\cref{s:good.def}), and we equip this set with the structure of a {\pulm} (\cref{s:operations on Good}).
As a second step, we consider translations of the elements of $\GS(A)$ by negative integers; in this way we obtain a {\ulm} $\T\GS(A)$, where $\T\colon \ULMP\to \ULM$ is a functor (\cref{s:positive cone}).
To show that the composition of these two steps provides a quasi-inverse of $\G$, we write $\G$ as the composite of two functors $(-)^+ $ and $\U$.
The functor $(-)^+ $ associates to $M$ its `positive cone' $M^+ $; the functor $\U$ associates to $M^+ $ its unit interval.
We will show that $(-)^+ $ and $\T$ are quasi-inverses (\cref{s:positive cone}), and that $\U$ and $\GS$ are quasi-inverses (\cref{s:MVM and positive-unital}); from this, it follows that $\G$ and $\X \df \T\GS$ are quasi-inverses, and hence the categories of {\ulms} and {\mvms} are equivalent (\cref{t:G is equivalence}).

By the time of publication of the present paper, the main result (\cref{t:G is equivalence}) has appeared also in the author's Ph.D.\ thesis \cite[Chapter~4]{Abbadini2021}.
However, the proofs are different.
In \cite{Abbadini2021}, the author uses Birkhoff’s subdirect representation theorem, which simplifies the arguments but relies on the axiom of choice. %, in contrast with the choice-free proof in the present paper.
Moreover, while in this paper we construct a quasi-inverse of $\G$ as the composite of two functors, in \cite{Abbadini2021} a one-step construction is adopted. %: {\amvm} $A$ is mapped to an algebra whose underlying set is the set of so-called `good $\Z$-sequences in $A$', which are functions from $\Z$ to $A$ satisfying certain properties.

In \cref{s:sub-irr_tot-ord} we show that subdirectly irreducible {\mvms} are totally ordered, and in \cref{s:good-pairs_in_sub-irr} we show that every good sequence in a subdirectly irreducible {\mvm} is of the form $(1, \dots, 1, x, 0, 0, \dots)$.
Even if we do not use these last two results, we have included them because they seem of interest.

%%%%%%%%%%%%%%%%%%%%%%%%%%%%%%%%%%% SECTION %%%%%%%%%%%%%%%%%%%%%%%%%%%%%%%%%%%%

\section{The algebras} \label{s:definition}

%================================= SUBSECTION =================================%

\subsection{Unital commutative lattice-ordered monoids}

The set $\R$, endowed with the binary operations $ + $ (addition), $ \lor $ (maximum), $ \land $ (minimum), and the constants $0$, $1$ and $-1$ is a prototypical example of \emph{\extaulm}.

\begin{definition} \label{d:lm}
	A \emph{\extlm} (shortened as \emph{\lm}) is an algebra $\langle M; +, \lor, \land, 0 \rangle$ (arities $2$, $2$, $2$, $0$) with the following properties.
	\begin{enumerate}[label=\textup{(M\arabic*)}, ref = M\arabic*]
		\item	\label[axiom]{ax:M1} $\langle M; \lor, \land \rangle$ is a distributive lattice.
		\item	\label[axiom]{ax:M2} $\langle M; +, 0 \rangle$ is a commutative monoid.
		\item	\label[axiom]{ax:M3} $ + $ distributes over $ \lor $ and $ \land $.
	\end{enumerate}
\end{definition}

\begin{definition} \label{d:ulms}
	A \emph{\extulm} (\emph{\ulm}, for short) is an algebra $\langle M; +, \lor, \land, 0, 1, -1 \rangle$ (arities $2$, $2$, $2$, $0$, $0$, $0$) with the following properties. %such that $\langle M; \lor, \land, 0 \rangle$ is {\alm}, and the following properties are satisfied.
	\begin{enumerate}[label=\textup{(U\arabic*)}, ref = U\arabic*, start = 0]
		\item \label[axiom]{ax:U0}
				$\langle M; +, \lor, \land, 0 \rangle$ is {\alm}.
		\item	\label[axiom]{ax:U1}
			$-1 + 1 = 0$.
		\item \label[axiom]{ax:U2}
			$0 \leq 1$.
		\item \label[axiom]{ax:U3}
			For every $x \in M$ there exists $n \in \N$ such that
			\[
				\underbrace{(-1) + \dots + (-1)}_{n \ \text{times}} \leq x \leq \underbrace{1 + \dots + 1}_{n \ \text{times}}.
			\]
	\end{enumerate}
	The element $1$ is called the \emph{positive unit}, and the element $-1$ is called the \emph{negative unit}.
\end{definition}

We warn the reader that some authors do not assume the lattice to be distributive, nor that $ + $ distributes over both $\land$ and $\lor$.

We denote with $\ULM$ the category of {\ulms} with homomorphisms.
We write $z - 1$ for $z + (-1)$.
Given $n \in \N$, we write $n$ for $\underbrace{1 + \dots + 1}_{n \ \text{times}}$ and $-n$ for $\underbrace{(-1) + \dots + (-1)}_{n \ \text{times}}$.

\begin{remark} \label{r:te}
	Given {\aulm} $\langle M; +, \lor, \land, 0, 1, -1 \rangle$, we define the operation $x \cdot y \df x - 1 + y$.
	This operation does not coincide on $\R$ with the usual multiplication.
	However, we still use this notation because the equations $x \cdot 1 = x = 1 \cdot x$ hold.
	In fact, {\ulms} admit a term-equivalent description in the signature $\{+, \cdot, \lor, \land, 0, 1\}$, from which the constant $-1$ can be recast as $0 \cdot 0$.
	In this signature, \cref{ax:U0,ax:U1} are equivalent to:
	\begin{enumerate}[label=\textup{(E\arabic*)}, ref = E\arabic*]
		\item \label[axiom]{ax:E1}$\langle M; \lor, \land \rangle$ is a distributive lattice.		
		\item	\label[axiom]{ax:E2}
				$\langle M; +, 0 \rangle$ and $\langle M; \cdot, 1 \rangle$ are commutative monoids.
		\item	\label[axiom]{ax:E3}
				Both the operations $ + $ and $\cdot$ distribute over both $ \lor $ and $ \land $.
		\item \label[axiom]{ax:E4}
				$(x \cdot y) + z = x \cdot (y + z)$.
		\item \label[axiom]{ax:E5}
				$(x + y) \cdot z = x + (y \cdot z)$.
	\end{enumerate}
	(Note that \cref{ax:E5,ax:E4} are equivalent, given the commutativity of $ + $ and $\cdot$.)
	The addition of \cref{ax:U2} equals the addition of the following axiom.
	\begin{enumerate}[label=\textup{(E\arabic*)}, ref = E\arabic*, start = 5]
		\item \label[axiom]{ax:E6}
				$0 \leq 1$.
	\end{enumerate}
	The addition of \cref{ax:U3} equals the addition of the following axiom.
	\begin{enumerate}[label=\textup{(E\arabic*)}, ref = E\arabic*, start = 6]
		\item \label[axiom]{ax:E7}
				For every $x \in M$ there exists $n \in \N$ such that 
				\[
					\underbrace{0 \cdot \dots \cdot 0}_{n \ \text{times}} \leq x \leq \underbrace{1 + \dots + 1}_{n \ \text{times}}.
				\]
	\end{enumerate}
	The class of algebras satisfying \cref{ax:E1,ax:E2,ax:E3,ax:E4,ax:A5,ax:E6,ax:E7} is term-equivalent to the class of {\ulms}.
	One interesting thing of \cref{ax:E1,ax:E2,ax:E3,ax:E4,ax:E6,ax:E7} is that their symmetries resemble the ones in the definition of {\mvms} below.
	We will use \cref{ax:E1,ax:E2,ax:E3,ax:E4,ax:E6,ax:E7} to explain the axioms of {\mvms} below; besides this usage, we will stick to \cref{ax:U0,ax:U1,ax:U2,ax:U3} throughout the paper.
\end{remark}

\begin{example}
For every topological space $X$ equipped with a preorder, the set of bounded continuous order-preserving functions from $X$ to $\R$ is a {\ulm}.
\end{example}

%================================= SUBSECTION =================================%

\subsection{MV-monoidal algebras}

In the following, we define the variety $\MVM$ of {\mvms}, which are finitary algebras axiomatised by a finite number of equations.
Our main result is that the categories $\ULM$ and $\MVM$ are equivalent.
Without giving the details now, we anticipate the fact that the equivalence is given by the functor $\G \colon \ULM \to \MVM$ that maps a {\ulm} $M$ to the set $\{\,x \in M \mid 0 \leq x \leq 1\,\}$, endowed with the operations $\oplus$, $\odot$, $ \land $, $ \lor $, $0$ and $1$, where $ \land $, $ \lor $, $0$ and $1$ are defined by restriction, and $\oplus$ and $\odot$ are defined by $x \oplus y \df (x + y) \land 1$ and $x \odot y \df (x + y - 1) \lor 0$.

On $[0, 1]$, consider the elements $0$ and $1$ and the operations $x \lor y \df \max \{x, y\}$, $x \land y \df \min \{x, y\}$, $x \oplus y \df \min \{x + y, 1\}$, and $x \odot y \df \max \{x + y - 1, 0\}$.
This gives a prime example of what we call {\amvm}.

\begin{definition} \label{d:MVM}
	An \emph{\mvm} is an algebra $\langle A; \oplus, \odot, \lor, \land, 0, 1 \rangle$ (arities $2$, $2$, $2$, $2$, $0$, $0$) satisfying the following equational axioms.
	\begin{enumerate}[label=\textup{(A\arabic*)}, ref = A\arabic*]
		\item	\label[axiom]{ax:A1}
				$\langle A; \lor, \land \rangle$ is a distributive lattice.		
		\item	\label[axiom]{ax:A2}
				$\langle A; \oplus, 0 \rangle$ and $\langle A; \odot, 1 \rangle$ are commutative monoids.
		\item	\label[axiom]{ax:A3}
				Both the operations $\oplus$ and $\odot$ distribute over both $ \lor $ and $ \land $.
		\item	\label[axiom]{ax:A4}
				$(x \oplus y) \odot ((x \odot y) \oplus z) = (x \odot (y \oplus z)) \oplus (y \odot z)$. 
		\item \label[axiom]{ax:A5}
				$(x \odot y) \oplus ((x \oplus y) \odot z) = (x \oplus (y \odot z)) \odot (y \oplus z)$.
		\item	\label[axiom]{ax:A6}
				$(x \odot y) \oplus z = ((x \oplus y) \odot ((x \odot y) \oplus z)) \lor z$.
		\item	\label[axiom]{ax:A7}
				$(x \oplus y) \odot z = ((x \odot y) \oplus ((x \oplus y) \odot z)) \land z$.
	\end{enumerate}
\end{definition}

Before commenting on the axioms, we remark that \cref{ax:A4,ax:A5} are equivalent, given the commutativity of $\oplus$ and $\odot$.
We have included both so to make it clear that, if $\langle A; \oplus, \odot, \lor, \land, 0, 1 \rangle$ is {\amvm}, then also the `dual' algebra $\langle A; \odot, \oplus, \land, \lor, 1, 0 \rangle$ is {\amvm}.

\Cref{ax:A1,ax:A2,ax:A3} coincide with \cref{ax:E1,ax:E2,ax:E3} in \cref{r:te}.
So, in a sense, the difference between {\mvms} and {\ulms} lies in the difference bewteen the conjunction of \cref{ax:A4,ax:A5,ax:A6,ax:A7} and the conjunction of \cref{ax:E4,ax:E6,ax:E7}.
We mention here that $0$ and $1$ are bounds of the underlying lattice of {\amvm} (\cref{l:bounded-lattice}); this fact is not completely obvious, given that the proof makes use of almost all the axioms of {\mvms}.

\Cref{ax:A4} is a sort of associativity, which resembles \cref{ax:E4}, i.e.\ $(x \cdot y) + z = x \cdot (y + z)$.
In particular, one verifies that the interpretation on $[0, 1]$ of both the left-hand and right-hand side of \cref{ax:A4} equals
\begin{equation} \label{e:xyz}
	((x + y + z - 1) \lor 0) \land 1.
\end{equation}
Notice that the element $x + y + z - 1$ appearing in \eqref{e:xyz} coincides, using the definition of $\cdot$ from \cref{r:te}, with the interpretation on $\R$ of $(x \cdot y) + z$ and $x \cdot (y + z)$.
In fact, \cref{ax:A4} is essentially the condition $(x \cdot y) + z = x \cdot (y + z)$ expressed at the unital level, i.e.: 
\begin{equation} \label{e:xyz-01}
	(((x \cdot y) + z) \lor 0) \land 1 = ((x \cdot (y + z)) \lor 0) \land 1.
\end{equation}
Indeed, the presence of the term $x \cdot y$ in the left-hand side of \eqref{e:xyz-01} corresponds to the presence of the terms $x \oplus y$ and $x \odot y$ in the left-hand side of \cref{ax:A4}, and the presence of the term $y + z$ in the right-hand side of \eqref{e:xyz-01} corresponds to the presence of the terms $y \oplus z$ and $y \odot z$ in the right-hand side of \cref{ax:A4}.

Analogously, \cref{ax:A5} corresponds to \cref{ax:E5}, i.e.\ $(x + y) \cdot z = x + (y \cdot z)$.

\Cref{ax:A6} expresses how the term $(x \odot y) \oplus z$ differs from its non-truncated version $(x \cdot y) + z$: essentially, the axiom can be read as 
\[
	(x \odot y) \oplus z = ((x \cdot y) + z) \lor z.
\]
Analogously, \cref{ax:A7} can be read as
\[
	(x \oplus y) \odot z = ((x + y) \cdot z) \land z.
\]

We remark that {\mvms} form a variety of algebras whose primitive operations are finitely many and of finite arity, and which is axiomatised by a finite number of equations.
We let $\MVM$ denote the category of {\mvms} with homomorphisms.

\begin{remark} \label{r:bdl are MVM}
	Bounded distributive lattices form a subvariety of the variety of {\mvms}, obtained by adding the axioms $x \oplus y = x \lor y$ and $x \odot y = x \land y$.
\end{remark}

%%%%%%%%%%%%%%%%%%%%%%%%%%%%%%%%%%% SECTION %%%%%%%%%%%%%%%%%%%%%%%%%%%%%%%%%%%%

\section{The unit interval functor} \label{s:Gamma}

In this section we define a functor $\G$ from the category of {\ulms} to the category of {\mvms}; the main goal of the paper is to show that $\G$ is an equivalence.
For a {\ulm} $M$, we set $\G(M) \df \{\,x \in M \mid 0 \leq x \leq 1\,\}$.
We are going to endow $\G(M)$ with a structure of {\amvm}.
Clearly, $0, 1 \in \G(M)$.
Moreover, we define $ \lor $ and $ \land $ on $\G(M)$ by restriction.
Finally, for $x, y \in \G(M)$, we set 
\begin{align*}
	x \oplus y	& \df (x + y) \land 1 \\
\intertext{and}
	x \odot y	& \df (x + y-1) \lor 0,
\end{align*}
To see that $\oplus$ and $\odot$ are internal operations on $\G(M)$, we make use of the following.

\begin{lemma} \label{l:sum positive}
	Let $M$ be {\alm}.
	For all $x, y, x', y' \in M$ such that $x \leq x'$ and $y \leq y'$, we have $x + y \leq x' + y'$.
\end{lemma}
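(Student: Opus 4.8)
The plan is to reduce the two-variable statement to monotonicity of $+$ in each coordinate separately, and then chain the two resulting inequalities by transitivity of $\leq$. Throughout I would use the standard fact that in any lattice the order is recovered from the operations via $a \leq b \iff a \lor b = b$; here the relevant lattice structure is supplied by \cref{ax:M1}.

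First I would establish the one-sided claim that $x \leq x'$ implies $x + y \leq x' + y$. From $x \leq x'$ we have $x \lor x' = x'$, and applying the distributivity of $+$ over $\lor$ from \cref{ax:M3} gives $(x + y) \lor (x' + y) = (x \lor x') + y = x' + y$, which is precisely the assertion $x + y \leq x' + y$. Next, invoking the commutativity of $+$ from \cref{ax:M2}, the symmetric argument applied in the second coordinate shows that $y \leq y'$ implies $x' + y \leq x' + y'$.

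Finally, transitivity of the lattice order combines these into $x + y \leq x' + y \leq x' + y'$, which is the desired conclusion. I expect no serious obstacle here: the only point requiring minor care is to apply the distributivity axiom in whichever coordinate is being varied, and to translate cleanly between the order relation and the join operation at each step.
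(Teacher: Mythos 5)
Your proposal is correct and follows essentially the same route as the paper: fix one coordinate, use distributivity of $+$ over $\lor$ to get monotonicity in the other, then chain the two inequalities by transitivity (the paper merely varies $y$ first where you vary $x$ first, which is immaterial).
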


\begin{proof}
	Since $ + $ distributes over $ \lor $, we have $(x + y) \lor (x + y') = x + (y \lor y') = x + y' $.
	Therefore, $x + y \leq x + y'$; analogously, $x + y' \leq x' + y'$.
	Hence, $x + y \leq x + y' \leq x' + y'$.
\end{proof}

\noindent By \cref{l:sum positive}, $\oplus$ and $\odot$ are internal operations on $\G(M)$: indeed, the condition $x \oplus y \in \G(M)$ holds because $x + y \geq 0 + 0 = 0$, and the condition $x \odot y \in \G(M)$ holds because $x + y - 1 \leq 1 + 1 - 1 = 1$.

Our next goal---met in \cref{p:Gamma is MVM} below---is to show that $\G(M)$ is an {\mvm}.
We need some lemmas.

\begin{lemma} \label{l:plusdot in mon}
	Let $M$ be a {\ulm}, and let $x, y, z\in \G(M)$.
	Then
	\begin{align*}
		(x \odot y) \oplus z	& = ((x + y + z - 1) \lor z) \land 1, \\
	\intertext{and}
		(x \oplus y) \odot z	& = ((x + y + z - 1) \land z) \lor 0.
	\end{align*}
\end{lemma}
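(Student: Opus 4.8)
The plan is to prove each of the two identities by directly unfolding the definitions of $\odot$ and $\oplus$ and then using the distributivity of $+$ over $\lor$ and $\land$ (\cref{ax:M3}). No auxiliary lemmas beyond the axioms of {\ulms} and the definitions in \cref{s:Gamma} will be needed, so the argument is essentially a short computation.

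For the first identity, I would start from the left-hand side and substitute $x \odot y = (x + y - 1) \lor 0$, obtaining $(x \odot y) \oplus z = \bigl(((x+y-1) \lor 0) + z\bigr) \land 1$ by the definition of $\oplus$. Since $+$ distributes over $\lor$, I can push the summand $z$ inside the join: $\bigl((x+y-1) \lor 0\bigr) + z = \bigl((x+y-1)+z\bigr) \lor (0+z) = (x+y+z-1) \lor z$. Meeting with $1$ then yields exactly $((x+y+z-1)\lor z)\land 1$, as claimed.

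For the second identity I would argue dually. Substituting $x \oplus y = (x+y) \land 1$ into the definition of $\odot$ gives $(x \oplus y) \odot z = \bigl(((x+y)\land 1) + (z-1)\bigr)\lor 0$. Distributing the summand $z-1$ over $\land$ produces $\bigl((x+y)+(z-1)\bigr)\land\bigl(1+(z-1)\bigr)$. The only point requiring a word of justification is the simplification of the right meetand: by \cref{ax:U1} we have $-1+1 = 0$, so $1+(z-1) = z + (1 + (-1)) = z + 0 = z$, using commutativity and associativity of $+$ (\cref{ax:M2}). Hence the expression equals $\bigl((x+y+z-1)\land z\bigr)\lor 0$, which is the desired right-hand side.

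There is no real obstacle here: the computation goes through mechanically once distributivity is invoked. The single step that is not purely formal is recognizing that $1 + (z-1)$ collapses to $z$, which is precisely where the interaction between the two units $1$ and $-1$ (\cref{ax:U1}) enters; I would make sure to flag this so that the reader sees where the unital structure is used, as opposed to the purely $\ell$-monoid distributivity used everywhere else.
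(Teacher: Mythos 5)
Your proof is correct and follows essentially the same route as the paper: unfold the definitions of $\oplus$ and $\odot$, distribute $+$ over $\lor$ (resp.\ $\land$), and simplify $1 + (z-1)$ to $z$. The only difference is that you spell out the use of \cref{ax:U1} in that last simplification, which the paper leaves implicit.
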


\begin{proof}
	We have
	\begin{align*}
		(x \odot y) \oplus z	& = ((x \odot y) + z) \land 1
									&& \by{def.\ of $\oplus$} \\
									& = (((x + y - 1) \lor 0) + z) \land 1
									&& \by{def.\ of $\odot$} \\
									& = ((x + y + z - 1) \lor z) \land 1
									&& \by{$ + $ distr.\ over $ \land $}
	\intertext{and}
		(x \oplus y) \odot z	& = ((x \oplus y) + z - 1) \lor 0
									&& \by{def.\ of $\odot$} \\
									& = (((x + y) \land 1) + z - 1) \lor 0
									&& \by{def.\ of $\oplus$} \\
									& = ((x + y + z - 1) \land z) \lor 0.
									&& \by{$ + $ distr.\ over $ \land $} \qedhere
	\end{align*}
\end{proof}

\begin{lemma} \label{l:land+lor=+}
	For all $x$ and $y$ in {\alm} we have
	\[
		(x \land y) + (x \lor y) = x + y.
	\]
\end{lemma}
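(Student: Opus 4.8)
The plan is to prove the two inequalities $(x \land y) + (x \lor y) \leq x + y$ and $(x \land y) + (x \lor y) \geq x + y$ separately, each by expanding the left-hand side through the distributivity of $+$ over the lattice operations, but applying the two distributive laws in \emph{opposite} orders. The whole argument uses only commutativity of $+$ together with the fact that $+$ distributes over both $\lor$ and $\land$; no cancellation or inverses are invoked, which is precisely why the identity persists in the purely monoidal setting.

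For the upper bound I would first distribute $+$ over $\lor$, writing $(x \land y) + (x \lor y) = ((x \land y) + x) \lor ((x \land y) + y)$, and then distribute $+$ over $\land$ in each summand (using commutativity to rewrite $y + x$ as $x + y$), obtaining
\[
	(x \land y) + (x \lor y) = \bigl((x + x) \land (x + y)\bigr) \lor \bigl((x + y) \land (y + y)\bigr).
\]
Each of the two terms of the outer join is a meet having $x + y$ among its arguments, hence each is $\leq x + y$; therefore their join is $\leq x + y$.

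For the lower bound I would expand the very same expression in the reverse order: first distribute $+$ over $\land$, giving $(x \land y) + (x \lor y) = (x + (x \lor y)) \land (y + (x \lor y))$, and then distribute over $\lor$, obtaining
\[
	(x \land y) + (x \lor y) = \bigl((x + x) \lor (x + y)\bigr) \land \bigl((x + y) \lor (y + y)\bigr).
\]
Now each of the two terms of the outer meet is a join having $x + y$ among its arguments, hence each is $\geq x + y$, so their meet is $\geq x + y$. Combining the two bounds yields the claimed equality. I do not expect a genuine obstacle here: the only idea required is the observation that the two distributive laws, applied in the two possible orders, sandwich the expression between $x + y$ and itself.
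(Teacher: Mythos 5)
Your proof is correct and follows essentially the same route as the paper's: both establish the two inequalities by applying the two distributive laws in opposite orders. The only cosmetic difference is that after the first distribution the paper invokes monotonicity of $+$ (its \cref{l:sum positive}) to bound $(x \land y) + x \leq y + x$, whereas you distribute a second time and conclude from the fact that a meet lies below, and a join above, each of its arguments; the two steps are interchangeable since that monotonicity is itself derived from distributivity.
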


\begin{proof}
	We recall the proof, available in \cite{Choud}, of the two inequalities:
	\begin{gather*}
		(x \land y) + (x \lor y) = ((x \land y) + x) \lor ((x \land y) + y) \leq (y + x) \lor (x + y) = x + y;\\
		(x \land y) + (x \lor y) = (x + (x \lor y)) \land (y + (x \lor y)) \geq (x + y) \land (y + x) = x + y. \qedhere
	\end{gather*}
\end{proof}

\begin{lemma}
	\label{l:oplus odot is good}
	Let $M$ be {\aulm}, and let $x, y \in \G(M)$.
	Then
	\[
		(x \oplus y) + (x \odot y) = x + y.
	\]
\end{lemma}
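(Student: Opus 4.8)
The plan is to expand both sides using the definitions of $\oplus$ and $\odot$, reduce the identity to a statement about the single element $s \df x + y$, and then invoke \cref{l:land+lor=+}. Setting $s \df x + y$, the left-hand side becomes
\[
	(x \oplus y) + (x \odot y) = (s \land 1) + ((s - 1) \lor 0),
\]
so it suffices to show that $(s \land 1) + ((s - 1) \lor 0) = s$.

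The key manipulation is to rewrite the $\odot$-term so that \cref{l:land+lor=+} becomes applicable. Using \cref{ax:U1} to write $0 = 1 + (-1)$, together with the distributivity of $+$ over $\lor$ (from \cref{ax:U0}), I obtain
\[
	(s - 1) \lor 0 = (s + (-1)) \lor (1 + (-1)) = (s \lor 1) + (-1) = (s \lor 1) - 1.
\]
Substituting this back, the left-hand side equals $(s \land 1) + (s \lor 1) - 1$. Now \cref{l:land+lor=+}, applied to the two elements $s$ and $1$, gives $(s \land 1) + (s \lor 1) = s + 1$, so the left-hand side equals $s + 1 - 1$; a final application of \cref{ax:U1} (together with commutativity and associativity of $+$ from \cref{ax:U0}) cancels $1 + (-1) = 0$, leaving $s$. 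Recalling $s = x + y$ completes the argument.

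The computation is short, and each step is routine; the only genuine choice is recognizing that the truncation $(s - 1) \lor 0$ should be rewritten as $(s \lor 1) - 1$, precisely so that the pair $s \land 1$ and $s \lor 1$ appears and \cref{l:land+lor=+} applies. I do not anticipate any real obstacle here. The one point requiring care is that all the rearrangements of $+$, $\lor$ and the unit $-1$ are justified by the distributivity and monoid axioms of \cref{ax:U0} and by \cref{ax:U1}, and never by additive cancellation, since a {\ulm} need not admit additive inverses.
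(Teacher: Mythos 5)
Your proposal is correct and follows essentially the same route as the paper: expand $\oplus$ and $\odot$ by definition, rewrite $(x+y-1)\lor 0$ as $((x+y)\lor 1) - 1$ via distributivity of $+$ over $\lor$ (the paper does exactly this step), apply \cref{l:land+lor=+} to the pair $x+y$ and $1$, and cancel $1 + (-1) = 0$ using \cref{ax:U1}. Your closing remark that no additive cancellation is invoked is a nice explicit check, but the substance of the argument is identical to the paper's.
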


\begin{proof}
	We have
	\begin{align*}
		(x \oplus y) + (x \odot y)	& = ((x + y) \land 1) + ((x + y - 1) \lor 0) && \by{def.\ of $\oplus$ and $\odot$} \\
											& = ((x + y) \land 1) + ((x + y) \lor 1) - 1 && \by{$+$ distr.\ over $\lor$} \\
											& = x + y + 1 - 1										&& \by{\cref{l:land+lor=+}} \\
											& = x + y.												&&
		\qedhere
	\end{align*}
\end{proof}

\begin{lemma} \label{l:sigma in monoid}
	Let $M$ be a {\ulm}.
	For all $x, y, z\in \G(M)$, the elements $(x \oplus y) \odot ((x \odot y) \oplus z)$, $(x \odot y) \oplus ((x \oplus y) \odot z)$, $(x \odot (y \oplus z)) \oplus (y \odot z)$, and $(x \oplus (y \odot z)) \odot (y \oplus z)$ coincide with
	\[
		(x + y + z - 1) \lor 0) \land 1.
	\]
\end{lemma}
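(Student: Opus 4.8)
The plan is to expand each of the four MV-monoidal terms back into the signature of $M$ using the definitions $u \oplus v = (u+v)\land 1$ and $u\odot v = (u+v-1)\lor 0$, and then to collapse everything onto the single element $s \df x+y+z-1$. Two preliminary observations will guide the bookkeeping. First, since $0 \le 1$ (\cref{ax:U2}), distributivity of the lattice gives $(s\land 1)\lor 0 = (s\lor 0)\land(1\lor 0) = (s\lor 0)\land 1$, so it suffices to bring each term into either of these two normal forms. Second, I will repeatedly use that $+$ distributes over $\land$ and $\lor$ (\cref{ax:M3}) to push sums through the lattice operations, together with \cref{l:sum positive} to compare elements, and \cref{l:oplus odot is good} in the guise $(u\oplus v)+(u\odot v) = u+v$.

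I would carry out \textbf{Expression 1} in full as the representative case. Writing $a \df x\oplus y$ and $b\df x\odot y$, so that $a + b = x+y$ by \cref{l:oplus odot is good}, the term $(x\oplus y)\odot((x\odot y)\oplus z)$ equals $a\odot(b\oplus z)$. Expanding the definitions and distributing $+$ over $\land$ turns this into $((a+b+z-1)\land a)\lor 0 = (s\land a)\lor 0$, where $a = (x+y)\land 1$. The only non-mechanical point now is to simplify $s\land a$: because $z\le 1$, \cref{l:sum positive} yields $s = (x+y)+(z-1)\le x+y$, so $s\land a = s\land(x+y)\land 1 = s\land 1$, and the term equals $(s\land 1)\lor 0 = ((x+y+z-1)\lor 0)\land 1$.

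The remaining three terms are handled by the same recipe, and split into two types according to whether the outermost operation is $\odot$ or $\oplus$. Expression 4, namely $(x\oplus(y\odot z))\odot(y\oplus z)$, is of the same type as Expression 1: setting $p\df y\oplus z$ and $q\df y\odot z$ with $p+q=y+z$, it reduces to $(s\land p)\lor 0$ with $p=(y+z)\land 1$, and here the bound $x\le 1$ gives $s\le y+z$, collapsing $s\land p$ to $s\land 1$. Expressions 2 and 3, whose outermost operation is $\oplus$, instead reduce to the dual form $(s\lor b)\land 1$ (respectively $(s\lor q)\land 1$) with $b=(x+y-1)\lor 0 \ge 0$ (respectively $q = (y+z-1)\lor 0\ge 0$); this time the lower bounds $z\ge 0$ (respectively $x\ge 0$) give $s\ge x+y-1$ (respectively $s\ge y+z-1$), so that $s\lor b = s\lor 0$ and the term equals $(s\lor 0)\land 1$. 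I expect the main obstacle to be purely organisational: in each case one must identify which of the two bounds $0\le x,y,z$ or $x,y,z\le 1$ is the one that makes the inner meet or join disappear, since it is exactly here---and only here---that the hypothesis $x,y,z\in\G(M)$ enters, via \cref{l:sum positive}.
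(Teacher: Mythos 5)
Your proposal is correct and follows essentially the same route as the paper: expand $\oplus$ and $\odot$ via their definitions and distributivity of $+$ (this is exactly the content of Lemma~\ref{l:plusdot in mon}), use $(u\oplus v)+(u\odot v)=u+v$ (Lemma~\ref{l:oplus odot is good}) to collapse onto $s=x+y+z-1$, and then invoke the bounds $0\le x,y,z\le 1$ to absorb the inner meet or join, finishing with lattice distributivity to exchange $\lor 0$ and $\land 1$. The only cosmetic difference is that the paper computes two of the four terms explicitly and obtains the other two by commutativity of $\oplus$, $\odot$ and $+$, whereas you treat all four in two symmetric pairs.
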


\begin{proof}
	We have
	\begin{align*}
		& (x \oplus y) \odot ((x \odot y) \oplus z)								&& \\
		& = (((x \oplus y) + (x \odot y) + z) \land (x \oplus y)) \lor 0 	&& \by{\cref{l:plusdot in mon}} \\
		& = ((x + y + z - 1) \land (x \oplus y)) \lor 0							&& \by{\cref{l:oplus odot is good}} \\
		& = ((x + y + z - 1) \land (x + y) \land 1) \lor 0						&& \by{def.\ of $\oplus$} \\
		& = ((x + y + z - 1) \land 1) \lor 0										&& \by{$x + y + z - 1 \leq x + y$} \\
		& = ((x + y + z - 1) \lor 0) \land 1.										&&				
	\intertext{and}
		& (x \odot (y \oplus z)) \oplus (y \odot z)								&& \\
		& = ((x + (y \oplus z) + (y \odot z)) \lor (y \odot z)) \land 1	&& \by{\cref{l:plusdot in mon}} \\
		& = ((x + y + z - 1) \lor (y \odot z)) \land 1							&& \by{\cref{l:oplus odot is good}} \\
		& = ((x + y + z - 1) \lor (y + z - 1) \lor 0) \land 1					&& \by{def.\ of $\odot$} \\
		& = ((x + y + z - 1) \lor 0) \land 1.										&& \by{$x + y + z - 1 \geq y + z - 1$}
	\end{align*}
	The fact that also $(x \odot y) \oplus ((x \oplus y) \odot z)$ and $(x \oplus (y \odot z)) \odot (y \oplus z)$ coincide with $(x + y + z - 1) \lor 0) \land 1$ follows from the commutativity of $\oplus$ and $\odot$ (which is easily seen to hold) and the commutativity of $ + $.
\end{proof}

\begin{proposition} \label{p:Gamma is MVM}
	Let $M$ be a {\ulm}.
	Then $\G(M)$ is an {\mvm}.
\end{proposition}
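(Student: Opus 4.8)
The plan is to verify the seven axioms \cref{ax:A1,ax:A2,ax:A3,ax:A4,ax:A5,ax:A6,ax:A7} in turn for the algebra $\langle \G(M); \oplus, \odot, \lor, \land, 0, 1 \rangle$, leaning on the lemmas already established. The first three are the structural, ``easy'' ones. For \cref{ax:A1}, I would first note that $\G(M)$ is closed under $\lor$ and $\land$ (if $0 \leq x, y \leq 1$ then $0 = 0 \land 0 \leq x \land y \leq x \lor y \leq 1 \lor 1 = 1$), so it is a sublattice of the distributive lattice $M$ and is therefore itself a distributive lattice. For \cref{ax:A2}, commutativity of $\oplus$ and $\odot$ is immediate from commutativity of $+$; the unit laws $x \oplus 0 = (x+0) \land 1 = x$ and $x \odot 1 = (x+1-1) \lor 0 = x$ follow from $x \leq 1$ and $x \geq 0$ respectively; and associativity I would obtain by distributing $+$ over $\land$ (resp.\ $\lor$) and then simplifying with the bounds $0 \leq z \leq 1$, which collapse both $(x \oplus y) \oplus z$ and $x \oplus (y \oplus z)$ to $(x + y + z) \land 1$, and both $(x \odot y) \odot z$ and $x \odot (y \odot z)$ to $(x + y + z - 2) \lor 0$. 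For \cref{ax:A3}, each distributivity law reduces, via distributivity of $+$ over $\lor$ and $\land$ together with the lattice-distributivity of $M$, to the corresponding lattice identity; for instance $x \oplus (y \lor z) = ((x + y) \lor (x + z)) \land 1 = ((x + y) \land 1) \lor ((x + z) \land 1) = (x \oplus y) \lor (x \oplus z)$, and the three remaining cases are analogous.

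The associativity-like axioms \cref{ax:A4,ax:A5} require no new work: \cref{l:sigma in monoid} already asserts that all four terms occurring in them coincide with $((x + y + z - 1) \lor 0) \land 1$, so both equations follow by transitivity of $=$.

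The remaining task, which I expect to be the only genuinely delicate step, is \cref{ax:A6,ax:A7}. Here I would combine \cref{l:plusdot in mon} and \cref{l:sigma in monoid}. Writing $w \df x + y + z - 1$, the left-hand side of \cref{ax:A6} equals $(w \lor z) \land 1$ by \cref{l:plusdot in mon}, while its right-hand side equals $((w \lor 0) \land 1) \lor z$ by \cref{l:sigma in monoid}. It then remains to prove the purely lattice-theoretic identity $(w \lor z) \land 1 = ((w \lor 0) \land 1) \lor z$. I would establish this by distributing $\lor z$ over the meet, obtaining $((w \lor 0) \land 1) \lor z = (w \lor 0 \lor z) \land (1 \lor z)$, and then using $0 \leq z \leq 1$ to simplify $1 \lor z = 1$ and $0 \lor z = z$, which yields exactly $(w \lor z) \land 1$. \Cref{ax:A7} then follows by the order-dual computation, or more cheaply by appealing to the duality $\langle A; \oplus, \odot, \lor, \land, 0, 1 \rangle \leftrightarrow \langle A; \odot, \oplus, \land, \lor, 1, 0 \rangle$ recorded after \cref{d:MVM}. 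Having verified all seven axioms, we conclude that $\G(M)$ is {\amvm}.
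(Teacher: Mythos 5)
Your proposal is correct and follows exactly the paper's own route: \cref{ax:A1,ax:A2,ax:A3} by direct computation, \cref{ax:A4,ax:A5} from \cref{l:sigma in monoid}, and \cref{ax:A6,ax:A7} by combining \cref{l:plusdot in mon} with \cref{l:sigma in monoid}; you merely spell out the lattice identity $(w \lor z) \land 1 = ((w \lor 0) \land 1) \lor z$ that the paper leaves implicit. One small caution: for \cref{ax:A7}, the "more cheaply" appeal to the duality remark after \cref{d:MVM} would be circular (it presupposes $\G(M)$ is already known to be {\amvm}), so stick with your primary suggestion of the order-dual lattice computation, which is sound since the identity and the hypotheses $0 \leq z \leq 1$ are self-dual.
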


\begin{proof}
	\Cref{ax:A1,ax:A2,ax:A3} are obtained by straightforward computations.
	\Cref{ax:A4,ax:A5} hold by \cref{l:sigma in monoid}.
	\Cref{ax:A6,ax:A7} hold by \cref{l:plusdot in mon,l:sigma in monoid}.
\end{proof}

Given a morphism of {\ulms} $f\colon M \to N$, we denote with $\G(f)$ its restriction $\G(f) \colon \G(M) \to \G(N)$.
This establishes a functor
\[
	\G \colon \ULM \to \MVM.
\]
Our main goal is to show that $\G$ is an equivalence of categories.

%%%%%%%%%%%%%%%%%%%%%%%%%%%%%%%%%%% SECTION %%%%%%%%%%%%%%%%%%%%%%%%%%%%%%%%%%%%

\section{Positive cones} \label{s:positive cone}

In \cite[Chapter 2]{Cignoli}, the authors proceed in two steps in order to prove that, for an {\mv} $A$, there exists {\aulg} that envelops $A$.
First, a partially ordered monoid $M_A$ is constructed from $A$.
Then {\aulg} $G_A$ is defined (in a way which is analogous to the definition of $\Z$ from $\N$).
In this paper, we proceed analogously: the role of $A$ is played by {\mvms}, the role of $G_A$ is played by {\ulms}, and the role of $M_A$ is played by what we call \emph{\pulms}.
Roughly speaking, if we think of a {\ulm} as the interval $(-\infty, \infty)$, then an {\mvm} is the interval $[0, 1]$, whereas a {\pulm} is the interval $[0, \infty)$.

In order to prove that $\G$ is an equivalence, we show that $\G$ is the composite of two equivalences 
\[
	\begin{tikzcd}
		\ULM \arrow{r}{(-)^+ } & \ULMP \arrow{r}{\U} & \MVM,
	\end{tikzcd}
\]
where $\ULMP$ is the category---yet to be defined---of \pulms.
The idea is that, for $M \in \ULM$, we have $M^+ \df \{\,x \in M \mid x \geq 0\,\}$, and for $N \in \ULMP$, we have $\U(N) \df \{\,x \in N \mid x \leq 1\,\}$, so that $\U(M^+ ) = \{\,x \in M \mid 0 \leq x \leq 1\,\} = \G(M)$.
In this section, we define the functor $(-)^+ $, and we exhibit a quasi-inverse $\T$.
We remark that one could construct a quasi-inverse functor for $\G$ just in one step: see the author's Ph.D.\ thesis \cite[Chapter~4]{Abbadini2021} for the employment of this approach.

Given a {\ulm} $M$, we set $M^+ \df \{\,x \in M \mid x \geq 0\,\}$.
With the following definition, we aim to capture the structure of $M^+ $ for $M$ {\aulm}.

\begin{definition} \label{d:positive-unital}
	By a \emph{\pulm} we mean an algebra $\langle M; +, \lor, \land, 0, 1, - \ominus 1 \rangle$ (arities $2$, $2$, $2$, $0$, $0$, $1$) such that, for every $x \in M$, the following properties hold.
	\begin{enumerate} [label=\textup{(P\arabic*)}, ref = P\arabic*, start = 0]
		\item \label[axiom]{ax:P0} $\langle M; +, \lor, \land, 0 \rangle$ is a {\lm}.
		\item \label[axiom]{ax:P1} $x \geq 0$.
		\item \label[axiom]{ax:P2} $(x + 1) \ominus 1 = x$.
		\item \label[axiom]{ax:P3} $(x \ominus 1) + 1 = x \lor 1$.
		\item \label[axiom]{ax:P4} There exists $n \in \N$ such that $x \leq \underbrace{1 + \dots + 1}_{n \ \text{times}}$.
	\end{enumerate}
\end{definition}

We denote with $\ULMP$ the category of {\pulms} with homomorphisms.
Given $n \in \N$, we write $n$ for $\underbrace{1 + \dots + 1}_{n \ \text{times}}$.

In this section, we show that $\ULM$ and $\ULMP$ are equivalent.

\begin{lemma} \label{l:n is cancellative}
	Let $M$ be {\apulm}.
	For all $x, y \in M$ and every $n \in \N$, if $x + n = y + n$ then $x = y$.
\end{lemma}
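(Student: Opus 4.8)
The plan is to prove the statement by induction on $n$, with the crux being that \cref{ax:P2} supplies a way to cancel a single copy of $1$.

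For the base case $n = 0$, the hypothesis reads $x + 0 = y + 0$, which gives $x = y$ at once, since $\langle M; +, 0 \rangle$ is a monoid by \cref{ax:P0}. For the inductive step, the key observation is the case $n = 1$: if $x + 1 = y + 1$, then I would apply the unary operation $- \ominus 1$ to both sides. This is legitimate precisely because $\ominus 1$ is a genuine operation of the signature, hence a well-defined function, so it sends equal inputs to equal outputs. Combining this with \cref{ax:P2} yields
\[
	x = (x + 1) \ominus 1 = (y + 1) \ominus 1 = y.
\]

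For the general inductive step, suppose the claim holds for $n$ and that $x + (n+1) = y + (n+1)$. Since $\langle M; +, 0 \rangle$ is a commutative monoid (\cref{ax:P0}), I may rewrite this equality as $(x + n) + 1 = (y + n) + 1$; the $n = 1$ case just established then gives $x + n = y + n$, and the inductive hypothesis delivers $x = y$, completing the induction.

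I expect no real obstacle here: the entire argument rests on \cref{ax:P2}, which exhibits the map $z \mapsto z + 1$ as a split injection, so that cancellation of an arbitrary $n$ follows by iteration. It is worth noting that \cref{ax:P1,ax:P3}, together with the lattice operations, play no role in this particular lemma; only the commutative monoid structure from \cref{ax:P0} and the single axiom \cref{ax:P2} are needed.
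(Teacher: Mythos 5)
Your proof is correct and follows essentially the same route as the paper: induction on $n$, with \cref{ax:P2} used to cancel one copy of $1$ at each step. The only cosmetic difference is that you isolate the $n=1$ case as a separate step before iterating, whereas the paper applies \cref{ax:P2} directly to $x+n+1$ inside the inductive step.
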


\begin{proof}
	The proof proceeds by induction on $n \in \N$.
	The case $n = 0$ is trivial.
	Suppose the statement holds for $n \in \N$.
	If $x + (n + 1) = y + (n + 1)$, then 
	\[
		x + n \stackrel{\text{\cref{ax:P2}}}{ = } (x + n + 1) \ominus 1 = (y + n + 1) \ominus 1 \stackrel{\text{\cref{ax:P2}}}{ = } y + n,
	\]
	and then, by inductive hypothesis, $x = y$.
\end{proof}

\begin{remark} \label{r:impl def}
	Let $x$ and $y$ be elements of {\apulm}.
	Then
	\[
		y = x \ominus 1 \stackrel{\text{\cref{l:n is cancellative}}}{\Longleftrightarrow} y + 1 = (x \ominus 1) + 1 \stackrel{\text{\cref{ax:P3}}}{\Longleftrightarrow} y + 1 = x \lor 1.
	\]
	Moreover,
	\[
		y = 0 \stackrel{\text{\cref{l:n is cancellative}}}{\Longleftrightarrow} y + 1 = 1.
	\]
	This shows that the unary operation $-\ominus 1$ and the constant $0$ can be explicitly defined from $\{\lor, \land, 1, +\}$.
	Therefore, every function $f\colon M \to N$ between {\pulms} that preserves $+$, $\lor$, $\land$ and $1$ preserves also $-\ominus 1$ and $0$, and hence it is a homomorphism.
\end{remark}

Given {\aulm} $M$, we endow $M^+ $ with the operations $+$, $\lor$, $\land$, $0$, $1$ defined by restriction and with $- \ominus 1$ defined by $x \ominus 1 \df (x - 1) \lor 0$.
The restriction of $ + $ on $M^+ $ is well-defined by \cref{l:sum positive}; it is immediate that $\lor$, $\land$ and $- \ominus 1$ are well-defined, and that $0, 1 \in M^+ $.

\begin{proposition} \label{p:M^+ is positive-m}
	For every {\ulm} $M$, the algebra $M^+ $ is {\apulm}.
\end{proposition}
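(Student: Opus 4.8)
The plan is to verify the five axioms \cref{ax:P0,ax:P1,ax:P2,ax:P3,ax:P4} one at a time for the algebra $M^+$, exploiting throughout the fact that $M$ itself satisfies \cref{ax:M1,ax:M2,ax:M3} together with \cref{ax:U1,ax:U2,ax:U3}.

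First I would dispatch the structural and trivial axioms. For \cref{ax:P0}, the discussion preceding the statement already records that $M^+$ is closed under $+$ (by \cref{l:sum positive}), $\lor$, $\land$, and contains $0$; since the identities defining a {\lm} hold throughout $M$, they hold a fortiori on the closed subset $M^+$, so $\langle M^+; +, \lor, \land, 0 \rangle$ is a {\lm}. \Cref{ax:P1} is immediate, since $M^+$ is by definition the set of $x \in M$ with $x \geq 0$. For \cref{ax:P4}, given $x \in M^+ \seq M$, \cref{ax:U3} furnishes $n \in \N$ with $-n \leq x \leq n$, and the right-hand inequality is exactly what \cref{ax:P4} requires.

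The two genuinely computational axioms are \cref{ax:P2,ax:P3}, and both hinge on the single observation that $z - 1 + 1 = z = z + 1 - 1$ for all $z \in M$, which follows from \cref{ax:U1} together with the commutativity and associativity of $+$. For \cref{ax:P2}, I would compute $(x + 1) \ominus 1 = ((x + 1) - 1) \lor 0 = x \lor 0$, and then use that $x \geq 0$ (since $x \in M^+$) to collapse $x \lor 0$ to $x$. For \cref{ax:P3}, I would unfold the definition of $\ominus$ and distribute $+$ over $\lor$ via \cref{ax:M3}, obtaining $(x \ominus 1) + 1 = ((x - 1) \lor 0) + 1 = ((x - 1) + 1) \lor (0 + 1) = x \lor 1$, as desired.

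I do not expect a serious obstacle here: the whole argument is careful bookkeeping of the definitions against the axioms. The only mild subtlety is remembering to invoke the positivity $x \geq 0$ at the end of \cref{ax:P2} in order to simplify $x \lor 0$ to $x$; everything else is forced by \cref{ax:U1,ax:U3} and the distributivity of $+$ over the lattice operations.
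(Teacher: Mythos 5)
Your proof is correct and follows essentially the same route as the paper: \cref{ax:P0,ax:P1,ax:P4} are dispatched by subalgebra/definitional reasoning and \cref{ax:U3}, while \cref{ax:P2,ax:P3} are verified by the same two one-line computations (using $x \geq 0$ to collapse $x \lor 0$, and distributing $+$ over $\lor$). No discrepancies.
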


\begin{proof}
	The algebra $\langle M^+ ; +, \lor, \land, 0 \rangle$ is {\alm} because it is a subalgebra of the {\lm} $\langle M; +, \lor, \land, 0 \rangle$; so, \cref{ax:P0} holds.
	\Cref{ax:P1} holds by definition of $M^+$.
	\Cref{ax:P2} holds because, for every $x \in M^+ $, we have
	\[
		(x + 1) \ominus 1 = (x + 1 - 1) \lor 0 = x \lor 0 = x.
	\]
	\Cref{ax:P3} holds because, for every $x \in M^+ $, we have
	\[
		(x \ominus 1) + 1 = ((x - 1) \lor 0) + 1 = x \lor 1.
	\]
	\Cref{ax:P4} holds because, by \cref{ax:U3}, for every $x \in M^+ $ there exists $n \in \N$ such that $x \leq n$.
\end{proof}

Given a morphism $f\colon M \to N$ of {\ulms}, $f$ restricts to a function $f^+ $ from $M^+ $ to $N^+ $.
Moreover, $f$ preserves $+$, $\lor$, $\land$ and $1$ and so, by \cref{r:impl def}, $f^+$ is a morphism of {\pulms}.
This establishes a functor
$
	(-)^+ \colon \ULM \to \ULMP
$
that maps $M$ to $M^+ $, and maps a morphism $f \colon M \to N$ to its restriction $f^+ \colon M^+ \to N^+ $.
We will prove that $(-)^+ $ is an equivalence of categories (\cref{t:G_0 is equiv} below).
To do so, we exhibit a quasi-inverse $\T\colon \ULMP\to \ULM$.

Let $M$ be a {\pulm}.
We want to construct a {\ulm} $\T(M)$ such that, if $N$ is a {\ulm} and $N^+ \cong M$, then $\T(M) \cong N$.
Every element of a {\ulm} $N$ can be expressed as $x - n$ for some $x \in N^+ $ and $n \in \N$.
Roughly speaking, we will obtain $\T(N^+ ) \cong N$ by translating the elements of $N^+ $ by negative integers.
(In fact, $\T$ stands for `translations'.)

Therefore, given a {\ulm} $M$, we consider the relation $\sim$ defined on $M \times \N$ as follows: $(x,n) \sim(y,m)$ if, and only if, $x + m = y + n$.
Using \cref{l:n is cancellative}, it is not difficult to show that $\sim$ is an equivalence relation.
The equivalence class of an element $(x,n)$ of $M \times \N$ is denoted by $[(x, n)]$, or simply $[x, n]$.
We set $\T(M) \df \frac{M \times \N}{\sim}$, and we endow $\T(M)$ with the operations of {\aulm}:
\begin{align*}
	0							& \df [0, 0];\\
	1							& \df [1, 0];\\
	-1							& \df [0, 1];\\
	[x, n] + [y, m]		& \df [x + y, n + m];\\
	[x, n] \lor [y, m]	& \df [(x + m) \lor (y + n), n + m];\\
	[x, n] \land [y, m]	& \df [(x + m) \land (y + n), n + m].
\end{align*}
Straightforward computations show that these operations are well-defined.

\begin{remark} \label{r:we can add}
	For every element $x$ of {\apulm} and all $n,m \in \N$ we have $(x,n) \sim (x + m,n + m)$.
\end{remark}

\begin{lemma} \label{l:wlog n = m}
	Let $M$ be {\apulm}.
	For all $x, y \in M$ and every $n \in \N$ we have
	\begin{align*}
		[x, n] \lor [y, n]	& = [x \lor y, n],
	\intertext{and}
		[x,n] \land [y,n]		& = [x \land y, n].
	\end{align*}
\end{lemma}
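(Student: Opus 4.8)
The plan is to unwind the definitions of $\lor$ and $\land$ on $\T(M)$ and reduce each claimed identity of equivalence classes to an equation in $M$ that follows from the distributivity of $+$ over the lattice operations (\cref{ax:M3}). No appeal to \cref{l:n is cancellative} is needed, since we are establishing an equality of classes rather than cancelling a summand.

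First I would specialize the defining formula for $\lor$ on $\T(M)$ to the case $m = n$, obtaining
\[
	[x, n] \lor [y, n] = [(x + n) \lor (y + n),\, n + n].
\]
It then remains to show that the pair $((x + n) \lor (y + n),\, 2n)$ is $\sim$-equivalent to $(x \lor y,\, n)$. Recalling that $(a, p) \sim (b, q)$ means $a + q = b + p$, this amounts precisely to the equation
\[
	((x + n) \lor (y + n)) + n = (x \lor y) + 2n
\]
in $M$.

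Next I would verify this equation by distributing $+$ over $\lor$ on both sides. The left-hand side equals $(x + 2n) \lor (y + 2n)$, and the right-hand side equals $(x + 2n) \lor (y + 2n)$ as well, so the two coincide; both rewrites use only \cref{ax:M3}. The argument for $\land$ is verbatim the same, with $\lor$ replaced by $\land$ throughout and the distributivity of $+$ over $\land$ invoked in its place.

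The proof is essentially a routine computation, so I do not expect a serious obstacle. The only point requiring care is to apply the definition of $\sim$ in the correct direction and to keep track of the doubled index $2n$ that appears after taking the join (or meet); once the target equation in $M$ is written down correctly, a single application of distributivity closes each case.
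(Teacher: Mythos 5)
Your proposal is correct and follows essentially the same route as the paper: specialize the definition of $\lor$ to equal second coordinates and then use distributivity of $+$ over $\lor$ to identify the resulting class with $[x \lor y, n]$. The only cosmetic difference is that you verify the relation $((x+n) \lor (y+n), 2n) \sim (x \lor y, n)$ directly from the definition of $\sim$, whereas the paper first rewrites the first coordinate as $(x \lor y) + n$ and then cites \cref{r:we can add}; the underlying computation is identical.
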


\begin{proof}
	We have
	\begin{align*}
		[x, n] \lor [y, n]	& = [(x + n) \lor (y + n), n + n]	&&\\
									& = [(x \lor y) + n, n + n]			&&\by{$ + $ distr.\ over $ \lor $} \\
									& = [x \lor y, n], 						&&\by{\cref{r:we can add}}
	\end{align*}
	and analogously for $ \land $.
\end{proof}

\begin{proposition}
	For every {\pulm} $M$, $\T(M)$ is a {\ulm}.
\end{proposition}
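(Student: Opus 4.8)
The plan is to verify that $\T(M)$ satisfies the four defining axioms \cref{ax:U0,ax:U1,ax:U2,ax:U3} of {\aulm}, taking for granted the well-definedness of the operations already asserted. The single organising principle I would use is a \emph{common-denominator reduction}: given finitely many elements $[x_1, n_1], \dots, [x_k, n_k]$ of $\T(M)$, set $d \df n_1 + \dots + n_k$; by \cref{r:we can add} each $[x_i, n_i]$ equals $[x_i + (d - n_i), d]$, so all of them may be represented with the common second coordinate $d$. Once this is done, \cref{l:wlog n = m} shows that $\lor$ and $\land$ act on the first coordinates exactly as in $M$, namely $[a, d] \lor [b, d] = [a \lor b, d]$ and $[a, d] \land [b, d] = [a \land b, d]$. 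Consequently every equational lattice identity to be checked in $\T(M)$ collapses, after this reduction, to the corresponding identity among the first coordinates in $M$, where it holds by \cref{ax:P0}.

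For \cref{ax:U0} I would argue as follows. The commutative-monoid axiom for $+$ is immediate, since $+$ is defined coordinatewise in both slots and $[0,0]$ is a two-sided identity. For the distributive-lattice axioms, the reduction above turns each of commutativity, associativity, idempotency, absorption and distributivity of $\lor$ and $\land$ into its counterpart in the distributive lattice $M$. For the distributivity of $+$ over $\lor$ and $\land$, I would compute both sides at a common denominator, use that $+$ distributes over $\lor$ and $\land$ in $M$, and then reconcile the extra summand that appears on one side via \cref{r:we can add} (which itself rests on the cancellativity of \cref{l:n is cancellative}). This settles \cref{ax:U0}.

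The three remaining axioms are short. \Cref{ax:U1} is the direct computation $-1 + 1 = [0,1] + [1,0] = [1,1] = [0,0] = 0$, the middle equality holding because $(1,1) \sim (0,0)$. \Cref{ax:U2} follows from $[0,0] \lor [1,0] = [0 \lor 1, 0] = [1,0]$, using \cref{l:wlog n = m} together with $0 \leq 1$ in $M$, which is guaranteed by \cref{ax:P1}. For \cref{ax:U3}, given $[x, n]$, I would obtain an upper bound from \cref{ax:P4}: choosing $m \in \N$ with $x \leq m$ in $M$, the common-denominator reduction (using \cref{l:sum positive} to see $x \leq x + n$) gives $[x, n] \leq [x, 0] \leq [m, 0]$, where $[m,0]$ is the $m$-fold sum of $1$. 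A matching lower bound comes from \cref{ax:P1}: since $0 \leq x$, we get $-n = [0, n] \leq [x, n]$. Taking $k \df \max\{m, n\}$ then yields $-k \leq [x, n] \leq k$.

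I expect no conceptually hard step; the only real care is the bookkeeping inside \cref{ax:U0}, where the two sides of a distributivity identity naturally land at different second coordinates and one must invoke \cref{r:we can add} to reconcile them. All the substance is carried by \cref{r:we can add} and \cref{l:wlog n = m}, which let every computation be pushed down to the already-known structure of the {\lm} $M$.
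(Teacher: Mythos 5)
Your proposal is correct and follows essentially the same route as the paper: the monoid axioms are handled coordinatewise, the distributive-lattice axioms are reduced to $M$ via \cref{r:we can add} and \cref{l:wlog n = m} (the common-denominator trick), distributivity of $+$ over $\lor$ and $\land$ is checked by direct computation, and the unit axioms \cref{ax:U1,ax:U2,ax:U3} are verified by short explicit calculations that the paper leaves as "easily seen to hold." The only difference is that you spell out the unit-axiom verifications in more detail, and they are all correct.
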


\begin{proof}
	The fact that $\T(M)$ is a commutative monoid follows from the fact that $M$ and $\N$ are commutative monoids.
	Checking that $\T(M)$ is a distributive lattice is facilitated by \cref{r:we can add,l:wlog n = m}.
	Let us prove that $ + $ distributes over $ \lor $:
	\begin{align*}
		[x, n] + ([y, m] \lor [z, m])	& = [x, n] + [y \lor z, m]								&& \by{\cref{l:wlog n = m}} \\
												& = [x + (y \lor z), n + m]							&& \\
												& = [(x + y) \lor (x + z), n + m]					&& \\
												& = [x + y, n + m] \lor [x + z, n + m]				&& \by{\cref{l:wlog n = m}} \\
												& = ([x, n] + [y, m]) \lor ([x, n] + [z, m]).	&&
	\end{align*}
	Analogously for $ \land $.
	The axioms for $1$ and $-1$ are easily seen to hold.
\end{proof}

For a morphism of {\pulms} $f\colon M \to N$, we set
\begin{align*}
	\T(f) \colon	\T(M)		& \longrightarrow	\T(N) \\
						[x, n]	& \longmapsto 		[f(x), n].
\end{align*}
The function $\T(f)$ is well-defined: indeed, if $(x, n) \sim (y, m)$, then $x + m = y + n$, and then $f(x) + m = f(x + m) = f(y + n) = f(y) + n$, and therefore $(f(x), n) \sim (f(y), m)$.
Moreover, $\T(f)$ is a morphism of \ulms.
We show only that $ + $ is preserved:
\begin{align*}
	\T(f)([x, n] + [y, m])	& = \T(f)([x + y, n + m]) \\
										& = [f(x + y), n + m]\\
										& = [f(x) + f(y), n + m]\\
										& = [f(x), n] + [f(y), m].
\end{align*}
One easily verifies that $\T\colon \ULMP\to \ULM$ is a functor.

For each {\ulm}, we consider the function
\begin{align*}
	\eps_M^0 \colon	\T(M^+ )	& \longrightarrow	M \\
							[x,n]		& \longmapsto		x - n.
\end{align*}
The function $\eps_M^0$ is well-defined: indeed, if $[x, n] = [y,m]$, then $x + m = y + n$ and therefore $x - n = y - m$.

\begin{proposition} \label{p:eps0 is morphism}
	The function $\eps_M^0 \colon \T(M^+ ) \to M$ is a morphism of {\ulms} for every {\ulm}.
\end{proposition}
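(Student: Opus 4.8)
The plan is to verify directly that $\eps_M^0$ preserves each of the six operations of the signature $\{+, \lor, \land, 0, 1, -1\}$; well-definedness has already been established above, so this is all that remains. The three constants are immediate from the defining assignments $0 = [0,0]$, $1 = [1,0]$, $-1 = [0,1]$ in $\T(M^+)$, since $\eps_M^0([0,0]) = 0 - 0 = 0$, $\eps_M^0([1,0]) = 1 - 0 = 1$, and $\eps_M^0([0,1]) = 0 - 1 = -1$.

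For addition I would compute, for $x, y \in M^+$ and $n, m \in \N$,
\[
	\eps_M^0([x,n] + [y,m]) = \eps_M^0([x+y, n+m]) = (x+y) - (n+m),
\]
and then rearrange using commutativity and associativity of $+$ in $M$ to obtain $(x - n) + (y - m) = \eps_M^0([x,n]) + \eps_M^0([y,m])$; here I implicitly use that $-(n+m) = (-n) + (-m)$, which follows from the definition of $-k$ as a sum of copies of $-1$.

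The substantive step --- and the one I would treat as the main point --- is preservation of $\lor$ and $\land$. For the join, I would start from the definition
\[
	\eps_M^0([x,n] \lor [y,m]) = ((x + m) \lor (y + n)) - (n + m).
\]
Since $M$ is {\alm} by \cref{ax:U0}, addition distributes over $\lor$ (\cref{ax:M3}), so adding $-(n+m)$ to the join splits it as $((x + m) - (n+m)) \lor ((y + n) - (n+m))$. It then remains to simplify each summand using $k + (-k) = 0$ (a consequence of \cref{ax:U1} applied $k$ times, together with commutativity), which yields $(x - n) \lor (y - m) = \eps_M^0([x,n]) \lor \eps_M^0([y,m])$. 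The argument for $\land$ is identical, using distributivity of $+$ over $\land$.

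I do not expect any serious obstacle: the only care required is the bookkeeping with the integer translates, namely that adding $-(n+m)$ really cancels the shifts $+m$ and $+n$ introduced in the definition of the lattice operations on $\T(M^+)$, which is precisely where \cref{ax:U1} enters. Everything else reduces to the monoid laws and the distributivity built into the definition of {\alm}.
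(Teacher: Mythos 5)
Your proposal is correct and follows essentially the same route as the paper's proof: a direct check that the constants, $+$, $\lor$, and $\land$ are preserved, with the lattice operations handled by distributivity of $+$ over $\lor$ and $\land$ to cancel the shifts $n+m$. (Incidentally, your final expression $(x-n)\lor(y-m)$ for the join computation is the intended one; the paper's displayed line contains a sign typo at that spot.)
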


\begin{proof}
	The function $\eps_M^0$ preserves $0$, $1$, and $-1$ because $\eps_M^0([0, 0]) = 0 - 0 = 0$, $\eps_M^0([1, 0]) = 1 - 0 = 1$ and $\eps_M^0([0, 1]) = 0 - 1 = -1$.
	For all $x, y \in M^+$ and $n \in \N$ we have
	\begin{align*}
		\eps_M^0([x, n] + [y, m])	& = \eps_M^0([x + y, n + m]) \\
											& = (x + y) - (n + m) \\
											& = (x - n) + (y - m) \\
											& = \eps_M^0([x, n]) + \eps_M^0([y, m]).
	\end{align*}
	Hence, $\eps_M^0$ preserves $ + $.
	Moreover, for all $x, y \in M^+$ and $n, m \in \N$, we have
	\begin{align*}
	 	\eps_M^0([x, n] \lor [y, m])	& = \eps_M^0([(x + m) \lor (y + n), n + m]) \\
												& = ((x + m) \lor (y + n)) - (n + m) \\
												& = (x + n) \lor (y + m).							&& \by{$ + $ distr.\ over $ \lor $}
	\end{align*}
	Hence, $\eps_M^0$ preserves $ \lor $.
	Analogously, $\eps_M^0$ preserves $ \land $.
\end{proof}

\begin{proposition} \label{p:eps0 is natural}
	$\eps^0 \colon \T(-)^+ \dot{\to} \Id_{\ULM}$ is a natural transformation, i.e., for every morphism of {\ulms} $f\colon M \to N$, the following diagram commutes.
	\[
		\begin{tikzcd}
			\T(M^+ )	\arrow[swap]{d}{\T(f^+ )} \arrow{r}{\eps^0_M}	& M \arrow{d}{f} \\
			\T(N^+ )	\arrow[swap]{r}{\eps^0_N}							& N
		\end{tikzcd}
	\]
\end{proposition}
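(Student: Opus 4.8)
The plan is to verify that the square commutes by a direct diagram chase on a generic element. Since both $f \circ \eps^0_M$ and $\eps^0_N \circ \T(f^+)$ are functions with domain $\T(M^+)$, and every element of $\T(M^+)$ is of the form $[x, n]$ with $x \in M^+$ and $n \in \N$, it suffices to check that the two composites agree on each such $[x, n]$. Well-definedness of all the maps involved has already been established, so no separate verification on representatives is needed.

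First I would compute the composite along the left-bottom path. By the definition of $\T(f^+)$ we have $\T(f^+)([x, n]) = [f^+(x), n] = [f(x), n]$, and applying $\eps^0_N$ then gives $\eps^0_N([f(x), n]) = f(x) - n$. Along the top-right path, $\eps^0_M([x, n]) = x - n$, and applying $f$ yields $f(x - n)$. Thus the entire statement reduces to the single identity $f(x - n) = f(x) - n$.

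This is where the homomorphism properties enter, and it is the only point requiring any care. Recalling that $x - n$ abbreviates $x + \underbrace{(-1) + \dots + (-1)}_{n \ \text{times}}$, and that a morphism of {\ulms} preserves both $+$ and the negative unit $-1$, we obtain
\[
	f(x - n) = f(x) + \underbrace{f(-1) + \dots + f(-1)}_{n \ \text{times}} = f(x) + \underbrace{(-1) + \dots + (-1)}_{n \ \text{times}} = f(x) - n.
\]
Hence both composites equal $f(x) - n$, and the diagram commutes. There is no substantial obstacle here: the crux is merely to use that $f$ preserves the constant $-1$ (and not only $+$, $\lor$, $\land$, $0$, $1$), which is precisely what allows $f$ to be pushed through the translation by $-n$.
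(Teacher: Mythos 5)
Your proof is correct and follows essentially the same route as the paper's: a direct element chase on $[x,n]$ reducing the claim to $f(x-n)=f(x)-n$, which holds because $f$ preserves $+$ and $-1$. The paper leaves that last identity implicit, whereas you spell it out, but the argument is the same.
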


\begin{proof}
	For every $x \in M^+ $ and every $n \in \N$ we have
	\begin{align*}
		\eps_N^0(\T(f^+)([x, n]))	& = \eps_N^0([f^+ (x), n]) \\
												& = \eps_N^0([f(x), n]) \\
												& = f(x)-n \\
												& = f(x - n) \\
												& = f(\eps_M^0([x, n])). \qedhere
	\end{align*}
\end{proof}

\begin{proposition} \label{p:eps0 is iso}
	The function $\eps_M^0 \colon \T(M^+ ) \to M$ is bijective for every {\ulm} $M$.
\end{proposition}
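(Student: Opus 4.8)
The plan is to establish surjectivity and injectivity of $\eps_M^0$ separately; since we already know from \cref{p:eps0 is morphism} that $\eps_M^0$ is a homomorphism, bijectivity will then immediately yield that it is an isomorphism.

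For surjectivity, I would fix an arbitrary $y \in M$ and invoke \cref{ax:U3} to obtain $n \in \N$ with $-n \leq y \leq n$. The key auxiliary fact I need is that $(-n) + n = 0$ in $M$, which follows by an easy induction on $n$ from \cref{ax:U1}. From $-n \leq y$ and \cref{l:sum positive} it then follows that $y + n \geq (-n) + n = 0$, so that $y + n \in M^+$. Hence $[y + n, n]$ is a well-defined element of $\T(M^+)$, and $\eps_M^0([y + n, n]) = (y + n) - n = y$, proving that $y$ lies in the image.

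For injectivity, I would suppose $\eps_M^0([x, n]) = \eps_M^0([y, m])$, i.e.\ $x - n = y - m$ in $M$. Adding $n + m$ to both sides and using $(-n) + n = 0$ and $(-m) + m = 0$ together with the commutativity and associativity of $+$, one obtains $x + m = y + n$. By the definition of $\sim$ this means $(x, n) \sim (y, m)$, so $[x, n] = [y, m]$, as required.

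The only point that requires care is the identity $(-n) + n = 0$ in $M$, which underpins both halves of the argument: it is what converts the boundedness supplied by \cref{ax:U3} into the membership $y + n \in M^+$ needed for surjectivity, and it is what licenses the cancellation of the negative translation in the injectivity computation. Beyond recording this identity, I expect no genuine obstacle, since the remainder is a direct unwinding of the definitions of $\eps_M^0$ and of the equivalence relation $\sim$.
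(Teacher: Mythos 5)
Your proposal is correct and follows essentially the same route as the paper: surjectivity via the representative $[x+n,n]$ with $n$ supplied by \cref{ax:U3}, and injectivity by cancelling the translation to recover $x+m=y+n$. You merely make explicit two details the paper leaves tacit, namely the identity $(-n)+n=0$ and the verification that $x+n\in M^+$, both of which are fine.
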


\begin{proof}
	To prove injectivity, let $x, y \in M^+ $, let $n,m\in \N$, and suppose we have $\eps_M^0([x,n]) = \eps_M^0([y,m])$.
	Then, 
	\[
		x - n = \eps_M^0([x,n]) = \eps_M^0([y,m]) = y - m;
	\]
	hence $x + m = y + n$, and thus $[x,n] = [y,m]$;
	this proves injectivity.
	To prove surjectivity, for $x \in M$, choose $n \in \N$ such that $-n \leq x$; then $\eps_M^0([x + n,n]) = x + n-n = x$.
\end{proof}

For each {\pulm} $M$, we consider the function 
\begin{align*}
	\eta_M^0 \colon M&\longrightarrow (\T(M))^+ \\
	x&\longmapsto [x, 0].
\end{align*}

\begin{proposition} \label{p:eta preserves}
	For every {\pulm} $M$, the function $\eta^0_M \colon M \to (\T(M))^+ $ is a morphism of {\pulms}.
\end{proposition}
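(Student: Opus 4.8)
The plan is to reduce everything to \cref{r:impl def}, which tells us that a function between {\pulms} is already a homomorphism as soon as it preserves $+$, $\lor$, $\land$ and $1$; this spares us from verifying the behaviour on $0$ and $-\ominus 1$ separately. Before invoking it, however, I would first make sure that $\eta^0_M$ genuinely lands in its stated codomain $(\T(M))^+$, i.e.\ that $[x, 0] \geq 0$ in $\T(M)$ for every $x \in M$. This is the one place where the positivity axiom \cref{ax:P1} is needed: since $x \geq 0$ in $M$, we have $x \lor 0 = x$, and therefore, by \cref{l:wlog n = m},
\[
	[x, 0] \lor [0, 0] = [x \lor 0, 0] = [x, 0],
\]
so that $[x, 0] \geq [0, 0] = 0$ and hence $\eta^0_M(x) = [x, 0] \in (\T(M))^+$. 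This confirms that $\eta^0_M$ is a well-defined function $M \to (\T(M))^+$, both of whose domain and codomain are {\pulms}.

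It then remains to check the four preservation conditions, each of which is an immediate unwinding of the definitions of the operations on $\T(M)$. Preservation of $1$ is just $\eta^0_M(1) = [1, 0] = 1$. For $+$, the definition of addition on $\T(M)$ gives
\[
	\eta^0_M(x) + \eta^0_M(y) = [x, 0] + [y, 0] = [x + y, 0] = \eta^0_M(x + y).
\]
For $\lor$ and $\land$, I would again appeal to \cref{l:wlog n = m} in the case $n = 0$, which yields $[x, 0] \lor [y, 0] = [x \lor y, 0]$ and $[x, 0] \land [y, 0] = [x \land y, 0]$, that is, precisely $\eta^0_M(x) \lor \eta^0_M(y) = \eta^0_M(x \lor y)$ and the analogous identity for $\land$. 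With these four conditions established, \cref{r:impl def} allows us to conclude that $\eta^0_M$ is a morphism of {\pulms}.

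There is no genuine obstacle here: every computation is a one-line consequence of the definitions, and the lemmas \cref{l:wlog n = m} and \cref{r:impl def} carry out the real work. The only point that deserves a moment's care---and which is easy to overlook---is the very first one, namely that the image of $\eta^0_M$ lies in the positive cone $(\T(M))^+$ and not merely in $\T(M)$; this is exactly where \cref{ax:P1} is used, and it is the only substantive ingredient beyond the two cited results.
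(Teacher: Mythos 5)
Your proposal is correct and follows the same route as the paper: verify that $\eta^0_M$ preserves $1$, $+$, $\lor$ and $\land$ (the paper leaves these computations implicit, you spell them out via \cref{l:wlog n = m}) and then conclude by \cref{r:impl def}. Your additional check that $[x,0]\geq 0$, so that the map really lands in $(\T(M))^+$, is a small well-definedness point the paper passes over silently, and it is handled correctly.
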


\begin{proof}
	It is easy to see that $\eta_M^0$ preserves $1$, $ + $, $ \lor $ and $ \land $.
	Then, by \cref{r:impl def}, the function $\eta^0_M$ is a morphism of {\pulms}.
\end{proof}

\begin{proposition} \label{p:eta is natural}
	$\eta^0 \colon \Id_{\ULMP} \dot{\to}(-)^+ \T$ is a natural transformation, i.e., for every morphism of {\pulms} $f\colon M \to N$, the following diagram commutes.
	\[
		\begin{tikzcd}
			M	\arrow[swap]{d}{f} \arrow{r}{\eta^0_M}	& (\T(M))^+ \arrow{d}{(\T(f))^+ } \\
			N	\arrow[swap]{r}{\eta^0_N}					& (\T(N))^+ 
		\end{tikzcd}
	\]
\end{proposition}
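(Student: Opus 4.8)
The plan is to verify commutativity of the square by a direct pointwise computation, chasing an arbitrary element $x \in M$ around both paths and checking that the two results agree. Since both composites are functions into $(\T(N))^+$, and the elements of $\T(N)$ are equivalence classes $[y, n]$, it suffices to compare the representatives produced by each route.

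Concretely, I would fix $x \in M$ and compute the top-then-right path first: by definition of $\eta^0_M$ we have $\eta^0_M(x) = [x, 0]$, and then applying $(\T(f))^+$—which is nothing but the restriction of $\T(f)$ to the positive cone—we use the defining formula $\T(f)([x, n]) = [f(x), n]$ with $n = 0$ to obtain $(\T(f))^+(\eta^0_M(x)) = [f(x), 0]$. For the left-then-bottom path, applying $f$ first gives $f(x) \in N$, and then $\eta^0_N(f(x)) = [f(x), 0]$ by definition of $\eta^0_N$. The two representatives coincide on the nose, so the square commutes.

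I do not expect any genuine obstacle here: the argument is a routine diagram chase, and the entire content is unwinding the three definitions involved (that of $\eta^0$, that of $\T$ on morphisms, and the fact that $(-)^+$ acts by restriction). The only point deserving a word of care is that $\eta^0_M$ and $\eta^0_N$ were already shown to be morphisms of {\pulms} in \cref{p:eta preserves}, so the vertical and horizontal arrows of the square are legitimate morphisms and the statement of naturality is well-posed; granting this, the verification reduces to the single identity $[f(x), 0] = [f(x), 0]$.

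\begin{proof}
	For every $x \in M$ we have
	\[
		(\T(f))^+ (\eta^0_M(x)) = \T(f)([x, 0]) = [f(x), 0] = \eta^0_N(f(x)),
	\]
	where the first equality uses the definition of $\eta^0_M$ together with the fact that $(\T(f))^+ $ is the restriction of $\T(f)$, the second uses the definition of $\T(f)$, and the last uses the definition of $\eta^0_N$. Hence the square commutes.
\end{proof}
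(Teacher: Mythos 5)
Your proof is correct and follows exactly the paper's own argument: a one-line element chase unwinding the definitions of $\eta^0$, $\T$ on morphisms, and the restriction $(-)^+$, reducing to the tautology $[f(x),0]=[f(x),0]$. Nothing further is needed.
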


\begin{proof}
	For every $x \in M$, we have
	\[
	 (\T(f))^+(\eta_M^0(m)) = (\T(f))^+([x, 0]) = \T(f)([x, 0]) = [f(x), 0] = \eta_N^0(f(x)). \qedhere
	\]
\end{proof}

\begin{notation}
	Let $M$ be {\apulm}.
	We define, inductively on $n \in \N$, a function $(\;\cdot\;) \ominus n \colon M \to M$.
	\begin{align*}
												x \ominus 0	\df{}	& x;\\
		\text{(for $n \geq 1$)} \ \ \ 	x \ominus n	 = {}	& (x \ominus (n-1)) \ominus 1.
	\end{align*}
\end{notation}

\begin{lemma} \label{l:fixed}
	Let $M$ be {\apulm}.
	For every $x \in M$ and every $n \in \N$, we have
	\[
		(x \ominus n) + n = x \lor n.
	\]
\end{lemma}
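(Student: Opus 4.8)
The plan is to prove this by induction on $n \in \N$, peeling off a single $\ominus 1$ at each step and invoking \cref{ax:P3}, which is precisely the case $n = 1$ of the statement. The unary operation $(\;\cdot\;) \ominus n$ is defined recursively as $(x \ominus (n-1)) \ominus 1$, so this inductive shape matches the definition exactly.

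For the base case $n = 0$, I would compute $(x \ominus 0) + 0 = x$, and observe that $x = x \lor 0$ by \cref{ax:P1} (since every element is $\geq 0$, we have $x \lor 0 = x$). For the inductive step, assuming $(x \ominus n) + n = x \lor n$, I would rewrite
\[
	(x \ominus (n+1)) + (n+1) = \bigl(((x \ominus n) \ominus 1) + 1\bigr) + n,
\]
then apply \cref{ax:P3} to the element $x \ominus n$ to turn $((x \ominus n) \ominus 1) + 1$ into $(x \ominus n) \lor 1$. Next I would use the distributivity of $+$ over $\lor$ (part of \cref{ax:P0}) to obtain
\[
	\bigl((x \ominus n) \lor 1\bigr) + n = \bigl((x \ominus n) + n\bigr) \lor (1 + n),
\]
and finally substitute the inductive hypothesis to get $(x \lor n) \lor (n+1) = x \lor n \lor (n+1)$.

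The remaining point is that $n \lor (n+1) = n+1$, i.e.\ that $n \leq n+1$. This holds because $1 \geq 0$ by \cref{ax:P1}, so \cref{l:sum positive} gives $n = n + 0 \leq n + 1 = n+1$; hence $x \lor n \lor (n+1) = x \lor (n+1)$, completing the induction.

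I do not expect a genuine obstacle here: the argument is a short, clean induction whose only subtlety is the bookkeeping of associativity/commutativity of $+$ to isolate the subterm $((x \ominus n) \ominus 1) + 1$ so that \cref{ax:P3} applies, together with the small observation that $n \leq n+1$ needed to collapse the final join. The identity will be used later to understand the iterated truncation $\ominus n$, which is why it is worth stating separately rather than inlining it.
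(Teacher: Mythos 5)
Your proof is correct and follows essentially the same route as the paper's: induction on $n$, unfolding the recursive definition of $\ominus n$, applying \cref{ax:P3} to the inner term, distributing $+$ over $\lor$, and invoking the inductive hypothesis. The only differences are cosmetic — you index the step as $n \to n+1$ rather than $n-1 \to n$, and you spell out the base case and the final collapse $x \lor n \lor (n+1) = x \lor (n+1)$, which the paper leaves implicit.
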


\begin{proof}
	We prove the statement by induction.
	The case $n = 0$ is trivial.
	Suppose the statement holds for $n-1 \in \N$, and let us prove it for $n$.
	We have
	\begin{align*}
		(x \ominus n) + n	& = ((x \ominus (n-1)) \ominus 1) + n				&& \by{ind.\ def.\ of $- \ominus n$} \\
								& = ((x \ominus (n-1)) \ominus 1) + 1 + (n-1)	&& \\
								& = ((x \ominus (n-1)) \lor 1) + (n-1)				&& \by{\cref{ax:P3}} \\
								& = ((x \ominus (n-1)) + (n-1)) \lor n				&& \by{$ + $ distr.\ over $ \lor $} \\
								& = x \lor (n-1) \lor n								&& \by{ind.\ hyp.} \\
								& = x \lor n. 												&& \qedhere
	\end{align*}
\end{proof}

\begin{proposition} \label{p:eta0 is iso}
	For every {\pulm} $M$, the function $\eta_M^0 \colon M \to (\T(M))^+ $ is bijective.
\end{proposition}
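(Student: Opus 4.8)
The plan is to prove bijectivity by handling injectivity and surjectivity separately, with essentially all of the work concentrated in surjectivity.

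Injectivity is immediate from the definition of $\sim$. If $\eta_M^0(x) = \eta_M^0(y)$, then $[x, 0] = [y, 0]$, i.e.\ $(x, 0) \sim (y, 0)$, which unwinds to $x + 0 = y + 0$, hence $x = y$.

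For surjectivity, the first task is to describe the positive cone $(\T(M))^+$ concretely inside $\T(M)$. I would take an arbitrary class $[y, n]$ and determine when $[y, n] \geq [0, 0]$. Using the definition of $\lor$ on $\T(M)$, one computes $[y, n] \lor [0, 0] = [y \lor n, n]$, so the condition $[y, n] \geq 0$ is equivalent to $(y \lor n) + n = y + n$; by the cancellativity established in \cref{l:n is cancellative}, this reduces to the clean inequality $n \leq y$ in $M$. Thus the positive elements of $\T(M)$ are exactly the classes $[y, n]$ with $n \leq y$. The heart of the argument is then to exhibit a preimage of such a positive class under $\eta_M^0$, and the natural candidate is $x \df y \ominus n$. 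Here \cref{l:fixed} does the decisive work: it gives $(y \ominus n) + n = y \lor n$, and since $n \leq y$ the right-hand side equals $y$. Consequently $(y \ominus n, 0) \sim (y, n)$, so $\eta_M^0(y \ominus n) = [y \ominus n, 0] = [y, n]$, which settles surjectivity.

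The main obstacle is precisely this surjectivity step: the need to invert the addition of $n$, that is, to produce an honest element of $M$ mapping onto a given positive class $[y, n]$. The truncated subtraction $-\ominus n$, together with the identity of \cref{l:fixed}, is exactly the tool that makes this inversion possible, and the role of \cref{l:n is cancellative} is to translate the abstract positivity condition into the usable inequality $n \leq y$. Once these two ingredients are in place, the verification is a single short computation.
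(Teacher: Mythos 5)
Your proof is correct and follows essentially the same route as the paper: injectivity from cancelling $+\,0$, and surjectivity by taking $x \ominus n$ as the preimage of $[x,n]$, with \cref{l:fixed} supplying the key identity $(x \ominus n) + n = x \lor n$. The only (cosmetic) difference is that you first extract the explicit inequality $n \leq x$ via \cref{l:n is cancellative}, whereas the paper simply chains the equalities $[x,n] = [x,n] \lor [0,0] = [x \lor n, n] = [(x\ominus n)+n, n] = [x\ominus n, 0]$ without isolating that inequality.
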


\begin{proof}
	First, we prove that $\eta_M^0$ is injective.
	Let $x, y \in M$, and suppose $\eta_M^0(x) = [x, 0] = [y, 0] = \eta_M^0(y)$.
	Then, $x + 0 = y + 0$, and so $x = y$.
	Second, we prove that $\eta_M^0$ is surjective.
	Let $[x,n]\in (\T(M))^+$.
	Then,
	\begin{align*}
		[x, n]	& = [x, n] \lor [0, 0]		&& \by{$[x,n]\in (\T(M))^+$}\\
					& = [x \lor n, n]\\
					& = [(x \ominus n) + n,n]	&& \by{\cref{l:fixed}}\\
					& = [x \ominus n, 0]			&& \by{\cref{r:we can add}}\\
					& = \eta_M^0(x \ominus n).	&& \qedhere
	\end{align*}
\end{proof}

We recall that two functors $F \colon \cat{A} \to \cat{B}$ and $G \colon \cat{B} \to \cat{A}$ are called \emph{quasi-inverses} if the functors $GF \colon \cat{A} \to \cat{A}$ and $FG \colon \cat{B} \to \cat{B}$ are naturally isomorphic to the identity functors on $\cat{A}$ and $\cat{B}$ respectively.
Two categories $\cat{A}$ and $\cat{B}$ are equivalent if, and only if, there exist two quasi-inverses $F \colon \cat{A} \to \cat{B}$ and $G \colon \cat{B} \to \cat{A}$ \cite[Chapter~IV, Section~4]{MacLane1998}.

\begin{theorem} \label{t:G_0 is equiv}
	The functors $(-)^+ \colon \ULM \to \ULMP$ and $\T\colon \ULMP \to \ULM$ are quasi-inverses.
\end{theorem}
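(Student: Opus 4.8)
The plan is to assemble the pieces already established. To show that $(-)^+$ and $\T$ are quasi-inverses, I must produce natural isomorphisms $\T(-)^+ \cong \Id_{\ULM}$ and $(-)^+\T \cong \Id_{\ULMP}$; the candidate natural transformations $\eps^0$ and $\eta^0$ have already been constructed, so the remaining work is to upgrade them to isomorphisms.

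The one observation I would make explicit first is the standard fact that, in a category of algebras over a fixed signature, every bijective homomorphism is an isomorphism: if $f$ is a bijective homomorphism, then for each operation $\omega$ and elements $b_1, \dots, b_k$ in the codomain, writing $a_i \df f^{-1}(b_i)$ and using that $f$ preserves $\omega$, one computes $f^{-1}(\omega(b_1, \dots, b_k)) = \omega(f^{-1}(b_1), \dots, f^{-1}(b_k))$, so the set-theoretic inverse $f^{-1}$ is again a homomorphism. This is what converts the bijectivity results into isomorphism results, and it is the only ingredient not already packaged in the preceding propositions.

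For the first natural isomorphism I would invoke \cref{p:eps0 is natural}, which gives that $\eps^0 \colon \T(-)^+ \dot{\to} \Id_{\ULM}$ is a natural transformation. By \cref{p:eps0 is morphism} each component $\eps^0_M$ is a morphism of {\ulms}, and by \cref{p:eps0 is iso} each $\eps^0_M$ is bijective; by the observation above each $\eps^0_M$ is therefore an isomorphism. Since a natural transformation all of whose components are isomorphisms is a natural isomorphism, we obtain $\T(-)^+ \cong \Id_{\ULM}$. Symmetrically, for the second I would use \cref{p:eta is natural} for naturality of $\eta^0 \colon \Id_{\ULMP} \dot{\to}(-)^+\T$, \cref{p:eta preserves} for each $\eta^0_M$ being a morphism of {\pulms}, and \cref{p:eta0 is iso} for bijectivity; thus each $\eta^0_M$ is an isomorphism and $\eta^0$ is a natural isomorphism $\Id_{\ULMP} \cong (-)^+\T$. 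By the characterization of quasi-inverses recalled immediately before the statement, $(-)^+$ and $\T$ are quasi-inverses.

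The main point to stress is that there is no real obstacle here: all the substance has been absorbed into the six preceding propositions, and the theorem is their formal consequence. The only genuinely new step is noting that bijective homomorphisms are isomorphisms in $\ULM$ and $\ULMP$, which I would state explicitly so that the bijectivity of $\eps^0_M$ and $\eta^0_M$ can be promoted to invertibility in the respective categories.
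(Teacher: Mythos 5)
Your proposal is correct and follows essentially the same route as the paper, which likewise cites \cref{p:eta preserves,p:eta is natural,p:eta0 is iso} for $\Id_{\ULMP} \cong (-)^+\T$ and \cref{p:eps0 is morphism,p:eps0 is natural,p:eps0 is iso} for $\Id_{\ULM} \cong \T(-)^+$. The only difference is that you make explicit the standard fact that bijective homomorphisms of algebras are isomorphisms, which the paper leaves implicit.
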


\begin{proof}
	The functors $\Id_{\ULMP} \colon \ULMP\to \ULMP$ and $(-)^+\T \colon \ULMP\to \ULMP$ are naturally isomorphic by \cref{p:eta preserves,p:eta is natural,p:eta0 is iso}.
	The functors $\Id_{\ULM} \colon \ULM \to \ULM$ and $\T(-)^+ \colon \ULM \to \ULM$ are naturally isomorphic by \cref{p:eps0 is morphism,p:eps0 is natural,p:eps0 is iso}.
\end{proof}

%%%%%%%%%%%%%%%%%%%%%%%%%%%%%%%%%%% SECTION %%%%%%%%%%%%%%%%%%%%%%%%%%%%%%%%%%%%

\section{Good sequences} \label{s:good.def}

\begin{definition}
	Let $A$ be an {\mvm}.
	A \emph{good pair} in $A$ is a pair $(x_0, x_1)$ of elements of $A$ such that $x_0 \oplus x_1 = x_0$ and $x_0 \odot x_1 = x_1$.
	A \emph{good sequence} in $A$ is a sequence $(x_0, x_1, x_2, \dots)$ of elements of $A$ which is eventually $0$ and such that, for each $n \in \N$, $(x_n, x_{n + 1})$ is a good pair.
\end{definition}

Instead of $(x_0, \dots, x_n, 0, 0, 0, \dots)$ we shall often write, concisely, $(x_0, \dots, x_n)$.
Thus, if $0^m$ denotes an $m$-tuple of zeros, the good sequences $(x_1, \dots, x_n)$ and $(x_0, \dots, x_n, 0^m)$ are identical.
For each $x \in A$, the sequence $(x, 0, 0, 0, \dots)$ (which is always good) will be denoted by $(x)$.

\begin{remark}
	In our definition of good pair we included both the condition $x_0 \oplus x_1 = x_0$ and the condition $x_0 \odot x_1 = x_1$ because, in general, they are not equivalent.
	As an example, one can take the {\mvm} consisting of three elements $\{0, a, 1\}$, where $a \oplus a = a$, and $a \odot a = 0$.
\end{remark}

In order to prove the equivalence between the categories of {\mvs} and {\ulgs} (see \cite{Mundici} or \cite{Cignoli}), Mundici used the facts that subdirectly irreducible {\mvs} are totally ordered and that good sequences in totally ordered {\mvs} are of the form $(1, \dots, 1, x, 0, 0, \dots)$.

In this paper we do not make use of the Subdirect Representation Theorem (in fact, we do not make use of the axiom of choice) to establish the equivalence between $\ULM$ and $\MVM$.
The reason why this is done is that, initially, the author was unable to prove that, in subdirectly irreducible {\mvms}, good sequences are of the form $(1, \dots, 1, x, 0, 0, \dots)$.
Eventually such a proof was found, and the result is given in \cref{c:good sequence in sub irr}.
However, the result is not used in the present paper, for the following reasons.
First, in this way, the proof that we provide for the equivalence between $\ULM$ and $\MVM$ may be applied in similar settings, where the structure of subdirectly irreducible algebras is not known.
Secondly, the proof we give does not rely on the axiom of choice. 
In particular, \emph{up to proving without the axiom of choice} that the axioms of {\mvms} hold in any {\mv}, we obtain a proof of the equivalence between {\ulgs} and {\mvs} that does not make use of the axiom of choice.

For a proof of our main result that does take advantage of the Subdirect Representation Theorem, the reader is invited to consult the author's Ph.D.\ thesis \cite[Chapter~4]{Abbadini2021}.

%%%%%%%%%%%%%%%%%%%%%%%%%%%%%%%%%%% SECTION %%%%%%%%%%%%%%%%%%%%%%%%%%%%%%%%%%%%

\section{Basic properties of MV-monoidal algebras}

\begin{remark}
	Inspection of the axioms that define {\mvms} shows that, for every {\mvm} $\langle A; \oplus, \odot, \lor, \land, 0, 1 \rangle$, also its `dual' algebra $\langle A; \odot, \oplus, \land, \lor, 1, 0 \rangle$ is {\amvm}.
\end{remark}

We give a name to the right- and left-hand terms of \cref{ax:A4,ax:A5}; we will then prove that their interpretations in {\amvm} coincide.

\begin{notation}
	We set
	\begin{align*}
		\sigma_1(x, y, z) \df (x \oplus y) \odot ((x \odot y) \oplus z);\\
		\sigma_2(x, y, z) \df (x \odot y) \oplus ((x \oplus y) \odot z);\\
		\sigma_3(x, y, z) \df (x \odot (y \oplus z)) \oplus (y \odot z);\\
		\sigma_4(x, y, z) \df (x \oplus (y \odot z)) \odot (y \oplus z).
	\end{align*}
\end{notation}

Note that \cref{ax:A5} can be written as $\sigma_1(x, y, z) = \sigma_3(x, y, z)$, \cref{ax:A6} can be written as $\sigma_2(x, y, z) = \sigma_4(x, y, z)$, \cref{ax:A6} can be written as $(x \odot y) \oplus z = \sigma_1(x, y, z) \lor z$, and \cref{ax:A7} can be written as $(x \oplus y) \odot z = \sigma_2(x, y, z) \land z$.

\begin{lemma} \label{l:permutations}
	Let $A$ be {\amvm}. For all $i,j \in \{1,2,3,4\}$, every permutation $\rho \colon \{1,2,3\} \to \{1,2,3\}$ and all $x_1, x_2, x_3 \in A$ we have
	\[
		\sigma_i(x_{1}, x_{2}, x_{3}) = \sigma_j(x_{\rho(1)}, x_{\rho(2)}, x_{\rho(3)}).
	\]
	In other words, the terms $\sigma_1$, $\sigma_2$, $\sigma_3$, $\sigma_4$ in the theory of {\mvms} are all invariant under permutations of variables, and they coincide.
\end{lemma}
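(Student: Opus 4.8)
The plan is to prove the two assertions behind the lemma separately: that the four terms $\sigma_1,\sigma_2,\sigma_3,\sigma_4$ coincide as functions of $(x,y,z)$, and that the common term is invariant under every permutation of its three arguments. The key simplification I would exploit is that commutativity of $\oplus$ and $\odot$ alone already gives $\sigma_3(x,y,z)=\sigma_2(y,z,x)$ and $\sigma_4(x,y,z)=\sigma_1(y,z,x)$: rearranging the factors of $\sigma_3$ and $\sigma_4$ via \cref{ax:A2} turns them literally into $\sigma_2$ and $\sigma_1$ evaluated at the cyclically shifted triple. Hence it suffices to establish a single inequality, namely $\sigma_2(x,y,z)\le\sigma_1(x,y,z)$ for all $x,y,z$, and then to close a cycle using \cref{ax:A4,ax:A5}.

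The crux, and the step I expect to be the main obstacle, is this inequality $\sigma_2\le\sigma_1$, which I would derive using only \cref{ax:A2,ax:A3,ax:A6,ax:A7} (notably \emph{not} \cref{ax:A4,ax:A5}). First note that $\oplus$ and $\odot$ are monotone, since by \cref{ax:A3} they distribute over $\lor$. From \cref{ax:A6} one reads off $z\le(x\odot y)\oplus z$, so monotonicity of $\odot$ gives $(x\oplus y)\odot z\le(x\oplus y)\odot((x\odot y)\oplus z)=\sigma_1$; combined with \cref{ax:A7}, which says $(x\oplus y)\odot z=\sigma_2\land z$, this yields $\sigma_2\land z\le\sigma_1\land z$. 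The delicate point is to rewrite $\sigma_1$ in a form that exposes $\sigma_1\land z$: applying \cref{ax:A7} at the third argument $(x\odot y)\oplus z$ and using that $\oplus$ distributes over $\land$, one obtains $\sigma_1=(x\odot y)\oplus(\sigma_1\land z)$, while \cref{ax:A7} directly gives $\sigma_2=(x\odot y)\oplus(\sigma_2\land z)$. Since $\sigma_2\land z\le\sigma_1\land z$, monotonicity of $\oplus$ finally delivers $\sigma_2\le\sigma_1$. Finding this particular rewriting of $\sigma_1$ is where the real work lies; every naive manipulation tends to reproduce $\sigma_2\le\sigma_1$ rather than its converse, so the asymmetry must be resolved through \cref{ax:A4,ax:A5} rather than by a self-dual trick.

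With the inequality in hand, the rest is bookkeeping. Evaluating $\sigma_2\le\sigma_1$ at $(y,z,x)$ and using the commutativity identities above gives $\sigma_3\le\sigma_4$. Now \cref{ax:A4}, which reads $\sigma_1=\sigma_3$, and \cref{ax:A5}, which reads $\sigma_2=\sigma_4$, close the loop: $\sigma_1=\sigma_3\le\sigma_4=\sigma_2\le\sigma_1$, forcing $\sigma_1=\sigma_2=\sigma_3=\sigma_4$. Writing $S$ for this common term, $S$ is symmetric in $x,y$ because it equals $\sigma_1$, and symmetric in $y,z$ because it equals $\sigma_3$; as the transpositions $(x\,y)$ and $(y\,z)$ generate the whole symmetric group on the three variables, $S$ is invariant under every permutation $\rho$. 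Combining the two facts gives $\sigma_i(x_1,x_2,x_3)=S=\sigma_j(x_{\rho(1)},x_{\rho(2)},x_{\rho(3)})$ for all $i,j\in\{1,2,3,4\}$ and all $\rho$, which is exactly the statement.
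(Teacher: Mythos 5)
Your proof is correct, and the final step (the common value is symmetric under the transpositions $(x\,y)$ and $(y\,z)$ because it equals both $\sigma_1$ and $\sigma_3$, and these generate the symmetric group) is exactly the paper's. Where you diverge is in how you show that all four terms coincide. The paper gets this almost for free: commutativity gives $\sigma_4(x,y,z)=\sigma_1(z,y,x)$, so once $\sigma_1=\sigma_3$ (\cref{ax:A4}) is known to be permutation-invariant, $\sigma_4=\sigma_1$ follows immediately, and $\sigma_2=\sigma_4$ is just \cref{ax:A5}; no order-theoretic reasoning is needed anywhere. You instead prove the inequality $\sigma_2\leq\sigma_1$ from \cref{ax:A2,ax:A3,ax:A6,ax:A7} alone --- via the rewriting $\sigma_1=(x\odot y)\oplus(\sigma_1\land z)$ versus $\sigma_2=(x\odot y)\oplus(\sigma_2\land z)$, which I checked and which is valid --- and then close the cycle $\sigma_1=\sigma_3\leq\sigma_4=\sigma_2\leq\sigma_1$. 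This is a genuine detour: you already have the commutativity identity $\sigma_4(x,y,z)=\sigma_1(y,z,x)$ in hand, and combined with the permutation-invariance of $\sigma_1$ it makes the inequality superfluous. What your route buys is the standalone observation that $\sigma_2\leq\sigma_1$ holds without invoking \cref{ax:A4,ax:A5}, which is a nontrivial fact about the axiom system and might be of independent interest; what it costs is length and a dependence on \cref{ax:A6,ax:A7} that the paper's argument avoids.
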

\begin{proof}
	By commutativity of $\oplus$ and $\odot$, in the theory of {\mvms} $\sigma_1$ is invariant under transposition of the first and the second variables, and $\sigma_3$ is invariant under transposition of the second and the third ones.
	Moreover, by \cref{ax:A4}, we have $\sigma_1(x, y, z) = \sigma_3(x, y, z)$.
	Since any two distinct transpositions in the symmetric group on three elements generate the whole group, it follows that $\sigma_1$ and $\sigma_3$ are invariant under any permutation of the variables.
	By commutativity of $\oplus$ and $\odot$, we have $\sigma_1(x, y, z) = \sigma_4(z, y, x)$, and, by \cref{ax:A5}, we have $\sigma_2(x, y, z) = \sigma_4(x, y, z)$.
	We conclude that $\sigma_1$, $\sigma_2$, $\sigma_3$, $\sigma_4$ are invariant under permutations of variables, and they coincide.
\end{proof}

In particular, \cref{l:permutations} guarantees that we have
\[
	\sigma_1(x, y, z) = \sigma_2(x, y, z) = \sigma_3(x, y, z) = \sigma_4(x, y, z).
\]

\begin{notation}
	For $x, y, z$ in {\amvm}, we let $\sigma(x, y, z)$ denote the common value of $\sigma_1(x, y, z)$, $\sigma_2(x, y, z)$, $\sigma_3(x, y, z)$ and $\sigma_4(x, y, z)$.
\end{notation}

\begin{lemma} \label{l:bounded-lattice}
	For every $x$ in {\amvm} we have $0 \leq x \leq 1$.
\end{lemma}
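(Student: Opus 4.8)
The plan is to establish the two inequalities $x \leq 1$ and $0 \leq x$ separately, and the decisive feature is the \emph{shape} of \cref{ax:A6,ax:A7} rather than any deep structural fact. As observed just after \cref{l:permutations}, \cref{ax:A7} can be read as $(x \oplus y) \odot z = \sigma_2(x, y, z) \land z$ and \cref{ax:A6} as $(x \odot y) \oplus z = \sigma_1(x, y, z) \lor z$. Since in any lattice a meet $w \land z$ lies below $z$ and a join $w \lor z$ lies above $z$, \cref{ax:A1} alone already gives the free bounds $(x \oplus y) \odot z \leq z$ and $(x \odot y) \oplus z \geq z$ for all $x, y, z$. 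The entire proof then consists in specialising $y$ and $z$ so that, via the unit laws of \cref{ax:A2}, the left-hand side collapses to $x$ itself.

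For $x \leq 1$, I would instantiate \cref{ax:A7} at $y = 0$ and $z = 1$. Because $0$ is the unit for $\oplus$ and $1$ is the unit for $\odot$ (\cref{ax:A2}), the left-hand side $(x \oplus 0) \odot 1$ simplifies to $x$, while the right-hand side keeps the form $w \land 1$ for some term $w$. Since $w \land 1 \leq 1$ in any lattice, this yields $x = w \land 1 \leq 1$.

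For $0 \leq x$, I would argue dually, instantiating \cref{ax:A6} at $y = 1$ and $z = 0$. Here the left-hand side $(x \odot 1) \oplus 0$ simplifies to $x$ by the same unit laws, and the right-hand side has the form $w \lor 0$, whence $x = w \lor 0 \geq 0$. Alternatively, this second inequality follows from the first by passing to the dual algebra $\langle A; \odot, \oplus, \land, \lor, 1, 0 \rangle$, which is again {\amvm}.

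The only thing that really needs care is selecting the correct substitutions; once $y$ and $z$ are fixed as above, the argument reduces to the unit laws and the trivial lattice inequalities $w \land 1 \leq 1$ and $w \lor 0 \geq 0$. I therefore expect the proof to use only \cref{ax:A1,ax:A2,ax:A6,ax:A7}, and neither \cref{ax:A3,ax:A4,ax:A5} nor the coincidence of the $\sigma_i$ from \cref{l:permutations}. Accordingly, the main obstacle is not computational but conceptual: resisting the temptation to invoke more machinery than the shape of \cref{ax:A6,ax:A7} actually demands, and checking that the lattice order $\leq$ (read as $a \leq b \iff a \land b = a$) is applied consistently so that the two bounds are genuinely justified from \cref{ax:A1}.
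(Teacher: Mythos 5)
Your proof is correct, and it is a genuine (if mild) streamlining of the paper's argument. Both proofs hinge on the same substitution: instantiate \cref{ax:A6} at $y = 1$, $z = 0$ so that the unit laws of \cref{ax:A2} collapse the left-hand side $(x \odot 1) \oplus 0$ to $x$. The difference is in how the right-hand side is treated. The paper computes it exactly: it invokes \cref{ax:A4} to replace $\sigma_1(x,1,0)$ by $\sigma_3(x,1,0)$, which the unit laws then reduce to $x$, yielding the absorption identity $x = x \lor 0$ and hence $0 \leq x$; the inequality $x \leq 1$ is then obtained by duality, which implicitly calls on \cref{ax:A5} as well. You instead leave the right-hand side as an unevaluated term $w$ and use only the lattice facts $w \lor 0 \geq 0$ and $w \land 1 \leq 1$ from \cref{ax:A1}, handling $x \leq 1$ symmetrically via \cref{ax:A7}. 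Your route therefore avoids \cref{ax:A4,ax:A5} and \cref{l:permutations} entirely, which is worth noting given the paper's remark that the lemma ``makes use of almost all the axioms''; what the paper's longer computation buys is the exact identity $\sigma_1(x,1,0) = x$ and the explicit equation $x = x \lor 0$, which are not needed for the statement but fall out of the same calculation.
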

\begin{proof}
	We have
	\begin{align*}
		x	
			& = (x \odot 1) \oplus 0
			&& \by{\cref{ax:A2}} \\
			& = \sigma_1(x, 1, 0) \lor 0
			&& \by{\cref{ax:A6}} \\
			& = \sigma_3(x, 1, 0) \lor 0
			&& \by{\cref{ax:A4}} \\
			& = ((x \odot (1 \oplus 0)) \oplus (1 \odot 0)) \lor 0
			&& \by{def.\ of $\sigma_3$} \\
			& = ((x \odot 1) \oplus 0) \lor 0
			&& \by{\cref{ax:A2}} \\
			& = x \lor 0.
			&& \by{\cref{ax:A2}}
	\end{align*}
	Thus, $0 \leq x$.
	Dually, $x \leq 1$.
\end{proof}

\begin{lemma} \label{l:absorbing}
	For every $x$ in {\amvm}, we have $x \oplus 1 = 1$ and $x \odot 0 = 0$.
\end{lemma}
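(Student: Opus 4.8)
The plan is to prove $x \oplus 1 = 1$ directly and then obtain $x \odot 0 = 0$ by duality. The first preparatory step I would record is that $\oplus$ is monotone in each argument: since $\oplus$ distributes over $\lor$ by \cref{ax:A3}, from $a \leq b$ (that is, $a \lor b = b$) we get $x \oplus b = x \oplus (a \lor b) = (x \oplus a) \lor (x \oplus b)$, whence $x \oplus a \leq x \oplus b$. This is immediate but it is the only structural fact needed beyond the monoid identities.

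The key step is then a two-sided squeeze. By \cref{l:bounded-lattice} we have $0 \leq x$, so the monotonicity just noted gives $0 \oplus 1 \leq x \oplus 1$; but $0 \oplus 1 = 1$ because $0$ is the neutral element of $\oplus$ by \cref{ax:A2}, and therefore $1 \leq x \oplus 1$. On the other hand, \cref{l:bounded-lattice} also yields $x \oplus 1 \leq 1$. Combining the two inequalities gives $x \oplus 1 = 1$.

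Finally, I would invoke the self-duality of the axioms remarked on above: the dual algebra $\langle A; \odot, \oplus, \land, \lor, 1, 0\rangle$ is again {\amvm}, and the identity $x \oplus 1 = 1$ interpreted there reads $x \odot 0 = 0$ in the original algebra. (Alternatively one simply repeats the squeeze dually, using $x \leq 1$, the identity $1 \odot 0 = 0$ from \cref{ax:A2}, and the bound $0 \leq x \odot 0$ from \cref{l:bounded-lattice}.) I expect no real obstacle here; the only subtlety worth flagging is that \cref{l:bounded-lattice} is used twice—once as the lower bound $0 \leq x$ feeding monotonicity and once as the upper bound $x \oplus 1 \leq 1$—so the argument genuinely relies on the bounds of the underlying lattice already being in hand.
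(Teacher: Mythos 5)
Your proof is correct and is essentially the paper's argument in disguise: the paper proves $x \odot 0 = 0$ via the single equational chain $0 = 1 \odot 0 = (1 \lor x)\odot 0 = (1\odot 0)\lor(x\odot 0) = 0 \lor (x\odot 0) = x\odot 0$ (then duality), and your monotonicity-plus-squeeze, once the monotonicity step is unfolded, performs exactly the same distributivity computation from the same inputs (\cref{l:bounded-lattice} used twice, \cref{ax:A2}, \cref{ax:A3}, and self-duality). There is no circularity issue: the monotonicity you derive is just \cref{l:sum positive} applied to the $\ell$-monoid reduct $\langle A;\oplus,\lor,\land,0\rangle$, which does not depend on this lemma.
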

\begin{proof}
	We have
	\begin{align*}
		0	& = 1 \odot 0								&& \by{\cref{ax:A2}} \\
			& = (1 \lor x) \odot 0					&& \by{\cref{l:bounded-lattice}} \\
			& = (1 \odot 0) \lor (x \odot 0)		&& \by{\cref{ax:A3}} \\
			& = 0 \lor (x \odot 0)					&& \by{\cref{ax:A2}} \\
			& = x \odot 0.								&& \by{\cref{l:bounded-lattice}}
	\end{align*}
	Dually, $x \oplus 1 = 1$.
\end{proof}

\begin{lemma}
	\label{l:order-preserving properties}
	The following properties hold for all $x, y, z, x', y'$ in {\amvm}.
	\begin{enumerate}
		\item
			\label{i:basic1} If $x \leq x'$ and $y \leq y'$, then $x \oplus y \leq x' \oplus y'$.
		\item
			\label{i:basic2} If $x \leq x'$ and $y \leq y'$, then $x \odot y \leq x' \odot y'$.
		\item
			\label{i:basic3} $x \leq y\Rightarrow x \oplus z \leq y \oplus z$
		\item
			\label{i:basic4} $x \leq y\Rightarrow x \odot z \leq y \odot z$.
		\item
			\label{i:basic5} $x \leq x \oplus y$.
		\item
			\label{i:basic6} $x \geq x \odot y$.

	\end{enumerate}
\end{lemma}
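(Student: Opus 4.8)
The plan is to prove the six items in the order \ref{i:basic3}, \ref{i:basic4}, \ref{i:basic1}, \ref{i:basic2}, \ref{i:basic5}, \ref{i:basic6}, using the distributivity of $\oplus$ and $\odot$ over $\lor$ (\cref{ax:A3}) as the essential tool and then bootstrapping the rest. Everything about $\odot$ will be obtained for free as the order-dual of the corresponding statement about $\oplus$: passing to the dual algebra $\langle A; \odot, \oplus, \land, \lor, 1, 0 \rangle$ (which is again {\amvm}) reverses the lattice order, so a statement proved for $\oplus$ and $\leq$ becomes the analogous statement for $\odot$ and $\geq$.

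First I would establish item~\ref{i:basic3}. Recalling that in a lattice $x \leq y$ means $x \lor y = y$, the assumption $x \leq y$ gives, by distributivity of $\oplus$ over $\lor$,
\[
	y \oplus z = (x \lor y) \oplus z = (x \oplus z) \lor (y \oplus z),
\]
whence $x \oplus z \leq y \oplus z$. Item~\ref{i:basic4} is the order-dual of item~\ref{i:basic3} (or, equally quickly, it follows directly from the distributivity of $\odot$ over $\lor$ by the same computation). Next, item~\ref{i:basic1} follows from item~\ref{i:basic3} together with the commutativity of $\oplus$ (\cref{ax:A2}): from $x \leq x'$ we get $x \oplus y \leq x' \oplus y$, and from $y \leq y'$, after commuting the arguments, $x' \oplus y \leq x' \oplus y'$, so chaining the two inequalities yields $x \oplus y \leq x' \oplus y'$. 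Item~\ref{i:basic2} is then the order-dual of item~\ref{i:basic1}.

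Finally, items~\ref{i:basic5} and~\ref{i:basic6} are where something beyond pure monoid/lattice manipulation is needed, namely the fact that $0$ and $1$ are bounds of the underlying lattice. For item~\ref{i:basic5}, since $0 \leq y$ by \cref{l:bounded-lattice}, monotonicity in the second coordinate (item~\ref{i:basic3} combined with commutativity) gives $x = x \oplus 0 \leq x \oplus y$, where $x \oplus 0 = x$ is the unit law of \cref{ax:A2}. Item~\ref{i:basic6} is the order-dual: using $y \leq 1$ from \cref{l:bounded-lattice} and the unit law $x \odot 1 = x$ yields $x \odot y \leq x \odot 1 = x$. None of these steps is genuinely difficult; the only point where the specific content of {\mvms} enters is the appeal to \cref{l:bounded-lattice} in items~\ref{i:basic5} and~\ref{i:basic6}, and that lemma has already been discharged above using almost all of the MV-monoidal axioms. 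Thus the main (mild) obstacle is not in the present lemma but is localized in that prior boundedness result.
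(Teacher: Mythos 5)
Your proposal is correct and follows essentially the same route as the paper: the only cosmetic difference is that you prove monotonicity in one argument (item~\eqref{i:basic3}) directly from the distributivity of $\oplus$ over $\lor$ and then deduce item~\eqref{i:basic1} by chaining, whereas the paper obtains item~\eqref{i:basic1} by citing \cref{l:sum positive} (whose proof is exactly your distributivity computation) and then specializes to item~\eqref{i:basic3}. Your handling of items~\eqref{i:basic5} and~\eqref{i:basic6} via \cref{l:bounded-lattice}, the unit law of \cref{ax:A2}, and order-duality matches the paper's argument.
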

\begin{proof}
	Let us call $A$ the {\mvm} of the statement.
	\Cref{i:basic1} is guaranteed by the application of \cref{l:sum positive} to the {\lm} $\langle A; \oplus, \lor, \land, 0 \rangle$.
	\Cref{i:basic2} is dual to \cref{i:basic1}.
	\Cref{i:basic3} holds by \cref{i:basic1} together with the fact that $z \leq z$.
	\Cref{i:basic4} is dual to \cref{i:basic3}.
	From $0 \leq y$ we obtain, by \cref{i:basic3}, $x \oplus 0 \leq x \oplus y$.
	By \cref{l:absorbing}, we have $x \oplus 0 = x$.
	Therefore, $x \leq x \oplus y$, and so \cref{i:basic5} is proved.
	\Cref{i:basic6} is dual to \cref{i:basic5}.
\end{proof}

\begin{lemma} \label{l:almost associative}
	For all $x$, $y$, $z$ in {\amvm} we have
	\[
		x \odot (y \oplus z) \leq (x \odot y) \oplus z.
	\]
\end{lemma}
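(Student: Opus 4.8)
The plan is to sandwich both sides between suitable instances of the term $\sigma$ introduced above. First I would observe that the left-hand side $x \odot (y \oplus z)$ is precisely the first $\oplus$-summand of $\sigma_3(x, y, z) = (x \odot (y \oplus z)) \oplus (y \odot z)$. By \cref{i:basic5} of \cref{l:order-preserving properties}, every element lies below its $\oplus$-sum with any other element, so
\[
	x \odot (y \oplus z) \leq (x \odot (y \oplus z)) \oplus (y \odot z) = \sigma_3(x, y, z).
\]

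Next I would invoke \cref{l:permutations}, which guarantees that the four terms $\sigma_1$, $\sigma_2$, $\sigma_3$, $\sigma_4$ all coincide; in particular $\sigma_3(x, y, z) = \sigma_1(x, y, z)$. Finally, \cref{ax:A6}, in its reformulation $(x \odot y) \oplus z = \sigma_1(x, y, z) \lor z$, exhibits the right-hand side as an upper bound of $\sigma_1$, i.e.\ $\sigma_1(x, y, z) \leq (x \odot y) \oplus z$. Chaining these three facts gives
\[
	x \odot (y \oplus z) \leq \sigma_3(x, y, z) = \sigma_1(x, y, z) \leq (x \odot y) \oplus z,
\]
which is the claimed inequality.

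There is no genuine obstacle here: the whole argument reduces to recognizing the left-hand side as a summand of $\sigma_3$ and the right-hand side as a join-dominant of $\sigma_1$ via \cref{ax:A6}, after which the identification $\sigma_3 = \sigma_1$ furnished by \cref{l:permutations} closes the gap. The only point requiring attention is that both the $\sigma_1$-reformulation of \cref{ax:A6} and the permutation-invariance in \cref{l:permutations} are genuinely available at this stage, which they are; once these are in hand the proof is essentially immediate.
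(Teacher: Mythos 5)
Your proof is correct and takes essentially the same route as the paper: both arguments sandwich the two sides around $\sigma(x,y,z)$ and use \cref{ax:A6} (in the form $(x \odot y) \oplus z = \sigma_1(x,y,z) \lor z$) for the upper half. The only difference is in the lower half, where the paper invokes \cref{ax:A7} (with permuted variables) to get the equality $x \odot (y \oplus z) = x \land \sigma(x,y,z)$, whereas you instead apply \cref{l:order-preserving properties}.\eqref{i:basic5} to the definition of $\sigma_3$; both justifications are valid and available at this point in the development.
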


\begin{proof}
	Using \cref{ax:A4,ax:A5,ax:A6,ax:A7}, we obtain
	\[
		x \odot (y \oplus z) = x \land \sigma(x, y, z) \leq \sigma(x, y, z) \leq \sigma(x, y, z) \lor z = (x \odot y) \oplus z. \qedhere
	\]
\end{proof}

\begin{lemma} \label{l:switch of plus and dot if good}
	Let $A$ be an {\mvm}, let $(x_0, x_1)$ be a good pair in $A$, and let $y \in A$.
	Then
	\[
		x_0 \odot (x_1 \oplus y) = x_1 \oplus (x_0 \odot y),
	\]
	and both these elements coincide with $\sigma(x_0, x_1, y)$.
\end{lemma}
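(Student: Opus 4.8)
The plan is to read off both claimed equalities directly from the definitions of the terms $\sigma_1$ and $\sigma_2$, using only the good-pair conditions together with \cref{l:permutations}. The key observation is that the two sides of the desired identity are each, after substitution, one of the four $\sigma$-terms evaluated at $(x_0, x_1, y)$; since those terms all coincide, the equalities follow at once.

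Concretely, I would first compute $\sigma_1(x_0, x_1, y)$. By definition $\sigma_1(x, y, z) = (x \oplus y) \odot ((x \odot y) \oplus z)$, so substituting $(x_0, x_1, y)$ and using the good-pair identities $x_0 \oplus x_1 = x_0$ and $x_0 \odot x_1 = x_1$ gives
\[
	\sigma_1(x_0, x_1, y) = (x_0 \oplus x_1) \odot ((x_0 \odot x_1) \oplus y) = x_0 \odot (x_1 \oplus y).
\]
This identifies the left-hand side of the statement with $\sigma_1(x_0, x_1, y)$.

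Next I would compute $\sigma_2(x_0, x_1, y)$. Since $\sigma_2(x, y, z) = (x \odot y) \oplus ((x \oplus y) \odot z)$, the same substitution and the same two good-pair identities yield
\[
	\sigma_2(x_0, x_1, y) = (x_0 \odot x_1) \oplus ((x_0 \oplus x_1) \odot y) = x_1 \oplus (x_0 \odot y),
\]
which identifies the right-hand side with $\sigma_2(x_0, x_1, y)$.

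Finally, \cref{l:permutations} gives $\sigma_1(x_0, x_1, y) = \sigma_2(x_0, x_1, y) = \sigma(x_0, x_1, y)$, so chaining the two computations produces $x_0 \odot (x_1 \oplus y) = \sigma(x_0, x_1, y) = x_1 \oplus (x_0 \odot y)$, as required. There is no genuine obstacle here beyond choosing the right two representatives among $\sigma_1, \dots, \sigma_4$: the whole content is that $\sigma_1$ collapses to the left-hand side and $\sigma_2$ to the right-hand side precisely because goodness lets us replace $x_0 \oplus x_1$ by $x_0$ and $x_0 \odot x_1$ by $x_1$. The only point requiring care is verifying that these substitutions are applied consistently in both the inner and outer positions of each term.
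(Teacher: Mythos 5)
Your proposal is correct and follows essentially the same route as the paper: identify $x_0 \odot (x_1 \oplus y)$ with $\sigma_1(x_0,x_1,y)$ and $x_1 \oplus (x_0 \odot y)$ with $\sigma_2(x_0,x_1,y)$ via the good-pair identities, then invoke \cref{l:permutations}. Nothing is missing.
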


\begin{proof}
	We have
	\begin{align*}
		x_0 \odot (x_1 \oplus y)
			& = (x_0 \oplus x_1) \odot ((x_0 \odot x_1) \oplus y)
			&& \by{$(x_0, x_1)$ is good} \\
			& = \sigma_1(x_0, x_1, y)
			&& \by{def.\ of $\sigma_1$} \\
			& = \sigma_2(x_0, x_1, y)
			&& \by{\cref{l:permutations}} \\
			& = (x_0 \odot x_1) \oplus ((x_0 \oplus x_1) \odot y)
			&& \by{def.\ of $\sigma_2$} \\
			& = x_1 \oplus (x_0 \odot y).
			&& \by{$(x_0, x_1)$ is good} \qedhere
	\end{align*}
\end{proof}

\begin{lemma} \label{l:oplus odot is a good pair}
	For all $x$ and $y$ in {\amvm}, the pair $(x \oplus y, x \odot y)$ is good.
\end{lemma}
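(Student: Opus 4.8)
The plan is to verify directly the two defining equations of a good pair for $(x \oplus y, x \odot y)$, namely
\[
	(x \oplus y) \oplus (x \odot y) = x \oplus y
	\qquad\text{and}\qquad
	(x \oplus y) \odot (x \odot y) = x \odot y,
\]
which are dual to each other, so it suffices to spell out the first. The key tool is the reformulation of \cref{ax:A6,ax:A7} recorded after the definition of $\sigma$: they read $(a \odot b) \oplus c = \sigma(a, b, c) \lor c$ and $(a \oplus b) \odot c = \sigma(a, b, c) \land c$. This lets me absorb a doubly truncated combination into a join or meet with $\sigma$, after which an absorption inequality makes the join (resp.\ meet) collapse.

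For the first equation I would use commutativity of $\oplus$ to write the left-hand side as $(x \odot y) \oplus (x \oplus y)$ and then apply \cref{ax:A6} with third argument $x \oplus y$, obtaining
\[
	(x \odot y) \oplus (x \oplus y) = \sigma(x, y, x \oplus y) \lor (x \oplus y).
\]
It then remains to check $\sigma(x, y, x \oplus y) \leq x \oplus y$, for then the join reduces to $x \oplus y$. This follows by expanding $\sigma$ through its definition as $\sigma_1$: since $\sigma_1(x, y, x \oplus y) = (x \oplus y) \odot \bigl((x \odot y) \oplus (x \oplus y)\bigr)$ has the form $(x \oplus y) \odot w$, \cref{i:basic6} gives $\sigma_1(x, y, x \oplus y) \leq x \oplus y$, as required.

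The second equation is handled dually: applying \cref{ax:A7} with third argument $x \odot y$ yields $(x \oplus y) \odot (x \odot y) = \sigma(x, y, x \odot y) \land (x \odot y)$, and expanding $\sigma$ as $\sigma_2(x, y, x \odot y) = (x \odot y) \oplus \bigl((x \oplus y) \odot (x \odot y)\bigr)$, which has the form $(x \odot y) \oplus w$, \cref{i:basic5} gives $\sigma(x, y, x \odot y) \geq x \odot y$, so the meet reduces to $x \odot y$. I do not anticipate a genuine obstacle; the only point requiring care is selecting the right representative among the four provably equal forms of $\sigma$ (namely $\sigma_1$ for the $\oplus$-equation and $\sigma_2$ for the $\odot$-equation, their coincidence being guaranteed by \cref{l:permutations}), so that the absorption laws \cref{i:basic5,i:basic6} apply to exactly the factor $x \oplus y$, respectively $x \odot y$, that we want to recover.
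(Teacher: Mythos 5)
Your proof is correct, but it takes a different route from the paper's. The paper's proof rewrites $(x \oplus y) \oplus (x \odot y)$ as $(1 \odot (x \oplus y)) \oplus (x \odot y) = \sigma_3(1, x, y)$, invokes \cref{l:permutations} to pass to $\sigma_4(1, x, y) = (1 \oplus (x \odot y)) \odot (x \oplus y)$, and then collapses this to $x \oplus y$ via \cref{l:absorbing} and \cref{ax:A2}; the whole argument is a chain of equalities with no inequalities. You instead apply \cref{ax:A6} with third argument $x \oplus y$ and kill the resulting join by the absorption inequality $\sigma_1(x, y, x \oplus y) = (x \oplus y) \odot \bigl((x \odot y) \oplus (x \oplus y)\bigr) \leq x \oplus y$ from \cref{l:order-preserving properties}.\eqref{i:basic6}, dually for the $\odot$-equation via \cref{ax:A7} and \cref{l:order-preserving properties}.\eqref{i:basic5}. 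Both arguments are short and rest on previously established material, so neither has a real advantage in length; the paper's version stays purely equational and leans on the permutation-invariance of $\sigma$, while yours trades that for the order-theoretic lemmas (which themselves depend on \cref{l:absorbing}, so the net dependency sets are comparable). One small simplification available to you: since \cref{ax:A6} is literally stated with $\sigma_1$ and \cref{ax:A7} with $\sigma_2$, you never actually need \cref{l:permutations} to select a representative of $\sigma$ --- you can work with $\sigma_1$ and $\sigma_2$ directly throughout.
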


\begin{proof}
	We have
	\begin{align*}
		(x \oplus y) \oplus (x \odot y)	& = (1 \odot (x \oplus y)) \oplus (x \odot y)	&& \by{\cref{ax:A2}} \\
													& = \sigma_3(1, x, y)										&& \by{def.\ of $\sigma_3$} \\
													& = \sigma_4(1, x, y)										&& \by{\cref{l:permutations}} \\
													& = (1 \oplus (x \odot y)) \odot (x \oplus y)	&& \by{def.\ of $\sigma_4$} \\
													& = 1 \odot (x \oplus y)								&& \by{\cref{l:absorbing}} \\
													& = x \oplus y.											&& \by{\cref{ax:A2}}
	\end{align*}
	Dually,
	$(x \oplus y) \odot (x \odot y) = x \odot y$.
\end{proof}

%%%%%%%%%%%%%%%%%%%%%%%%%%%%%%%%%%% SECTION %%%%%%%%%%%%%%%%%%%%%%%%%%%%%%%%%%%%

\section{Operations on the set of good sequences} \label{s:operations on Good}

We denote with $\GS(A)$ the set of good sequences in {\amvm} $A$. (In fact, $\GS$ stands for `good'.)
We will endow $\GS(A)$ with the structure of {\apulm}.
We let $\gs{0}$ denote the good sequence $(0, 0, 0, \dots)$, and we let $\gs{1}$ denote the good sequence $(1, 0, 0, 0, \dots)$.
For good sequences $\gs{a} = (a_0, a_1, a_2, \dots)$ and $\gs{b} = (b_0, b_1, b_2, \dots)$, we set
\[
	\gs{a} \lor \gs{b} \df (a_0 \lor b_0, a_1 \lor b_1, a_2 \lor b_2, \dots),
\]
and
\[
	\gs{a} \land \gs{b} \df (a_0 \land b_0, a_1 \land b_1, a_2 \land b_2, \dots).
\]
\Cref{p:join is good} below asserts that $\gs{a} \lor \gs{b}$ and $\gs{a} \land \gs{b}$ are good sequences.
In order to prove it, we establish the following lemmas.

\begin{lemma} \label{l:join of good pairs}
	For all good pairs $(x_0, x_1)$ and $(y_0, y_1)$ in {\amvm}, the pairs $(x_0 \lor y_0, x_1 \lor y_1)$ and $(x_0 \land y_0, x_1 \land y_1)$ are good.
\end{lemma}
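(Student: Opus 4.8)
The plan is to prove directly that the join pair $(x_0 \lor y_0, x_1 \lor y_1)$ is good, and then to obtain the statement for the meet pair $(x_0 \land y_0, x_1 \land y_1)$ by duality. For the duality step I would use that the `dual' algebra $\langle A; \odot, \oplus, \land, \lor, 1, 0 \rangle$ is again {\amvm}, and that, by commutativity of $\oplus$ and $\odot$, a pair $(x_0, x_1)$ is good in $A$ if and only if $(x_1, x_0)$ is good in the dual algebra. Applying the join statement in the dual algebra to the good pairs $(x_1, x_0)$ and $(y_1, y_0)$ shows that $(x_1 \land y_1, x_0 \land y_0)$ is good in the dual algebra, which by the same correspondence means exactly that $(x_0 \land y_0, x_1 \land y_1)$ is good in $A$. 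Hence it suffices to verify the two good-pair equations for the join pair, namely $(x_0 \lor y_0) \oplus (x_1 \lor y_1) = x_0 \lor y_0$ and $(x_0 \lor y_0) \odot (x_1 \lor y_1) = x_1 \lor y_1$.

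For the first equation, I would expand the left-hand side using distributivity of $\oplus$ over $\lor$ (\cref{ax:A3}); goodness of the two pairs ($x_0 \oplus x_1 = x_0$ and $y_0 \oplus y_1 = y_0$) then rewrites it as $x_0 \lor y_0 \lor (x_0 \oplus y_1) \lor (y_0 \oplus x_1)$. This expression already dominates $x_0 \lor y_0$, so the crux, and the main obstacle, is to show that the two cross terms satisfy $x_0 \oplus y_1 \leq x_0 \lor y_0$ and (symmetrically) $y_0 \oplus x_1 \leq x_0 \lor y_0$. I would prove the first of these by rewriting $y_1 = y_0 \odot y_1$ (goodness) and applying \cref{ax:A6} to obtain $x_0 \oplus y_1 = (y_0 \odot y_1) \oplus x_0 = \sigma(y_0, y_1, x_0) \lor x_0$. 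Then \cref{l:switch of plus and dot if good}, applied to the good pair $(y_0, y_1)$ and the element $x_0$, identifies $\sigma(y_0, y_1, x_0)$ with $y_0 \odot (y_1 \oplus x_0)$, which is $\leq y_0$ by \cref{i:basic6}. Hence $x_0 \oplus y_1 \leq y_0 \lor x_0$, as needed, and the second cross term is handled by the symmetric argument.

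The second equation is easier. Expanding the left-hand side via distributivity of $\odot$ over $\lor$ (\cref{ax:A3}) and using goodness ($x_0 \odot x_1 = x_1$ and $y_0 \odot y_1 = y_1$) rewrites it as $x_1 \lor y_1 \lor (x_0 \odot y_1) \lor (y_0 \odot x_1)$, which already dominates $x_1 \lor y_1$. Here the cross terms are harmless without any appeal to $\sigma$: since $x_0 \leq 1$ by \cref{l:bounded-lattice}, monotonicity of $\odot$ (\cref{i:basic2}) gives $x_0 \odot y_1 \leq 1 \odot y_1 = y_1$, and symmetrically $y_0 \odot x_1 \leq x_1$. Thus both cross terms are $\leq x_1 \lor y_1$, forcing the whole expression to equal $x_1 \lor y_1$. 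This completes the verification that the join pair is good, and the meet pair then follows by the duality described above. I expect the only genuinely delicate point to be the inequality $x_0 \oplus y_1 \leq x_0 \lor y_0$ in the first equation, which is where the full strength of goodness, via \cref{ax:A6} and \cref{l:switch of plus and dot if good}, is actually used.
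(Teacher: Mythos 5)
Your proposal is correct and follows essentially the same route as the paper: both expand by distributivity, use goodness to reduce to the cross terms, handle $x_0 \oplus y_1$ via \cref{ax:A6} together with \cref{l:switch of plus and dot if good} and \cref{l:order-preserving properties}.\eqref{i:basic6}, dispose of the cross terms in the $\odot$ equation by monotonicity, and obtain the meet statement by duality. Your explicit justification of the duality step (goodness of $(x_0,x_1)$ in $A$ being equivalent to goodness of $(x_1,x_0)$ in the dual algebra) is a welcome elaboration of what the paper leaves as "Dually".
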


\begin{proof}
	We prove that $(x_0 \lor y_0, x_1 \lor y_1)$ is a good pair.	We have
	\begin{align*}
		& (x_0 \lor y_0) \oplus (x_1 \lor y_1) \\
		& = (x_0 \oplus x_1) \lor (x_0 \oplus y_1) \lor (y_0 \oplus x_1) \lor (y_0 \oplus y_1)		&& \by{$\oplus$ distr.\ over $ \lor $} \\
		& = x_0 \lor (x_0 \oplus y_1) \lor (y_0 \oplus x_1) \lor y_0										&& \by{$(x_0, x_1)$, $(y_0, y_1)$ good} \\
		& = (x_0 \oplus y_1) \lor (y_0 \oplus x_1)																&& \by{\cref{l:order-preserving properties}.\eqref{i:basic5}} \\
		& = (x_0 \oplus (y_0 \odot y_1)) \lor (y_0 \oplus (x_0 \odot x_1))								&& \by{$(x_0, x_1)$, $(y_0, y_1)$ good} \\
		& = x_0 \lor \sigma(x_0, y_0, y_1) \lor y_0 \lor \sigma(x_0, x_1, y_0) 							&& \by{\cref{ax:A6}} \\
		& = x_0 \lor ((x_0 \oplus y_1) \odot y_0) \lor y_0 \lor (x_0 \odot (x_1 \oplus y_0))		&& \by{\cref{l:switch of plus and dot if good}}\\
		& = (x_0 \lor (x_0 \odot (x_1 \oplus y_0))) \lor (((x_0 \oplus y_1) \odot y_0) \lor y_0)	&&	 \\
		& = x_0 \lor y_0.																									&& \by{\cref{l:order-preserving properties}.\eqref{i:basic6}}
	\end{align*}
	Moreover, we have
	\begin{align*}
		&(x_0 \lor y_0) \odot (x_1 \lor y_1) \\
		& = (x_0 \odot x_1) \lor (x_0 \odot y_1) \lor (y_0 \odot x_1) \lor (y_0 \odot y_1)	&& \by{$\oplus$ distr.\ over $ \lor $} \\
		& = x_1 \lor (x_0 \odot y_1) \lor (y_0 \odot x_1) \lor y_1									&& \by{$(x_0, x_1)$, $(y_0, y_1)$ good} \\
		& = x_1 \lor y_1.																							&& \by{\cref{l:order-preserving properties}.\eqref{i:basic6}}
	\end{align*}
	Hence, $(x_0 \lor y_0, x_1 \lor y_1)$ is good.
	Dually, $(x_0 \land y_0, x_1 \land y_1)$ is good.
\end{proof}

\begin{proposition} \label{p:join is good}
	For all good sequences $\gs{a}$ and $\gs{b}$ in {\amvm}, the sequences $\gs{a} \lor \gs{b}$ and $\gs{a} \land \gs{b}$ are good.
\end{proposition}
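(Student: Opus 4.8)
The plan is to verify directly that $\gs{a} \lor \gs{b}$ satisfies the two defining conditions of a good sequence, arguing componentwise; the analogous verification for $\gs{a} \land \gs{b}$ will then follow by the same reasoning. Write $\gs{a} = (a_0, a_1, a_2, \dots)$ and $\gs{b} = (b_0, b_1, b_2, \dots)$.

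First I would check that $\gs{a} \lor \gs{b}$ is eventually $0$. Since $\gs{a}$ and $\gs{b}$ are good sequences, each is eventually $0$, so there is $N \in \N$ with $a_n = 0 = b_n$ for all $n \geq N$; for such $n$ we have $a_n \lor b_n = 0 \lor 0 = 0$, so the $n$-th entry of $\gs{a} \lor \gs{b}$ vanishes, as required. Second, I would check that $(a_n \lor b_n, a_{n+1} \lor b_{n+1})$ is a good pair for every $n \in \N$. Since $\gs{a}$ and $\gs{b}$ are good, the pairs $(a_n, a_{n+1})$ and $(b_n, b_{n+1})$ are good, and then $(a_n \lor b_n, a_{n+1} \lor b_{n+1})$ is good by \cref{l:join of good pairs}. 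Taken together, these two observations say precisely that $\gs{a} \lor \gs{b}$ is a good sequence.

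The argument for $\gs{a} \land \gs{b}$ is entirely dual: the entry-wise meet of two eventually-$0$ sequences is eventually $0$ because $0 \land 0 = 0$, and $(a_n \land b_n, a_{n+1} \land b_{n+1})$ is good by the ``meet'' half of \cref{l:join of good pairs}. I do not expect any genuine obstacle here: the substantive computation---that the componentwise join or meet of two good pairs is again a good pair---has already been isolated in \cref{l:join of good pairs}, so it only remains to lift that pairwise statement to sequences and to record that the ``eventually $0$'' condition is trivially preserved under componentwise $\lor$ and $\land$.
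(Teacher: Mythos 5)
Your proposal is correct and takes essentially the same route as the paper, whose proof of this proposition is simply ``By \cref{l:join of good pairs}.'' You have merely spelled out the routine details (componentwise application of that lemma and preservation of the eventually-$0$ condition) that the paper leaves implicit.
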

\begin{proof}
	By \cref{l:join of good pairs}.
\end{proof}

\begin{proposition} \label{p:distributive lattice}
	Let $A$ be an {\mvm}.
	Then, $\langle\GS(A); \lor, \land \rangle$ is a distributive lattice.
\end{proposition}

\begin{proof}
	$\langle \GS(A); \lor, \land \rangle$ is a distributive lattice, because $ \lor $ and $ \land $ are applied componentwise, and $\langle A; \lor, \land \rangle$ is a distributive lattice.
\end{proof}

\noindent For $A$ an {\mvm}, we have a partial order $ \leq $ on $\GS(A)$, induced by the lattice operations.
Since the lattice operations are defined componentwise, we have the following.

\begin{remark} \label{r:order is pointwise}
	Let $A$ be {\amvm}. For all good sequences $\gs{a} = (a_0, a_1, a_2, \dots)$ and $\gs{b} = (b_0, b_1, b_2, \dots)$ in $A$, we have $\gs{a} \leq \gs{b}$ if, and only if, for all $n \in \N$, $a_n \leq b_n$.
\end{remark}

Now we want to define the sum of good sequences.
Given two good sequences $\gs{a} = (a_0, a_1, a_2, \dots)$ and $\gs{b} = (b_0, b_1, b_2, \dots)$ in {\amvm}, there are two natural ways to define a sequence $\gs{c} = (c_0, c_1, c_2, \dots)$ as the sum of $\gs{a}$ and $\gs{b}$.
The first one is
\[
	c_n \df (a_0 \oplus b_n) \odot (a_1 \oplus b_{n-1}) \odot \dots \odot (a_{n-1} \oplus b_1) \odot (a_n \oplus b_0),
\]
and the second one is
\[
	c_n \df b_n \oplus (a_0 \odot b_{n-1}) \oplus (a_1 \odot b_{n-2}) \oplus \dots \oplus (a_{n-2} \odot b_1) \oplus (a_{n-1} \odot b_0) \oplus a_n.
\]
Our first aim, reached in \cref{l:associative} below, is to show that these two ways coincide.

\begin{lemma} \label{l:bipartite}
%	Let $n,m\in \N$, 
	Let $x_0, \dots, x_n, y_0, \dots, y_m$ be elements of {\amvm} and suppose that, for every $i \in \{0, \dots,n\} $ and every $j \in \{0, \dots, m\}$, the pair $(x_i, y_j)$ is good.
	Then, the pair
	\[
		(x_0 \odot \dots \odot x_n, y_0 \oplus \dots \oplus y_m)	
	\]
	is good.
\end{lemma}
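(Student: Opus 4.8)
The plan is to isolate two one-variable ``absorption'' facts and then assemble the general statement by a double induction. Write $X \df x_0 \odot \dots \odot x_n$ and $Y \df y_0 \oplus \dots \oplus y_m$; these are unambiguous since $\oplus$ and $\odot$ are associative and commutative by \cref{ax:A2}. The two auxiliary claims are: \emph{(i)} if $(a, c)$ and $(b, c)$ are good pairs, then $(a \odot b, c)$ is good; and \emph{(ii)} if $(a, b)$ and $(a, c)$ are good pairs, then $(a, b \oplus c)$ is good. Granting these, I would first fix $i$ and prove, by induction on $m$ using \emph{(ii)}, that $(x_i, Y)$ is a good pair: the base case $m = 0$ is the hypothesis that $(x_i, y_0)$ is good, and the inductive step combines $(x_i, y_0 \oplus \dots \oplus y_{m-1})$ with $(x_i, y_m)$. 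Then, with each $(x_i, Y)$ good, I would prove by induction on $n$ using \emph{(i)} that $(X, Y)$ is good: the base case $n = 0$ is $(x_0, Y)$, and the inductive step combines $(x_0 \odot \dots \odot x_{n-1}, Y)$ with $(x_n, Y)$. This yields the lemma.

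It then remains to prove \emph{(i)}, since \emph{(ii)} is its image under the symmetry $\langle A; \oplus, \odot, \lor, \land, 0, 1\rangle \mapsto \langle A; \odot, \oplus, \land, \lor, 1, 0\rangle$, which carries the good pair $(x_0, x_1)$ to the good pair $(x_1, x_0)$. For \emph{(i)} I must verify the two equalities defining goodness of $(a \odot b, c)$. The equality $(a \odot b) \odot c = c$ follows at once from associativity and commutativity of $\odot$ together with $b \odot c = c$ and $a \odot c = c$. The equality $(a \odot b) \oplus c = a \odot b$ is the crux: I would apply \cref{l:switch of plus and dot if good} to the good pair $(b, c)$ and the element $a$ to get $b \odot (c \oplus a) = c \oplus (b \odot a)$; since $c \oplus a = a \oplus c = a$ because $(a, c)$ is good, the left-hand side equals $b \odot a = a \odot b$, and therefore $(a \odot b) \oplus c = c \oplus (a \odot b) = a \odot b$.

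The main obstacle is precisely this last equality. Its companion $(a \odot b) \odot c = c$ and the whole inductive scaffolding are routine monoid bookkeeping, but the ``product absorbs the sum'' half genuinely requires the MV-monoidal associativity packaged in \cref{l:switch of plus and dot if good} and does not follow from the commutative-monoid axioms alone. One must also take care to instantiate \cref{l:switch of plus and dot if good} at the good pair $(b, c)$ rather than $(a, c)$, so that the auxiliary element $a$ is the one absorbed via $a \oplus c = a$.
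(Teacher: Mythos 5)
Your proposal is correct and follows essentially the same route as the paper: your claim \emph{(i)} is exactly the paper's base case $(n,m)=(1,0)$, proved by the same appeal to \cref{l:switch of plus and dot if good} (the paper instantiates it at the good pair $(a,c)$ with auxiliary element $b$ rather than at $(b,c)$ with auxiliary $a$, but the two are interchangeable by commutativity), and \emph{(ii)} is its dual, the case $(0,1)$. The only cosmetic difference is that you organize the induction as two nested single inductions while the paper runs one joint induction on the pair $(n,m)$; the content is the same.
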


\begin{proof}
	The statement is trivial for $(n,m) = (0, 0)$.
	The statement is true for $(n,m) = (1, 0)$ because
	\[
		(x_0 \odot x_1) \oplus y_0 \stackrel{\text{\cref{l:switch of plus and dot if good}}}{ = } x_0 \odot (x_1 \oplus y_0) = x_0 \odot x_1,
	\]
	and
	\[
		(x_0 \odot x_1) \odot y_0 = x_0 \odot (x_1 \odot y_0) = x_0 \odot y_0 = y_0.
	\]
	The case $(n,m) = (0, 1)$ is analogous.
	Let $(n,m) \in \N \times \N\setminus \{(0, 0), (0, 1), (1, 0)\}$, and suppose that the statement is true for each $(h,k) \in \N \times \N$ such that $(h,k) \neq (n,m)$, $h \leq n$ and $k \leq m$.
	We prove that the statement holds for $(n,m)$.
	At least one of the two conditions $n \neq 0$ and $m\neq 0$ holds.
	Suppose, for example, $n \neq 0$.
	Then, by inductive hypothesis, the pairs $(x_0 \odot \dots \odot x_{n-1}, y_0 \oplus \dots \oplus y_m)$ and $(x_{n}, y_0 \oplus \dots \oplus y_m)$ are good.
	Now we apply the statement for the case $(1, 0)$, and we obtain that $(x_0 \odot \dots \odot x_n, y_0 \oplus \dots \oplus y_m)$ is a good pair.
	The case $m\neq 0$ is analogous.
\end{proof}

\begin{lemma} \label{l:oplus and odot are still good}
	Let $A$ be {\amvm}. For every good pair $(x, y)$ in $A$ and every $u \in A$, the pairs $(x \oplus u, y)$ and $(x, y \odot u)$ are good.
\end{lemma}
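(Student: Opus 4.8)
The plan is to check, for each of the two pairs, the two defining equations of a good pair, namely $a \oplus b = a$ and $a \odot b = b$. Throughout I will use that $(x,y)$ being good means $x \oplus y = x$ and $x \odot y = y$, together with the commutative monoid axioms \cref{ax:A2} and the monotonicity facts collected in \cref{l:order-preserving properties}.

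First I would treat $(x \oplus u, y)$. The equation $(x \oplus u) \oplus y = x \oplus u$ is purely formal: by commutativity and associativity of $\oplus$ (\cref{ax:A2}) it rewrites as $(x \oplus y) \oplus u$, which equals $x \oplus u$ since $x \oplus y = x$. The equation $(x \oplus u) \odot y = y$ is the one place where the order properties genuinely enter. For the inequality $y \leq (x \oplus u) \odot y$, I start from $x \leq x \oplus u$ (\cref{l:order-preserving properties}.\eqref{i:basic5}) and apply monotonicity of $\odot$ (\cref{l:order-preserving properties}.\eqref{i:basic4}) to obtain $x \odot y \leq (x \oplus u) \odot y$; since $x \odot y = y$, this reads $y \leq (x \oplus u) \odot y$. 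The reverse inequality $(x \oplus u) \odot y \leq y$ is immediate from \cref{l:order-preserving properties}.\eqref{i:basic6}. Antisymmetry then gives $(x \oplus u) \odot y = y$.

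Next I would treat $(x, y \odot u)$. The equation $x \odot (y \odot u) = y \odot u$ is formal: by associativity of $\odot$ (\cref{ax:A2}) it equals $(x \odot y) \odot u = y \odot u$, using $x \odot y = y$. For $x \oplus (y \odot u) = x$, note first that $y \odot u \leq y$ (\cref{l:order-preserving properties}.\eqref{i:basic6}), so monotonicity of $\oplus$ (\cref{l:order-preserving properties}.\eqref{i:basic3}) gives $x \oplus (y \odot u) \leq x \oplus y = x$, while $x \leq x \oplus (y \odot u)$ holds by \cref{l:order-preserving properties}.\eqref{i:basic5}; antisymmetry finishes it. Alternatively, this second pair can be deduced from the first by the evident duality: $(x_0, x_1)$ is good in $A$ if and only if $(x_1, x_0)$ is good in the dual MV-monoidal algebra $\langle A; \odot, \oplus, \land, \lor, 1, 0 \rangle$, and applying the first part inside the dual to the good pair $(y, x)$ and the element $u$ yields exactly that $(x, y \odot u)$ is good.

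I do not expect a genuine obstacle here: each equation of the form "$\oplus$-side equals $a$" or "$\odot$-side equals $b$" reduces either to a monoid identity or to a short squeeze between the two monotonicity bounds. The only mildly delicate point is to remember that $(x \oplus u) \odot y = y$ is not obtainable by rewriting alone and must be established through the two-sided order argument above.
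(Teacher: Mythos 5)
Your proposal is correct and follows essentially the same route as the paper: the equation $(x\oplus u)\oplus y=x\oplus u$ by rewriting with commutativity/associativity, the equation $(x\oplus u)\odot y=y$ by the squeeze $y=x\odot y\leq(x\oplus u)\odot y\leq y$ via \cref{l:order-preserving properties}, and the pair $(x,y\odot u)$ by duality. The paper simply says ``Dually'' for the second pair, whereas you also spell it out explicitly, which is harmless.
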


\begin{proof}
	The pair $(x \oplus u, y)$ is good because we have $(x \oplus u) \oplus y = (x \oplus y) \oplus u = x \oplus u$, and, by \cref{l:order-preserving properties}, we have $y = x \odot y \leq (x \oplus u) \odot y \leq y$,
	which implies $(x \oplus u) \odot y = y$.
	Dually, $(x, y \odot u)$ is good.
\end{proof}

\begin{lemma} \label{l:good is transitive}
	If $(x, y)$ and $(y, z)$ are good pairs in {\amvm}, then $(x, z)$ is a good pair.
\end{lemma}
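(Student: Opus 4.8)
The plan is to establish both defining equalities for the pair $(x,z)$ using nothing beyond the associativity (and, implicitly, the monoid structure) of $\oplus$ and $\odot$ granted by \cref{ax:A2}. Unwinding the definition, the goodness of $(x,y)$ gives $x \oplus y = x$ and $x \odot y = y$, while the goodness of $(y,z)$ gives $y \oplus z = y$ and $y \odot z = z$. These four equations are exactly the raw material, and the point is that they chain together through associativity alone.

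To obtain $x \oplus z = x$, I would start from $x = x \oplus y$ and rewrite the inner $y$ as $y \oplus z$, then reassociate. The computation is
\[
	x = x \oplus y = x \oplus (y \oplus z) = (x \oplus y) \oplus z = x \oplus z,
\]
where the second equality substitutes $y = y \oplus z$, the third is associativity of $\oplus$, and the fourth reuses $x \oplus y = x$. Dually, to obtain $x \odot z = z$, I would start from $z = y \odot z$, rewrite $y$ as $x \odot y$, and reassociate:
\[
	z = y \odot z = (x \odot y) \odot z = x \odot (y \odot z) = x \odot z.
\]
Together these two lines give precisely $x \oplus z = x$ and $x \odot z = z$, i.e.\ that $(x,z)$ is a good pair.

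The hard part, such as it is, is simply recognizing that transitivity of goodness requires none of the more delicate identities (\cref{ax:A4,ax:A5,ax:A6,ax:A7}) nor any of the order-theoretic lemmas; the defining equations of a good pair are arranged so that one side collapses under the monoid laws. The symmetry between the two displays is the manifestation of the self-duality of {\mvms}: the second computation is obtained from the first by swapping $\oplus \leftrightarrow \odot$ and reversing the roles of the two coordinates, so once the $\oplus$-identity is verified the $\odot$-identity follows by duality and needs no separate argument.
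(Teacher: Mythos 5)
Your proof is correct and is essentially identical to the paper's: both establish $x \oplus z = x$ and $x \odot z = z$ by the same chains of equalities, using only the defining equations of good pairs and the associativity of $\oplus$ and $\odot$ from \cref{ax:A2} (yours merely reads the chains in the opposite direction). No further comment is needed.
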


\begin{proof}
	We have
	\[
		x \oplus z = (x \oplus y) \oplus z = x \oplus (y \oplus z) = x \oplus y = x,
	\]
	and
	\[
		x \odot z = x \odot (y \odot z) = (x \odot y) \odot z = y \odot z = z. \qedhere
	\]
\end{proof}

\begin{lemma} \label{l:associative}
	Let $A$ be {\amvm}, let $m \in \N$ and let $(a_0, \dots, a_m)$ and $(b_0, \dots, b_m)$ be good sequences in $A$.
	Then
	\[
		(a_0 \odot b_m) \oplus \dots \oplus (a_m \odot b_0) = a_0 \odot (a_1 \oplus b_m) \odot \dots \odot (a_m \oplus b_1) \odot b_0,
	\]
	and
	\[
		(a_0 \oplus b_m) \odot \dots \odot (a_m \oplus b_0) = b_m \oplus (a_0 \odot b_{m-1}) \oplus \dots \oplus (a_{m-1} \odot b_0) \oplus a_m.
	\]
\end{lemma}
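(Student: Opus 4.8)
The plan is to prove the first identity directly by a left-to-right \emph{sweep} that turns the $\odot$-product on the right-hand side into the $\oplus$-sum on the left-hand side, absorbing one factor at a time, and then to deduce the second identity by duality.

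For the first identity, write the right-hand side as $a_0 \odot (a_1 \oplus b_m) \odot \dots \odot (a_m \oplus b_1) \odot b_0$ and process the factors from left to right. I would show by induction on $k \in \{0, \dots, m\}$ that the product of the first $k+1$ factors $a_0 \odot (a_1 \oplus b_m) \odot \dots \odot (a_k \oplus b_{m-k+1})$ equals
\[
	T_k \df a_k \oplus (a_0 \odot b_m) \oplus (a_1 \odot b_{m-1}) \oplus \dots \oplus (a_{k-1} \odot b_{m-k+1})
\]
(so that $T_0 = a_0$). In the inductive step I absorb the next factor $(a_{k+1} \oplus b_{m-k})$ by two applications of the switch lemma \cref{l:switch of plus and dot if good}. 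Writing $T_k = a_k \oplus R$ with $R \df (a_0 \odot b_m) \oplus \dots \oplus (a_{k-1} \odot b_{m-k+1})$: first, since $(a_k, a_{k+1})$ is good, \cref{l:oplus and odot are still good} shows $(T_k, a_{k+1}) = (a_k \oplus R, a_{k+1})$ is good, whence $T_k \odot (a_{k+1} \oplus b_{m-k}) = a_{k+1} \oplus (T_k \odot b_{m-k})$. Second, I rewrite $T_k \odot b_{m-k} = b_{m-k} \odot (R \oplus a_k)$ and switch again, which requires $(b_{m-k}, R)$ to be good (see below) and yields $T_k \odot b_{m-k} = R \oplus (a_k \odot b_{m-k})$. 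Combining the two switches gives exactly $T_{k+1}$. Finally the trailing factor $b_0$ is absorbed by one further switch of the same kind, using that $(b_0, R_m)$ is good where $R_m \df (a_0 \odot b_m) \oplus \dots \oplus (a_{m-1} \odot b_1)$; this collapses the product to $(a_0 \odot b_m) \oplus \dots \oplus (a_m \odot b_0)$, the left-hand side.

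The main obstacle is certifying, at each step, that $(b_{m-k}, R)$ is a good pair. Here $R$ is a finite $\oplus$-sum of terms $a_j \odot b_{m-j}$ with $j < k$, so every $b$-index occurring in $R$ is $m-j > m-k$. By \cref{l:good is transitive} applied along the good sequence $(b_0, \dots, b_m)$, each pair $(b_{m-k}, b_{m-j})$ is good; by \cref{l:oplus and odot are still good} each pair $(b_{m-k}, a_j \odot b_{m-j})$ is then good; and \cref{l:bipartite}, taken with a single factor on the left, combines these into the goodness of $(b_{m-k}, R)$. Keeping the index ranges straight—so that all $b$-indices in the accumulated sum strictly exceed the index being switched against—is the delicate bookkeeping that makes the sweep go through.

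For the second identity I would invoke duality rather than repeat the sweep. The dual algebra $\langle A; \odot, \oplus, \land, \lor, 1, 0 \rangle$ is again {\amvm}, and by commutativity of $\oplus$ and $\odot$ the reversed tuples $(a_m, \dots, a_0)$ and $(b_m, \dots, b_0)$ are good sequences in this dual algebra. Applying the already-proven first identity in the dual algebra to these reversed sequences, and then translating the operations back (every $\odot$ becomes $\oplus$ and vice versa), turns it after reindexing into precisely the second identity in $A$.
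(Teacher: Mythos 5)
Your proof is correct, and it reaches the first identity by a different decomposition than the paper, though with the same toolkit. The paper inducts on the length $m$: it applies the inductive hypothesis to the shorter good sequences $(a_0, \dots, a_{m-1})$ and $(b_1, \dots, b_m)$, thereby rewriting all summands except $(a_m \odot b_0)$ as a single $\odot$-product, and then performs exactly two applications of \cref{l:switch of plus and dot if good} to absorb $a_m$ and then $b_0$; the goodness of the two pairs being switched is certified, just as in your argument, via \cref{l:good is transitive}, \cref{l:oplus and odot are still good} and \cref{l:bipartite}. Your sweep instead fixes $m$ and inducts on the number of absorbed factors, maintaining the explicit mixed invariant $T_k$; each step again costs two switches plus the same goodness bookkeeping, and your verification that $(b_{m-k}, R)$ is good (transitivity along the $b$-sequence, then \cref{l:oplus and odot are still good}, then \cref{l:bipartite} with a single left factor) is sound -- the degenerate case $R = 0$ is harmless since $(b_{m}, 0)$ is trivially a good pair. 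What each approach buys: the paper's recursion keeps every intermediate object an instance of the statement for shorter sequences, so the induction hypothesis does most of the work and only the last two factors ever need explicit handling; your invariant makes all intermediate expressions visible, which is more transparent but requires the heavier index discipline you acknowledge. Your derivation of the second identity by passing to the dual algebra and reversing the tuples is exactly the paper's (one-line) argument, and the point you leave implicit --- that the reversed tuples are good sequences in the dual algebra because the defining conditions of a good pair swap roles under exchanging $\oplus$ with $\odot$ --- does hold.
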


\begin{proof}
	We prove the first equality by induction on $m\in 	\N$.
	The case $m = 0$ is trivial.
	Let $m\in \N\setminus \{0\}$ and suppose that the statement holds for $n-1$.
	By the inductive hypothesis, we have
	\begin{equation} \label{e:1}
		(a_0 \odot b_m) \oplus \dots \oplus (a_m \odot b_0) = (a_0 \odot (a_1 \oplus b_m) \odot \dots \odot (a_{m-1} \oplus b_2) \odot b_1) \oplus (a_m \odot b_0).
	\end{equation}
	By \cref{l:oplus and odot are still good}, the pair $(b_0, a_0 \odot (a_1 \oplus b_m) \odot \dots \odot (a_{m-1} \oplus b_2) \odot b_1)$ is good.
	Therefore, by \cref{l:switch of plus and dot if good}, we have
	\begin{multline} \label{e:2}
		(a_0 \odot (a_1 \oplus b_m) \odot \dots \odot (a_{m-1} \oplus b_2) \odot b_1) \oplus (a_m \odot b_0) \\
		 = ((a_0 \odot (a_1 \oplus b_m) \odot \dots \odot (a_{m-1} \oplus b_2) \odot b_1) \oplus a_m) \odot b_0.
	\end{multline}
	By \cref{l:good is transitive}, the pairs $(a_0, a_m)$, $(a_1, a_m)$, \dots $(a_{m-2}, a_m)$, $(a_{m-1}, a_m)$ are good.
	By \cref{l:oplus and odot are still good}, the pairs $(a_0, a_m)$, $(a_1 \oplus b_m, a_m)$, \dots, $(a_{m-2} \oplus b_3, a_m)$, $(a_{m-1} \oplus b_2, a_m)$ are good.
	By \cref{l:bipartite}, the pair 
	\[
		(a_0 \odot (a_1 \oplus b_m) \odot \dots \odot (a_{m-1} \oplus b_2), a_m)
	\]
	is good.
	Thus, by \cref{l:switch of plus and dot if good}, we have
	\begin{multline} \label{e:3}
		((a_0 \odot (a_1 \oplus b_m) \odot \dots \odot (a_{m-1} \oplus b_2) \odot b_1) \oplus a_m) \odot b_0 \\
		 = ((a_0 \odot (a_1 \oplus b_m) \odot \dots \odot (a_{m-1} \oplus b_2)) \odot (b_1 \oplus a_m)) \odot b_0.
	\end{multline}
	The chain of equalities established in \cref{e:1,e:2,e:3} settles the first equality of the statement.
	The second one is dual.
\end{proof}

Given two good sequences $\gs{a} = (a_0, a_1, a_2, \dots)$ and $\gs{b} = (b_0, b_1, b_2, \dots)$ in {\amvm}, we set
\[
	\gs{a} + \gs{b} \df(c_0, c_1, c_2, \dots),
\]
where
\[
	c_n \df (a_0 \oplus b_n) \odot (a_1 \oplus b_{n-1}) \odot \dots \odot (a_{n-1} \oplus b_1) \odot (a_n \oplus b_0),
\]
or, equivalently (by \cref{l:associative}),
\[
	c_n \df b_n \oplus (a_0 \odot b_{n-1}) \oplus (a_1 \odot b_{n-2}) \oplus \dots \oplus (a_{n-2} \odot b_1) \oplus (a_{n-1} \odot b_0) \oplus a_n.
\]

In \cref{p:sum is good} below, we show that $\gs{a} + \gs{b}$ is a good sequence.
In preparation for it, we establish the following lemma.

\begin{lemma} \label{l:hsr}
	Let $A$ be {\amvm} and let $(a_0, \dots, a_n)$ and $(b_0, \dots, b_n)$ be good sequences in $A$.
	Then, the pair
	\[
		((a_0 \oplus b_n) \odot \dots \odot (a_n \oplus b_0), (a_0 \odot b_n) \oplus \dots \oplus (a_n \odot b_0))
	\]
	is good.
\end{lemma}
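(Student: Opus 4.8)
The plan is to reduce the statement to \cref{l:bipartite}. Writing $x_i \df a_i \oplus b_{n-i}$ and $y_i \df a_i \odot b_{n-i}$ for $i \in \{0, \dots, n\}$, the pair in the statement is exactly $(x_0 \odot \dots \odot x_n, y_0 \oplus \dots \oplus y_n)$. By \cref{l:bipartite}, it therefore suffices to prove that $(x_i, y_j)$ is a good pair for \emph{all} $i, j \in \{0, \dots, n\}$, that is, that $(a_i \oplus b_{n-i}, a_j \odot b_{n-j})$ is good for every choice of indices.

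To establish this, I would first record that, since $(a_0, \dots, a_n)$ and $(b_0, \dots, b_n)$ are good sequences, consecutive pairs $(a_k, a_{k+1})$ and $(b_k, b_{k+1})$ are good, so \cref{l:good is transitive} yields that $(a_i, a_j)$ is good whenever $i < j$ and $(b_k, b_l)$ is good whenever $k < l$. I would then split into three cases according to the relation between $i$ and $j$. If $i < j$, I start from the good pair $(a_i, a_j)$ and apply \cref{l:oplus and odot are still good} twice: first adjoining $b_{n-i}$ to the left entry via $\oplus$ to get that $(a_i \oplus b_{n-i}, a_j)$ is good, then adjoining $b_{n-j}$ to the right entry via $\odot$ to conclude that $(a_i \oplus b_{n-i}, a_j \odot b_{n-j})$ is good. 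If $i > j$, then $n - i < n - j$, so $(b_{n-i}, b_{n-j})$ is good, and the same double application of \cref{l:oplus and odot are still good}—adjoining $a_i$ and $a_j$ and using the commutativity of $\oplus$ and $\odot$ from \cref{ax:A2}—gives the claim. If $i = j$, the pair $(a_i \oplus b_{n-i}, a_i \odot b_{n-i})$ is good directly by \cref{l:oplus odot is a good pair}.

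Once $(x_i, y_j)$ has been verified to be good for all $i, j$, \cref{l:bipartite} immediately gives that $(x_0 \odot \dots \odot x_n, y_0 \oplus \dots \oplus y_n)$ is a good pair, which is precisely the assertion.

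I expect the main obstacle to be the mixed terms with $i \neq j$: there is no reason for $(a_i \oplus b_{n-i}, a_j \odot b_{n-j})$ to be good by any symmetry argument, and the two good sequences are a priori unrelated to one another. The key realization is that each such mixed term can nevertheless be built from a genuine good pair internal to one of the two sequences—namely $(a_i, a_j)$ when $i < j$ and $(b_{n-i}, b_{n-j})$ when $i > j$—by successively grafting on the remaining elements through \cref{l:oplus and odot are still good}; selecting which sequence to start from according to the case is the crux of the argument. Everything else reduces to routine bookkeeping and the final appeal to \cref{l:bipartite}.
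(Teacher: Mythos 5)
Your proposal is correct and follows essentially the same route as the paper's own proof: reduce via \cref{l:bipartite} to showing each mixed pair $(a_i \oplus b_{n-i}, a_j \odot b_{n-j})$ is good, handle $i = j$ by \cref{l:oplus odot is a good pair}, and handle $i \neq j$ by starting from the good pair $(a_i, a_j)$ or $(b_{n-i}, b_{n-j})$ (via \cref{l:good is transitive}) and grafting on the remaining elements with \cref{l:oplus and odot are still good}. No gaps.
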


\begin{proof}
	By \cref{l:bipartite}, it is enough to show that, for $i,j\in \{0, \dots, m\}$, the pair $(a_i \oplus b_{m-i}, a_j \odot b_{m-j})$ is good.
	The case $i = j$ is covered by \cref{l:oplus odot is a good pair}.
	If $i<j$, then, by \cref{l:good is transitive}, the pair $(a_i, a_j)$ is good; by \cref{l:oplus and odot are still good}, the pair $(a_i \oplus b_{m-i}, a_j \odot b_{m-j})$ is good.
	If $i>j$, then, by \cref{l:good is transitive}, the pair $(b_{m-i}, b_{m-j})$ is good; by \cref{l:oplus and odot are still good}, the pair $(a_i \oplus b_{m-i}, a_j \odot b_{m-j})$ is good.
\end{proof}

\begin{proposition} \label{p:sum is good}
	For all good sequences $\gs{a}$ and $\gs{b}$ in {\amvm}, the sequence $\gs{a} + \gs{b}$ is good.
\end{proposition}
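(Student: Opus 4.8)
The plan is to verify the two defining conditions of a good sequence for $\gs{a} + \gs{b} = (c_0, c_1, c_2, \dots)$: that it is eventually $0$, and that $(c_n, c_{n+1})$ is a good pair for every $n \in \N$.

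For the eventually-$0$ condition I would use the sum form of $c_n$ from the definition. Pick $N$ with $a_i = b_i = 0$ for $i > N$. For $n$ large enough (say $n \geq 2N + 2$) one has $a_n = b_n = 0$, and every summand $a_i \odot b_{n-1-i}$ of $c_n$ contains a zero factor: if $i \leq N$ then $n - 1 - i > N$, so $b_{n-1-i} = 0$, and otherwise $a_i = 0$. By \cref{l:absorbing} each such summand equals $0$, and since $0$ is the unit for $\oplus$ this forces $c_n = 0$.

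The heart of the matter is the good-pair condition, and the key observation is to read off the sum form of $c_{n+1}$ as $c_{n+1} = Q_n \oplus (a_{n+1} \oplus b_{n+1})$, where $Q_n \df (a_0 \odot b_n) \oplus \dots \oplus (a_n \odot b_0)$, and then to produce two good pairs sharing the first entry $c_n$. First, \cref{l:hsr} applied to the truncations $(a_0, \dots, a_n)$ and $(b_0, \dots, b_n)$ — which are themselves good sequences, as $(a_n, 0)$ and $(b_n, 0)$ are good pairs by \cref{l:absorbing} — gives that $(c_n, Q_n)$ is a good pair, since $c_n = (a_0 \oplus b_n) \odot \dots \odot (a_n \oplus b_0)$ is precisely the first entry of the pair furnished by \cref{l:hsr}. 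Second, I claim $(c_n, a_{n+1} \oplus b_{n+1})$ is a good pair: by \cref{l:good is transitive} each $(a_i, a_{n+1})$ and each $(b_{n-i}, b_{n+1})$ is good (composing the consecutive good pairs of $\gs{a}$, respectively $\gs{b}$); by \cref{l:oplus and odot are still good} this upgrades to $(a_i \oplus b_{n-i}, a_{n+1})$ and $(a_i \oplus b_{n-i}, b_{n+1})$ being good; and then \cref{l:bipartite}, applied with the elements $x_i \df a_i \oplus b_{n-i}$ for $i \in \{0, \dots, n\}$ and the two elements $y_0 \df a_{n+1}$, $y_1 \df b_{n+1}$, yields that $(x_0 \odot \dots \odot x_n,\ y_0 \oplus y_1) = (c_n,\ a_{n+1} \oplus b_{n+1})$ is good.

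With both good pairs in hand, the conclusion is a short computation. Writing $w \df a_{n+1} \oplus b_{n+1}$, the absorptions $c_n \oplus Q_n = c_n$ and $c_n \oplus w = c_n$ give $c_n \oplus c_{n+1} = c_n \oplus Q_n \oplus w = c_n$; and the switch identity \cref{l:switch of plus and dot if good} (using that $(c_n, Q_n)$ is good) together with $c_n \odot w = w$ gives $c_n \odot c_{n+1} = c_n \odot (Q_n \oplus w) = Q_n \oplus (c_n \odot w) = Q_n \oplus w = c_{n+1}$, so $(c_n, c_{n+1})$ is good. I expect the main obstacle to be spotting the decomposition $c_{n+1} = Q_n \oplus w$ with $(c_n, Q_n)$ good, and realizing that $(c_n, w)$ can be obtained purely from the bipartite lemma after feeding in the transitivity of the good pairs of $\gs{a}$ and $\gs{b}$; once the two good pairs share the first entry $c_n$, the switch lemma handles the $\odot$-condition automatically.
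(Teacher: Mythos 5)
Your proof is correct, and it pivots on the same key lemma as the paper (\cref{l:hsr}), but via a different decomposition. You split $c_{n+1}$ as $Q_n \oplus (a_{n+1} \oplus b_{n+1})$ with $Q_n \df (a_0 \odot b_n) \oplus \dots \oplus (a_n \odot b_0)$, apply \cref{l:hsr} to the truncations to get $(c_n, Q_n)$ good, separately establish $(c_n, a_{n+1} \oplus b_{n+1})$ good via \cref{l:good is transitive}, \cref{l:oplus and odot are still good} and \cref{l:bipartite}, and then glue the two good pairs with \cref{l:switch of plus and dot if good}; all of these steps check out, including the treatment of the truncations as good sequences (since $(a_n,0)$ is a good pair) and the eventually-zero argument. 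The paper avoids the entire second half of this work with a padding trick: since $1 \oplus b_{n+1} = a_{n+1} \oplus 1 = 1$ by \cref{l:absorbing}, one may rewrite $c_n = (1 \oplus b_{n+1}) \odot (a_0 \oplus b_n) \odot \dots \odot (a_n \oplus b_0) \odot (a_{n+1} \oplus 1)$ and observe that $c_{n+1} = (1 \odot b_{n+1}) \oplus (a_0 \odot b_n) \oplus \dots \oplus (a_{n+1} \odot 1)$, so that $(c_n, c_{n+1})$ is \emph{directly} an instance of \cref{l:hsr} applied to the good sequences $(1, a_0, \dots, a_{n+1})$ and $(1, b_0, \dots, b_{n+1})$. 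So the obstacle you flag --- handling the boundary terms $a_{n+1}$ and $b_{n+1}$ --- dissolves if you absorb them into the \cref{l:hsr} instance by prepending a $1$ to each sequence; your route costs three extra lemmas but makes the role of each ingredient more explicit.
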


\begin{proof}
	Let $\gs{a} = (a_0, \dots, a_m)$ and $\gs{b} = (b_0, \dots, b_m)$.
	Set $\gs{c} \df \gs{a} + \gs{b}$, and write $\gs{c} = (c_0, c_1, c_2, \dots)$.
	Then, for $j > 2m$ we have $c_j = 0$.
	For all $n \in \N$ we have
	\begin{align*}
		c_n& = (a_0 \oplus b_n) \odot \dots \odot (a_n \oplus b_0) \\
		& = (1 \oplus b_{n + 1}) \odot (a_0 \oplus b_n) \odot \dots \odot (a_n \oplus b_0) \odot (a_{n + 1} \oplus 1),
	\end{align*}
	and
	\[
		c_{n + 1} = (1 \odot b_{n + 1}) \oplus (a_0 \odot b_n) \oplus \dots \oplus (a_{n} \odot b_0) \oplus (a_{n + 1} \odot 1).
	\]
	By \cref{l:hsr}, the pair $(c_n, c_{n + 1})$ is good.
\end{proof}

\begin{proposition} \label{p:sum is commutative}
	Addition of good sequences is commutative.
\end{proposition}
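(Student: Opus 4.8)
The plan is to prove the identity componentwise, directly from the definition of $+$, using nothing more than the commutativity of $\oplus$ and $\odot$ guaranteed by \cref{ax:A2}. Write $\gs{a} = (a_0, a_1, a_2, \dots)$ and $\gs{b} = (b_0, b_1, b_2, \dots)$. By definition, the $n$-th component of $\gs{a} + \gs{b}$ is
\[
	c_n = (a_0 \oplus b_n) \odot (a_1 \oplus b_{n-1}) \odot \dots \odot (a_{n-1} \oplus b_1) \odot (a_n \oplus b_0),
\]
whereas the $n$-th component of $\gs{b} + \gs{a}$ is obtained by swapping the roles of $\gs{a}$ and $\gs{b}$, namely
\[
	c'_n = (b_0 \oplus a_n) \odot (b_1 \oplus a_{n-1}) \odot \dots \odot (b_{n-1} \oplus a_1) \odot (b_n \oplus a_0).
\]
It suffices to show $c_n = c'_n$ for every $n \in \N$, since two good sequences are equal precisely when all their components agree.

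The key step is a reindexing of the $\odot$-product defining $c'_n$. Each factor of $c'_n$ has the form $b_i \oplus a_{n-i}$ for $i$ ranging over $\{0, \dots, n\}$; reading the factors from right to left (that is, substituting $j = n - i$), $c'_n$ is the $\odot$-product of the terms $b_{n-j} \oplus a_j$ for $j \in \{0, \dots, n\}$. By commutativity of $\oplus$ we have $b_{n-j} \oplus a_j = a_j \oplus b_{n-j}$, so these are exactly the factors appearing in $c_n$. Since $\langle A; \odot, 1 \rangle$ is a commutative monoid, the $\odot$-product of a finite family of elements does not depend on the order in which the factors are listed, and hence $c'_n = c_n$.

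I expect there to be essentially no genuine obstacle here: the statement is an immediate consequence of the commutativity and associativity of the two monoid operations. The only point requiring mild care is the bookkeeping in the reindexing of the product—making explicit that reversing the order of the factors and transposing each summand $b_{n-j} \oplus a_j$ to $a_j \oplus b_{n-j}$ transforms the defining expression for $c'_n$ into that for $c_n$. One could make this fully rigorous by an induction on $n$ appealing to the associativity and commutativity of $\odot$, but it is cleaner simply to invoke the standard fact that finite products in a commutative monoid are invariant under permutation of the factors, which applies because of \cref{ax:A2}.
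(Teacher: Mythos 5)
Your proof is correct and follows essentially the same route as the paper: the paper's own one-line argument is precisely this componentwise reindexing, reversing the order of the $\odot$-factors and transposing each $\oplus$-summand via the commutativity of $\oplus$ and $\odot$ from \cref{ax:A2}. Your version merely spells out the bookkeeping more explicitly.
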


\begin{proof}
	By commutativity of $\oplus$ and $\odot$, we have
	\[
		\gs{a} + \gs{b} = (a_0 \oplus b_n) \odot \dots \odot (a_n \oplus b_0) = (b_0 \oplus a_n) \odot \dots \odot (b_n \oplus a_0) = \gs{b} + \gs{a}. \qedhere
	\]
\end{proof}

\begin{remark} \label{r:0 is neutral}
	Let $A$ be an {\mvm}, and let $\gs{a} \in \GS(A)$.
	Then, $\gs{a} + \gs{0} = \gs{a}$.
\end{remark}

Now, we show that, for all good sequences $\gs{x}$, $\gs{y}$, $\gs{z}$ in {\amvm}, we have $(\gs{x} + \gs{y}) + \gs{z} = \gs{x} + (\gs{y} + \gs{z})$.
A direct verification, which seems difficult in general, becomes treatable when $\gs{y}$ is of the form $(y_0, 0, 0, \dots)$.
In fact, Light's associativity test guarantees that this is enough to imply associativity, thanks to the fact that the elements of the form $(y_0, 0, 0, \dots)$ generate $\GS(A)$.
In the following, we carry out the details.

\begin{lemma} \label{l:as sum of elements}
	For every good sequence $(a_0, \dots, a_n)$ in {\amvm} we have
	\[
		(a_0, \dots, a_n) = (a_0, \dots, a_{n-1}) + (a_n).
	\]
\end{lemma}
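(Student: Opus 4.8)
The plan is to unwind the right-hand side directly from the definition of the sum of good sequences, exploiting the fact that the summand $(a_n) = (a_n, 0, 0, \dots)$ is concentrated at the zeroth coordinate. Write $a'_i$ for the entries of $(a_0, \dots, a_{n-1})$, so that $a'_i = a_i$ for $i \leq n-1$ and $a'_i = 0$ for $i \geq n$, and write $b_j$ for the entries of $(a_n)$, so that $b_0 = a_n$ and $b_j = 0$ for $j \geq 1$. The case $n = 0$ is immediate: then $(a_0, \dots, a_{n-1}) = \gs{0}$, and $\gs{0} + (a_0) = (a_0)$ by \cref{r:0 is neutral}. So from now on I assume $n \geq 1$.

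Let $(c_0, c_1, c_2, \dots) \df (a_0, \dots, a_{n-1}) + (a_n)$, and compute $c_k$ using the second (additive) expression, whose equivalence to the first is guaranteed by \cref{l:associative}:
\[
    c_k = b_k \oplus (a'_0 \odot b_{k-1}) \oplus \dots \oplus (a'_{k-1} \odot b_0) \oplus a'_k.
\]
Since $b_j = 0$ for every $j \geq 1$, \cref{l:absorbing} makes each summand $a'_j \odot b_{k-1-j}$ with $j \leq k-2$ equal to $0$, and likewise $b_k = 0$ when $k \geq 1$; because $0$ is neutral for $\oplus$, the whole expression collapses, leaving
\[
    c_k = (a'_{k-1} \odot a_n) \oplus a'_k \quad (k \geq 1), \qquad c_0 = a_n \oplus a_0.
\]

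It then remains to verify $c_k = a_k$ coordinatewise. The crucial input is \cref{l:good is transitive}, which, applied to the good pairs of $(a_0, \dots, a_n)$, yields $a_i \odot a_n = a_n$ and $a_i \oplus a_n = a_i$ for every $i < n$; combined with $x \oplus 0 = x$ and $x \odot 0 = 0$ (\cref{l:absorbing}), this finishes the computation. For $k = 0$ and for $1 \leq k < n$ one obtains $c_k = a_n \oplus a_k = a_k$; for $k = n$ one obtains $c_n = a_{n-1} \odot a_n = a_n$ from the good pair $(a_{n-1}, a_n)$; and for $k > n$ both $a'_{k-1}$ and $a'_k$ vanish, so $c_k = 0 = a_k$. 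Hence $(c_0, c_1, \dots) = (a_0, \dots, a_n)$. The argument is essentially index bookkeeping rather than algebra; the only points that demand care are the two boundaries $k = n$ and $k = 0$, where the entries $a'_{k-1}$ and $a'_k$ switch between an original entry of $\gs{a}$ and $0$, so one must make sure the collapse is applied with the correct values there.
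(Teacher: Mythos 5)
Your proof is correct and follows essentially the same route as the paper's: a direct coordinatewise computation of the sum, collapsed via the goodness relations $a_i \oplus a_n = a_i$ and $a_i \odot a_n = a_n$ obtained by iterating \cref{l:good is transitive}. The only difference is that you expand $c_k$ using the additive ($\oplus$-based) form of the sum, whereas the paper uses the multiplicative form $a_0 \odot \dots \odot a_{k-1} \odot (a_k \oplus a_n)$; the bookkeeping is dual but the substance is identical.
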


\begin{proof}
	Set $(b_0, b_1, b_2, \dots) \df (a_0, \dots, a_{n-1}) + (a_n)$.
	For $k\in \{0, \dots,n-1\}$, we have
	\[
		b_k = a_0 \odot a_1 \odot \dots \odot a_{k-1} \odot (a_k \oplus a_n) = a_0 \odot a_1 \odot \dots \odot a_{k-1} \odot a_k = a_k.
	\]
	Moreover, $b_n = a_0 \odot a_1 \odot \dots \odot a_{n-1} \odot a_n = a_n$, and, for $k>n$, we have $b_k = 0$. In conclusion, $(a_0, \dots, a_n) = (b_0, b_1, b_2, \dots) = (a_0, \dots, a_{n-1}) + (a_n)$.
\end{proof}

\begin{notation} \label{n:magma}
	A \emph{magma} $\langle X; \cdot\rangle$ consists of a set $X$ and a binary operation $\cdot$ on $X$.
	Given a subset $T$ of a magma $X$, we define, inductively on $n \in \Np$, the subset $T_n$; we set $T_1 \df T$, and, for $n \in \Np$, $T_n \df \{\,tz \mid t\in T, z\in T_{n-1}\,\} \cup\{\,zt \mid t\in T, z\in T_{n-1}\,\}$.
	Roughly speaking, $T_n$ is the set of elements of $X$ which can be obtained with at most $n$ occurrences of elements of $T$ via application of the operation $\cdot$.
	We say that $T$ \emph{generates} $X$ if $\bigcup_{n \in \Np}T_n = X$.
\end{notation}

\begin{lemma} \label{l:generation}
	For every {\mvm} $A$, the set $\{\,(x) \in \GS(A) \mid x \in A\,\}$ generates the magma $\langle \GS(A); + \rangle$.
\end{lemma}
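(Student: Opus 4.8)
The plan is to prove the statement by induction on the ``length'' of a good sequence, with \cref{l:as sum of elements} serving as the inductive engine. Write $T \df \{\,(x) \in \GS(A) \mid x \in A\,\}$ and recall the sets $T_n$ from \cref{n:magma}; the goal is to show that $\bigcup_{n \in \Np} T_n = \GS(A)$. Since every good sequence is eventually $0$, each $\gs{a} \in \GS(A)$ can be written as $(a_0, \dots, a_n)$ for some $n \in \N$, and I would take this $n$ (an index past which the sequence vanishes) as the induction parameter.

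For the base case $n = 0$, the sequence $(a_0)$ is, by definition, the one-term sequence $(a_0, 0, 0, \dots)$, hence an element of $T = T_1$. For the inductive step, suppose the claim holds for all good sequences that can be written with parameter at most $n$, and let $\gs{a} = (a_0, \dots, a_{n+1})$. Here I would first note that the truncation $(a_0, \dots, a_n)$ is again a good sequence: it is eventually $0$, and each of its consecutive pairs $(a_i, a_{i+1})$ is good, being one of the good pairs of $\gs{a}$. By \cref{l:as sum of elements}, we have $\gs{a} = (a_0, \dots, a_n) + (a_{n+1})$. The inductive hypothesis places $(a_0, \dots, a_n)$ in some $T_k$, and $(a_{n+1}) \in T$; hence, by the definition of $T_{k+1}$ as containing all sums $z + t$ with $z \in T_k$ and $t \in T$, we conclude $\gs{a} \in T_{k+1} \seq \bigcup_{m \in \Np} T_m$.

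Since the base case and the inductive step together exhaust all good sequences, this yields $\bigcup_{n \in \Np} T_n = \GS(A)$, i.e.\ $T$ generates the magma $\langle \GS(A); + \rangle$. I expect no genuine obstacle here: the content is entirely carried by \cref{l:as sum of elements}, and the only point requiring (minor) care is the observation that a truncation of a good sequence is again a good sequence, so that the inductive hypothesis applies; the rest is bookkeeping of the generation index.
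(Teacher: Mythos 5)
Your proof is correct and is essentially the paper's own argument: the paper proves this lemma with the one-liner ``By induction, using \cref{l:as sum of elements}'', and you have simply filled in that induction on the length of the sequence. One micro-quibble: the pair $(a_n, 0)$ occurring in the truncation $(a_0, \dots, a_n)$ is not literally one of the good pairs of $\gs{a}$ (there the successor of $a_n$ is $a_{n+1}$); it is nonetheless good because $a_n \oplus 0 = a_n$ by \cref{ax:A2} and $a_n \odot 0 = 0$ by \cref{l:absorbing}, so the truncation is indeed a good sequence and your induction goes through.
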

\begin{proof}
	By induction, using \cref{l:as sum of elements}.
\end{proof}

\begin{lemma}[Light's associativity test]\label{l:Light}
	Let $\langle X; \cdot\rangle$ be a magma, and let $T$ be a subset of $X$ that generates $X$.
	Suppose that, for every $x, z\in X$ and $t\in T$, we have $(xt)z = x(tz)$.
	Then, the operation $\cdot$ is associative.
\end{lemma}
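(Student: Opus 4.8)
The plan is to isolate the elements of $X$ that ``associate in the middle'' and to show that this set is all of $X$. Concretely, I would define
\[
	S \df \{\, b \in X \mid (xb)z = x(bz) \text{ for all } x, z \in X \,\}.
\]
The hypothesis says precisely that $T \seq S$, and the desired conclusion $(ab)c = a(bc)$ for all $a, b, c \in X$ is exactly the assertion $S = X$ (read $b$ as the middle element). So the whole proof reduces to showing $S = X$.

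The key step is that $S$ is a submagma of $\langle X; \cdot \rangle$, i.e.\ closed under $\cdot$. Given $b, b' \in S$, I would verify $bb' \in S$ by checking, for arbitrary $x, z \in X$, that both $(x(bb'))z$ and $x((bb')z)$ equal the common value $(xb)(b'z)$. For the left-hand side, applying the defining property of $b \in S$ with the pair $(x, b')$ gives $x(bb') = (xb)b'$, and then applying the defining property of $b' \in S$ with the pair $(xb, z)$ gives $((xb)b')z = (xb)(b'z)$. For the right-hand side, applying the property of $b' \in S$ with the pair $(b, z)$ gives $(bb')z = b(b'z)$, and then the property of $b \in S$ with the pair $(x, b'z)$ gives $x(b(b'z)) = (xb)(b'z)$. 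Hence $bb' \in S$.

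Finally, I would conclude by the generation hypothesis. Since $T \seq S$ and $S$ is closed under $\cdot$, an immediate induction on $n \in \Np$ shows $T_n \seq S$ for every $n$: indeed $T_1 = T \seq S$, and each element of $T_n$ is of the form $tz$ or $zt$ with $t \in T \seq S$ and $z \in T_{n-1} \seq S$, hence lies in $S$. Therefore $X = \bigcup_{n \in \Np} T_n \seq S$, so $S = X$, which is precisely associativity. The only delicate point is the submagma step: since we may not assume associativity anywhere, we must apply the middle-associativity of $b$ and of $b'$ in a carefully chosen order, and the substance of the argument is exactly that both bracketings collapse to the same term $(xb)(b'z)$.
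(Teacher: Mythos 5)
Your proof is correct and is essentially the paper's argument: the computation showing that $(x(bb'))z$ and $x((bb')z)$ both collapse to the common value $(xb)(b'z)$ is exactly the chain of equalities in the paper's inductive step (with $b = t \in T$ and $b' = y'$), and the generation hypothesis is then exhausted by the same induction on $n$. The only difference is packaging: you isolate the set $S$ of middle-associative elements and prove it is a submagma --- marginally more general, since the paper only ever needs the case where one factor lies in $T$ --- but the substance is identical.
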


\begin{proof}
	Since $T$ generates $X$, we have $\bigcup_{n \in \Np}T_n = X$.
	We prove, by induction on $n \in \Np$, that, for every $y \in T_n$, and every $x, y \in X$, we have $(xy)z = x(yz)$.
	The case $n = 1$ is ensured by hypothesis.
	Let $n \geq 2$, and suppose that the cases $1, \dots,n-1$ hold.
	Then, either $y = ty'$ or $y = y't$, for some $t\in T$ and $y'\in T_n$.
	Suppose, for example, $y = ty'$.
	Then, we have
	\[
		(xy)z = (x(ty'))z = ((xt)y')z = (xt)(y'z) = x(t(y'z)) = x((ty')z) = x(yz).
	\]
	The case $y = y't$ is analogous.
\end{proof}

\begin{lemma} \label{l:associative 212}
	Let $A$ be {\amvm}, let $(a_0, a_1)$ and $(b_0, b_1)$ be good pairs in $A$, and let $x \in A$.
	Then
	\[
		(a_0 \oplus (x \odot b_0) \oplus b_1) \odot (a_1 \oplus x \oplus b_0) = (a_0 \oplus x \oplus b_1) \odot (a_1 \oplus (a_0 \odot x) \oplus b_0),
	\]
	and both sides coincide with $a_1 \oplus \sigma(a_0, x, b_0) \oplus b_1$.
\end{lemma}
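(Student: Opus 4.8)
The plan is to prove both displayed equalities at once by showing that each of the two sides, separately, equals the asserted common value $a_1 \oplus \sigma(a_0, x, b_0) \oplus b_1$. The key observation is that each side has the shape
\[
	(p_0 \oplus q_1) \odot (p_1 \oplus q_0),
\]
which is exactly the left-hand expression appearing in the $m = 1$ instance of the second equality of \cref{l:associative}. Specialising that lemma to $m = 1$ and two good pairs $(p_0, p_1)$, $(q_0, q_1)$ (a good pair extends to a good sequence of length two for free, since $(p_1, 0)$ and $(q_1, 0)$ are good by \cref{l:absorbing}) yields the identity
\[
	(p_0 \oplus q_1) \odot (p_1 \oplus q_0) = p_1 \oplus (p_0 \odot q_0) \oplus q_1 .
\]
So for each side I only need to exhibit the correct good pairs $(p_0, p_1)$ and $(q_0, q_1)$ and then identify the ``diagonal product'' $p_0 \odot q_0$ with $\sigma(a_0, x, b_0)$.

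For the left-hand side I would set $p_0 \df a_0 \oplus (x \odot b_0)$, $p_1 \df a_1$, $q_0 \df b_0 \oplus x$, and $q_1 \df b_1$, so that $p_0 \oplus q_1$ is the first factor and $p_1 \oplus q_0$ is the second factor. The pair $(p_0, p_1) = (a_0 \oplus (x \odot b_0), a_1)$ is good by \cref{l:oplus and odot are still good} applied to the good pair $(a_0, a_1)$ with $u = x \odot b_0$, and $(q_0, q_1) = (b_0 \oplus x, b_1)$ is good by the same lemma applied to $(b_0, b_1)$ with $u = x$. The identity above then rewrites the left-hand side as $a_1 \oplus (p_0 \odot q_0) \oplus b_1$, and I would compute
\[
	p_0 \odot q_0 = (b_0 \oplus x) \odot ((b_0 \odot x) \oplus a_0) = \sigma_1(x, b_0, a_0) = \sigma(a_0, x, b_0),
\]
the last step using the permutation invariance from \cref{l:permutations}. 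This gives the left-hand side $= a_1 \oplus \sigma(a_0, x, b_0) \oplus b_1$.

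The right-hand side is handled symmetrically, now with $p_0 \df a_0 \oplus x$, $p_1 \df a_1$, $q_0 \df b_0 \oplus (a_0 \odot x)$, $q_1 \df b_1$; again $(p_0, p_1)$ and $(q_0, q_1)$ are good by \cref{l:oplus and odot are still good}, and here $p_0 \odot q_0 = (a_0 \oplus x) \odot ((a_0 \odot x) \oplus b_0) = \sigma_1(a_0, x, b_0) = \sigma(a_0, x, b_0)$ directly. Thus the right-hand side also equals $a_1 \oplus \sigma(a_0, x, b_0) \oplus b_1$, and the two sides coincide. I do not expect any serious obstacle: the proof is essentially bookkeeping once the two substitutions are found. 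The only point requiring care—and the step I would flag as the crux—is recognising that perturbing the \emph{first} entry of a good pair by $\oplus u$ preserves goodness (\cref{l:oplus and odot are still good}), since this is precisely what licenses reading each side as a sum of two good pairs; the accompanying subtlety is keeping track of the permutation invariance of $\sigma$ (\cref{l:permutations}), which is what makes the two differently-shaped diagonal products $p_0 \odot q_0$ agree.
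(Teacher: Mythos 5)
Your proof is correct, and there is no circularity: \cref{l:associative} is established before \cref{l:associative 212} and its proof does not depend on it, so specialising its second identity to $m=1$ is legitimate; your observation that a good pair $(p_0,p_1)$ is automatically a good sequence (because $(p_1,0)$ is good by \cref{ax:A2} and \cref{l:absorbing}) closes the only formal gap in that specialisation. The route differs from the paper's in which lemma carries the load. The paper works directly: it notes that $(a_0 \oplus (x \odot b_0) \oplus b_1,\ a_1)$ and $(x \oplus b_0,\ b_1)$ are good (both via \cref{l:oplus and odot are still good}) and applies \cref{l:switch of plus and dot if good} twice to peel off first $a_1$ and then $b_1$, leaving $(a_0 \oplus (x \odot b_0)) \odot (x \oplus b_0) = \sigma_4(a_0,x,b_0) = \sigma(a_0,x,b_0)$. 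You instead package those two peelings into a single citation of the $m=1$ case of \cref{l:associative}, whose own proof performs exactly the same two applications of \cref{l:switch of plus and dot if good}; so the underlying mechanism is identical, but your version is shorter on the page at the cost of importing a heavier lemma, and it identifies the residual diagonal term as $\sigma_1$ after a permutation (\cref{l:permutations}) rather than reading it off as $\sigma_4$ directly. Both computations of the $\sigma$ term are fine. One cosmetic point: in your first display for the left-hand side you silently commute $p_1 \oplus q_0 = a_1 \oplus b_0 \oplus x$ into $a_1 \oplus x \oplus b_0$, and you state the conclusion of \cref{l:associative} as $p_1 \oplus (p_0 \odot q_0) \oplus q_1$ whereas the lemma literally yields $q_1 \oplus (p_0 \odot q_0) \oplus p_1$; both are harmless uses of commutativity of $\oplus$ but worth flagging explicitly.
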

\begin{proof}
	Since the pair $(a_0, a_1)$ is good, it follows from \cref{l:oplus and odot are still good} that the pair $(a_0 \oplus (x \odot b_0) \oplus b_1, a_1)$ is good.
	Since the pair $(b_0, b_1)$ is good, it follows from \cref{l:oplus and odot are still good} that the pair $(x \oplus b_0, b_1)$ is good.
	Therefore, we have
	\begin{align*}
		& (a_0 \oplus (x \odot b_0) \oplus b_1) \odot (a_1 \oplus x \oplus b_0) \\
		& = a_1 \oplus ((a_0 \oplus (x \odot b_0) \oplus b_1) \odot (x \oplus b_0))	&& \by{\cref{l:switch of plus and dot if good}} \\
		& = a_1 \oplus ((a_0 \oplus (x \odot b_0)) \odot (x \oplus b_0)) \oplus b_1	&& \by{\cref{l:switch of plus and dot if good}} \\
		& = a_1 \oplus \sigma(a_0, x, b_0) \oplus b_1.
	\end{align*}
	Analogously, $(a_0 \oplus x \oplus b_1) \odot (a_1 \oplus (a_0 \odot x) \oplus b_0) = a_1 \oplus \sigma(a_0, x, b_0) \oplus b_1$.
\end{proof}

\begin{lemma} \label{l:for assoc}
	Let $A$ be an {\mvm}, let $n \in \Np$, let $(a_0, \dots, a_n)$ and $(b_0, \dots, b_n)$ be good sequences in $A$, and let $x \in A$.
	Then
	\[
		\bigodot_{i = 0} ^n a_i \oplus (x \odot b_{n-i-1}) \oplus b_{n-i} = \bigodot_{i = 0}^n a_i \oplus (a_{i-1} \odot x) \oplus b_{n-i}.
	\]
\end{lemma}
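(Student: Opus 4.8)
The plan is to prove the identity by repeated application of \cref{l:associative 212}, reading that lemma as a local rule that moves a ``free $x$'' one position to the left inside a $\odot$-product. Throughout I use the convention $a_{-1} = b_{-1} = 1$, under which the $i = n$ factor on the left-hand side is $a_n \oplus (x \odot b_{-1}) \oplus b_0 = a_n \oplus x \oplus b_0$ and the $i = 0$ factor on the right-hand side is $a_0 \oplus (a_{-1} \odot x) \oplus b_n = a_0 \oplus x \oplus b_n$ (both using $1 \odot x = x$ from \cref{ax:A2}). Thus on the left the free $x$ sits in the last factor and on the right it sits in the first factor, while otherwise the two sides differ only in that each term $x \odot b_{n-i-1}$ has been turned into $a_{i-1} \odot x$.

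Concretely, for each $k \in \{0, \dots, n\}$ I would introduce the interpolating product
\[
	P_k \df \Bigl(\bigodot_{i=0}^{k-1} a_i \oplus (x \odot b_{n-i-1}) \oplus b_{n-i}\Bigr) \odot \bigl(a_k \oplus x \oplus b_{n-k}\bigr) \odot \Bigl(\bigodot_{i=k+1}^{n} a_i \oplus (a_{i-1} \odot x) \oplus b_{n-i}\Bigr),
\]
in which the factors before position $k$ are still in ``$x \odot b$'' form, the factor at position $k$ carries the free $x$, and the factors after position $k$ are already in ``$a \odot x$'' form. Since $\odot$ is commutative and associative (\cref{ax:A2}), this product is unambiguous (and the boundary products are empty when $k = 0$ or $k = n$). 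By the remarks above, $P_n$ is exactly the left-hand side of the statement and $P_0$ is exactly the right-hand side, so it suffices to prove $P_{k+1} = P_k$ for every $k \in \{0, \dots, n-1\}$.

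For the step $P_{k+1} = P_k$ I would isolate, using commutativity and associativity of $\odot$, the two adjacent factors occupying positions $k$ and $k+1$ in $P_{k+1}$, namely $a_k \oplus (x \odot b_{n-k-1}) \oplus b_{n-k}$ and $a_{k+1} \oplus x \oplus b_{n-k-1}$, and apply \cref{l:associative 212} with the substitution $a_0 \mapsto a_k$, $a_1 \mapsto a_{k+1}$, $b_0 \mapsto b_{n-k-1}$, $b_1 \mapsto b_{n-k}$. The pairs $(a_k, a_{k+1})$ and $(b_{n-k-1}, b_{n-k})$ are good, being consecutive entries of the good sequences $(a_0, \dots, a_n)$ and $(b_0, \dots, b_n)$, so the hypotheses of \cref{l:associative 212} are met. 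The lemma rewrites this pair of factors as $(a_k \oplus x \oplus b_{n-k}) \odot (a_{k+1} \oplus (a_k \odot x) \oplus b_{n-k-1})$; reinserting them leaves every other factor untouched and yields precisely $P_k$, since the free $x$ has migrated from position $k+1$ to position $k$ and position $k+1$ has acquired ``$a \odot x$'' form.

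I expect no conceptual difficulty here: the whole argument is an $n$-fold telescoping of \cref{l:associative 212}, and the only real care needed is the index bookkeeping---checking that at each step the factor to the left of the free $x$ is in ``$x \odot b$'' form with the matching $b$-index, so that \cref{l:associative 212} applies verbatim, and that the boundary conventions $a_{-1} = b_{-1} = 1$ correctly identify $P_n$ and $P_0$ with the two sides of the claimed equation. The goodness hypotheses are automatic, as only consecutive pairs from the given good sequences ever occur.
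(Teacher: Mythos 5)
Your proposal is correct and is essentially the paper's argument: both proofs move the free $x$ leftward one position at a time by repeated application of \cref{l:associative 212} to consecutive good pairs $(a_k, a_{k+1})$ and $(b_{n-k-1}, b_{n-k})$, with the same implicit boundary convention $a_{-1} = b_{-1} = 1$. The only difference is packaging — the paper phrases the iteration as an induction on $n$ (one application of \cref{l:associative 212} followed by the inductive hypothesis on the truncated sequences $(a_0,\dots,a_{n-1})$ and $(b_1,\dots,b_n)$), whereas you unroll it into an explicit telescoping chain $P_n = P_{n-1} = \dots = P_0$.
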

\begin{proof}
	We prove the statement by induction on $n \in \Np$.
	The case $n = 1$ is \cref{l:associative 212}.
	Now let $n \in \N\setminus \{0, 1\}$, and suppose that the statement holds for $n-1$.
	In the following chain of equalities, the second equality is obtained by an application of \cref{l:associative 212} with respect to the good pairs $(a_{n-1}, a_n)$ and $(b_0, b_1)$, and the third equality is obtained by an application of the inductive hypothesis with respect to the good sequences $(a_0, \dots, a_{n-1})$ and $(b_1, \dots, b_n)$.
	\begin{align*}
		& \bigodot_{i = 0} ^n a_i \oplus (x \odot b_{n-i-1}) \oplus b_{n-i} \\
		& = \left(\bigodot_{i = 0} ^{n-2} a_i \oplus (x \odot b_{n-i-1}) \oplus b_{n-i} \right) \odot (a_{n-1} \oplus (x \odot b_{0}) \oplus b_1) \\
		& \quad \odot (a_n \oplus x \oplus b_1) \\
		& = \left(\bigodot_{i = 0} ^{n-2} a_i \oplus (x \odot b_{n-i-1}) \oplus b_{n-i} \right) \odot (a_{n-1} \oplus x \oplus b_1) \\
		& \quad \odot (a_n \oplus (a_{n-1} \odot x) \oplus b_1) \\
		& = \left(\bigodot_{i = 0}^n a_i \oplus (a_{i-1} \odot x) \oplus b_{n-i} \right) \odot (a_n \oplus (a_{n-1} \odot x) \oplus b_1) \\
		& = \bigodot_{i = 0}^n a_i \oplus (a_{i-1} \odot x) \oplus b_{n-i}. \qedhere
	\end{align*}
\end{proof}

\begin{lemma} \label{l:associativity}
	Let $A$ be an {\mvm}, let $\gs{a}$ and $\gs{b}$ be good sequences in $A$, and let $x \in A$.
	Then,
	\[
		(\gs{a} + (x)) + \gs{b} = \gs{a} + ((x) + \gs{b}).
	\]
\end{lemma}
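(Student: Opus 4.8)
The plan is to prove the identity componentwise. By \cref{r:order is pointwise}, two good sequences agree exactly when all their components agree, so it suffices to fix $n \in \N$ and show that the $n$-th components of $(\gs{a} + (x)) + \gs{b}$ and $\gs{a} + ((x) + \gs{b})$ coincide. Writing $\gs{a} = (a_0, a_1, \dots)$ and $\gs{b} = (b_0, b_1, \dots)$ and padding with zeros if necessary (which preserves goodness, since $(a_m,0)$ and $(0,0)$ are good pairs by \cref{ax:A2} and \cref{l:absorbing}), I may assume $(a_0, \dots, a_n)$ and $(b_0, \dots, b_n)$ are good sequences, so that \cref{l:for assoc} applies for this value of $n$. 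Throughout I adopt the convention $a_{-1} = b_{-1} = 1$, matching the boundary indices appearing in \cref{l:for assoc}.

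First I would compute the auxiliary sums $\gs{a} + (x)$ and $(x) + \gs{b}$ termwise. Recalling $(x) = (x, 0, 0, \dots)$ and expanding the definition of $+$ in its second ($\oplus$-of-$\odot$) form, using that $0$ is absorbing for $\odot$ (\cref{l:absorbing}) and neutral for $\oplus$ (\cref{ax:A2}), all cross terms vanish except one, so the $k$-th term of $\gs{a} + (x)$ collapses to $(a_{k-1} \odot x) \oplus a_k$, and symmetrically the $k$-th term of $(x) + \gs{b}$ is $(x \odot b_{k-1}) \oplus b_k$. Both formulas are valid also at $k = 0$ under the convention above, since $1 \odot x = x$ by \cref{ax:A2}.

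Then I would substitute these into the outer sum using the first (product-of-$\oplus$) form of $+$. For the left-hand side, writing $\gs{p} \df \gs{a} + (x)$, the $n$-th component of $\gs{p} + \gs{b}$ is $\bigodot_{i=0}^n (p_i \oplus b_{n-i}) = \bigodot_{i=0}^n \bigl(a_i \oplus (a_{i-1}\odot x) \oplus b_{n-i}\bigr)$. For the right-hand side, writing $\gs{q} \df (x) + \gs{b}$, the $n$-th component of $\gs{a} + \gs{q}$ is $\bigodot_{i=0}^n (a_i \oplus q_{n-i}) = \bigodot_{i=0}^n \bigl(a_i \oplus (x \odot b_{n-i-1}) \oplus b_{n-i}\bigr)$. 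These two products are precisely the right- and left-hand sides of \cref{l:for assoc}, which therefore yields their equality and completes the proof.

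The computations are routine once the two auxiliary termwise formulas are in place; the only genuine algebraic content is \cref{l:for assoc}, which I am assuming. Consequently the sole points requiring care are bookkeeping ones: correctly matching the boundary terms $i = 0$ and $i = n$, where the conventions $a_{-1} = 1$ and $b_{-1} = 1$ enter through $a_{-1}\odot x = x$ and $x \odot b_{-1} = x$, and choosing the padding so that the truncations $(a_0,\dots,a_n)$ and $(b_0,\dots,b_n)$ remain good sequences, ensuring \cref{l:for assoc} is applicable.
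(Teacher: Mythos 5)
Your proposal is correct and follows essentially the same route as the paper: compute the components of $\gs{a}+(x)$ and $(x)+\gs{b}$ as $(a_{k-1}\odot x)\oplus a_k$ and $(x\odot b_{k-1})\oplus b_k$, substitute into the product-of-$\oplus$ form of the outer sum, and conclude by \cref{l:for assoc}. The paper's proof is just a terser version of the same componentwise computation, stating the auxiliary termwise formulas without the justification you supply.
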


\begin{proof}
	Set $\gs{d} \df \gs{a} + (x)$ and $\gs{e} \df(x) + \gs{b}$.
%	We have $d_0 = a_0 \oplus x$ and, f
	For every $n \in \N$, we have $d_n = a_n \oplus (a_{n-1} \odot x)$ and $e_n = (x \odot b_{n-1}) \oplus b_n$.
	We set $\gs{f} \df(\gs{a} + (x)) + \gs{b}$ and $\gs{g} \df \gs{a} + ((x) + \gs{b})$.
	For every $n \in \N$, we have
	\begin{align*}
		f_n	& = \bigodot_{i = 0} ^n d_i \oplus b_{n-i} \\
				& = \bigodot_{i = 0} ^n a_i \oplus (a_{i-1} \odot x) \oplus b_{n-i} \\
				& = \bigodot_{i = 0} ^n a_i \oplus (x \odot b_{n-i-1}) \oplus b_{n-i}	&& \by{\cref{l:for assoc}} \\
				& = \bigodot_{i = 0} ^n a_i \oplus e_{n-i} \\
				& = g_n.																						&& \qedhere
	\end{align*}
\end{proof}

\begin{proposition} \label{p:associativity full}
	Addition of good sequences is associative.
\end{proposition}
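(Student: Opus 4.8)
The plan is to invoke Light's associativity test, \cref{l:Light}, applied to the magma $\langle \GS(A); + \rangle$ with the generating set $T \df \{\,(x) \in \GS(A) \mid x \in A\,\}$. All the computational difficulty has already been absorbed into the preceding lemmas, so the argument here is purely organizational: I would combine \cref{l:generation} (which supplies the generation hypothesis) with \cref{l:associativity} (which supplies the mixed-associativity hypothesis restricted to single-element good sequences in the middle position).

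First I would recall that, by \cref{l:generation}, the set $T = \{\,(x) \in \GS(A) \mid x \in A\,\}$ generates the magma $\langle \GS(A); + \rangle$. This is exactly the generation condition required by \cref{l:Light}.

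Next I would verify the hypothesis of \cref{l:Light}, namely that $(\gs{x} + t) + \gs{z} = \gs{x} + (t + \gs{z})$ for all $\gs{x}, \gs{z} \in \GS(A)$ and every $t \in T$. Since each element of $T$ is of the form $(x)$ for some $x \in A$, this required identity reads
\[
	(\gs{x} + (x)) + \gs{z} = \gs{x} + ((x) + \gs{z}),
\]
which is precisely the content of \cref{l:associativity} (with the roles of the good sequences named $\gs{a}$ and $\gs{b}$ there). Having confirmed both hypotheses, \cref{l:Light} yields that $+$ is associative on $\GS(A)$, which is the assertion of the proposition.

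I do not anticipate a genuine obstacle at this final step: the substantive work lies upstream, in establishing the partial-associativity identity of \cref{l:associativity} (itself built on \cref{l:for assoc}) and in the generation result \cref{l:generation}. The only point requiring a moment's care is to match notation correctly when citing \cref{l:associativity}, so that the middle factor $t$ in Light's test is consistently identified with a single-element good sequence $(x)$; once that identification is made explicit, the conclusion is immediate.
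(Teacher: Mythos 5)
Your proposal is correct and matches the paper's proof exactly: the paper also deduces \cref{p:associativity full} by combining \cref{l:generation}, \cref{l:associativity}, and Light's test \cref{l:Light}, with the single-element good sequences $(x)$ as the generating set. You have merely spelled out the details that the paper leaves implicit.
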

\begin{proof}
	By \cref{l:associativity,l:Light,l:generation}.
\end{proof}

Our next aim---reached in \cref{p:distributivity full} below---is to show that good sequences satisfy $\gs{a} + (\gs{b} \lor \gs{c}) = (\gs{a} + \gs{b}) \lor (\gs{a} + \gs{c})$.
We need some lemmas.

\begin{lemma} \label{l:for distributivity}
	Let $A$ be {\amvm}, let $(x_0, x_1)$ and $(y_0, y_1)$ be good pairs in $A$ and let $z \in A$.
	Then 
	\[
		((z \oplus x_1) \odot x_0) \lor ((z \oplus y_1) \odot y_0) = (z \oplus (x_1 \lor y_1)) \odot (x_0 \lor y_0).
	\]
\end{lemma}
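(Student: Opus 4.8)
The plan is to reduce the claimed identity, via commutativity of $\oplus$ and $\odot$, to
\[
	(x_0 \odot (x_1 \oplus z)) \lor (y_0 \odot (y_1 \oplus z)) = (x_0 \lor y_0) \odot ((x_1 \lor y_1) \oplus z),
\]
and then prove the two inequalities separately. The inequality $\leq$ is immediate from monotonicity of $\oplus$ and $\odot$ (\cref{l:order-preserving properties}), since $x_0 \leq x_0 \lor y_0$, $x_1 \leq x_1 \lor y_1$, and symmetrically. The substance is the reverse inequality $\geq$.

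For $\geq$, I would first expand the right-hand side using distributivity (\cref{ax:A3}): since $(x_1 \lor y_1) \oplus z = (x_1 \oplus z) \lor (y_1 \oplus z)$, distributing $\odot$ over $\lor$ yields a join of four terms, two of which are exactly the summands on the left (the \emph{diagonal} terms $x_0 \odot (x_1 \oplus z)$ and $y_0 \odot (y_1 \oplus z)$), the remaining two being the \emph{cross} terms $x_0 \odot (y_1 \oplus z)$ and $y_0 \odot (x_1 \oplus z)$. It then suffices to show that each cross term lies below the join of the two diagonal terms; by the $x \leftrightarrow y$ symmetry it is enough to treat $y_0 \odot (x_1 \oplus z)$.

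The key step---and the main obstacle---is an absorption identity for a single good pair, which I expect to extract from \cref{ax:A6}: for a good pair $(x_0, x_1)$ and any $t \in A$,
\[
	x_1 \oplus t = (x_0 \odot (x_1 \oplus t)) \lor t.
\]
Indeed, instantiating \cref{ax:A6} at $(x_0, x_1, t)$ gives $(x_0 \odot x_1) \oplus t = \sigma(x_0, x_1, t) \lor t$; using $x_0 \odot x_1 = x_1$ (since $(x_0, x_1)$ is good) together with \cref{l:switch of plus and dot if good} to rewrite $\sigma(x_0, x_1, t) = x_0 \odot (x_1 \oplus t)$, the identity follows. This is precisely where the good-pair hypothesis is genuinely used, and it is what fails for arbitrary pairs; spotting that \cref{ax:A6} supplies exactly this identity is the crux, as it dissolves the cross terms without any case analysis on the order.

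With this in hand I would bound the cross term as follows. By \cref{l:almost associative} and commutativity, $y_0 \odot (x_1 \oplus z) \leq x_1 \oplus (y_0 \odot z)$. Applying the absorption identity with $t \df y_0 \odot z$ gives
\[
	x_1 \oplus (y_0 \odot z) = (x_0 \odot (x_1 \oplus (y_0 \odot z))) \lor (y_0 \odot z),
\]
and monotonicity (\cref{l:order-preserving properties}) finishes the job: $x_0 \odot (x_1 \oplus (y_0 \odot z)) \leq x_0 \odot (x_1 \oplus z)$ because $y_0 \odot z \leq z$, while $y_0 \odot z \leq y_0 \odot (y_1 \oplus z)$ because $z \leq y_1 \oplus z$. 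Hence $y_0 \odot (x_1 \oplus z)$ lies below the join of the two diagonal terms, and the symmetric argument (with the good pair $(y_0, y_1)$ and $t \df x_0 \odot z$) handles $x_0 \odot (y_1 \oplus z)$. This collapses the four-term expansion to the two diagonal terms, yielding the reverse inequality and hence the stated equality.
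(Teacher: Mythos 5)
Your proof is correct and follows essentially the same route as the paper: both arguments expand $(z \oplus (x_1 \lor y_1)) \odot (x_0 \lor y_0)$ into a four-term join via distributivity and then absorb the two cross terms into the join of the diagonal ones using \cref{ax:A6} together with the goodness of the pairs (your absorption identity $x_1 \oplus t = (x_0 \odot (x_1 \oplus t)) \lor t$ is exactly the paper's rewriting $z \oplus x_1 = z \lor \sigma(z, x_1, x_0)$). The only difference is cosmetic: you detour through \cref{l:almost associative} and apply the absorption identity at the shifted argument $t = y_0 \odot z$, whereas the paper applies it directly to $z \oplus x_1$ and then distributes $\odot\, y_0$ over the resulting join.
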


\begin{proof}
	We have
	\begin{align} 
		& (z \oplus (x_1 \lor y_1)) \odot (x_0 \lor y_0) \notag\\
		& = ((z \oplus x_1) \lor (z \oplus y_1)) \odot (x_0 \lor y_0) \notag\\
		& = ((z \oplus x_1) \odot (x_0 \lor y_0)) \lor ((z \oplus y_1) \odot (x_0 \lor y_0)) \notag\\
		& = ((z \oplus x_1) \odot x_0) \lor ((z \oplus x_1) \odot y_0) \lor ((z \oplus y_1) \odot x_0) \lor ((z \oplus y_1) \odot y_0) \notag\\
		& = \sigma(z, x_1, x_0) \lor ((z \oplus x_1) \odot y_0) \lor ((z \oplus y_1) \odot x_0) \lor \sigma(z, y_1, y_0), \label{e:distr-lor}
	\end{align}
	where the last equality follows from \cref{l:switch of plus and dot if good}.
	We have
	\begin{align*}
		(z \oplus x_1) \odot y_0	& = 		(z \oplus (x_1 \odot x_0)) \odot y_0					&& \by{$(x_0, x_1)$ is good} \\
											& = 		(z \lor \sigma(z, x_1, x_0)) \odot y_0					&& \by{\cref{ax:A6}} \\
											& = 		(z \odot y_0) \lor (\sigma(z, x_1, x_0) \odot y_0)	&& \by{$\odot$ distr.\ over $ \lor $} \\
											& \leq 	((z \oplus y_1) \odot y_0) \lor \sigma(z, x_1, x_0)	&& \by{\cref{l:order-preserving properties}} \\
											& = 		\sigma(z, y_1, y_0) \lor \sigma(z, x_1, x_0).		&& \by{\cref{l:switch of plus and dot if good}}
	\end{align*}
	Analogously, $(z \oplus y_1) \odot x_0 \leq \sigma(z, x_1, x_0) \lor \sigma(z, y_1, y_0)$.
	Therefore, \eqref{e:distr-lor} equals $\sigma(z, x_1, x_0) \lor \sigma(z, y_1, y_0)$, i.e.\ $((z \oplus x_1) \odot x_0) \lor ((z \oplus y_1) \odot y_0)$.
\end{proof}

\begin{lemma} \label{l:distributivity}
	Let $A$ be an {\mvm}, let $a\in A$ and let $\gs{b}$ and $\gs{c}$ be good sequences in $A$.
	Then, $(a) + (\gs{b} \lor \gs{c}) = ((a) + \gs{b}) \lor ((a) + \gs{c})$.
\end{lemma}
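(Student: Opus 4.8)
The plan is to reduce the asserted identity to a componentwise statement and then to apply \cref{l:for distributivity}. Since $\gs{b} \lor \gs{c}$ is again a good sequence by \cref{p:join is good}, and since all three sums appearing in the statement are good sequences by \cref{p:sum is good}, while joins of good sequences are computed componentwise, it suffices to verify equality of the $n$-th components for every $n \in \N$.

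First I would compute the $n$-th component $[(a) + \gs{d}]_n$ of $(a) + \gs{d}$ for an arbitrary good sequence $\gs{d} = (d_0, d_1, d_2, \dots)$. Writing $(a) = (a, 0, 0, \dots)$ and applying the defining formula for the sum, the factors indexed by the vanishing entries reduce via $0 \oplus d_i = d_i$, so for $n \geq 1$ the $n$-th component equals $(a \oplus d_n) \odot d_{n-1} \odot \dots \odot d_0$, while for $n = 0$ it equals $a \oplus d_0$. Because $\gs{d}$ is good, \cref{l:good is transitive} yields $d_i \odot d_j = d_j$ whenever $i \leq j$, whence $d_{n-1} \odot \dots \odot d_0 = d_{n-1}$. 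Adopting the convention $d_{-1} \df 1$---legitimate because $(1, d_0)$ is a good pair by \cref{l:absorbing} and \cref{ax:A2}---I obtain the uniform formula $[(a) + \gs{d}]_n = (a \oplus d_n) \odot d_{n-1}$, valid for all $n \in \N$.

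Applying this formula to $\gs{d} = \gs{b} \lor \gs{c}$, to $\gs{d} = \gs{b}$, and to $\gs{d} = \gs{c}$, the desired componentwise identity becomes
\[
	(a \oplus (b_n \lor c_n)) \odot (b_{n-1} \lor c_{n-1}) = ((a \oplus b_n) \odot b_{n-1}) \lor ((a \oplus c_n) \odot c_{n-1}).
\]
This is precisely the conclusion of \cref{l:for distributivity} with $z \df a$, $(x_0, x_1) \df (b_{n-1}, b_n)$ and $(y_0, y_1) \df (c_{n-1}, c_n)$; the hypotheses hold because $(b_{n-1}, b_n)$ and $(c_{n-1}, c_n)$ are good pairs (for $n \geq 1$ by goodness of $\gs{b}$ and $\gs{c}$, and for $n = 0$ by the convention $b_{-1} = c_{-1} = 1$).

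The computations are routine, and I do not expect a serious obstacle: the substance of the argument has already been isolated in \cref{l:for distributivity}. The only point requiring care is the bookkeeping for the component formula---in particular the boundary index $n = 0$, which the convention $d_{-1} \df 1$ absorbs cleanly, so that a single invocation of \cref{l:for distributivity} settles all components at once.
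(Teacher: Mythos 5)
Your proof is correct and follows essentially the same route as the paper's: compute the $n$-th component of each side, collapse the telescoping product $d_{n-1} \odot \dots \odot d_0$ to $d_{n-1}$ using goodness, and conclude with a single application of \cref{l:for distributivity}. (Two cosmetic remarks: only the cases $i < j$ of your claim $d_i \odot d_j = d_j$ are needed---$d_i \odot d_i = d_i$ need not hold in general---and your convention $d_{-1} \df 1$ cleanly handles the $n = 0$ boundary case that the paper leaves implicit.)
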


\begin{proof}
	We set $\gs{d} \df (a) + {(\gs{b} \lor \gs{c})}$.
	Then, 
	\[
		d_n = (a \oplus (b_n \lor c_n)) \odot (b_{n-1} \lor c_{n-1}) \odot \dots \odot (b_0 \lor c_0) = (a \oplus (b_n \lor c_n)) \odot (b_{n-1} \lor c_{n-1}).
	\]
	We set $\gs{f} \df (a) + \gs{b}$, $\gs{g} \df (a) + \gs{c}$ and $\gs{h} \df \gs{f} \lor \gs{g} = ((a) + \gs{b}) \lor ((a) + \gs{c})$. We have
	\begin{align*}
		f_n	& = (a \oplus b_n) \odot b_{n-1} \odot \dots \odot b_0 = (a \oplus b_n) \odot b_{n-1}, \\
		g_n	& = (a \oplus c_n) \odot c_{n-1} \odot \dots \odot c_0 = (a \oplus c_n) \odot c_{n-1},
	\intertext{and}
		h_n	& = f_n \lor g_n \\
				& = ((a \oplus b_n) \odot b_{n-1}) \lor ((a \oplus c_n) \odot c_{n-1}) \\
				& = (a \oplus (b_n \lor c_n)) \odot (b_{n-1} \lor c_{n-1})
				&& \by{\cref{l:for distributivity}} \\
				& = d_n.
				&& \qedhere
	\end{align*}
\end{proof}

\begin{proposition} \label{p:distributivity full}
	For all good sequences $\gs{a}, \gs{b}, \gs{c}$ in {\amvm}, we have 
	\begin{align*}
		\gs{a} + (\gs{b} \lor \gs{c}) = (\gs{a} + \gs{b}) \lor (\gs{a} + \gs{c}),
	\intertext{and}
		\gs{a} + (\gs{b} \land \gs{c}) = (\gs{a} + \gs{b}) \land (\gs{a} + \gs{c}).
	\end{align*}
\end{proposition}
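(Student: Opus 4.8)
The plan is to deduce the identity for arbitrary good sequences from the one-element case already established in \cref{l:distributivity} and then propagate it along sums. By \cref{l:as sum of elements} every good sequence decomposes as $\gs{a} = (a_0, \dots, a_{n-1}) + (a_n)$, so I would argue the join identity by induction on the length $n$ of $\gs{a}$. The base cases are $\gs{a} = \gs{0}$ (where both sides reduce to $\gs{b} \lor \gs{c}$ by \cref{r:0 is neutral}) and $\gs{a} = (a_0)$ (which is exactly \cref{l:distributivity}). For the inductive step, writing $\gs{a} = \gs{a}' + (a_n)$ with $\gs{a}' \df (a_0, \dots, a_{n-1})$, I would use commutativity (\cref{p:sum is commutative}) and associativity (\cref{p:associativity full}) to bring $(a_n)$ to the front, apply the inductive hypothesis to $\gs{a}'$, then apply \cref{l:distributivity} to the one-element sequence $(a_n)$, and reassemble:
\begin{align*}
\gs{a} + (\gs{b} \lor \gs{c})
 &= (a_n) + (\gs{a}' + (\gs{b} \lor \gs{c})) \\
 &= (a_n) + ((\gs{a}' + \gs{b}) \lor (\gs{a}' + \gs{c})) \\
 &= ((a_n) + (\gs{a}' + \gs{b})) \lor ((a_n) + (\gs{a}' + \gs{c})) \\
 &= (\gs{a} + \gs{b}) \lor (\gs{a} + \gs{c}).
\end{align*}

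For the meet identity, one inequality comes for free: the join identity just proved shows that $\gs{x} \mapsto \gs{a} + \gs{x}$ is order-preserving, so from $\gs{b} \land \gs{c} \leq \gs{b}, \gs{c}$ one gets $\gs{a} + (\gs{b} \land \gs{c}) \leq (\gs{a} + \gs{b}) \land (\gs{a} + \gs{c})$. The substance is the reverse inequality, equivalently the full identity, which I would obtain by the same two-step pattern: first prove the one-element case $(a) + (\gs{b} \land \gs{c}) = ((a) + \gs{b}) \land ((a) + \gs{c})$, then run the identical length induction. As in the proof of \cref{l:distributivity}, using that $\gs{b} \land \gs{c}$ is good (\cref{p:join is good}) and that the product of consecutive terms of a good sequence collapses, this one-element case reduces to the pointwise identity
\[
(z \oplus (x_1 \land y_1)) \odot (x_0 \land y_0) = ((z \oplus x_1) \odot x_0) \land ((z \oplus y_1) \odot y_0)
\]
for good pairs $(x_0, x_1)$, $(y_0, y_1)$ and any $z$, which is the meet-analogue of \cref{l:for distributivity}.

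To prove this analogue I would expand the left-hand side with distributivity of $\oplus$ and $\odot$ over $\land$ (\cref{ax:A3}) into the four-fold meet of $(z \oplus x_1) \odot x_0$, $(z \oplus x_1) \odot y_0$, $(z \oplus y_1) \odot x_0$ and $(z \oplus y_1) \odot y_0$; by \cref{l:switch of plus and dot if good} the two diagonal terms equal $\sigma(z, x_1, x_0)$ and $\sigma(z, y_1, y_0)$, so it suffices to show that each cross term dominates $\sigma(z, x_1, x_0) \land \sigma(z, y_1, y_0)$, after which the four-fold meet collapses to the diagonal meet. I expect this cross-term bound to be the main obstacle. It is formally dual to the bound in \cref{l:for distributivity}, but it \emph{cannot} be obtained by merely invoking the self-duality of the {\mvm} axioms, because the notion of good pair is not self-dual: the dual of a good pair is a good pair with its entries swapped. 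Hence I would re-derive it directly, rewriting $(z \oplus x_1) \odot y_0$ by means of \cref{ax:A7} together with the good-pair relations $x_0 \odot x_1 = x_1$ and $y_0 \oplus y_1 = y_0$, and using monotonicity of $\oplus$ and $\odot$ (\cref{l:order-preserving properties}) to compare it with the diagonal $\sigma$'s. Once this meet-analogue of \cref{l:for distributivity} is in place, the one-element meet identity follows exactly as in \cref{l:distributivity}, and the length induction of the first paragraph carries it to all good sequences, completing the proof.
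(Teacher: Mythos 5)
Your proof of the join identity is correct and is essentially the paper's own argument: the paper likewise reduces to the one-element case \cref{l:distributivity} and propagates it by induction along the decomposition of $\gs{a}$ into summands of the form $(x)$ (via \cref{l:generation} rather than an explicit length induction, but this is the same idea). For the meet identity the paper merely says ``analogous'', and your observation that the duality is not entirely automatic --- the dual of a good pair $(x_0,x_1)$ is the good pair $(x_1,x_0)$ --- is well taken. There is, however, a shortcut that avoids redoing the four-term computation: by \cref{l:switch of plus and dot if good} one has $(z\oplus x_1)\odot x_0=(z\odot x_0)\oplus x_1$ for any good pair $(x_0,x_1)$, and by \cref{l:join of good pairs} the pair $(x_0\land y_0,x_1\land y_1)$ is good, so the meet-analogue of \cref{l:for distributivity} that you need, namely
\[
(z \oplus (x_1 \land y_1)) \odot (x_0 \land y_0) = ((z \oplus x_1) \odot x_0) \land ((z \oplus y_1) \odot y_0),
\]
becomes, after rewriting both sides with the switch lemma, exactly the statement of \cref{l:for distributivity} in the dual algebra applied to the (dual-)good pairs $(x_1,x_0)$ and $(y_1,y_0)$; no new estimate is required. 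If you prefer your direct route, the cross-term bound you flag as the main obstacle does hold: since $y_0=y_0\oplus y_1$, \cref{ax:A7} gives $(z\oplus x_1)\odot y_0=\sigma(y_0,y_1,z\oplus x_1)\land(z\oplus x_1)$; by \cref{l:switch of plus and dot if good,l:permutations,l:order-preserving properties} we get $\sigma(y_0,y_1,z\oplus x_1)=y_1\oplus(y_0\odot(z\oplus x_1))\geq y_1\oplus(y_0\odot z)=\sigma(z,y_1,y_0)$ and $z\oplus x_1\geq(z\oplus x_1)\odot x_0=\sigma(z,x_1,x_0)$, whence $(z\oplus x_1)\odot y_0\geq\sigma(z,x_1,x_0)\land\sigma(z,y_1,y_0)$, which is what the collapse of the fourfold meet requires. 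So your proposal is complete in substance and, on the meet case, more careful than the paper itself.
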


\begin{proof}
	Let us prove the first equality: the second one is analogous.
	Let $A$ be the {\mvm} of the statement.
	Set $\hat{A} \df \{\,(x) \in \GS(A) \mid x \in A\,\}$.
	By \cref{l:generation}, $\hat{A}$ generates the magma $\langle \GS(A); + \rangle$.
	Following \cref{n:magma}, for $n \in \Np$, we let $\hat{A}_n$ denote the set of elements of $\GS(A)$ which can be obtained with at most $n$ occurrences of elements of $\hat{A}$ via application of $ + $.
	We prove by induction on $n \in \Np$ that, for all $\gs{a} \in \hat{A}_n$, and $\gs{b}$, $\gs{c} \in \GS(A)$, we have $\gs{a} + (\gs{b} \lor \gs{c}) = (\gs{a} + \gs{b}) \lor (\gs{a} + \gs{c})$.
	The case $n = 1$ is \cref{l:distributivity}.
	Suppose that the statement holds for $n \in \Np$, and let us prove it for $n + 1$.
	Let $\gs{a} \in \hat{A}_{n + 1}$, and let $\gs{b}$, $\gs{c} \in \GS(A)$.
	Then, there exists $\gs{a'} \in \hat{A}_n$ and $\gs{x} \in \hat{A}$ such that $\gs{a} = \gs{a'} + \gs{x}$ or $\gs{a} = \gs{x} + \gs{a'}$.
	Since addition is commutative by \cref{p:sum is commutative}, these two conditions are equivalent.
	So,
	\begin{align*}
		\gs{a} + (\gs{b} \lor \gs{c})	& = \gs{x} + \gs{a'} + (\gs{b} \lor \gs{c}) \\
												& = \gs{x} + ((\gs{a'} + \gs{b}) \lor (\gs{a'} + \gs{c})) && \by{ind.\ hyp.}\\
												& = (\gs{x} + \gs{a'} + \gs{b}) \lor (\gs{x} + \gs{a'} + \gs{c})	&& \by{\cref{l:distributivity}}\\
												& = (\gs{a} + \gs{b}) \lor (\gs{a} + \gs{c}).	&&\qedhere
	\end{align*}
\end{proof}

For a good sequence $\gs{a} = (a_0, a_1, a_2, \dots)$ in an {\mvm}, set $\gs{a} \ominus 1 \df(a_1, a_2, a_3, \dots)$.
The sequence $\gs{a} \ominus 1$ is a good sequence.

\begin{proposition}
	For every {\mvm} $A$, the algebra $\GS(A)$ is {\apulm}.
\end{proposition}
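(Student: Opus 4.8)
The plan is to verify the five conditions \cref{ax:P0,ax:P1,ax:P2,ax:P3,ax:P4} of \cref{d:positive-unital} in turn, drawing on the structure already established for $\GS(A)$. \cref{ax:P0} is obtained by assembling earlier results: $\langle \GS(A); \lor, \land\rangle$ is a distributive lattice by \cref{p:distributive lattice} (i.e.\ \cref{ax:M1}), $\langle \GS(A); +, \gs{0}\rangle$ is a commutative monoid by \cref{p:sum is commutative}, \cref{p:associativity full} and \cref{r:0 is neutral} (i.e.\ \cref{ax:M2}), and $+$ distributes over $\lor$ and $\land$ by \cref{p:distributivity full} (i.e.\ \cref{ax:M3}); hence $\langle \GS(A); +, \lor, \land, \gs{0}\rangle$ is {\alm}. \cref{ax:P1} is then immediate, since the order on $\GS(A)$ is pointwise (\cref{r:order is pointwise}) and every component of a good sequence is ${}\geq 0$ by \cref{l:bounded-lattice}, so $\gs{a} \geq \gs{0}$ for all $\gs{a}$.

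The crux is one computation underlying both \cref{ax:P2} and \cref{ax:P3}: the effect of adding $\gs{1} = (1, 0, 0, \dots)$. First I would show $\gs{a} + \gs{1} = (1, a_0, a_1, \dots)$ for every good sequence $\gs{a} = (a_0, a_1, \dots)$. Substituting $\gs{1}$ into the convolution defining $+$, the $n$-th term for $n \geq 1$ collapses to $a_0 \odot \dots \odot a_{n-1} \odot (a_n \oplus 1)$; here $a_n \oplus 1 = 1$ by \cref{l:absorbing}, while the telescoping identity $a_0 \odot \dots \odot a_k = a_k$, which follows from goodness of each pair $(a_i, a_{i+1})$, reduces the product to $a_{n-1}$; the $0$-th term is $a_0 \oplus 1 = 1$. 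Granting this, \cref{ax:P2} is immediate, as $(\gs{a} + \gs{1}) \ominus 1 = (a_0, a_1, \dots) = \gs{a}$ by definition of the shift $\ominus 1$. For \cref{ax:P3} I would apply the same computation to $\gs{a} \ominus 1 = (a_1, a_2, \dots)$ to get $(\gs{a} \ominus 1) + \gs{1} = (1, a_1, a_2, \dots)$, and compare with $\gs{a} \lor \gs{1} = (a_0 \lor 1, a_1 \lor 0, a_2 \lor 0, \dots) = (1, a_1, a_2, \dots)$, the latter equality using $a_0 \leq 1$ and $a_i \geq 0$ from \cref{l:bounded-lattice}.

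For \cref{ax:P4} I would iterate the formula for $+\,\gs{1}$ to get $\underbrace{\gs{1} + \dots + \gs{1}}_{n} = (\underbrace{1, \dots, 1}_{n}, 0, 0, \dots)$; since any good sequence $\gs{a}$ is eventually $0$, say $a_i = 0$ for $i \geq n$, and its remaining components satisfy $a_i \leq 1$ by \cref{l:bounded-lattice}, the pointwise order (\cref{r:order is pointwise}) gives $\gs{a} \leq \underbrace{1 + \dots + 1}_{n}$. I expect no real obstacle: the only mildly delicate point is the telescoping computation of $\gs{a} + \gs{1}$, and once it is secured the remaining axioms follow directly from the boundedness and pointwise-order facts already in hand.
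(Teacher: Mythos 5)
Your proposal is correct and follows essentially the same route as the paper: assemble \cref{ax:P0} from \cref{p:distributive lattice,p:sum is commutative,p:associativity full,r:0 is neutral,p:distributivity full}, get \cref{ax:P1} from the pointwise order, reduce \cref{ax:P2,ax:P3} to the identity $\gs{a} + \gs{1} = (1, a_0, a_1, \dots)$, and get \cref{ax:P4} from $n\gs{1} = (1,\dots,1,0,0,\dots)$. The only difference is that you spell out the telescoping computation of $\gs{a}+\gs{1}$ that the paper dismisses as ``easy to see'', and your computation is correct.
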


\begin{proof}
	By \cref{p:distributive lattice}, $\GS(A)$ is a distributive lattice.
	By \cref{p:sum is commutative,p:associativity full,r:0 is neutral}, $\GS(A)$ is a commutative monoid.
	By \cref{p:distributivity full}, $ + $ distributes over $ \land $ and $ \lor $.
	Thus, $\GS(A)$ is {\alm} (\cref{ax:P0}).
	Since the order in $\GS(A)$ is pointwise (\cref{r:order is pointwise}), and $0$ is the least element of $A$, we have \cref{ax:P1}, i.e., $\gs{0}$ is the least element of $\GS(A)$.
	It is easy to see that $(a_0, a_1, a_2, \dots) + \gs{1} = (1, a_0, a_1, a_2, \dots)$.
	Therefore, we have \cref{ax:P2}, i.e., for all $\gs{a} \in \GS(A)$, $\gs{a} + \gs{1} \ominus 1 = \gs{a}$.
	For all $\gs{a} \in \GS(A)$, we have $(\gs{a} \ominus 1) + \gs{1} = a \lor 1$, which establishes \cref{ax:P3}.
	By induction, one proves $n \gs{1} = (\underbrace{1, \dots, 1}_{n \text{ times}}, 0, 0, 0 \dots)$.
	Since $1$ is the maximum of $A$, we have \cref{ax:P4}, i.e., for all $\gs{a} \in \GS(A)$, there exists $n \in \N$ such that $\gs{a} \leq n \gs{1}$.
\end{proof}

Given a morphism of {\mvms} $f\colon A\to B$, we set
\begin{align*}
	\GS(f) \colon \GS(A)&\longrightarrow \GS(B) \\
	(x_0, x_1, x_2, \dots)&\longmapsto (f(x_0),f(x_1),f(x_2), \dots).
\end{align*}

\begin{lemma}
	For every morphism $f$ of {\mvms}, the function $\GS(f)$ is a morphism of {\pulms}.
\end{lemma}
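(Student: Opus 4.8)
The plan is to first check that $\GS(f)$ is well-defined---that is, that it sends good sequences to good sequences---and then to verify that it preserves the four operations $+$, $\lor$, $\land$ and $1$, from which being a morphism of {\pulms} will follow by \cref{r:impl def}.

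For well-definedness, I would take a good sequence $(x_0, x_1, x_2, \dots)$ in $A$ and show that its image $(f(x_0), f(x_1), f(x_2), \dots)$ is good in $B$. Since $f$ is a morphism of {\mvms}, it preserves $\oplus$, $\odot$ and $0$. Hence, for each $n$, the good-pair conditions are transported along $f$: we have $f(x_n) \oplus f(x_{n+1}) = f(x_n \oplus x_{n+1}) = f(x_n)$ and $f(x_n) \odot f(x_{n+1}) = f(x_n \odot x_{n+1}) = f(x_{n+1})$. Moreover, since $(x_n)_n$ is eventually $0$ and $f(0) = 0$, the image sequence is eventually $0$ as well. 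Thus $\GS(f)$ indeed lands in $\GS(B)$.

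Next I would verify preservation of the operations. The constant is preserved because $\GS(f)(\gs{1}) = (f(1), f(0), f(0), \dots) = (1, 0, 0, \dots) = \gs{1}$, using that $f$ preserves $1$ and $0$. The lattice operations $\lor$ and $\land$ on $\GS(A)$ are computed componentwise, and $f$ preserves $\lor$ and $\land$, so $\GS(f)$ commutes with them componentwise. For $+$, I would use the explicit formula $(\gs{a} + \gs{b})_n = (a_0 \oplus b_n) \odot \dots \odot (a_n \oplus b_0)$: applying $f$ to this term and using that $f$ preserves $\oplus$ and $\odot$ yields exactly $(f(a_0) \oplus f(b_n)) \odot \dots \odot (f(a_n) \oplus f(b_0))$, which is the $n$-th component of $\GS(f)(\gs{a}) + \GS(f)(\gs{b})$.

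Finally, having shown that $\GS(f)$ preserves $+$, $\lor$, $\land$ and $1$, I would invoke \cref{r:impl def}, which guarantees that any function between {\pulms} preserving these four operations automatically preserves also $-\ominus 1$ and $0$, and is therefore a homomorphism. I do not expect any serious obstacle here, since every condition involved is equational and hence transported by the homomorphism $f$; the only point demanding a little care is the well-definedness step, but this too is immediate once one observes that the good-pair conditions are equations in $\oplus$ and $\odot$.
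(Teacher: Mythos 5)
Your proposal is correct and follows essentially the same route as the paper: the only substantive step, preservation of $+$, is handled identically via the explicit formula $(\gs{a}+\gs{b})_n = (a_0\oplus b_n)\odot\dots\odot(a_n\oplus b_0)$ and the fact that $f$ preserves $\oplus$ and $\odot$, while the remaining operations are routine. Your explicit check of well-definedness and your appeal to \cref{r:impl def} to dispense with $-\ominus 1$ and $0$ are small, sound refinements of what the paper leaves as ``straightforward computations.''
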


\begin{proof}
	Let us prove that $\GS(f)$ preserves $ + $.
	Set $\gs{z} \df \gs{x} + \gs{y}$, $\gs{u} \df f(\gs{z})$, and $\gs{w} \df f(\gs{x}) + f(\gs{y})$.
	Let $\gs{z} = (z_0, z_1, z_2, \dots)$, $\gs{u} = (u_0, u_1, u_2, \dots)$ and $\gs{w} = (w_0, w_1, w_2, \dots)$.
	We shall show $\gs{u} = \gs{w}$.
	For each $n \in \N$, we have 
	\[
		z_n = (x_0 \oplus y_n) \odot \dots \odot (x_n \oplus y_0).
	\]
	Thus,
	\begin{align*}
		u_n	& = f((x_0 \oplus y_n) \odot \dots \odot (x_n \oplus y_0)) \\
				& = (f(x_0) \oplus f(y_n)) \odot \dots \odot (f(x_n) \oplus f(y_0)) \\
				& = f((x_0 \oplus y_n) \odot \dots \odot (x_n \oplus y_0)) \\
				& = w_n.
	\end{align*}
	Therefore, $\GS(f)$ preserves $ + $.
	Straightforward computations show that $\GS(f)$ preserves also $0$, $1$, $ \lor $, $ \land $ and $\ominus$.
\end{proof}

It is easy to see that $\GS\colon \MVM \to \ULMP$ is a functor.

%%%%%%%%%%%%%%%%%%%%%%%%%%%%%%%%%%% SECTION %%%%%%%%%%%%%%%%%%%%%%%%%%%%%%%%%%%%

\section{MV-monoidal algebras and positive cones are equivalent}
\label{s:MVM and positive-unital}

%================================= SUBSECTION =================================%

\subsection{The unit interval functor from positive cones}

Let $M$ be a {\pulm}.
We set $\U(M) \df \{\,x \in M \mid x \leq 1\,\}$; $\U$ stands for `unit interval'.
We endow $M$ with the operations of {\mvm}.
The operations $\lor$, $\land$, $0$, $1$ are defined by restriction.
For $x, y \in \U(M)$, we set $x \oplus y	\df (x + y) \land 1$ and $x \odot y \df (x + y) \ominus 1$.
By the equivalence between $\ULMP$ and $\ULM$ (\cref{t:G_0 is equiv}), and since $\G(M')$ is an {\mvm} for every {\ulm} $M'$ (\cref{p:Gamma is MVM}), $\U(M)$ is an {\mvm}.
Given a morphism $f\colon M \to N$ of {\pulms}, we set $\U(f) \colon \U(M) \to \U(N)$ as the restriction of $f$.
This assignment establishes a functor $\U\colon \ULMP\to\MVM$.

%================================= SUBSECTION =================================%
	
\subsection{The unit}

For each {\mvm} $A$, consider the function
\begin{align*}
	\eta^1_A\colon A&\longrightarrow \U\GS(A) \\
	x&\longmapsto (x).
\end{align*}

\begin{proposition} \label{p:unit mmv}
	For every {\mvm} $A$, the function $\eta^1_A\colon A\to \GS(A)$ is an isomorphism of {\mvms}.
\end{proposition}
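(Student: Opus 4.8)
The plan is to first identify $\U\GS(A)$ explicitly, which reduces the bijectivity of $\eta^1_A$ to a triviality, and then to check that $\eta^1_A$ respects the operations. By definition $\U\GS(A) = \{\,\gs{a} \in \GS(A) \mid \gs{a} \leq \gs{1}\,\}$, the order on $\GS(A)$ is pointwise (\cref{r:order is pointwise}), and $\gs{1} = (1, 0, 0, \dots)$. Since $0 \leq x \leq 1$ for every $x \in A$ (\cref{l:bounded-lattice}), a good sequence $\gs{a} = (a_0, a_1, a_2, \dots)$ satisfies $\gs{a} \leq \gs{1}$ if and only if $a_n \leq 0$, and hence $a_n = 0$, for every $n \geq 1$. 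Thus $\U\GS(A) = \{\,(x) \mid x \in A\,\}$, and $\eta^1_A$ is a bijection of $A$ onto $\U\GS(A)$ whose inverse sends $(x)$ to $x$. It then remains only to verify that $\eta^1_A$ is a homomorphism of {\mvms}.

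Preservation of $0$, $1$, $\lor$, and $\land$ is immediate: $\eta^1_A(0) = \gs{0}$ and $\eta^1_A(1) = \gs{1}$, while $\lor$ and $\land$ are computed componentwise in $\GS(A)$, so that $(x) \lor (y) = (x \lor y)$ and $(x) \land (y) = (x \land y)$. The substance of the proof is therefore the preservation of $\oplus$ and $\odot$, whose definitions on $\U\GS(A)$ read $\gs{a} \oplus \gs{b} = (\gs{a} + \gs{b}) \land \gs{1}$ and $\gs{a} \odot \gs{b} = (\gs{a} + \gs{b}) \ominus 1$, and so hinge on computing the sum $(x) + (y)$.

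I would compute this sum directly from the defining formula for addition of good sequences applied to $(x, 0, 0, \dots)$ and $(y, 0, 0, \dots)$: the $n$-th term is $(a_0 \oplus b_n) \odot \cdots \odot (a_n \oplus b_0)$, which equals $x \oplus y$ for $n = 0$ and $(x \oplus 0) \odot (0 \oplus y) = x \odot y$ for $n = 1$; for $n \geq 2$ the product contains the factor $a_1 \oplus b_{n-1} = 0 \oplus 0 = 0$, so it vanishes by absorption (\cref{l:absorbing}). Hence $(x) + (y) = (x \oplus y, x \odot y)$, which is indeed a good sequence by \cref{l:oplus odot is a good pair}. Consequently $(x) \oplus (y) = (x \oplus y, x \odot y) \land \gs{1} = (x \oplus y)$, using $x \oplus y \leq 1$ and $x \odot y \geq 0$ from \cref{l:bounded-lattice}, and $(x) \odot (y) = (x \oplus y, x \odot y) \ominus 1 = (x \odot y)$ by the shift definition of $\ominus 1$. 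Thus $\eta^1_A$ preserves $\oplus$ and $\odot$, and is an isomorphism of {\mvms}.

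The only real obstacle is the evaluation of $(x) + (y)$: one must observe that every component past the second collapses to $0$ by absorption, after which the truncation $\land\,\gs{1}$ and the shift $\ominus 1$ recover exactly $x \oplus y$ and $x \odot y$. No appeal to the harder structural results on $\GS(A)$, such as associativity or distributivity of $+$, is needed for this particular statement.
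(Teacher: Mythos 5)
Your proof is correct and follows essentially the same route as the paper's: both hinge on the computation $(x) + (y) = (x \oplus y, x \odot y)$ followed by applying $\land\,\gs{1}$ and $\ominus 1$. You merely fill in details the paper leaves implicit, namely the identification $\U\GS(A) = \{\,(x) \mid x \in A\,\}$ via the pointwise order and \cref{l:bounded-lattice}, and the term-by-term verification that the components of $(x)+(y)$ beyond the second vanish by \cref{l:absorbing}.
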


\begin{proof}
	The facts that $\eta^1_A$ is a bijection and that it preserves $0$, $1$, $\lor$, $\land$ are immediate.
	Let $x, y \in A$.
	Then, $(x) + (y) = (x \oplus y, x \odot y)$.
	Therefore 
	\begin{align*}
		\eta^1_A(x) \oplus \eta^1_A(y) & = (x) \oplus (y) = ((x) + (y)) \land \gs{1}\\
		& = (x \oplus y, x \odot y) \land \gs{1} = (x \oplus y) = \eta^1_A(x \oplus y), \text{ and}\\
		\eta^1_A(x) \odot \eta^1_A(y) & = (x) \odot (y) = ((x) + (y)) \ominus 1\\
		& = (x \oplus y, x \odot y) \ominus 1 = (x \odot y) = \eta^1_A(x \odot y).\qedhere
	\end{align*}
\end{proof}

\begin{proposition} \label{p:eta1 is natural}
	$\eta^1 \colon \Id_{\MVM} \dot{\to} \U\GS$ is a natural transformation, i.e., for every morphism of {\mvms} $f\colon A\to B$, the following diagram commutes.
	\[
		\begin{tikzcd}
			A \arrow{r}{\eta_A^1} \arrow[swap]{d}{f}	& \U\GS(A) \arrow{d}{\U\GS(f)} \\
			B \arrow[swap]{r}{\eta_B^1}					& \U\GS(B) \\
		\end{tikzcd}
	\]
\end{proposition}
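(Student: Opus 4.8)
The plan is to verify commutativity of the square by chasing an arbitrary element $x \in A$ around it. Both composites $\U\GS(f) \circ \eta^1_A$ and $\eta^1_B \circ f$ are functions $A \to \U\GS(B)$, so it suffices to check that they return the same good sequence for each $x$.

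First I would compute the top-then-right composite. By definition $\eta^1_A(x) = (x) = (x, 0, 0, \dots)$. Recalling that $\GS(f)$ is defined componentwise by $f$, and that $\U\GS(f)$ is merely the restriction of $\GS(f)$ to $\U\GS(A)$, we obtain
\[
	\U\GS(f)(\eta^1_A(x)) = \GS(f)((x, 0, 0, \dots)) = (f(x), f(0), f(0), \dots).
\]
Since $f$ is a morphism of {\mvms}, it preserves the constant $0$, so $f(0) = 0$ and the right-hand side equals $(f(x), 0, 0, \dots) = (f(x))$. On the other hand, the left-then-bottom composite gives $\eta^1_B(f(x)) = (f(x))$ directly from the definition of $\eta^1_B$. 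The two results coincide, which establishes commutativity of the diagram for every $x \in A$, and hence the naturality of $\eta^1$.

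I expect no genuine obstacle here: the entire content reduces to the componentwise description of $\GS(f)$ together with the single observation that morphisms of {\mvms} preserve the constant $0$. The only point deserving a moment's care is recognising that $\U\GS(f)$ is, by construction, the restriction of the componentwise map $\GS(f)$, so that it fixes the tail of zeros and maps the distinguished good sequence $(x)$ to $(f(x))$.
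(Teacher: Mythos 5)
Your proposal is correct and follows essentially the same element-chasing argument as the paper: both compute $\U\GS(f)(\eta^1_A(x)) = \GS(f)((x)) = (f(x)) = \eta^1_B(f(x))$, with your version merely making explicit the (implicit in the paper) observation that $f(0)=0$ preserves the tail of zeros. No issues.
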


\begin{proof}
	For every $x \in A$ we have
	\[
		\U\GS(f)(\eta_A^1(x)) = \U\GS(f)((x)) = \GS(f)((x)) = (f(x)) = \eta_B^1(f(x)). \qedhere
	\]
\end{proof}

%================================= SUBSECTION =================================%
	
\subsection{The counit}

For each {\pulm}, we consider the function
\begin{align*}
	\eps_M^1 \colon \GS\U(M)&\longrightarrow M \\
	(x_0, \dots, x_n)&\longmapsto x_0 + \dots + x_n.
\end{align*}
Our next goal, met in \cref{p:exists unique good sequence}, is to prove that $\eps_M^1$ is bijective; this will show that a {\pulm} $M$ is in bijection with the set of good sequences in its unit interval $\U(M)$.

\begin{lemma} \label{l:trunc}
	Let $M$ be {\apulm}.
	For every $x \in M$ and every $n \in \N$, we have
	\[
		x = (x \land n) + (x \ominus n).
	\]
\end{lemma}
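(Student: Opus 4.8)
The statement to prove is $x = (x \land n) + (x \ominus n)$ for $x$ in {\apulm} $M$ and $n \in \N$. I would proceed by induction on $n$, mirroring the inductive definition of $-\ominus n$ and the proof of \cref{l:fixed}, which establishes the closely related identity $(x \ominus n) + n = x \lor n$. The base case $n = 0$ is immediate: $x \land 0 = 0$ by \cref{ax:P1} (since $0 \leq x$ gives $x \land 0 = 0$), and $x \ominus 0 = x$ by the definition of $-\ominus n$, so the right-hand side is $0 + x = x$.

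For the inductive step, I would use the cancellation available from \cref{l:n is cancellative}. The key observation is that I can prove the identity by adding $n$ to both sides and invoking \cref{l:fixed}. Concretely, I would compute $((x \land n) + (x \ominus n)) + n$ and show it equals $x + n$, from which $x = (x \land n) + (x \ominus n)$ follows by \cref{l:n is cancellative}. Using that $+$ distributes over $\land$ (\cref{ax:P0}) and \cref{l:fixed}, I would write
\[
	((x \land n) + (x \ominus n)) + n = (x \land n) + ((x \ominus n) + n) = (x \land n) + (x \lor n).
\]
The final step is to recognise that $(x \land n) + (x \lor n) = x + n$. This is exactly the content of \cref{l:land+lor=+} applied to the elements $x$ and $n$, which holds in any {\lm} and hence in $M$ by \cref{ax:P0}.

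Alternatively, and perhaps more cleanly, I could bypass the explicit induction by observing that the whole identity reduces to one application of \cref{l:land+lor=+} combined with \cref{l:fixed} and cancellation. That is: \cref{l:fixed} gives $(x \ominus n) + n = x \lor n$, so adding $x \land n$ to both sides and using \cref{l:land+lor=+} yields $((x \land n) + (x \ominus n)) + n = (x \land n) + (x \lor n) = x + n$, and then \cref{l:n is cancellative} cancels the $n$. This is the approach I would actually write down, since it packages all the inductive work into the already-proved \cref{l:fixed}.

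The main obstacle—though a mild one—is the interplay between cancellation and the fact that $M$ is only a positive-unital monoid, not a group: one cannot simply ``subtract $n$'' directly, so it is essential to route the argument through \cref{l:n is cancellative} rather than through any formal inverse. Everything else is a routine combination of distributivity of $+$ over $\land$ and the two cited lemmas, so I do not anticipate any genuine difficulty beyond ensuring that each addition of $n$ is justified and that the cancellation lemma is applied to the correct pair of elements.
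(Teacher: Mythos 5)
Your proposal is correct, and the ``alternative'' non-inductive argument you say you would actually write down is precisely the paper's proof: add $n$, rewrite $(x \ominus n) + n$ as $x \lor n$ via \cref{l:fixed}, apply \cref{l:land+lor=+} to get $x + n$, and cancel $n$ using \cref{l:n is cancellative}. The preliminary induction sketch is superfluous but harmless.
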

\begin{proof}
	We have
	\begin{align*}
		(x \land n) + (x \ominus n) + n \stackrel{\text{\cref{l:fixed}}}{ = } (x \land n) + (x \lor n) \stackrel{\text{\cref{l:land+lor=+}}}{ = } x + n.
	\end{align*}
	Since $n$ is cancellative by \cref{l:n is cancellative}, it follows that $(x \land n) + (x \ominus n) = x$.
\end{proof}

\begin{lemma} \label{l:x less than n}
	Let $M$ be {\apulm}, let $x \in M$ and let $n \in \N$. 
	If $x \leq n$, then $x \ominus n = 0$.
\end{lemma}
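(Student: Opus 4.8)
The plan is to reduce the claim to the already-established identity in \cref{l:fixed}, namely $(x \ominus n) + n = x \lor n$, together with the cancellativity of $n$ from \cref{l:n is cancellative}. The key observation is that, in an abstract {\pulm}, we do not have the concrete formula $x \ominus 1 = (x - 1) \lor 0$ available (there is no subtraction), so we must argue purely from the axioms and the lemmas proved so far; fortunately \cref{l:fixed} packages exactly the information we need about $\ominus n$.

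First I would invoke \cref{l:fixed} to write $(x \ominus n) + n = x \lor n$. Next I would use the hypothesis $x \leq n$: since $n$ denotes the $n$-fold sum $1 + \dots + 1$ and the order on $M$ is the lattice order, $x \leq n$ is equivalent to $x \lor n = n$. Substituting, this yields $(x \ominus n) + n = n$. Finally, observing that $0 + n = n$ by \cref{ax:P0} (the additive neutrality of $0$), I obtain $(x \ominus n) + n = 0 + n$, and then \cref{l:n is cancellative} gives $x \ominus n = 0$, as desired.

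There is no real obstacle here: the only point requiring a moment's care is remembering that $x \ominus n$ must be manipulated abstractly via \cref{l:fixed} rather than through any explicit truncated-subtraction formula, and that the cancellativity supplied by \cref{l:n is cancellative} is precisely what lets us strip off the $+\,n$ on both sides. The whole argument is a three-line chain of equalities.
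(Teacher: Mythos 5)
Your proposal is correct and is essentially identical to the paper's own proof: both invoke \cref{l:fixed} to get $(x \ominus n) + n = x \lor n = n = 0 + n$ and then cancel $n$ via \cref{l:n is cancellative}. Nothing is missing.
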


\begin{proof}
	By \cref{l:fixed}, we have 
	\[
		(x \ominus n) + n = x \lor n = n = 0 + n.
	\]
	By \cref{l:n is cancellative}, the element $n$ is cancellative: it follows that $x \ominus n = 0$.
\end{proof}

\begin{lemma} \label{l:transpose}
	Let $M$ be a {\pulm}, let $x \in M$, and let $n,k\in \N$.
	Then, 
	\[
		(x \ominus n) \land k = (x \land (n + k)) \ominus n.
	\]
\end{lemma}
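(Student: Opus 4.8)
The plan is to prove the identity by adding $n$ to both sides and then invoking the cancellativity of $n$ from \cref{l:n is cancellative}; this reduces the claim to a routine distributive-lattice computation in which the two occurrences of $\ominus n$ are eliminated simultaneously via \cref{l:fixed}.

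First I would compute $((x \ominus n) \land k) + n$. Since $+$ distributes over $\land$ (\cref{ax:P0}, via \cref{ax:M3}), this equals $((x \ominus n) + n) \land (k + n)$, and by \cref{l:fixed} the first factor is $x \lor n$, so we obtain $(x \lor n) \land (n + k)$. Next I would compute $((x \land (n + k)) \ominus n) + n$; applying \cref{l:fixed} directly with $x \land (n + k)$ in place of $x$ yields $(x \land (n + k)) \lor n$. Thus, after adding $n$, the two sides of the statement become
\[
	(x \lor n) \land (n + k) \quad \text{and} \quad (x \land (n + k)) \lor n.
\]

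It then remains to check that these two elements coincide. I would first observe that $n \leq n + k$: indeed, $k \geq 0$ by \cref{ax:P1}, so $n = n + 0 \leq n + k$ by \cref{l:sum positive}. Writing $a \df x$, $b \df n$, $c \df n + k$, so that $b \leq c$, distributivity of the lattice (\cref{ax:P0}) gives $(a \lor b) \land c = (a \land c) \lor (b \land c) = (a \land c) \lor b$, where the last step uses $b \land c = b$. This is precisely the required equality. Finally, since $n$ is cancellative by \cref{l:n is cancellative}, I can cancel $n$ from the equality obtained after adding $n$ and conclude.

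I expect no genuine obstacle here: once the move of adding $n$ is found, everything collapses to the standard fact that in a distributive lattice $(a \lor b) \land c = (a \land c) \lor b$ whenever $b \leq c$. The only point requiring a moment's care is confirming $n \leq n + k$, which is what guarantees $b \land c = b$ and hence makes the distributive-lattice identity applicable.
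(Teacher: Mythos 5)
Your proof is correct and follows essentially the same route as the paper: add $n$ to both sides, use distributivity of $+$ over $\land$ together with \cref{l:fixed} to reduce to the lattice identity $(x \lor n) \land (n+k) = (x \land (n+k)) \lor n$, and conclude by cancellativity of $n$ (\cref{l:n is cancellative}). The only difference is that you spell out the justification of the lattice identity (via $n \leq n+k$ and distributivity), which the paper leaves implicit.
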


\begin{proof}
	We have
	\begin{align*}
		((x \ominus n) \land k) + n	& = ((x \ominus n) + n) \land (k + n)	&& \by{$ + $ distr.\ over $ \land $} \\
												& = (x \lor n) \land (k + n)	&&\by{\cref{l:fixed}} \\
												& = (x \land (k + n)) \lor n \\
												& = ((x \land (k + n)) \ominus n) + n.	&& \by{\cref{l:fixed}}
	\end{align*}
	Since $n$ is cancellative by \cref{l:n is cancellative}, we have $(x \ominus n) \land k = (x \land (n + k)) \ominus n$.
\end{proof}

\begin{lemma} \label{l:truncation is good}
	For every $x$ in \apulm, we have
	\[
		(x \land 1) + ((x \ominus 1) \land 1) = x \land 2.
	\]
\end{lemma}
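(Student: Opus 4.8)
The plan is to reduce the identity to a single application of \cref{l:trunc} to the element $x \land 2$. The obstacle is that the two summands on the left-hand side look asymmetric: the first, $x \land 1$, is a truncation of $x$ at height $1$, while the second, $(x \ominus 1) \land 1$, mixes the shift operation $-\ominus 1$ with a truncation. The key observation is that \cref{l:transpose} lets us rewrite the second summand as a truncation of $x$ itself.

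First I would apply \cref{l:transpose} with $n = 1$ and $k = 1$, which gives
\[
	(x \ominus 1) \land 1 = (x \land 2) \ominus 1.
\]
Next I would record the purely lattice-theoretic identity $x \land 1 = (x \land 2) \land 1$, which holds because $1 \leq 2$ and so $(x \land 2) \land 1 = x \land (2 \land 1) = x \land 1$. Writing $y \df x \land 2$ for brevity, these two facts turn the left-hand side of the statement into
\[
	(x \land 1) + ((x \ominus 1) \land 1) = (y \land 1) + (y \ominus 1).
\]

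Finally I would invoke \cref{l:trunc} with $n = 1$ applied to the element $y$, which asserts exactly that $y = (y \land 1) + (y \ominus 1)$. Substituting back $y = x \land 2$ then yields
\[
	(x \land 1) + ((x \ominus 1) \land 1) = y = x \land 2,
\]
as required. No step presents a genuine difficulty; the only insight needed is to recognize, via \cref{l:transpose}, that the whole left-hand side is nothing but the canonical decomposition of $x \land 2$ into its part below $1$ and its part above $1$.
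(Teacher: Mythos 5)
Your proof is correct, and it is not the route the paper takes. You reduce the identity to two lemmas already established at that point in the text: \cref{l:transpose} (with $n = k = 1$) rewrites $(x \ominus 1) \land 1$ as $(x \land 2) \ominus 1$, the trivial lattice identity $x \land 1 = (x \land 2) \land 1$ aligns the first summand, and then \cref{l:trunc} applied to $y = x \land 2$ with $n = 1$ finishes the job. Both auxiliary lemmas precede the statement in the paper and their proofs do not depend on it, so there is no circularity. The paper instead argues directly: it adds $1$ to both sides, unfolds $((x \ominus 1) + 1)$ via \cref{ax:P3}, distributes $+$ over $\land$ repeatedly, invokes \cref{l:land+lor=+} to collapse $(x \land 1) + (x \lor 1)$ into $x + 1$, arrives at $(x \land 2) + 1$, and cancels the $1$ using \cref{l:n is cancellative}. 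Your argument is shorter and more conceptual --- it exposes the left-hand side as the canonical decomposition of $x \land 2$ into its part below $1$ and the remainder --- whereas the paper's computation is self-contained at the level of the distributivity axioms and the basic cancellation lemma, at the cost of a longer chain of manipulations. Either proof is acceptable.
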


\begin{proof}
	We have
	\begin{align*}
		& (x \land 1) + ((x \ominus 1) \land 1) + 1 \\
		& = (x \land 1) + ((x \ominus 1) + 1) \land 2)				&& \by{$ + $ distr.\ over $ \land $} \\
		& = (x \land 1) + ((x \lor 1) \land 2)							&& \by{\cref{ax:P3}} \\
		& = ((x \land 1) + (x \lor 1)) \land ((x \land 1) + 2)	&& \by{$ + $ distr.\ over $ \land $} \\
		& = (x + 1) \land (x + 2) \land 3								&& \by{\cref{l:land+lor=+}, $ + $ distr.\ over $ \land $} \\
		& = (x + 1) \land 3													&& \\
		& = (x \land 2) + 1.													&& \by{$ + $ distr.\ over $ \land $}
	\end{align*}
	Since $1$ is cancellative by \cref{l:n is cancellative}, we have $(x \land 1) + ((x \ominus 1) \land 1) = x \land 2$.
\end{proof}

\begin{lemma} \label{l:trunc is good}
	Let $M$ be {\apulm}, and let $x \in M$.
	Then, $(x \ominus 0, x \ominus 1, x \ominus 2, \dots)$ is a good sequence in $\U(M)$.
\end{lemma}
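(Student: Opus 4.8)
The plan is to verify, for the sequence with $n$-th entry $x_n \df (x \ominus n) \land 1$ (capped at $1$ so as to land in $\U(M)$), the three requirements in the definition of a good sequence in $\U(M)$: that each entry belongs to $\U(M)$, that the sequence is eventually $\gs{0}$, and that every pair of consecutive entries is good. The first point is immediate, since each $x_n$ is by construction $\leq 1$. For the second, I would use \cref{ax:P4} to find $n_0 \in \N$ with $x \leq n_0$; then $x \leq n$ for every $n \geq n_0$, so \cref{l:x less than n} gives $x \ominus n = 0$ and hence $x_n = 0 \land 1 = 0$ for all such $n$.

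The substantive step is the good-pair condition for consecutive entries. Fix $n \in \N$ and set $a \df x \ominus n$, so that $x_n = a \land 1$ and, by the inductive definition of $\ominus (n+1)$, $x_{n+1} = (a \ominus 1) \land 1$. Recalling that the operations on $\U(M)$ are $u \oplus v = (u + v) \land 1$ and $u \odot v = (u + v) \ominus 1$, I must check $x_n \oplus x_{n+1} = x_n$ and $x_n \odot x_{n+1} = x_{n+1}$. The key computation is
\[
	x_n + x_{n+1} = (a \land 1) + ((a \ominus 1) \land 1) = a \land 2,
\]
which is precisely \cref{l:truncation is good} applied to the element $a$. From this, $x_n \oplus x_{n+1} = (a \land 2) \land 1 = a \land 1 = x_n$ (using $1 \leq 2$), and $x_n \odot x_{n+1} = (a \land 2) \ominus 1$.

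It then remains to identify $(a \land 2) \ominus 1$ with $x_{n+1} = (a \ominus 1) \land 1$. This is exactly \cref{l:transpose} applied to $a$ with parameters $n = 1$ and $k = 1$, which yields $(a \ominus 1) \land 1 = (a \land 2) \ominus 1$. Hence $x_n \odot x_{n+1} = x_{n+1}$, so $(x_n, x_{n+1})$ is a good pair, and combining this with the first two points shows that $(x_0, x_1, x_2, \dots)$ is a good sequence in $\U(M)$.

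The main obstacle is conceptual rather than computational: recognizing that \cref{l:truncation is good} should be invoked not for $x$ itself but for the shifted element $a = x \ominus n$, and that the interaction between the cap $\land 1$ and the operation $\ominus 1$ is captured exactly by the transposition identity \cref{l:transpose}. Once these two earlier lemmas are applied with the right arguments, both good-pair equations follow at once, so essentially all of the difficulty has already been absorbed into \cref{l:truncation is good,l:transpose}.
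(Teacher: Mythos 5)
Your proof is correct and follows essentially the same route as the paper's: both arguments reduce everything to the identity $x_n + x_{n+1} = (x \ominus n) \land 2$ supplied by \cref{l:truncation is good} applied to the shifted element $x \ominus n$, and then read off the two good-pair equations. The only difference is in the $\odot$ equation: you identify $((x \ominus n) \land 2) \ominus 1$ with $x_{n+1}$ by citing \cref{l:transpose} with $n = k = 1$, whereas the paper re-derives the same fact by hand (adding $1$, applying \cref{ax:P3}, and cancelling $1$ via \cref{l:n is cancellative}) --- your packaging is marginally cleaner, and your reading of the entries as $(x \ominus n) \land 1$ is the one the paper's own proof and \cref{p:exists unique good sequence} actually use.
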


\begin{proof}
	For $n \in \N$, set $x_n \df x \ominus n$.
	Since $x \leq n$ for some $n \in \N$, the sequence $(x_0, x_1, x_2, \dots)$ is eventually $0$ by \cref{l:x less than n}.
	We have
	\begin{equation} \label{l:land2}
		x_n + x_{n + 1} = ((x \ominus n) \land 1) + (((x \ominus n) \ominus 1) \land 1) \stackrel{\text{\cref{l:truncation is good}}}{=} (x \ominus n) \land 2.
	\end{equation}
	Therefore,
	\[
		x_n \oplus x_{n + 1} = (x_n + x_{n + 1}) \land 1 \stackrel{\text{\cref{l:land2}}}{=} ((x \ominus n) \land 2) \land 1	 = (x \ominus n) \land 1  = x_n.
	\]
	Moreover,
	\begin{align*}
		(x_n \odot x_{n + 1}) + 1	& = ((x_n + x_{n + 1}) \ominus 1) + 1 \\
										& = (x_n + x_{n + 1}) \lor 1 	&& \by{\cref{ax:P3}}\\
										& = ((x \ominus n) \land 2) \lor 1	&& \by{\cref{l:land2}} \\
										& = ((x \ominus n) \lor 1) \land 2 \\
										& = (((x \ominus n) \ominus 1 ) + 1) \land 2	&&\by{\cref{ax:P3}} \\
										& = ((x \ominus (n + 1)) \land 1) + 1 \\
										& = x_{n + 1} + 1.
	\end{align*}
	The element $1$ is cancellative by \cref{l:n is cancellative}; thus $x_n \odot x_{n + 1} = x_{n + 1}$.
\end{proof}

\begin{lemma} \label{l:trunc sums up}
	For every $m\in \N$ and every element $x$ of {\apulm} such that $x \leq m$, we have
	\[
		x = \sum_{n \in \{0, \dots, m-1\}}(x \ominus n) \land 1.
	\]
\end{lemma}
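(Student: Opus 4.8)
The plan is to induct on $m$. For the base case $m=0$, the hypothesis $x \leq 0$ (where the empty iterate of $1$ is $0$), combined with \cref{ax:P1}, forces $x = 0$, which is exactly the empty sum appearing on the right-hand side.

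For the inductive step, I would assume the statement for $m$ and take $x \in M$ with $x \leq m+1$. The idea is to peel off the first summand and recognise what remains as the sum attached to $x \ominus 1$. Since $x \ominus 0 = x$, the right-hand side for $m+1$ equals
\[
	(x \land 1) + \sum_{k \in \{0, \dots, m-1\}} \bigl((x \ominus 1) \ominus k\bigr) \land 1,
\]
using that $x \ominus (k+1) = (x \ominus 1) \ominus k$ (both are the $(k+1)$-fold iterate of the operation $-\ominus 1$ applied to $x$, and iteration composes associatively). Provided $x \ominus 1 \leq m$, the inductive hypothesis rewrites the displayed sum as $x \ominus 1$, and then \cref{l:trunc} with $n = 1$ gives $(x \land 1) + (x \ominus 1) = x$, closing the induction.

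The step that needs care is the inequality $x \ominus 1 \leq m$. I would obtain it from \cref{ax:P3}: we have $(x \ominus 1) + 1 = x \lor 1 \leq (m+1) \lor 1 = m+1$, so $(x \ominus 1) + 1 \leq m + 1$. It then remains to cancel the summand $+1$ at the level of the order, i.e.\ to argue that $a + 1 \leq b + 1$ implies $a \leq b$. This follows because $+$ distributes over $\land$ (\cref{ax:P0}), so $a + 1 \leq b + 1$ rewrites as $(a \land b) + 1 = a + 1$, whence $a \land b = a$ by cancellativity of $1$ (\cref{l:n is cancellative}). This order-reflecting observation is the only mildly non-formal ingredient; everything else is bookkeeping about the iterated operation $-\ominus n$ together with the single decomposition \cref{l:trunc}.
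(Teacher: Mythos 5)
Your proof is correct, but it runs the induction in the opposite direction from the paper. You peel off the \emph{first} summand: you rewrite the sum for $m+1$ as $(x \land 1) + \sum_{k}((x\ominus 1)\ominus k)\land 1$, apply the inductive hypothesis to $x \ominus 1$, and close with \cref{l:trunc} at $n=1$. The paper instead peels off the \emph{last} summand: it decomposes $x = (x \land m) + (x \ominus m)$ via \cref{l:trunc} at $n=m$, applies the inductive hypothesis to $x \land m$, and relies throughout on \cref{l:transpose} to pass between $(x\ominus n)\land 1$ and $(x\land(n+1))\ominus n$. Your route avoids \cref{l:transpose} entirely, but at the price of two small side facts that the paper does not need here: the identity $x \ominus (k+1) = (x\ominus 1)\ominus k$ (harmless, as you say --- both are the $(k+1)$-fold iterate) and, more substantively, that $x \ominus 1 \leq m$, which you obtain by showing the order is cancellable under $+1$. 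That cancellation argument --- $(a\land b)+1 = (a+1)\land(b+1) = a+1$ by distributivity of $+$ over $\land$, hence $a \land b = a$ by \cref{l:n is cancellative} --- is sound and is exactly the kind of order-reflection the paper sidesteps by staying with the top-end decomposition. Both proofs are of comparable length; yours is arguably more self-contained at this point of the development, while the paper's reuses machinery (\cref{l:transpose}) it needs elsewhere anyway.
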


\begin{proof}
	We prove the statement by induction on $m\in \N$.
	If $m = 0$, then $x = 0$, and the assertion holds.
	Let us suppose that it holds for a fixed $m$, and let us prove that it holds for $m + 1$.
	We recall that, by \cref{l:transpose}, we have $(x \ominus n) \land 1 = (x \land (n + 1)) \ominus n$.
	We have
	\begin{align*}
		x	& = (x \land m) + (x \ominus m)
			&& \by{\cref{l:trunc}} \\
			& = (x \land m) + ((x \land (m + 1)) \ominus n)
			&& \by{$x \leq m + 1$} \\
			& = \left(\sum_{n \in \{0, \dots, m-1\}} ((x \land m) \land (n + 1)) \ominus n \right)\\
			& \quad + ((x \land (m + 1)) \ominus n)
			&& \by{ind.\ hyp.} \\
			& = \left(\sum_{n \in \{0, \dots, m - 1\}}(x \land (n + 1)) \ominus n \right)\\
			& \quad + ((x \land (m + 1)) \ominus n) \\
			& = \sum_{n \in \{0, \dots, m\}} (x \land (n + 1)) \ominus n.
			&& \qedhere
	\end{align*}
\end{proof}

\begin{remark} \label{r:good-unrolling}
	Let $M$ be {\apulm}, and let $x_0, x_1 \in \U(M)$.
	The pair $(x_0, x_1)$ is a good pair in $\U(M)$ if, and only if, $(x_0 + x_1) \land 1 = x_0$ and $(x_0 + x_1) \ominus 1 = x_1$.
	This is just unrolling the definitions.
\end{remark}

\begin{lemma} \label{l:base case}
	Let $M$ be a \pulm. For every $m\in \N$ and every good sequence $(x_0, \dots, x_m)$ in $\U(M)$, we have
	\begin{equation*} \label{e:land}
		(x_0 + \dots + x_m) \land 1 = x_0.
	\end{equation*}
\end{lemma}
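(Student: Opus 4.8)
The plan is to argue by induction on $m$. For the base case $m = 0$, the claim is simply $x_0 \land 1 = x_0$, which holds because $x_0 \in \U(M)$ means $x_0 \leq 1$.

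For the inductive step, I would assume the result for good sequences indexed by $0, \dots, m-1$ (the case $m-1$) and let $(x_0, \dots, x_m)$ be a good sequence in $\U(M)$. First I would observe that the shifted sequence $(x_1, \dots, x_m)$ is again a good sequence in $\U(M)$, since each consecutive pair $(x_n, x_{n+1})$ with $n \geq 1$ is good by hypothesis; after reindexing it is a good sequence of the previous length, so the inductive hypothesis applies. Writing $s \df x_1 + \dots + x_m$, this gives $s \land 1 = x_1$. The goal is then to show $(x_0 + s) \land 1 = x_0$.

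The key computation rewrites $(x_0 + s) \land 1$ in terms of $x_1 = s \land 1$. Using that $+$ distributes over $\land$ (\cref{ax:P0}), and that $x_0 + 1 \geq 1$ --- which follows from $x_0 \geq 0$ (\cref{ax:P1}) together with monotonicity of $+$ (\cref{l:sum positive}), so that $(x_0 + 1) \land 1 = 1$ --- I would compute $(x_0 + (s \land 1)) \land 1 = (x_0 + s) \land (x_0 + 1) \land 1 = (x_0 + s) \land 1$. Hence $(x_0 + s) \land 1 = (x_0 + (s \land 1)) \land 1 = (x_0 + x_1) \land 1$, and the last term equals $x_0$ precisely because $(x_0, x_1)$ is a good pair in $\U(M)$ (\cref{r:good-unrolling}).

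I expect the only subtle point to be the middle equality linking $(x_0 + s) \land 1$ with $(x_0 + (s \land 1)) \land 1$: a priori, replacing $s$ by its truncation $s \land 1$ inside $x_0 + (-)$ could change the value, but the distributivity identity combined with the absorbing effect of $x_0 + 1 \geq 1$ shows it does not. Everything else is a routine induction, with the positivity of elements of $\U(M)$ (guaranteed by \cref{ax:P1}) being exactly what makes the truncation argument go through.
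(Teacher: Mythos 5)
Your proof is correct and is essentially the paper's argument in mirror image: the paper inducts by collapsing the \emph{last} two terms via the good pair $(x_m, x_{m+1})$ and then applies the inductive hypothesis to the initial segment, whereas you apply the inductive hypothesis to the tail $(x_1,\dots,x_m)$ and then collapse via the \emph{first} good pair $(x_0,x_1)$. The key step is identical in both — distributivity of $+$ over $\land$ combined with $x_0 + \dots + 1 \geq 1$ to insert or remove a truncation under $\land\, 1$, followed by \cref{r:good-unrolling} — so there is nothing further to add.
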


\begin{proof}
	We prove the statement by induction on $m\in \N$.
	The case $m = 0$ is trivial.
	The case $m = 1$ holds by \cref{r:good-unrolling}.
	Suppose the statement holds for $m\in \Np$, and let us prove it holds for $m + 1$.
	We have the following chain of equalities, the first of which is justified by the fact that $x_0 + \dots + x_{m-1} + 1 \geq 1$.
	\begin{align*}
		& (x_0 + \dots + x_{m + 1}) \land 1 \\
		& = (x_0 + \dots + x_{m + 1}) \land (x_0 + \dots + x_{m-1} + 1) \land 1 \\
		& = (x_0 + \dots + x_{m-1} + ((x_m + x_{m + 1}) \land 1)) \land 1 && \by{$+$ distr.\ over $\land$}\\
		& = (x_0 + \dots + x_{m-1} + x_m) \land 1 && \by{\cref{r:good-unrolling}}\\
		& = x_0.	&& \by{ind.\ hyp.} \qedhere
	\end{align*}
\end{proof}

\begin{lemma}\label{l:base case lor}
	Let $M$ be {\apulm}. For every $k \in \N$ and every good sequence $(x_0, \dots, x_k)$ in $\U(M)$, we have
	\begin{equation}\label{e:lor m}
		(x_0 + \dots + x_k) \ominus 1 = x_1 + \dots + x_k.
	\end{equation}
\end{lemma}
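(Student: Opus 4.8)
The plan is to reduce the claim to its additive reformulation and then induct on $k$. By \cref{r:impl def}, the asserted identity $(x_0 + \dots + x_k) \ominus 1 = x_1 + \dots + x_k$ is equivalent to
\[
	(x_0 + \dots + x_k) \lor 1 = (x_1 + \dots + x_k) + 1,
\]
so it suffices to prove this last equation, which I do by induction on $k$. Working with $\lor$ and $+1$ rather than with $\ominus 1$ has the advantage that both inequalities get packaged into a single computation. For the base case $k = 0$ I simply note that $x_0 \leq 1$ (since $x_0 \in \U(M)$), whence $x_0 \lor 1 = 1$, which matches the empty sum plus $1$.

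For the inductive step I write $b \df x_0 + \dots + x_{k-1}$, so that the full sum is $b + x_k$. The inductive hypothesis gives $b \lor 1 = (x_1 + \dots + x_{k-1}) + 1$, and therefore $(x_1 + \dots + x_k) + 1 = (b \lor 1) + x_k$. Distributing $+ x_k$ over $\lor$ (\cref{ax:M3}) rewrites the right-hand side as $(x_0 + \dots + x_k) \lor (1 + x_k)$, so the step reduces to the single identity
\[
	(x_0 + \dots + x_k) \lor (1 + x_k) = (x_0 + \dots + x_k) \lor 1.
\]
Since $1 \leq 1 + x_k$, the nontrivial inclusion is $\leq$, and it amounts to showing $1 + x_k \leq (x_0 + \dots + x_k) \lor 1$.

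This last inequality is where the goodness of the sequence enters, and it is the only genuinely nontrivial point. The pair $(x_{k-1}, x_k)$ is good, so by \cref{r:good-unrolling} we have $(x_{k-1} + x_k) \ominus 1 = x_k$, and then \cref{ax:P3} yields $(x_{k-1} + x_k) \lor 1 = x_k + 1 = 1 + x_k$. On the other hand $x_{k-1} + x_k \leq x_0 + \dots + x_k$ by \cref{ax:P1,l:sum positive}, so passing to $\lor 1$ gives $1 + x_k = (x_{k-1} + x_k) \lor 1 \leq (x_0 + \dots + x_k) \lor 1$, as required. This completes the step, and with it the induction and the lemma.

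The main obstacle is precisely this lower bound $1 + x_k \leq (x_0 + \dots + x_k) \lor 1$. A naive argument would split into the cases $b \geq 1$ and $b \leq 1$ (mirroring Mundici's reliance on totally ordered algebras), but such a dichotomy is unavailable in an arbitrary distributive lattice, where an element need be neither above nor below $1$. The good-pair identity $(x_{k-1} + x_k) \lor 1 = 1 + x_k$ sidesteps the case distinction by supplying the needed bound purely equationally, which is what keeps the proof free of the axiom of choice.
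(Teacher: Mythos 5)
Your proof is correct and follows essentially the same route as the paper's: both pass to the equivalent additive identity $(x_0+\dots+x_k)\lor 1 = 1 + x_1 + \dots + x_k$, induct on $k$, distribute $+x_k$ over $\lor$, and use goodness of the last pair to rewrite $1+x_k$ as $(x_{k-1}+x_k)\lor 1$ before absorbing it into $(x_0+\dots+x_k)\lor 1$. The only difference is cosmetic: you phrase the final absorption as a two-sided inequality rather than a chain of equalities.
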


\begin{proof}
	We prove this statement by induction on $k \in \N$.
	\Cref{e:lor m} is equivalent to 
	\[
		((x_0 + \dots + x_k) \ominus 1) + 1 = 1 + x_1 + \dots + x_k,
	\]
	i.e.,
	\[
		(x_0 + \dots + x_k) \lor 1 = 1 + x_1 + \dots + x_k.
	\]
	The case $k = 0$ is trivial.
	Let us suppose that the statement holds for a fixed $k \in \N$, and let us prove that it holds for $k + 1$.
	We have
	\begin{align*}
		1 + x_1 + \dots + x_{k+1}	& = (1 + x_1 + \dots + x_{k}) + x_{k+1}							&& \\
											& = ((x_0 + \dots + x_{k}) \lor 1) + x_{k+1}						&& \text{(ind. hyp.)}	\\
											& = (x_0 + \dots + x_{k} + x_{k+1}) \lor (1 + x_{k+1})		&& \\
											& = (x_0 + \dots + x_{k+1}) \lor ((x_{k} + x_{k+1}) \lor 1)	&& \\
											& = ((x_0 + \dots + x_{k+1}) \lor(x_{k} + x_{k+1})) \lor 1	&& \\
											& = (x_0 + \dots + x_{k+1}) \lor 1.									&& \qedhere
	\end{align*}
\end{proof}

\begin{lemma} \label{l:unique}
	Let $M$ be a {\pulm}, let $m\in \N$, and let $(x_0, \dots, x_m)$ and $(y_0, \dots, y_m)$ be good sequences in $\U(M)$.
	If 
	\[
		x_0 + \dots + x_m = y_0 + \dots + y_m,
	\]
	then, for all $i \in \{0, \dots,m\}$, $x_i = y_i$.
\end{lemma}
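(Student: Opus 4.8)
The plan is to prove uniqueness by showing that each $x_i$ is recoverable from the single element $s \df x_0 + \dots + x_m$ by an operation that is expressed purely in terms of the \pulm{} structure (and therefore does not see the decomposition). Concretely, I claim that for a good sequence $(x_0, \dots, x_m)$ in $\U(M)$ one has
\[
	x_i = (s \ominus i) \land 1 \qquad \text{for all } i \in \{0, \dots, m\},
\]
where $s = x_0 + \dots + x_m$. Since the right-hand side depends only on $s$, which by hypothesis is the same for the two sequences $(x_0,\dots,x_m)$ and $(y_0,\dots,y_m)$, this immediately forces $x_i = y_i$ for every $i$, giving the statement.

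The key step is thus to establish the displayed formula, and I would prove it by induction on $i$. The base case $i = 0$ is exactly \cref{l:base case}, which gives $s \land 1 = x_0$ (and $s \land 1 = (s \ominus 0) \land 1$ since $\ominus 0$ is the identity). For the inductive step, the main tool is \cref{l:base case lor}, which yields
\[
	s \ominus 1 = (x_0 + \dots + x_m) \ominus 1 = x_1 + \dots + x_m.
\]
More generally, iterating this identity, I would show $s \ominus i = x_i + \dots + x_m$ for each $i \leq m$: applying \cref{l:base case lor} to the good sequence $(x_i, \dots, x_m)$ (a tail of a good sequence is still good, since each consecutive pair $(x_j, x_{j+1})$ is a good pair) gives $(x_i + \dots + x_m) \ominus 1 = x_{i+1} + \dots + x_m$, and combining this with the inductive hypothesis $s \ominus i = x_i + \dots + x_m$ and the definition $s \ominus (i+1) = (s \ominus i) \ominus 1$ advances the induction. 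Then \cref{l:base case} applied to the good sequence $(x_i, \dots, x_m)$ gives $(x_i + \dots + x_m) \land 1 = x_i$, that is, $(s \ominus i) \land 1 = x_i$, which is precisely the formula.

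The only subtlety I anticipate is bookkeeping: one must be careful that the tails $(x_i, \dots, x_m)$ are genuinely good sequences in $\U(M)$ so that \cref{l:base case,l:base case lor} apply to them, but this is immediate from the definition of good sequence, since goodness is a condition on consecutive pairs. Beyond that, the argument is a clean double application of the two preceding lemmas together with the cancellativity of the integers $n$ (\cref{l:n is cancellative}), which is implicitly what makes $\ominus$ well-behaved. I expect no real obstacle; the substance of the proof has already been isolated in \cref{l:base case,l:base case lor}, and this lemma is essentially their combination.
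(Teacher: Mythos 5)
Your proof is correct and rests on the same two ingredients as the paper's: \cref{l:base case} to recover $x_0$ as $s \land 1$ and \cref{l:base case lor} to peel off the tail via $\ominus 1$, applied recursively to the (still good) tail sequences. The only difference is organizational --- the paper inducts on $m$ to compare the two decompositions directly, while you induct on the index $i$ to derive the explicit reconstruction formula $x_i = (s \ominus i) \land 1$, which is precisely the formula the paper isolates immediately afterwards in \cref{p:exists unique good sequence}.
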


\begin{proof}
	We prove the statement by induction on $m$.
	The case $m = 0$ is trivial.
	Suppose that the statement holds for a fixed $m\in \N$, and let us prove it for $m + 1$.
	By \cref{l:base case}, we have 
	\[
		x_0 = (x_0 + \dots + x_{m + 1}) \land 1 = (y_0 + \dots + y_{m + 1}) \land 1 = y_0.
	\]
	By \cref{l:base case lor}, we have
	\begin{align*}
		x_1 + \dots + x_{m + 1}	& = (x_0 + x_1 + \dots + x_{m + 1}) \ominus 1 \\
										& = (y_0 + y_1 + \dots + y_{m + 1}) \ominus 1 \\
										& = y_1 + \dots + y_{m + 1}.
	\end{align*}
	By inductive hypothesis, for all $i \in \{1, \dots, m + 1\}$, $x_i = y_i$.
\end{proof}

\begin{proposition} \label{p:exists unique good sequence}
	Let $M$ be a \pulm, and let $x \in M$.
	Then, there exists exactly one good sequence $(x_0, \dots, x_m)$ in $\U(M)$ such that $x = x_0 + \dots + x_m$, given by 
	\[
		x_n = (x \ominus n) \land 1.
	\]
	In particular, the function $\eps_M^1 \colon \GS\U(M) \to M$ is bijective.
\end{proposition}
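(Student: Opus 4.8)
The plan is to assemble the lemmas established earlier: I would prove existence of the claimed good sequence, then uniqueness, and finally read off bijectivity of $\eps_M^1$. No new computation is needed; the entire argument is bookkeeping on top of the preceding results.

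First I would settle existence. Fix $x \in M$. By \cref{ax:P4} there is some $m \in \N$ with $x \leq m$, and I set $x_n \df (x \ominus n) \land 1$ for each $n \in \N$. The sequence $(x_0, x_1, x_2, \dots)$ is a good sequence in $\U(M)$ by \cref{l:trunc is good}. Moreover, for $n \geq m$ we have $x \leq m \leq n$, so $x \ominus n = 0$ by \cref{l:x less than n}, whence $x_n = 0$; thus the sequence is in fact $(x_0, \dots, x_{m-1})$. Finally, \cref{l:trunc sums up} gives
\[
	x = \sum_{n \in \{0, \dots, m-1\}} (x \ominus n) \land 1 = x_0 + \dots + x_{m-1},
\]
so this good sequence sums to $x$, as claimed.

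Next I would establish uniqueness. Suppose $(y_0, \dots, y_k)$ is any good sequence in $\U(M)$ with $y_0 + \dots + y_k = x$. Padding both sequences with trailing zeros—which, by the convention adopted in \cref{s:good.def}, does not change the good sequence they represent—I may assume that $(x_0, \dots, x_\ell)$ and $(y_0, \dots, y_\ell)$ share a common length $\ell = \max\{m-1, k\}$ and have equal sum $x$. Then \cref{l:unique} forces $x_i = y_i$ for every $i$; in particular $y_n = (x \ominus n) \land 1$ for all $n$. This simultaneously shows that the good sequence produced in the existence step is the \emph{unique} one summing to $x$ and that its $n$-th term is exactly $(x \ominus n) \land 1$, proving the first assertion.

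The concluding claim about $\eps_M^1 \colon \GS\U(M) \to M$ is then immediate: surjectivity is precisely the existence statement (every $x \in M$ is the sum of a good sequence in $\U(M)$), and injectivity is precisely the uniqueness statement (two good sequences with the same sum coincide, once identified up to trailing zeros). The only point demanding any care is the alignment of the two sequence lengths before invoking \cref{l:unique}; all the analytic content is already contained in \cref{l:trunc is good,l:trunc sums up,l:unique}, so there is no genuine obstacle remaining at this stage.
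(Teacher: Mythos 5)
Your proposal is correct and follows essentially the same route as the paper: existence via \cref{l:trunc is good,l:trunc sums up} (with \cref{l:x less than n} guaranteeing the sequence is eventually zero), uniqueness via \cref{l:unique} after padding to a common length, and bijectivity of $\eps_M^1$ read off directly. The paper's own proof is just a terser version of exactly this assembly.
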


\begin{proof}
	\Cref{l:trunc sums up,l:trunc is good} show that $x_n = (x \ominus n) \land 1$ works.
	Uniqueness is ensured by \cref{l:unique}.
\end{proof}

Our next goal is to prove that $\eps_M^1$ is a morphism of {\pulms} (\cref{p:eps1 is morphism} below).
We need some lemmas.

\begin{lemma} \label{l:join of good sequences in mon}
	Let $M$ be a {\pulm}. For all good sequences $(x_0, \dots, x_m)$ and $(y_0, \dots, y_m)$ in $\U(M)$, we have
	\begin{equation} \label{e:lor}
		(((x_0 + \dots + x_m) \lor (y_0 + \dots + y_m)) \ominus n) \land 1 = x_n \lor y_n,
	\end{equation}
	and
	\begin{equation} \label{e:land m}
		(((x_0 + \dots + x_m) \land (y_0 + \dots + y_m)) \ominus n) \land 1 = x_n \land y_n.
	\end{equation}
\end{lemma}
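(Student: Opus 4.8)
The plan is to reduce both identities to a single observation---that the operation $z \mapsto (z \ominus n) \land 1$ distributes over $\lor$ and over $\land$---combined with the explicit formula for good-sequence components supplied by \cref{p:exists unique good sequence}. Write $X \df x_0 + \dots + x_m$ and $Y \df y_0 + \dots + y_m$. Since $(x_0, \dots, x_m)$ is a good sequence in $\U(M)$ summing to $X$, the uniqueness part of \cref{p:exists unique good sequence} forces $x_n = (X \ominus n) \land 1$ for every $n$; likewise $y_n = (Y \ominus n) \land 1$. Thus \eqref{e:lor} becomes the statement that $((X \lor Y) \ominus n) \land 1 = ((X \ominus n) \land 1) \lor ((Y \ominus n) \land 1)$, and \eqref{e:land m} becomes the analogous statement with $\lor$ replaced by $\land$.

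Next I would show that $-\ominus n$ distributes over both $\lor$ and $\land$. For $\lor$, using \cref{l:fixed} and the distributivity of $+$ over $\lor$ (\cref{ax:P0}), one computes
\[
	((X \lor Y) \ominus n) + n = (X \lor Y) \lor n = ((X \ominus n) \lor (Y \ominus n)) + n,
\]
and then cancels $n$ via \cref{l:n is cancellative} to conclude $(X \lor Y) \ominus n = (X \ominus n) \lor (Y \ominus n)$. The case of $\land$ is identical, replacing $\lor$ by $\land$ throughout and using in addition the distributivity of the underlying lattice to rewrite $(X \land Y) \lor n = (X \lor n) \land (Y \lor n)$.

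Finally I would combine these with lattice distributivity. For \eqref{e:lor},
\[
	((X \lor Y) \ominus n) \land 1 = ((X \ominus n) \lor (Y \ominus n)) \land 1 = ((X \ominus n) \land 1) \lor ((Y \ominus n) \land 1) = x_n \lor y_n,
\]
where the middle equality is lattice distributivity; for \eqref{e:land m} the same computation with $\land$ in place of $\lor$ (using commutativity, associativity and idempotency of $\land$ at the middle step) yields $x_n \land y_n$.

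There is no serious obstacle once the reduction is made: the only point requiring care is recognizing that \cref{p:exists unique good sequence} permits replacing each component $x_n$ by the closed expression $(X \ominus n) \land 1$, after which everything follows from cancellativity (\cref{l:n is cancellative}), \cref{l:fixed}, and lattice distributivity. I would also make sure to regard both sequences as padded to a common length with trailing zeros, so that the formula $x_n = (X \ominus n) \land 1$ holds simultaneously at every index $n$.
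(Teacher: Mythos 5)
Your proof is correct and follows essentially the same route as the paper: both reduce via the uniqueness part of \cref{p:exists unique good sequence} to the identity $((X \lor Y) \ominus n) \land 1 = ((X \ominus n) \land 1) \lor ((Y \ominus n) \land 1)$ (and its $\land$-analogue), and both dispose of it by adding $n$, invoking \cref{l:fixed}, distributivity of $+$ over the lattice operations, lattice distributivity, and cancellativity of $n$. The only difference is organizational: you factor the computation through the intermediate fact that $-\ominus n$ commutes with $\lor$ and $\land$, whereas the paper adds $n$ to the whole equation at once and checks a single distributive-lattice identity.
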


\begin{proof}
	Let us prove \cref{e:lor}.
	Set $x \df x_0 + \dots + x_n$, and $y\df y_0 + \dots + y_n$.
	By \cref{p:exists unique good sequence}, we have $x_n = (x \ominus n) \land 1$ and $y_n = (y\ominus n) \land 1$.
	Adding $n$ on both sides of \cref{e:lor}, we obtain the equivalent statement
	\[
		(x \lor y \lor n) \land (n + 1) = ((x \lor n) \land (n + 1)) \lor ((y \lor n) \land (n + 1)),
	\]
	which holds by the distributivity laws.
	The proof of \cref{e:land m} is analogous.
\end{proof}

\begin{lemma} \label{l:duo}
	Let $M$ be {\apulm}, and let $x,y \in M$ with $y \leq 1$.
	Set $x_0 \df x \land 1$ and $x_{1} \df (x \ominus 1) \land 1$.
	Then,
	\[
		((x + y) \ominus 1) \land 1 = x_0 \odot (x_1 \oplus y).
	\]
\end{lemma}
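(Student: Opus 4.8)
The plan is to show that both sides of the asserted identity are equal to the single expression $((x + y) \ominus 1) \land 1$, rewritten as $((x + y) \land 2) \ominus 1$. For the left-hand side this is immediate: applying \cref{l:transpose} to the element $x + y$ with $n = k = 1$ gives $((x + y) \ominus 1) \land 1 = ((x + y) \land 2) \ominus 1$, so the only real work is in transforming the right-hand side into this same form.

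For the right-hand side I would first unfold the MV-monoidal operations on $\U(M)$. Note that $x_0 = x \land 1$, $x_1 = (x \ominus 1) \land 1$ and $y$ all lie in $\U(M)$, so $x_1 \oplus y = (x_1 + y) \land 1$ and $x_0 \odot (x_1 \oplus y) = (x_0 + (x_1 \oplus y)) \ominus 1$ are legitimate. Distributing $+$ over $\land$ (\cref{ax:M3}) yields
\[
	x_0 \odot (x_1 \oplus y) = \bigl((x_0 + x_1 + y) \land (x_0 + 1)\bigr) \ominus 1.
\]
By \cref{l:truncation is good} we have $x_0 + x_1 = x \land 2$; distributing $+$ over $\land$ once more gives $x_0 + x_1 + y = (x + y) \land (2 + y)$ and $x_0 + 1 = (x + 1) \land 2$. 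Hence the argument of $\ominus 1$ on the right-hand side is the four-fold meet $(x + y) \land (2 + y) \land (x + 1) \land 2$.

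The heart of the argument—and the only place where the hypotheses are used—is to see that this four-fold meet collapses to $(x + y) \land 2$, i.e.\ that the extra meetands $2 + y$ and $x + 1$ are redundant. Here positivity of $M$ gives $(x + y) \land 2 \leq 2 \leq 2 + y$, and the hypothesis $y \leq 1$ gives $(x + y) \land 2 \leq x + y \leq x + 1$; thus $(x + y) \land 2$ already lies below both redundant meetands, and the four-fold meet equals $(x + y) \land 2$. Consequently the right-hand side equals $((x + y) \land 2) \ominus 1$, which matches the left-hand side by the first step, completing the proof. I expect this last redundancy check to be the only subtle point, since it is exactly where the assumption $y \leq 1$ enters; everything else is routine unfolding and distributivity.
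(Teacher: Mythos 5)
Your proposal is correct and follows essentially the same route as the paper's proof: unfold $\oplus$ and $\odot$, use $x_0 + x_1 = x \land 2$ (the paper re-derives this inline from \cref{l:trunc}, while you cite \cref{l:truncation is good} directly, which is cleaner), distribute $+$ over $\land$, discard the redundant meetands $2+y$ and $x+1$ using positivity and $y \leq 1$, and finish with \cref{l:transpose}. The only cosmetic difference is that you meet in the middle at $((x+y)\land 2)\ominus 1$ whereas the paper transforms the right-hand side all the way into the left-hand side; the content is identical.
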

\begin{proof}
	We have
	\begin{align*}
		x_0 + x_1	& = (x \land 1) + ((x \ominus 1) \land 1)\\
						& = ((x \land 1) + (x \ominus 1)) \land ((x \land 1) + 1)	&& \by{$+$ distr.\ over $\land$}\\
						& = x \land (x + 1) \land 2	&& \by{\cref{l:trunc}}\\
						& = x \land 2.
	\end{align*}
	Therefore, we have
	\begin{align*}
		x_0 \odot (x_1 \oplus y)	& = (x_0 + ((x_1 + y) \land 1)) \ominus 1	&& \by{def.\ of $\oplus$ and $\odot$}\\
											& = ((x_0 + x_1 + y) \land (x_0 + 1)) \ominus 1 && \by{$+$ distr.\ over $\land$}\\
											& = ((x + y) \land ((x \land 1) + 1)) \ominus 1	&& \\
											& = ((x + y) \land (x + 1) \land 2) \ominus 1	&& \by{$+$ distr.\ over $\land$}\\
											& = ((x + y) \land 2) \ominus 1	&& \\
											& = ((x + y) \ominus 1) \land 1.	&& \by{\cref{l:transpose}} \qedhere
	\end{align*}
\end{proof}

\begin{lemma} \label{l:ominus n split}
	Let $x$ and $y$ be elements of {\apulm}, let $n \in \N\setminus \{0\}$, and suppose $y \leq 1$.
	Then,
	\[
		(x + y) \ominus n = (x \ominus (n-1) + y) \ominus 1.
	\]
\end{lemma}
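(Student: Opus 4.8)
The plan is to exploit the cancellativity of the natural numbers (\cref{l:n is cancellative}): to prove that the two sides are equal, it suffices to show that they become equal after adding $n$, since $n$ is cancellative in \apulm. So I would compute $((x + y) \ominus n) + n$ and $((x \ominus (n-1) + y) \ominus 1) + n$ separately and verify that they coincide.

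The left-hand side is immediate: by \cref{l:fixed} we have $((x + y) \ominus n) + n = (x + y) \lor n$. For the right-hand side, the key move is to split $n = 1 + (n-1)$, which is legitimate since $n \geq 1$, so that the single $\ominus 1$ can be cancelled by one of the restored units. Concretely, I would first apply \cref{ax:P3} in the form $(z \ominus 1) + 1 = z \lor 1$ with $z = x \ominus (n-1) + y$, obtaining $(x \ominus (n-1) + y) \lor 1$, then add the remaining $n-1$ and distribute $+$ over $\lor$. This produces $((x \ominus (n-1)) + (n-1) + y) \lor n$.

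Applying \cref{l:fixed} once more turns $(x \ominus (n-1)) + (n-1)$ into $x \lor (n-1)$, and a second distribution of $+$ over $\lor$ yields $(x + y) \lor ((n-1) + y) \lor n$. The final step, and the only place the hypothesis $y \leq 1$ enters, is to absorb the middle term: since $y \leq 1$, monotonicity of $+$ (\cref{l:sum positive}) gives $(n-1) + y \leq (n-1) + 1 = n$, so $(n-1) + y$ is dominated by $n$ and drops out of the join, leaving $(x + y) \lor n$, exactly the left-hand side. Cancelling $n$ via \cref{l:n is cancellative} then finishes the proof.

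I expect no genuine difficulty here; the only subtlety is bookkeeping. The two applications of \cref{l:fixed} must be placed correctly to restore and then re-collapse the truncations, and one must recognize that $y \leq 1$ is precisely what makes $(n-1) + y$ redundant against $n$ in the final join. Everything else is routine distribution of $+$ over $\lor$ and cancellation of a natural number.
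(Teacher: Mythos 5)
Your proof is correct and follows essentially the same route as the paper's: add $n$ to both sides, split $n = 1 + (n-1)$, apply \cref{ax:P3} and \cref{l:fixed}, distribute $+$ over $\lor$, absorb the term $(n-1)+y$ into $n$ using $y \leq 1$, and cancel $n$ via \cref{l:n is cancellative}. No gaps.
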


\begin{proof}
	We have
	\begin{align*}
		& (x \ominus (n-1) + y) \ominus 1 + n	\\
		& = (x \ominus (n-1) + y) \ominus 1 + 1 + (n-1)	&&\\
		& = (x \ominus (n-1) + y) \lor 1 + (n-1)			&&\\
		& = (x \ominus (n-1) + y + (n-1)) \lor n			&&\\
		& = ((x \lor (n-1)) + y) \lor n					&&\\
		& = (x + y) \lor (y + (n-1)) \lor n				&&\\
		& = (x + y) \lor n											&& \by{$y \leq 1$}	\\
		& = ((x + y) \ominus n) + n.							&& \by{\cref{l:fixed}}
	\qedhere
	\end{align*}
\end{proof}

\begin{lemma} \label{l.duo-trans}
	Let $M$ be {\apulm}, and let $x,y \in M$ with $y \leq 1$.
	For every $n \in \Np$, we have 
	\[
		((x + y) \ominus n) \land 1 = ((x \ominus (n-1))\land 1) \odot (((x \ominus n) \land 1) \oplus y).
	\]
\end{lemma}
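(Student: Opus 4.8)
The plan is to observe that this identity is nothing more than the composition of two facts already established: the ``peeling'' identity \cref{l:ominus n split} and the base case \cref{l:duo}. So no induction on $n$ is required. First, since $n \in \Np$ and $y \leq 1$, \cref{l:ominus n split} applies and gives $(x + y) \ominus n = (x \ominus (n-1) + y) \ominus 1$. Meeting both sides with $1$ then yields
\[
	((x + y) \ominus n) \land 1 = ((x \ominus (n-1) + y) \ominus 1) \land 1.
\]

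Next, I would apply \cref{l:duo} with $x \ominus (n-1)$ playing the role of $x$. The hypothesis $y \leq 1$ is exactly the one we are given, so, writing $w \df x \ominus (n-1)$, $w_0 \df w \land 1$ and $w_1 \df (w \ominus 1) \land 1$, \cref{l:duo} gives $((w + y) \ominus 1) \land 1 = w_0 \odot (w_1 \oplus y)$. Combining this with the displayed equality above, the right-hand side of the claim is recovered by unfolding $w_0 = (x \ominus (n-1)) \land 1$ and, using the inductive definition of $-\ominus n$, $w_1 = ((x \ominus (n-1)) \ominus 1) \land 1 = (x \ominus n) \land 1$.

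I do not expect any genuine obstacle: the single peeling step of \cref{l:ominus n split} reduces everything to the already-proven instance $n = 1$, namely \cref{l:duo}. The only points to handle carefully are the bookkeeping of the truncation indices --- checking that $(x \ominus (n-1)) \ominus 1$ is indeed $x \ominus n$, which is immediate from the inductive definition --- and confirming that the elements $w_0, w_1, y$ to which $\oplus$ and $\odot$ are applied all lie in $\U(M)$, which holds since each is $\leq 1$ (by a meet with $1$, respectively by hypothesis) and $\geq 0$ (by \cref{ax:P1}).
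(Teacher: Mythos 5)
Your proposal is correct and follows exactly the paper's own proof: a single application of \cref{l:ominus n split} to rewrite $(x+y)\ominus n$ as $(x\ominus(n-1)+y)\ominus 1$, followed by \cref{l:duo} applied to $x\ominus(n-1)$, with the index bookkeeping $(x\ominus(n-1))\ominus 1 = x\ominus n$ coming from the inductive definition. No differences worth noting.
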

\begin{proof}
	We have
	\begin{align*}
		& ((x + y) \ominus n) \land 1\\
		& = (x \ominus (n-1) + y) \ominus 1	&& \by{\cref{l:ominus n split}}\\
		& = ((x \ominus (n-1)) \land 1) \odot ((((x \ominus (n-1)) \ominus 1) \land 1) \oplus y)	&& \by{\cref{l:duo}}\\
		& = ((x \ominus (n-1)) \land 1) \odot (((x \ominus n) \land 1) \oplus y).	&& \qedhere\\
%		& = x_{n-1} \odot (x_{n} \oplus y).	&& 
	\end{align*}
\end{proof}

\begin{proposition} \label{p:eps1 is morphism}
	For every {\pulm} $M$, the function $\eps_M^1 \GS\U(M) \to M$ is a morphism of {\pulms}.
\end{proposition}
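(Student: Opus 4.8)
The plan is to reduce the claim to a finite checklist using \cref{r:impl def}: since $\eps_M^1$ is already known to be a bijection (\cref{p:exists unique good sequence}), it suffices to verify that it preserves $1$, $\lor$, $\land$ and $+$, whereupon it automatically preserves $0$ and $-\ominus 1$ as well. Preservation of $1$ is immediate, since $\eps_M^1(\gs{1}) = \eps_M^1((1)) = 1$.

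For $\lor$ and $\land$ I would route the argument through the canonical good sequence of \cref{p:exists unique good sequence}. Given good sequences $\gs{a} = (a_0, \dots, a_m)$ and $\gs{b} = (b_0, \dots, b_m)$ in $\U(M)$ (padded to a common length), set $x \df \eps_M^1(\gs{a}) = a_0 + \dots + a_m$ and $y \df \eps_M^1(\gs{b})$. As $\gs{a} \lor \gs{b}$ is the componentwise join, \eqref{e:lor} of \cref{l:join of good sequences in mon} identifies its $n$-th entry $a_n \lor b_n$ with $((x \lor y) \ominus n) \land 1$; summing over $n$ and invoking the summation formula in \cref{p:exists unique good sequence} gives $\eps_M^1(\gs{a} \lor \gs{b}) = x \lor y = \eps_M^1(\gs{a}) \lor \eps_M^1(\gs{b})$. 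The case of $\land$ is identical, using \eqref{e:land m} instead.

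The hard part is preservation of $+$, and here I would first reduce to sums with a single summand. Using \cref{l:as sum of elements} to peel one entry off $\gs{b} = (b_0, \dots, b_m)$ at a time, together with associativity and commutativity of addition of good sequences (\cref{p:associativity full,p:sum is commutative}), an induction on the length of $\gs{b}$ reduces the general identity to the single claim $\eps_M^1(\gs{a} + (y)) = \eps_M^1(\gs{a}) + y$ for all $\gs{a} \in \GS\U(M)$ and $y \in \U(M)$. To prove this, put $x \df \eps_M^1(\gs{a})$, so that $a_n = (x \ominus n) \land 1$ by \cref{p:exists unique good sequence}. Writing $\gs{c} \df \gs{a} + (y)$ and expanding the defining product for $+$ against $(y) = (y, 0, 0, \dots)$, the goodness of $\gs{a}$ (via \cref{l:good is transitive}, which makes $a_0 \odot \dots \odot a_{n-1} = a_{n-1}$) collapses the product to $c_0 = a_0 \oplus y$ and $c_n = a_{n-1} \odot (a_n \oplus y)$ for $n \geq 1$. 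A short distributivity computation using $y \geq 0$ gives $c_0 = (x + y) \land 1$, while for $n \geq 1$ the term $c_n = ((x \ominus (n-1)) \land 1) \odot (((x \ominus n) \land 1) \oplus y)$ is precisely the right-hand side of \cref{l.duo-trans}, whence $c_n = ((x + y) \ominus n) \land 1$. Thus $\gs{c}$ is the canonical good sequence of $x + y$, and summing its entries via \cref{p:exists unique good sequence} yields $\eps_M^1(\gs{c}) = x + y = \eps_M^1(\gs{a}) + y$.

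I expect the main obstacle to be this single-summand computation of $\gs{c}$: one must correctly simplify the iterated $\odot$-product using the goodness of $\gs{a}$ and then recognize the resulting expression as exactly the statement of \cref{l.duo-trans}, which appears to have been tailored for this step. Once those two identifications are secured, together with the $n = 0$ base case, the general preservation of $+$ follows formally by the generation/induction argument, and the proof is complete.
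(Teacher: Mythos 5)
Your proposal is correct and follows essentially the same route as the paper's proof: preservation of $1$, $\lor$, $\land$ via the uniqueness clause of \cref{p:exists unique good sequence} together with \cref{l:join of good sequences in mon}, and preservation of $+$ by peeling off one entry at a time with \cref{l:as sum of elements} and resolving the single-summand case through \cref{l.duo-trans}. The only cosmetic differences are that you invoke \cref{r:impl def} explicitly (the paper leaves it implicit) and you spell out the telescoping of the $\odot$-product via \cref{l:good is transitive}, which the paper states directly from the definition of the sum.
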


\begin{proof}	
	Clearly, $\eps_M^1$ preserves $1$.
	Let us prove that $\eps_M^1$ preserves $\lor$.
	Let $\gs{x} = (x_0, \dots, x_m)$ and $\gs{y} = (y_0, \dots, y_m)$ be good sequences in $\U(M)$.
	We shall prove 
	\[
		(x_0 + \dots + x_m) \lor (y_0 + \dots + y_m)  =  (x_0 \lor y_0) + \dots + (x_m \lor y_m).
	\]
	By \cref{p:join is good}, $(x_0\lor y_0,\dots,x_m\lor y_m)$ is a good sequence.
	By \cref{p:exists unique good sequence}, it is enough to show that, for every $n\in \N$,
	\[
		(((x_0 + \dots + x_m) \lor (y_0 + \dots + y_m)) \ominus n) \land 1  =  x_n \lor y_n.
	\]
	This holds by \cref{l:join of good sequences in mon}.
	Analogously, $\eps_M^1$ preserves $\land$.
	
	Let us prove that $\eps_M^1$ preserves $+$.
	We prove, by induction on $n\in \N$, that, for all $m\in \N$, and for all $(x_0,\dots,x_m)$ and $(y_0,\dots,y_n)$ good sequences in $\U(M)$, we have
	\[
		\eps_M^1((x_0, \dots, x_m) + (y_0, \dots, y_n)) = x_0 + \dots + x_m + y_0 + \dots + y_n.
	\]
	
	Let us prove the base case $n=0$.
	Let $m\in \N$, let $(x_0,\dots,x_m)$ be a good sequence in $\U(M)$, and let $y \in \U(M)$.
	Then, from the definition of sum of good sequences, we obtain that	$(x_0,\dots,x_m)+(y)$ is the good sequence $(z_0, \dots, z_{m+1})$ where, for every $k\in \{0, \dots, m+1\}$, $z_k = x_{k-1} \odot (x_k \oplus y)$ (where, by convention, we set $x_{-1}=1$).
	By \cref{l.duo-trans,p:exists unique good sequence}, for every $k\in \N$, we have 
	\[
		((x_0 + \dots + x_m + y) \ominus k) \land 1 = x_{k-1} \odot (x_k \oplus y)  = z_k.
	\]
	By \cref{p:exists unique good sequence}, we have $z_0+\dots+z_{m+1}=x_0+\dots+x_m+y$; this settles the base case.
	
	Let us suppose that the case $n$ holds, for a fixed $n\in\N$, and let us prove the case $n+1$.
	Let $m\in \N$, and let $(x_0,\dots,x_m)$ and $(y_0,\dots,y_{n+1})$ be good sequences in $\U(M)$.
	Then
	\begin{align*}
		& \eps_M^1((x_0, \dots, x_m) + (y_0, \dots, y_{n+1}))	\\
		& = \eps_M^1((x_0, \dots, x_m) + (y_0, \dots, y_n) + (y_{n+1}))	&& \by{\cref{l:as sum of elements}}\\
		& = \eps_M^1((x_0, \dots, x_m) + (y_0, \dots, y_n)) + \eps_M^1((y_{n+1}))
		&& \by{base case}\\
		& = \eps_M^1(x_0, \dots, x_m) + \eps_M^1(y_0, \dots, y_n) + y_{n+1}
		&& \by{ind.\ hyp.}\\
		& = x_0 + \dots + x_m + y_0 + \dots + y_n + y_{n+1}.
		&& \qedhere
	\end{align*}
\end{proof}

\begin{proposition} \label{p:eps1 is natural}
	$\eps^1 \colon \GS\U\dot{\to} \Id_{\ULMP}$ is a natural transformation, i.e., for every morphism of {\pulms} $f\colon M \to N$, the following diagram commutes.
	\[
		\begin{tikzcd}
			\GS\U(M) \arrow[swap]{d}{\GS\U(f)} \arrow{r}{\eps^1_M}	& M \arrow{d}{f} \\
			\GS\U(N) \arrow[swap]{r}{\eps^1_N}								& N
		\end{tikzcd}
	\]
\end{proposition}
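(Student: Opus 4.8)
The plan is to chase the square directly, evaluating both composites on an arbitrary element of $\GS\U(M)$ and reducing the claim to the single fact that $f$ preserves $+$. Since everything in sight is defined componentwise or by summation, no structural lemma beyond the functoriality already established for $\U$ and $\GS$ should be needed; I expect this to be a short computation rather than a genuinely hard step.

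First I would fix a good sequence $(x_0, \dots, x_m)$ in $\U(M)$, which is a typical element of $\GS\U(M)$. Unfolding the definitions, $\GS\U(f)$ acts componentwise via the restriction of $f$ to $\U(M)$, so it sends $(x_0, \dots, x_m)$ to $(f(x_0), \dots, f(x_m))$, a good sequence in $\U(N)$; this is legitimate precisely because $\U$ and $\GS$ are functors, so $\GS\U(f)$ is a well-defined morphism and in particular lands in $\GS\U(N)$. Then $\eps^1_N$ sums these components, giving $f(x_0) + \dots + f(x_m)$. Going the other way around the square, $\eps^1_M$ first returns $x_0 + \dots + x_m \in M$, and then $f$ is applied to obtain $f(x_0 + \dots + x_m)$.

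It remains to compare $f(x_0 + \dots + x_m)$ with $f(x_0) + \dots + f(x_m)$. Here I would simply invoke that $f$ is a morphism of {\pulms} and hence preserves $+$ (indeed preserves all operations); by a trivial induction on $m$ this yields $f(x_0 + \dots + x_m) = f(x_0) + \dots + f(x_m)$, so the two composites agree on $(x_0, \dots, x_m)$. As $(x_0, \dots, x_m)$ was an arbitrary good sequence in $\U(M)$, the diagram commutes for every $f$, which is exactly the asserted naturality of $\eps^1$.

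The only point requiring any care — and the nearest thing to an obstacle — is bookkeeping rather than mathematics: one must note that an element of $\GS\U(M)$ is a good sequence whose entries lie in $\U(M) \subseteq M$, so that applying $f$ to each entry is meaningful and the resulting sequence is again good. This is guaranteed by the fact that $f$ restricts to $\U(M) \to \U(N)$ and by functoriality of $\GS$, both of which are available from the preceding development, so the verification goes through without incident.
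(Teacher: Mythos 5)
Your proof is correct and follows exactly the paper's argument: evaluate both composites on a good sequence $(x_0,\dots,x_m)$, observe that $\GS\U(f)$ acts componentwise, and reduce the commutativity of the square to the fact that $f$ preserves finite sums. No further comment is needed.
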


\begin{proof}
	Let $(x_0, \dots, x_m) \in \GS\U(M)$.
	Then
	\begin{align*}
		 \eps_N^1(\GS\U(f)(x_0, \dots, x_m))	& = \eps_N^1(\U(f)(x_0), \dots, \U(f)(x_m)) \\
		 													& = \eps_N^1(f(x_0), \dots,f(x_m)) \\
		 													& = f(x_0) + \dots + f(x_m) \\
		 													& = f(x_0 + \dots + x_m) \\
															& = f(\eps_M^1(x_0, \dots, x_m)). \qedhere
	\end{align*}
\end{proof}

%================================= SUBSECTION =================================%

\subsection{The equivalence}

\begin{theorem} \label{t:G_1 is equiv}	
	The functors $\U \colon \ULMP \to \MVM$ and $\GS \colon \MVM \to \ULMP$ are quasi-inverses.
	Thus, the categories of {\ulms} and {\mvms} are equivalent.
\end{theorem}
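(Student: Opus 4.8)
The plan is to observe that the two natural transformations already built in this section, namely $\eta^1 \colon \Id_{\MVM} \dot{\to} \U\GS$ and $\eps^1 \colon \GS\U \dot{\to} \Id_{\ULMP}$, are in fact natural \emph{isomorphisms}; by the definition of quasi-inverses recalled before \cref{t:G_0 is equiv}, this is exactly the assertion that $\U$ and $\GS$ are quasi-inverses. So the theorem is an assembly of the preceding propositions, and the two claims in the statement then follow by combining this with \cref{t:G_0 is equiv}.

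For $\eta^1$: it is a natural transformation by \cref{p:eta1 is natural}, and each component $\eta^1_A$ is an isomorphism of {\mvms} by \cref{p:unit mmv}; hence $\eta^1$ is a natural isomorphism and $\U\GS \cong \Id_{\MVM}$. For $\eps^1$: it is a natural transformation by \cref{p:eps1 is natural}, each component $\eps^1_M$ is a morphism of {\pulms} by \cref{p:eps1 is morphism}, and each $\eps^1_M$ is bijective by \cref{p:exists unique good sequence}. The only point needing a word is that a bijective morphism of {\pulms} is an isomorphism in $\ULMP$: a bijective homomorphism between algebras of the same signature has a homomorphic set-theoretic inverse, so each $\eps^1_M$ is an isomorphism. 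Thus $\eps^1$ is a natural isomorphism and $\GS\U \cong \Id_{\ULMP}$. Together these show that $\U$ and $\GS$ are quasi-inverses, whence $\ULMP$ and $\MVM$ are equivalent.

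For the final assertion I would compose equivalences: by \cref{t:G_0 is equiv} the functors $(-)^+$ and $\T$ are quasi-inverses, so $\ULM$ and $\ULMP$ are equivalent, and since a composite of equivalences is again an equivalence, combining this with $\ULMP \simeq \MVM$ yields that $\ULM$ and $\MVM$ are equivalent; in particular $\G = \U\,(-)^+$ and $\X = \T\GS$ are the resulting quasi-inverses. As for difficulty, essentially all of the genuine work has already been carried out in the supporting propositions, the decisive one being \cref{p:exists unique good sequence} (existence and uniqueness of the good-sequence decomposition of an element of a {\pulm}), which is precisely what makes $\eps^1_M$ bijective; the present step is purely organisational, its only mild subtlety being the remark that bijective homomorphisms are isomorphisms in these categories.
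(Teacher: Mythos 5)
Your proposal is correct and follows the paper's own proof exactly: it assembles \cref{p:unit mmv,p:eta1 is natural} to get $\Id_{\MVM} \cong \U\GS$ and \cref{p:eps1 is morphism,p:eps1 is natural,p:exists unique good sequence} to get $\GS\U \cong \Id_{\ULMP}$, then composes with \cref{t:G_0 is equiv} for the final assertion. Your explicit remark that a bijective homomorphism of algebras is an isomorphism is a small point the paper leaves implicit, but it is the same argument.
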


\begin{proof}
	By \cref{p:unit mmv,p:eta1 is natural}, the two functors $\Id_{\MVM} \colon \MVM \to \MVM$ and $\U\GS\colon \MVM \to \MVM$ are naturally isomorphic.
	By \cref{p:eps1 is morphism,p:eps1 is natural,p:exists unique good sequence}, the functors $\Id_{\ULMP} \colon \ULMP\to \ULMP$ and $\GS\U\colon \ULMP\to \ULMP$ are naturally isomorphic.
\end{proof}

We are ready to prove the main result of the paper.

\begin{theorem} \label{t:G is equivalence}
	The functor $\G\colon \ULM \to \MVM$ is an equivalence of categories.
\end{theorem}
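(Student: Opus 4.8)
The plan is to exhibit $\G$ as the composite of the two functors $(-)^+ \colon \ULM \to \ULMP$ and $\U \colon \ULMP \to \MVM$, both of which were shown to be equivalences in the previous sections, and then to invoke the fact that a composite of equivalences is again an equivalence.

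First I would verify that $\G = \U \circ (-)^+$ strictly, as functors. On objects, for a {\ulm} $M$ the underlying set of $\U(M^+)$ is $\{\,x \in M^+ \mid x \leq 1\,\} = \{\,x \in M \mid 0 \leq x \leq 1\,\}$, which is exactly the underlying set of $\G(M)$. The constants $0$, $1$ and the lattice operations $\lor$, $\land$ agree, since both are defined by restriction in either construction, and $\oplus$ agrees since it is $(x + y) \land 1$ in both. The only point requiring a moment's care is $\odot$: in $\G(M)$ it is defined as $(x + y - 1) \lor 0$, whereas in $\U(M^+)$ it is $(x + y) \ominus 1$; but recalling that the operation $\ominus 1$ on the positive cone $M^+$ is $z \ominus 1 = (z - 1) \lor 0$, the two definitions coincide. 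On morphisms, for $f \colon M \to N$ both $\G(f)$ and $\U(f^+)$ are the restriction of $f$ to the common domain $\G(M) = \U(M^+)$, so they agree. Hence $\G = \U \circ (-)^+$.

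With this identification in hand, the result follows immediately. By \cref{t:G_0 is equiv} the functor $(-)^+$ is an equivalence, with quasi-inverse $\T$; by \cref{t:G_1 is equiv} the functor $\U$ is an equivalence, with quasi-inverse $\GS$. Since the composite of two equivalences of categories is again an equivalence, $\G = \U \circ (-)^+$ is an equivalence, with quasi-inverse $\X \df \T\GS$.

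The point worth stressing is that there is no genuine obstacle remaining at this stage: all the substantive work was carried out in establishing the two component equivalences \cref{t:G_0 is equiv,t:G_1 is equiv}. The only thing to check here is the strict factorization $\G = \U \circ (-)^+$, and even that reduces to the routine observation that $(x + y - 1) \lor 0$ and $(x + y) \ominus 1$ define the same operation on the unit interval.
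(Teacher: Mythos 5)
Your proposal is correct and follows exactly the paper's own route: the paper likewise proves the theorem by writing $\G$ as the composite of $(-)^+$ and $\U$ and invoking \cref{t:G_0 is equiv,t:G_1 is equiv}. Your explicit check that the two definitions of $\odot$ agree (i.e., that $(x+y)\ominus 1 = (x+y-1)\lor 0$ on $M^+$) is a worthwhile detail that the paper leaves implicit.
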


\begin{proof}
	The functor $\G$ is the composite of $(-)^+ $ and $\U$, which are equivalences by \cref{t:G_0 is equiv,t:G_1 is equiv}.
\end{proof}

\noindent Notice that, by \cref{t:G_0 is equiv,t:G_1 is equiv}, a quasi-inverse of $\G$ is given by the composite $\T\GS$.
\[
	\begin{tikzcd}
		\ULM \arrow[yshift = .8ex, bend left]{rr}{\G} \arrow[yshift = .45ex]{r}{(-)^+ }& \ULMP\arrow[yshift = .45ex]{r}{\U} \arrow[yshift = -.45ex]{l}{\T}& \MVM \arrow[yshift = -.45ex]{l}{\GS} \arrow[yshift = -.8ex, bend left]{ll}{\X}.
	\end{tikzcd}
\]
%%
%\begin{remark} \label{r:just-one-step}
%	We have constructed a quasi-inverse of $\G$ as the composite of two functors because in this way the proofs seemed a bit easier to write down.
%	A direct construction for the quasi-inverse of $\G$ maps {\amvm} $A$ to the set of $\Z$-indexed sequences $(\dots, x_{-2}, x_{-.1}, x_0, x_1, x_2, \dots)$ such that $x_n = 1$ for $n$ close enough to $-\infty$, $x_n = 0$ for $n$ close enough to $\infty$, and $(x_n, x_{n + 1})$ is a good pair for every $n \in \Z$.
%\end{remark}
%%

%%%%%%%%%%%%%%%%%%%%%%%%%%%%%%%%%%% SECTION %%%%%%%%%%%%%%%%%%%%%%%%%%%%%%%%%%%%

\section{Further research}	

Some results about commutative $\ell$-monoids in the literature suggest similar ones for algebras in the language $\{\oplus, \odot, \lor, \land, 0, 1\}$.
For example, in \cite{Repnbases} it is shown that the variety generated by $\langle \R; +, \lor, \land \rangle$ does not admit a finite equational basis, and a countable basis is given in the same paper.
Building on these results, the content of the present paper may possibly serve to obtain a nice equational basis for the variety generated by $\langle [0, 1]; \oplus, \odot, \lor, \land, 0, 1 \rangle$ which, we conjecture, is not finitely based; in particular, we conjecture that the variety of $\MVM$-algebras is not generated by $[0, 1]$.

We suspect that, from the results in the present paper, one may deduce a nice axiomatization of the quasi-variety generated by 
$\langle [0, 1]; \oplus, \odot, \lor, \land, 0, 1 \rangle$ and of the class of $\{\oplus, \odot, \lor, \land, 0, 1\}$-subreducts of {\mvs} (conjecturally, these two classes coincide).

%%%%%%%%%%%%%%%%%%%%%%%%%%%%%%%%%% APPENDIX %%%%%%%%%%%%%%%%%%%%%%%%%%%%%%%%%%%%

\appendix

%%%%%%%%%%%%%%%%%%%%%%%%%%%%%%%%%%% SECTION %%%%%%%%%%%%%%%%%%%%%%%%%%%%%%%%%%%%

\section{The equivalence restricts to lattice-ordered groups and MV-algebras}
\label{s:restriction}

In this section, we shortly hint at the fact that Mundici's equivalence follows from our main result.

We recall that a \emph{\extulg} (\emph{\ulg}, for short) is an algebra $\langle G; +, \lor, \land, 0, -, 1 \rangle$ (arities $2$, $2$, $2$, $0$, $1$, $0$) such that $\langle G; \lor, \land \rangle$ is a distributive lattice, $\langle G; +, 0, - \rangle$ is an Abelian group, $ + $ distributes over $ \lor $ and $ \land $, $0 \leq 1$, and, for all $x \in M$, there exists $n \in \N$ such that $x \leq n1$.
We let $\ULG$ denote the category of {\ulgs} with homomorphisms.
For all basic notions and results about lattice-ordered groups, we refer to \cite{BKW}.
In every {\ulg} one defines the constant $-1$ as the additive inverse of $1$.
	
\begin{remark} \label{r:reducts-ulg}
	It is not difficult to prove that the $\{+, \lor, \land, 0, 1, -1\}$-reducts of {\ulgs} are precisely the {\ulms} in which every element has an inverse.
	Moreover, the forgetful functor from $\ULG$ to the category of $\{+, \lor, \land, 0, 1, -1\}$-algebras is full and faithful.
\end{remark}

We recall that an \emph{\mv} $\langle A; \oplus, \lnot, 0 \rangle$ is a set $A$ equipped with a binary operation $\oplus$, a unary operation $\lnot$ and a constant $0$ such that $\langle A; \oplus, 0 \rangle$ is a commutative monoid, $\lnot 0 \oplus x = \lnot 0$, $\lnot\lnot x = x$ and $\lnot(\lnot x \oplus y) \oplus y = \lnot(\lnot y \oplus x) \oplus x$.
We let $\MV$ denote the category of {\mvs} with homomorphisms.
For all basic notions and results about {\mvs} we refer to \cite{Cignoli}.
Via $\oplus$, $\lnot$, $0$, one defines the operations $1 \df \lnot 0$, $x \odot y\df \lnot(\lnot x \oplus \lnot y)$, $x \lor y\df (x \odot \lnot y) \oplus y$, and $x \land y\df x \odot (\lnot x \oplus y)$.

\begin{lemma} \label{l:MV is MVM}
	Given {\amv} $\langle A; \oplus, \lnot, 0 \rangle$, the algebra $\langle A; \oplus, \odot, \lor, \land, 0, 1 \rangle$ is {\amvm}.
\end{lemma}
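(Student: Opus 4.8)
The plan is to verify the seven equational axioms \cref{ax:A1,ax:A2,ax:A3,ax:A4,ax:A5,ax:A6,ax:A7} one at a time, separating those that are already part of the standard theory of {\mvs} from those that are genuinely the new content of \cref{l:MV is MVM}. The crucial methodological constraint, since the whole point of this appendix is to \emph{deduce} Mundici's representation from \cref{t:G is equivalence}, is that I may \textbf{not} represent $A$ as the unit interval $\G(G)$ of {\aulg} $G$; that would be circular. Everything must be done intrinsically in the theory of {\mvs}.

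First I would dispatch \cref{ax:A1,ax:A2,ax:A3}, which are well-known facts about {\mvs}. The lattice $\langle A; \lor, \land \rangle$ is distributive with least element $0$ and greatest element $1 = \lnot 0$; $\langle A; \oplus, 0 \rangle$ is a commutative monoid by definition, and $\langle A; \odot, 1 \rangle$ is a commutative monoid because $\odot$ is the De Morgan dual of $\oplus$, namely $x \odot y = \lnot(\lnot x \oplus \lnot y)$; and both $\oplus$ and $\odot$ distribute over $\lor$ and $\land$. For all of these I would simply refer to \cite{Cignoli}.

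It then remains to establish \cref{ax:A4,ax:A5,ax:A6,ax:A7}. Since these are equations in the operations $\oplus$, $\odot$, $\lor$, $\land$, $0$, $1$, all of which are term-definable from $\oplus$, $\lnot$, $0$, they are equations in the language of {\mvs}. The most economical route is Chang's completeness theorem \cite{Cignoli}: an {\mv} equation holds in every {\mv} if, and only if, it holds in the standard {\mv} $[0, 1]$. On $[0, 1]$ these four identities are exactly the computations already carried out in the body of the paper; for instance, both sides of \cref{ax:A4} evaluate to the expression \eqref{e:xyz}, i.e.\ $((x + y + z - 1) \lor 0) \land 1$, in accordance with \eqref{e:xyz-01}, and \cref{ax:A5,ax:A6,ax:A7} reduce similarly. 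Hence all four hold on $[0, 1]$, and therefore in every {\mv} $A$.

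The main obstacle, and the reason this step deserves comment, is the one already flagged in \cref{s:good.def}: completeness (equivalently, the subdirect representation of every {\mv} as a subdirect product of totally ordered {\mvs}, where \cref{ax:A4,ax:A5,ax:A6,ax:A7} can instead be checked by a short case analysis using that $x \odot y = 0$ precisely when $x \leq \lnot y$) relies on the existence of enough prime ideals, hence on a weak form of the axiom of choice. A fully choice-free argument would instead derive \cref{ax:A4,ax:A5,ax:A6,ax:A7} by direct equational manipulation from the {\mv} axioms; this is possible but laborious, and I would not carry it out in detail here. This is exactly the gap referred to in the remark ``up to proving without the axiom of choice that the axioms of {\mvms} hold in any {\mv}'': once \cref{l:MV is MVM} is established choice-freely, the equivalence between {\ulgs} and {\mvs} follows from \cref{t:G is equivalence} without any appeal to choice.
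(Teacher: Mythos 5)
Your proposal is correct and takes essentially the same route as the paper: the paper also invokes the fact that $[0,1]$ generates the variety of {\mvs} (Chang's completeness theorem) to reduce everything to the standard algebra, and then observes that the axioms hold there because $[0,1] = \G(\R)$ is {\amvm} by \cref{p:Gamma is MVM} --- which is exactly the source of the computations you cite, and involves no circularity since that proposition is independent of the theory of {\mvs}. The only cosmetic difference is that the paper handles all seven axioms uniformly through this reduction rather than treating \cref{ax:A1,ax:A2,ax:A3} separately, and your remarks on the axiom of choice match the paper's own subsequent remark.
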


\begin{proof}
	Since $[0, 1]$ generates the variety of {\mvs} \cite[Theorem~2.3.5]{Cignoli}, it suffices to check that \cref{ax:A1,ax:A2,ax:A3,ax:A4,ax:A5,ax:A6,ax:A7} hold in $[0, 1]$.
	This is the case because $\R$ is easily seen to be a {\ulm} and thus, by \cref{p:Gamma is MVM}, the unit interval $[0, 1]$ is {\amvm}.	
\end{proof}

\begin{remark} \label{r:reducts-ulm}
	Using \cref{l:MV is MVM}, one proves that the $\{\oplus, \odot, \lor, \land, 0, 1\}$-reducts of {\mvs} are the {\mvms} such that, for every $x$, there exists $y$ such that $x \oplus y = 1$ and $x \odot y = 0$.
	Moreover, the forgetful functor from $\MV$ to the category of $\{\oplus, \odot, \lor, \land, 0, 1\}$-algebras is full and faithful.
\end{remark}

Using \cref{r:reducts-ulg,r:reducts-ulm}, it is not too difficult to obtain the following.

\begin{theorem} \label{t:restricts to Mundici}
	The equivalence $\G\colon \ULM \to \MVM$ restricts to an equivalence between $\ULG$ and $\MV$.
\end{theorem}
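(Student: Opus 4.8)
The plan is to realise both $\ULG$ and $\MV$ as full subcategories of $\ULM$ and $\MVM$ and then to check that $\G$ carries one onto the other. By \cref{r:reducts-ulg}, the forgetful functor identifies $\ULG$ with the full subcategory $\ULM'$ of $\ULM$ whose objects are the {\ulms} in which every element has an additive inverse; by \cref{r:reducts-ulm}, the forgetful functor identifies $\MV$ with the full subcategory $\MVM'$ of $\MVM$ whose objects are the {\mvms} in which every $x$ admits some $y$ with $x \oplus y = 1$ and $x \odot y = 0$. Being full and faithful, these forgetful functors are equivalences onto the full subcategories $\ULM'$ and $\MVM'$, respectively. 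Hence it is enough to prove that $\G$ restricts to an equivalence between $\ULM'$ and $\MVM'$.

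The crux is the object-level characterisation: \emph{for every {\ulm} $M$, we have $M \in \ULM'$ if, and only if, $\G(M) \in \MVM'$.} For the forward implication, assume every element of $M$ has an inverse, and let $x \in \G(M)$. Then $y \df 1 - x$ lies in $\G(M)$ (since $0 \leq x \leq 1$) and satisfies $x + y = 1$, so that $x \oplus y = (x + y) \land 1 = 1$ and $x \odot y = (x + y - 1) \lor 0 = 0$; thus $\G(M) \in \MVM'$.

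For the converse, suppose $\G(M) \in \MVM'$. I would first observe that each $x \in \G(M)$ has an inverse in $M$: if $y \in \G(M)$ satisfies $x \oplus y = 1$ and $x \odot y = 0$, then $(x + y) \land 1 = 1$ and $(x + y - 1) \lor 0 = 0$ together force $x + y = 1$, whence $x + (y - 1) = 0$. Next, since $\U(M^+) = \G(M)$, \cref{l:trunc sums up} applied to the {\pulm} $M^+$ expresses every element of $M^+$ as a finite sum of elements of $\G(M)$, so every element of $M^+$ has an inverse. Finally, an arbitrary $z \in M$ can be written $z = w - k$ with $w \in M^+$ and $k \in \N$ (by \cref{ax:U3}, pick $k$ with $-k \leq z$ and set $w \df z + k$), and therefore $z$ has an inverse as well; so $M \in \ULM'$.

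Granting this characterisation, the rest is routine bookkeeping. The forward implication shows that $\G$ maps objects of $\ULM'$ into $\MVM'$, and since both are full subcategories, $\G$ restricts to a functor $\ULM' \to \MVM'$. For the quasi-inverse $\X$: given $A \in \MVM'$, the natural isomorphism $\G\X \cong \Id_{\MVM}$ gives $\G(\X(A)) \cong A \in \MVM'$, so by the converse implication $\X(A) \in \ULM'$, and thus $\X$ restricts to a functor $\MVM' \to \ULM'$. The natural isomorphisms $\Id_{\ULM} \cong \X\G$ and $\G\X \cong \Id_{\MVM}$ restrict to these full subcategories, so the two restrictions are quasi-inverse equivalences $\ULM' \simeq \MVM'$. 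Composing with the equivalences $\ULG \simeq \ULM'$ and $\MV \simeq \MVM'$ of the first paragraph yields the desired equivalence. I expect the main obstacle to be the converse implication of the characterisation: passing from the existence of complements in the unit interval to the invertibility of \emph{every} element of $M$, which is precisely where the representation of positive elements as sums of unit-interval elements is required.
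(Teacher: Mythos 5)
Your proposal is correct and follows exactly the route the paper indicates: it uses \cref{r:reducts-ulg,r:reducts-ulm} to identify $\ULG$ and $\MV$ with full subcategories of $\ULM$ and $\MVM$, and then checks that $\G$ and its quasi-inverse preserve the defining properties. The paper leaves these details to the reader ("it is not too difficult to obtain"), and your filling of the only nontrivial gap --- deducing invertibility of all of $M$ from complements in $\G(M)$ via \cref{l:trunc sums up} and \cref{ax:U3} --- is sound.
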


\begin{remark}
	To establish \cref{t:restricts to Mundici} we used the axiom of choice.
	Precisely, we used the choice-based fact that $[0, 1]$ generates the variety of {\mvs} in order to verify that every {\mv} is {\amvm} (\cref{l:MV is MVM}).
	If one proved without the axiom of choice that the axioms of {\mvms} are satisfied by every MV-algebra (and we suspect this to be possible), one would have a choice-free proof of Mundici's equivalence.
	The properties of lattices, \cref{ax:A2}, the distributivity of $\odot$ over $ \lor $ and the distributivity of $\oplus$ over $ \land $ were part of the original axiomatization of MV-algebras by Chang \cite{Chang1958}, which, as proved in \cite{Mangani1973} (see also \cite[Section~2]{Cignoli_elementary}), is equivalent to the modern one, presented here.	
	A direct proof of \cref{ax:A6,ax:A7} has been obtained with the help of Prover9, but we have not obtained proofs of the distributivity of the lattice, the distributivity of $\oplus$ over $ \lor $, the distributivity of $\odot$ over $ \land $, and \cref{ax:A4,ax:A5}.
\end{remark}

%%%%%%%%%%%%%%%%%%%%%%%%%%%%%%%%%%% SECTION %%%%%%%%%%%%%%%%%%%%%%%%%%%%%%%%%%%%

\section{Subdirectly irreducible MV-monoidal algebras are totally ordered} \label{s:sub-irr_tot-ord}

In this section we prove that every subdirectly irreducible {\mvm} is totally ordered (\cref{t:sub irr is totally ordered}).
We proceed in analogy with \cite[Section~1]{Repn}.
Given {\amvm} $A$, and a lattice congruence $\theta$ on $A$ such that $\lvert A/\theta\rvert = 2$, we set
\[
	\theta^*\df \{\,(a, b) \in A \times A\mid \forall x \in A\ (a \oplus x, b \oplus x) \in \theta\text{ and } (a \odot x, b \odot x) \in \theta\,\};
\]
moreover, with $\0(\theta)$ and $\1(\theta)$ we denote the classes of the lattice congruence $\theta$ corresponding to smallest and greatest elements of the lattice $A/\theta$.
An \emph{MVM-congruence} on an {\mvm} $A$ is an equivalence relation on $A \times A$ that respects $\oplus$, $\odot$, $\lor$, $\land$, $0$, $1$.

\begin{lemma} \label{l:char-theta*}
	Let $A$ be an {\mvm}, and let $\theta$ be any lattice congruence on $A$ such that $\lvert A/\theta\rvert = 2$.
	Then, $\theta^*$ is the greatest MVM-congruence contained in $\theta$.
\end{lemma}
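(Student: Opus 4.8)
The plan is to verify, one at a time, the three features that characterise ``greatest $\MVM$-congruence contained in $\theta$'': that $\theta^* \seq \theta$, that $\theta^*$ contains every $\MVM$-congruence below $\theta$, and that $\theta^*$ is itself an $\MVM$-congruence. The first two are immediate. For $\theta^* \seq \theta$, I would instantiate the defining condition at $x = 0$: since $a \oplus 0 = a$ by \cref{ax:A2}, the $\oplus$-clause gives $(a, b) \in \theta$. For the second, if $\psi$ is an $\MVM$-congruence with $\psi \seq \theta$ and $(a, b) \in \psi$, then $\psi$ respects $\oplus$ and $\odot$, so $(a \oplus x, b \oplus x), (a \odot x, b \odot x) \in \psi \seq \theta$ for every $x$; that is exactly $(a, b) \in \theta^*$. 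All the work is in the third point.

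To show $\theta^*$ is an $\MVM$-congruence, note it is an equivalence relation because $\theta$ is, and it respects the constants $0, 1$ trivially. As all operations are commutative, compatibility reduces, by transitivity of $\theta^*$, to stability under the unary translations $a \mapsto a \oplus x$, $a \mapsto a \odot x$, $a \mapsto a \lor x$, $a \mapsto a \land x$. The lattice translations are easy: by distributivity (\cref{ax:A3}), $(a \lor x) \oplus z = (a \oplus z) \lor (x \oplus z)$ and $(a \lor x) \odot z = (a \odot z) \lor (x \odot z)$, so the defining conditions for $(a \lor x, b \lor x) \in \theta^*$ follow from $(a \oplus z, b \oplus z), (a \odot z, b \odot z) \in \theta$ together with $\theta$ being a lattice congruence; the $\land$-case is dual.

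The two monoid translations are the substance, but they funnel into a single claim. Writing out $(a \oplus x, b \oplus x) \in \theta^*$, the $\oplus$-clause is $(a \oplus (x \oplus z), b \oplus (x \oplus z)) \in \theta$, immediate from $(a, b) \in \theta^*$, while the $\odot$-clause, by \cref{ax:A7}, becomes $(\sigma(a, x, z) \land z, \sigma(b, x, z) \land z) \in \theta$. Dually, for $(a \odot x, b \odot x) \in \theta^*$ the $\odot$-clause is immediate and the $\oplus$-clause reduces via \cref{ax:A6} to $(\sigma(a, x, z) \lor z, \sigma(b, x, z) \lor z) \in \theta$. Hence everything comes down to
\[
	(a, b) \in \theta^* \ \Longrightarrow\ (\sigma(a, x, z), \sigma(b, x, z)) \in \theta \quad \text{for all } x, z,
\]
after which the lattice-congruence property of $\theta$ closes each case. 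To attack this I would use that, since $A/\theta$ is the two-element chain, $\1(\theta)$ is upward closed and $\0(\theta)$ downward closed (indeed prime), together with the good-pair identities. As $(x \oplus z, x \odot z)$ is a good pair (\cref{l:oplus odot is a good pair}), \cref{l:switch of plus and dot if good} rewrites $\sigma(a, x, z) = (a \odot u) \oplus w = u \odot (a \oplus w)$ with $u \df x \oplus z$ and $w \df x \odot z$. If $w \in \1(\theta)$ then $\sigma(a, x, z) \geq w$ puts both values in $\1(\theta)$; if $u \in \0(\theta)$ then $\sigma(a, x, z) = (a \odot u) \oplus w \leq u \oplus w = u$ (goodness) puts both in $\0(\theta)$; either way the values are $\theta$-equal. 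In the remaining case $u \in \1(\theta)$, $w \in \0(\theta)$, the bounds $a \odot u \leq \sigma(a, x, z) \leq a \oplus w$ are $\theta$-controlled, since $(a \odot u, b \odot u), (a \oplus w, b \oplus w) \in \theta$; as an element of the chain trapped between two elements of one class lies in that class, the two values agree \emph{unless} $a \odot u \in \0(\theta)$ and $a \oplus w \in \1(\theta)$.

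I expect this last case to be the main obstacle: the squeezing inequalities no longer determine the class of $\sigma$, and the genuine difficulty is that compatibility of $\theta^*$ with $\oplus$ refers back to itself. My plan is to resolve it by a coupled argument using \emph{both} clauses of $\theta^*$ simultaneously. Assuming for contradiction $\sigma(a, x, z) \in \1(\theta)$ while $\sigma(b, x, z) \in \0(\theta)$, I would expand $\sigma$ through its four equal forms (\cref{l:permutations}): from $\sigma(a, x, z) \in \1(\theta)$ read off (via \cref{l:order-preserving properties}) that $a \oplus x \in \1(\theta)$ and $(a \odot x) \oplus z \in \1(\theta)$, and from $\sigma(b, x, z) \in \0(\theta)$ that $b \odot x \in \0(\theta)$ and $(b \oplus x) \odot z \in \0(\theta)$; transporting $a \oplus x$ and $b \odot x$ along the $\oplus$- and $\odot$-clauses of $(a, b) \in \theta^*$ yields $b \oplus x \in \1(\theta)$ and $a \odot x \in \0(\theta)$. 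Feeding these back, together with primeness of $\1(\theta)$, I would derive a clash. Making this final case go through cleanly is the crux; everything else is bookkeeping.
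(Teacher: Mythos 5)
Your overall architecture coincides with the paper's: the two containment claims are handled the same way, the reduction of compatibility to one-variable-at-a-time translations (then transitivity of $\theta^*$) is exactly the paper's move, and the lattice translations via \cref{ax:A3} are as in the paper. Your repackaging of the two monoid translations into the single claim that $(a,b)\in\theta^*$ forces $(\sigma(a,x,z),\sigma(b,x,z))\in\theta$ is a correct and genuinely cleaner formulation: by \cref{ax:A6,ax:A7} it implies both remaining clauses, and the claim itself is true.

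The gap is in the final case, the one you yourself flag as the crux: the facts you propose to extract there do not suffice to produce the clash. From $a\oplus x,\ b\oplus x,\ (a\odot x)\oplus z\in\1(\theta)$ and $a\odot x,\ b\odot x,\ (b\oplus x)\odot z\in\0(\theta)$ nothing follows about the class of $z$, $a$ or $b$: an $\odot$ of two filter elements need not stay in the filter, an $\oplus$ of two ideal elements need not stay in the ideal, and the relations $(a\odot x)\oplus z=\sigma(a,x,z)\lor z$ and $(b\oplus x)\odot z=\sigma(b,x,z)\land z$ are already satisfied by your standing assumptions whatever class $z$ lies in. The contradiction has to come from the $\theta^*$-clauses evaluated at the \emph{composite} test elements $u=x\oplus z$ and $w=x\odot z$, which you introduced when setting up the case but do not use in the endgame. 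Concretely (this is the paper's own argument transplanted into your notation): since $\sigma(b,x,z)=(b\odot u)\oplus w\geq b\odot u$ lies in $\0(\theta)$, also $b\odot u\in\0(\theta)$, hence $a\odot u\in\0(\theta)$ by $\theta^*$; but $a\odot u=(x\oplus z)\odot a=\sigma(a,x,z)\land a$ by \cref{ax:A7} and \cref{l:permutations}, and $\sigma(a,x,z)\in\1(\theta)$, so closure of $\1(\theta)$ under $\land$ forces $a\in\0(\theta)$. Dually, $\sigma(a,x,z)=(a\oplus w)\odot u\leq a\oplus w$ gives $a\oplus w\in\1(\theta)$, hence $b\oplus w\in\1(\theta)$ by $\theta^*$; and $b\oplus w=\sigma(b,x,z)\lor b$ with $\sigma(b,x,z)\in\0(\theta)$ forces $b\in\1(\theta)$. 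Thus $a\in\0(\theta)$ and $b\in\1(\theta)$, contradicting $(a,b)\in\theta^*\seq\theta$. With this insertion your proof closes; as written, the decisive step is only asserted.
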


\begin{proof}
	It is not difficult to prove that $\theta^* \seq \theta$ and that $\theta^*$ contains every congruence contained in $\theta$.
	
	We prove that $\theta^*$ is an MVM-congruence.
	The relation $\theta^*$ is an equivalence relation because $\theta$ is so.
	
	In the following, let $a, a', b, b'\in A$, and suppose $(a, a'), (b,b') \in \theta^*$.
	
	Let us prove $(a \lor b, a' \lor b') \in \theta^*$.
	Let $x \in A$.
	Since $(a \oplus x, a' \oplus x) \in \theta$, $(b \oplus x, b' \oplus x) \in \theta$, and $\theta$ is a lattice congruence, we have $((a \oplus x) \lor (b \oplus x), (a' \oplus x) \lor (b' \oplus x)) \in \theta$, i.e., $((a \lor b) \oplus x, (a' \lor b') \oplus x) \in \theta$.
	Analogously, $((a \lor b) \odot x, (a' \lor b') \odot x) \in \theta$.
	This proves $(a \lor b, a' \lor b') \in \theta^*$.	Analogously, $(a \land b, a' \land b') \in \theta^*$.

	Let us prove $(a \oplus b, a' \oplus b') \in \theta^*$.
	Let $x \in A$.
	We shall prove 
	\begin{equation} \label{e:oplus oplus}
		(a \oplus b \oplus x, a' \oplus b' \oplus x) \in \theta,
	\end{equation}
	and
	\begin{equation} \label{e:in theta 0}
		((a \oplus b) \odot x, (a' \oplus b') \odot x) \in \theta.
	\end{equation}

	Since $(a, a') \in \theta^*$, we have $(a \oplus (b \oplus x), a' \oplus (b \oplus x)) \in \theta$.
	Since $(b, b') \in \theta^*$, we have $(b \oplus (a' \oplus x), b' \oplus (a' \oplus x)) \in \theta$.
	Hence, by transitivity of $\theta$, we have $(a \oplus b \oplus x, a' \oplus b' \oplus x) \in \theta$, and so \cref{e:oplus oplus} is proved.
	
	Let us prove \cref{e:in theta 0}.
	By transitivity of $\theta$, it is enough to prove 
	\begin{equation} \label{e:in theta 1}
		((a \oplus b) \odot x, (a' \oplus b) \odot x) \in \theta,
	\end{equation}
	and
	\begin{equation} \label{e:in theta 2}
		((a' \oplus b) \odot x, (a' \oplus b') \odot x) \in \theta.
	\end{equation}
	Let us prove \cref{e:in theta 1}.
	Suppose, by way of contradiction, $((a \oplus b) \odot x, (a' \oplus b) \odot x) \notin \theta$.
	Then, without loss of generality, we may assume $(a \oplus b) \odot x \in \0(\theta)$ and $(a' \oplus b) \odot x \in \1(\theta)$.	We have	$\1(\theta) \ni(a' \oplus b) \odot x \leq x$; thus $x \in \1(\theta)$.
	We have
	\begin{equation*}
		\underbrace{(a \oplus b) \odot x}_{\in \0(\theta)} = \sigma(a, b, x) \land \underbrace{x}_{\in \1(\theta)},
	\end{equation*}
	and thus $\sigma(a, b, x) \in \0(\theta)$.
	We have
	\begin{equation*}
	\underbrace{(a' \oplus b) \odot x}_{\in \1(\theta)} = \sigma(a', b, x) \land \underbrace{x}_{\in \1(\theta)},
	\end{equation*}
	and thus $\sigma(a', b, x) \in \1(\theta)$.
	We have
	\[
		\0(\theta) \ni \sigma(a, b, x) = (a \odot (b \oplus x)) \oplus (b \odot x) \geq a \odot (b \oplus x),
	\]
	and thus $a \odot (b \oplus x) \in \0(\theta)$.
	Since $(a, a') \in \theta^*$, it follows that $a' \odot (b \oplus x) \in \0(\theta)$.
	We have
	\[
		\underbrace{a' \odot (b \oplus x)}_{\in \0(\theta)} = a' \land \underbrace{\sigma(a', b, x)}_{\in \1(\theta)}.
	\]
	Therefore, $a'\in \0(\theta)$.
	We have
	\[
		\1(\theta) \ni \sigma(a', b, x) = (a' \oplus (b \odot x)) \odot (b \oplus x) \leq a' \oplus (b \odot x),
	\]
	and thus $a' \oplus (b \odot x) \in \1(\theta)$.
	Since $(a, a') \in \theta^*$, it follows that $a \oplus (b \odot x) \in \1(\theta)$.
	We have
	\[
		\underbrace{a \oplus (b \odot x)}_{\in \1(\theta)} = a \lor \underbrace{\sigma(a, b, x)}_{\in \0(\theta)}.
	\]
	Therefore, $a\in \1(\theta)$.
	Thus, $a\in \1(\theta)$ and $a'\in \0(\theta)$, and this contradicts $(a, a') \in \theta^*\seq \theta$.
	In conclusion, \cref{e:in theta 1} holds, and analogously for \cref{e:in theta 2}.
	By transitivity of $\theta$, \cref{e:in theta 0} holds.
	Thus, $(a \oplus b, a' \oplus b') \in \theta^*$.
	Analogously, $(a \odot b, a' \odot b') \in \theta^*$.	
\end{proof}

We denote with $\Delta$ the identity relation $\{\,(s,s) \mid s \in A\,\}$.

\begin{lemma} \label{l:existence of lattice congruence}
	If $A$ is a subdirectly irreducible {\mvm}, then there exists a lattice congruence $\theta$ on $A$ such that $\lvert A/\theta\rvert = 2$ and $\theta^* = \Delta$.
\end{lemma}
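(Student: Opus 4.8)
The plan is to exploit subdirect irreducibility through its monolith together with \cref{l:char-theta*}. Since $A$ is subdirectly irreducible, its lattice of MVM-congruences has a \emph{monolith} $\mu$: the least congruence strictly above $\Delta$, contained in every MVM-congruence different from $\Delta$. In particular $\mu \ne \Delta$.

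The key reduction I would make is the following equivalence: for a lattice congruence $\theta$ with $\lvert A/\theta\rvert = 2$, one has $\theta^* = \Delta$ if and only if $\mu \not\seq \theta$. Indeed, by \cref{l:char-theta*}, $\theta^*$ is the greatest MVM-congruence contained in $\theta$; if $\theta^* \ne \Delta$, then $\theta^*$ is a nontrivial MVM-congruence, whence $\mu \seq \theta^* \seq \theta$; conversely, if $\mu \seq \theta$, then $\mu$ is a nontrivial MVM-congruence contained in $\theta$, so $\mu \seq \theta^*$ and $\theta^* \ne \Delta$. Thus it suffices to produce a $2$-element lattice quotient $\theta$ that does \emph{not} identify some pair belonging to $\mu$.

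To build such a $\theta$, I would first extract a comparable witness from $\mu$. Since $\mu \ne \Delta$, choose $a \ne b$ with $(a,b) \in \mu$; as $\mu$ is in particular a lattice congruence, $(a \land b, a \lor b) \in \mu$, and setting $c \df a \land b$ and $d \df a \lor b$ we have $c < d$ because $a \ne b$. Now I would invoke prime filter separation for distributive lattices: since $d \not\leq c$, there is a prime filter $F$ of the distributive lattice $\langle A; \lor, \land\rangle$ with $d \in F$ and $c \notin F$. Its characteristic map $A \to \{0, 1\}$ is a homomorphism onto the two-element lattice, whose kernel is a lattice congruence $\theta$ with $\lvert A/\theta\rvert = 2$, $\1(\theta) = F$, and $\0(\theta) = A \setminus F$. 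Since $d \in \1(\theta)$ while $c \in \0(\theta)$, the pair $(c, d) \in \mu$ is not identified by $\theta$, so $\mu \not\seq \theta$, and by the reduction above $\theta^* = \Delta$.

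The only delicate point is the appeal to prime filter separation, which in general relies on a weak form of the axiom of choice; this is consistent with the status of the appendix, whose results are not needed for the choice-free main theorem. Everything else is routine: the monolith exists by the definition of subdirect irreducibility, the reduction is an immediate consequence of \cref{l:char-theta*}, and the passage from a prime filter to a $2$-element lattice congruence is standard distributive-lattice theory.
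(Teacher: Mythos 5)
Your proof is correct, and it reaches the conclusion by a mildly different route from the paper's. The paper decomposes the distributive lattice $\langle A; \lor, \land\rangle$ into a subdirect product of two-element lattices, obtaining a family $\{\theta_i\}_{i \in I}$ of lattice congruences with $\bigcap_i \theta_i = \Delta$; since each $\theta_i^*$ is an MVM-congruence with $\Delta \seq \theta_i^* \seq \theta_i$, also $\bigcap_i \theta_i^* = \Delta$, and subdirect irreducibility (in its intersection form) forces $\theta_j^* = \Delta$ for some $j$. You instead invoke subdirect irreducibility through the monolith $\mu$, prove the clean equivalence $\theta^* = \Delta \iff \mu \not\seq \theta$ via \cref{l:char-theta*}, and then manufacture a single two-element quotient separating a comparable pair $c < d$ in $\mu$ by the prime filter theorem. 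The two arguments rest on the same ingredients (the prime ideal theorem for distributive lattices and \cref{l:char-theta*}), so neither is more choice-free than the other; your version is more pointed, producing one explicit witness $\theta$ rather than appealing to the whole decomposition, while the paper's is slightly shorter because it never needs to name the monolith or verify your reduction. All the individual steps you take check out: $\mu$ is in particular a lattice congruence, so $(a \land b, a \lor b) \in \mu$ with $a \land b < a \lor b$, and the kernel of the characteristic map of a prime filter is indeed a two-element lattice quotient.
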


\begin{proof}
	Since $A$ is distributive as a lattice, it can be decomposed into a subdirect product of two-element lattices.
	Let $\{\theta_i\}_{i\in I}$ be the set of lattice congruences of $A$ corresponding with such a decomposition.
	Then $\bigcap_{i\in I} \theta_i = \Delta$.
	By \cref{l:char-theta*}, each $\theta_i^*$ is an MVM-congruence, and $\Delta\seq\theta^*_i\seq \theta_i$.
	Therefore we have $\bigcap_{i\in I} \theta_i^* = \Delta$, and the fact that $A$ is subdirectly irreducible implies $\theta^*_j = \Delta$ for some $j\in I$.
\end{proof}

\begin{theorem} \label{t:sub irr is totally ordered}
	Every subdirectly irreducible {\mvm} is totally ordered.
\end{theorem}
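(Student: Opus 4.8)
The plan is to argue by contradiction, reducing everything to the data supplied by \cref{l:existence of lattice congruence}: fix a lattice congruence $\theta$ on $A$ with $\lvert A/\theta\rvert = 2$ and $\theta^* = \Delta$, and write $P \df \0(\theta)$, $Q \df \1(\theta)$. Since $0 \leq x \leq 1$ for all $x$ (\cref{l:bounded-lattice}), $P$ is a prime ideal and $Q = A \setminus P$ the complementary prime filter of the distributive lattice $A$, with $0 \in P$ and $1 \in Q$. Writing $\phi \colon A \to A/\theta$ for the quotient map, I would first record that, because $\oplus$ and $\odot$ distribute over $\lor$ and $\land$ (\cref{ax:A3}) and are order-preserving (\cref{l:order-preserving properties}), each map $t \mapsto \phi(t \oplus x)$ and $t \mapsto \phi(t \odot x)$ is a monotone lattice homomorphism into the two-element chain $A/\theta$. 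Unravelling the definition of $\theta^*$, the hypothesis $\theta^* = \Delta$ says exactly that this family of maps separates the points of $A$.

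Next I would extract, from an assumed pair of incomparable elements, separation data in both directions. Suppose $a, b \in A$ are incomparable. Then $a \land b \neq a$, so some map in the family distinguishes $a \land b$ from $a$; as the maps are monotone and $a \land b \leq a$, the separating value forces $a \land b$ into $P$ and $a$ into $Q$. Using $(a \land b) \oplus x = (a \oplus x) \land (b \oplus x)$ (respectively the $\odot$-version) together with primeness of $P$, this produces some $x$ with $a \oplus x \in Q$ and $b \oplus x \in P$, or with $a \odot x \in Q$ and $b \odot x \in P$. Symmetrically, $b \not\leq a$ produces some $y$ with $b \oplus y \in Q$ and $a \oplus y \in P$, or with $b \odot y \in Q$ and $a \odot y \in P$.

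The heart of the proof --- and the step I expect to be the main obstacle --- is to turn these two opposite witnesses into a contradiction. I would run a membership chase through the identities $u \odot (v \oplus w) = u \land \sigma(u,v,w)$, $(u \odot v) \oplus w = \sigma(u,v,w) \lor w$ and their duals (the identities behind \cref{l:almost associative}), pushing the known $P$/$Q$ memberships of the witness terms through $\sigma$ and invoking primeness of $P$ at each stage, exactly in the spirit of the proof of \cref{e:in theta 1} inside \cref{l:char-theta*}, where an analogous configuration was driven to the impossible conclusion that one element lies simultaneously in $\0(\theta)$ and $\1(\theta)$. The $\oplus$-flavoured and $\odot$-flavoured witnesses should be interchanged by the $\oplus \leftrightarrow \odot$, $\lor \leftrightarrow \land$, $P \leftrightarrow Q$ duality of MV-monoidal algebras, so that up to duality only a couple of cases need genuine work.

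The reason I expect this to be delicate rather than routine is that the tempting shortcut fails: one would like to deduce the contradiction from a single lattice inequality such as $(a \oplus x) \land (b \oplus y) \leq (a \oplus y) \lor (b \oplus x)$ together with the memberships, but no such inequality is evidently a consequence of the MV-monoidal axioms alone (the corresponding statement for general commutative $\ell$-monoids is not clearly valid, given that their variety is strictly larger than the one generated by $\R$). The contradiction therefore cannot be isolated from the ambient algebra; it must genuinely couple $\theta^* = \Delta$ with the nonlinear $\sigma$-identities and primeness, and carrying out this coupled computation cleanly across the $\oplus$/$\odot$ cases is the crux of the argument.
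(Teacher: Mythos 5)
Your setup coincides with the paper's: you invoke \cref{l:existence of lattice congruence}, read off from $\theta^* = \Delta$ that the maps $t \mapsto \phi(t \oplus x)$ and $t \mapsto \phi(t \odot x)$ separate points, and extract from a putative incomparable pair $a, b$ two witnesses (one for $a \land b \neq a$, one for $a \land b \neq b$), each of $\oplus$- or $\odot$-type. The gap is that you stop exactly where the proof must be finished: the four-case contradiction is announced as ``the crux'' and ``delicate'' but never carried out, and the reason you give for expecting difficulty is mistaken. The inequality you dismiss, $(a \oplus x) \land (b \oplus y) \leq (a \oplus y) \lor (b \oplus x)$, \emph{is} a consequence of the axioms: by \cref{l:order-preserving properties} one has $(a \oplus x) \land (b \oplus y) \leq (a \oplus (x \lor y)) \land (b \oplus (x \lor y))$, which by \cref{ax:A3} equals $(a \land b) \oplus (x \lor y) = ((a \land b) \oplus x) \lor ((a \land b) \oplus y) \leq (b \oplus x) \lor (a \oplus y)$, again by monotonicity. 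This chain places an element of the filter $\1(\theta)$ below an element of the ideal $\0(\theta)$, which is absurd; it is precisely the computation the paper performs in the homogeneous $\oplus/\oplus$ case, and the $\odot/\odot$ case is dual.

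The mixed cases are even easier and need none of the $\sigma$-machinery you propose to deploy: if the $\oplus$-witness gives $b \oplus x \in \0(\theta)$, then $b \leq b \oplus x$ (\cref{l:order-preserving properties}) forces $b \in \0(\theta)$, whence both $(a \land b) \odot y$ and $b \odot y$ lie below $b$ and hence in $\0(\theta)$, contradicting the $\odot$-witness for the pair $(a\land b, b)$. The membership chase through $\sigma(\cdot,\cdot,\cdot)$ that you model on the proof of \cref{l:char-theta*} belongs to that lemma, where one must show that $\theta^*$ is an MVM-congruence; that work is already packaged into \cref{l:existence of lattice congruence} and is not needed again here. So the missing steps are small --- two lines of distributivity for the homogeneous cases and one monotonicity observation for the mixed ones --- but as written the argument is incomplete, and its assessment of where the difficulty lies would send a reader down the wrong path.
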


\begin{proof}
	Let $A$ be a subdirectly irreducible {\mvm}.
	\Cref{l:existence of lattice congruence} entails that there exists a lattice congruence $\theta$ on $A$ such that $\lvert A/\theta\rvert = 2$ and such that $\theta^* = \Delta$, i.e., for all distinct $a, b\in A$, there exists $x \in A$ such that $(a \oplus x, b \oplus x) \notin \theta$ or $(a \odot x, b \odot x) \notin \theta$.
	
	Let $a, b\in A$.
	We shall prove that either $a \leq b$ or $b \leq a$ holds.
	Suppose, by way of contradiction, that this is not the case, i.e., $a \land b\neq a$ and $a \land b\neq b$.
	Since $a \land b\neq a$, there exists $x \in A$ such that $((a \land b) \oplus x, a \oplus x) \notin \theta$ or $((a \land b) \odot x, a \odot x) \notin \theta$.
	Since $a \land b\neq b$, there exists $y \in A$ such that $((a \land b) \oplus y, b \oplus y) \notin \theta$ or $((a \land b) \odot y, b \odot y) \notin \theta$.
	We have four cases.
	\begin{enumerate}
	
		\item \label{i:case oplus oplus}
			Case $((a \land b) \oplus x, a \oplus x) \notin \theta$ and $((a \land b) \oplus y, b \oplus y) \notin \theta$.
			
			\noindent
			Since $a \land b \leq a$ and $a \land b \leq b$, we have $(a \land b) \oplus x \in \0(\theta)$, $a \oplus x \in \1(\theta)$, $(a \land b) \oplus y \in \0(\theta)$, and $b \oplus y \in \1(\theta)$. Then,
			\begin{align*}
				\0(\theta)	& \ni		((a \land b) \oplus x) \lor ((a \land b) \oplus y) \\
								& = 		(a \land b) \oplus (x \lor y)								&& \by{$\oplus$ distr.\ over $ \lor $} \\
								& = 		(a \oplus (x \lor y)) \land (b \oplus (x \lor y))	&& \by{$\oplus$ distr.\ over $ \land $} \\
								& \geq 	(a \oplus x) \land (b \oplus y)							&& \by{\cref{l:order-preserving properties}} \\
								& \in \1(\theta),
			\end{align*}
			which is a contradiction.
			
		\item
			The case $((a \land b) \odot x, a \odot x) \notin \theta$ and $((a \land b) \odot y, b \odot y) \notin \theta$ is analogous to \cref{i:case oplus oplus}.
		
		\item \label{i:case oplus odot}
			Case $((a \land b) \oplus x, a \oplus x) \notin \theta$ and $((a \land b) \odot y, b \odot y) \notin \theta$.
		
			\noindent
			Since $a \land b \leq a$, we have $(a \land b) \oplus x \in \0(\theta)$, and $a \oplus x \in \1(\theta)$. Therefore,
			\[
				\0(\theta) \ni(a \land b) \oplus x = \underbrace{(a \oplus x)}_{\in \1(\theta)} \land (b \oplus x).
			\]
			Hence $b \oplus x \in \0(\theta)$, which implies $b\in \0(\theta)$, which implies $(a \land b) \odot y \in \0(\theta)$ and $b \odot y \in \0(\theta)$, which contradicts $((a \land b) \odot y, b \odot y) \notin \theta$.
		
		\item
			The case $((a \land b) \odot x, a \odot x) \notin \theta$ and $((a \land b) \oplus y, b \oplus y) \notin \theta$ is analogous to \cref{i:case oplus odot}.
			
	\end{enumerate}
	In any case, we are led to a contradiction.
\end{proof}

%%%%%%%%%%%%%%%%%%%%%%%%%%%%%%%%%%% SECTION %%%%%%%%%%%%%%%%%%%%%%%%%%%%%%%%%%%%

\section{Good pairs in subdirectly irreducible MV-monoidal algebras} \label{s:good-pairs_in_sub-irr}

The goal of this section---met in \cref{c:good sequence in sub irr}---is to show that good sequences in a subdirectly irreducible {\mvm} are of the form $(1, \dots, 1, x, 0, 0, \dots)$.

\begin{notation} \label{n:congruences}
	Let $A$ be an {\mvm} and let $x \in A$.
	For $a, a'\in A$, set $ a\sim_\bot^x a'$ if, and only if, there exist $n,m\in \N$ such that 
	\begin{align*}
		a \oplus (\underbrace{x \oplus \dots \oplus x}_{n \text{ times}})	& \geq a',	& a' \oplus (\underbrace{x \oplus \dots \oplus x}_{m\text{ times}})	& \geq a.
	\end{align*}
	Moreover, set $ a\sim_x^\top a'$ if, and only if, there exist $n,m\in \N$ such that 	
	\begin{align*}
		b \odot (\underbrace{x \odot \dots \odot x}_{n \text{ times}})	& \leq b',	& b' \odot (\underbrace{x \odot \dots \odot x}_{m \text{ times}})	& \leq b.
	\end{align*}
\end{notation}

It is not difficult to prove the following.

\begin{lemma} \label{l:are-congr}
	For every {\mvm} $A$ and every $x \in A$, the relation $\sim_\bot ^x$ is the smallest MVM-congruence $\sim$ on $A$ such that $x \sim 0$, and the relation $\sim_x^\top$ is the smallest MVM-congruence $\sim$ on $A$ such that $x \sim 1$.
\end{lemma}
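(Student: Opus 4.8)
The plan is to verify, for the relation $\sim_\bot^x$, the three defining properties of being the smallest MVM-congruence collapsing $x$ to $0$: that $\sim_\bot^x$ is an MVM-congruence, that $x \mathrel{\sim_\bot^x} 0$, and that $\sim_\bot^x$ is contained in every MVM-congruence $\sim$ with $x \sim 0$. Throughout I abbreviate the $n$-fold sum $\underbrace{x \oplus \dots \oplus x}_{n}$ by $nx$ (so $0x = 0$). The relation $x \mathrel{\sim_\bot^x} 0$ is immediate: $x \oplus 0x = x \geq 0$ by \cref{l:bounded-lattice}, and $0 \oplus 1x = x \geq x$. The statement for $\sim_x^\top$ will then follow by duality, since replacing $\langle A; \oplus, \odot, \lor, \land, 0, 1\rangle$ by the dual {\mvm} $\langle A; \odot, \oplus, \land, \lor, 1, 0\rangle$ turns $\sim_\bot^x$ into $\sim_x^\top$, leaves the notion of MVM-congruence unchanged, and turns the condition $x \sim 0$ into $x \sim 1$.

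Next I would check that $\sim_\bot^x$ is an MVM-congruence. Reflexivity uses $n = m = 0$, symmetry is built into the definition, and transitivity follows by adding witnesses and using monotonicity of $\oplus$ (\cref{l:order-preserving properties}): from $a \oplus nx \geq a'$ and $a' \oplus n'x \geq a''$ one gets $a \oplus (n+n')x \geq a''$. Compatibility with $0$ and $1$ is vacuous, as they are nullary. For $\lor$, $\land$ and $\oplus$ the verification is routine: given $a \oplus nx \geq a'$ and $b \oplus mx \geq b'$, distributivity of $\oplus$ over $\lor$ and $\land$ (\cref{ax:A3}) together with monotonicity gives $(a \lor b) \oplus kx \geq a' \lor b'$ and $(a \land b) \oplus kx \geq a' \land b'$ for $k = \max\{n,m\}$, while \cref{l:order-preserving properties}.\eqref{i:basic1} gives $(a \oplus b) \oplus (n+m)x \geq a' \oplus b'$.

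The main obstacle is compatibility with $\odot$. The key estimate I would isolate is
\[
	(u \oplus nx) \odot (v \oplus mx) \leq (u \odot v) \oplus (n+m)x \qquad \text{for all } u, v \in A,\ n, m \in \N.
\]
This I would prove by first establishing the one-variable version $(u \oplus nx) \odot v \leq (u \odot v) \oplus nx$ by induction on $n$, the inductive step being a single application of the near-associativity inequality $p \odot (q \oplus r) \leq (p \odot q) \oplus r$ of \cref{l:almost associative} (with $r = x$) followed by monotonicity; applying this one-variable bound twice and commuting $\odot$ then yields the two-variable estimate. Granting the estimate, compatibility with $\odot$ is immediate: from $a' \leq a \oplus nx$ and $b' \leq b \oplus mx$, monotonicity of $\odot$ (\cref{l:order-preserving properties}.\eqref{i:basic2}) gives $a' \odot b' \leq (a \oplus nx) \odot (b \oplus mx) \leq (a \odot b) \oplus (n+m)x$, and symmetrically in the reverse direction, so $a \odot b \mathrel{\sim_\bot^x} a' \odot b'$.

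Finally I would prove minimality. Let $\sim$ be any MVM-congruence with $x \sim 0$; since $\sim$ respects $\oplus$, induction gives $nx \sim 0$, hence $a \oplus nx \sim a$ for every $a$. Suppose now $a \mathrel{\sim_\bot^x} a'$, witnessed by $a \oplus nx \geq a'$ and $a' \oplus mx \geq a$. The trick is to convert the inequalities into meets: from $a' \leq a \oplus nx$ we obtain $a' = a' \land (a \oplus nx) \sim a' \land a$, using that $\sim$ respects $\land$ and that $a \oplus nx \sim a$; symmetrically $a = a \land (a' \oplus mx) \sim a \land a'$. Since $a \land a' = a' \land a$, transitivity gives $a \sim a'$, so $\sim_\bot^x \subseteq \sim$. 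This shows $\sim_\bot^x$ is the least MVM-congruence with $x \sim 0$, and the dual argument settles $\sim_x^\top$.
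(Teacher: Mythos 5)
Your proof is correct and complete. The paper itself offers no proof of this lemma (it is introduced only with ``It is not difficult to prove the following''), so there is no argument to compare against; your write-up supplies exactly the missing details, and the two nontrivial points --- the estimate $(u \oplus nx) \odot (v \oplus mx) \leq (u \odot v) \oplus (n+m)x$ via \cref{l:almost associative} for compatibility with $\odot$, and the conversion of the defining inequalities into meets for minimality --- are both handled correctly.
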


\begin{lemma} \label{l:good approximation for leq}
	Let $A$ be an {\mvm}, let $(x_0, x_1)$ be a good pair in $A$, and let $a, b\in A$ be such that $a \leq b \oplus x_1$ and $a \odot x_0 \leq b$.
	Then, $a \leq b$.
\end{lemma}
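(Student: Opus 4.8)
The plan is to reduce to the situation $b \le a$ and then to reformulate both hypotheses in terms of the single symmetric term $\sigma$, using the two identities that the good pair $(x_0,x_1)$ forces through \cref{ax:A6,ax:A7}.

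First I would reduce to the case $b \le a$ by replacing $a$ with $a \lor b$. This is harmless: on one hand $a \lor b \le b \oplus x_1$, since $a \le b \oplus x_1$ and $b \le b \oplus x_1$ by \cref{l:order-preserving properties}.\eqref{i:basic5}; on the other hand, as $\odot$ distributes over $\lor$ (\cref{ax:A3}), we get $(a \lor b) \odot x_0 = (a \odot x_0) \lor (b \odot x_0) \le b$, using $a \odot x_0 \le b$ and $b \odot x_0 \le b$ (the latter by \cref{l:order-preserving properties}.\eqref{i:basic6}). Hence proving the reduced statement for $a \lor b$ yields $a \lor b \le b$, and therefore $a \le b$. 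So from now on I assume $b \le a \le b \oplus x_1$ and $a \odot x_0 \le b$.

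Next I would rewrite the relevant expressions via $\sigma$. For every $c \in A$, instantiating \cref{ax:A6,ax:A7} with the good pair $(x_0,x_1)$ in the first two slots and using $x_0 \oplus x_1 = x_0$ and $x_0 \odot x_1 = x_1$ gives
\[
	x_1 \oplus c = \sigma(x_0, x_1, c) \lor c \qquad\text{and}\qquad x_0 \odot c = \sigma(x_0, x_1, c) \land c.
\]
Writing $\sigma_c \df \sigma(x_0, x_1, c)$, the hypothesis $a \le b \oplus x_1 = \sigma_b \lor b$ together with distributivity of the lattice (\cref{ax:A1}) yields $a = a \land (\sigma_b \lor b) = (a \land \sigma_b) \lor (a \land b) = (a \land \sigma_b) \lor b$, the last step because $b \le a$.

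Finally I would bound $a \land \sigma_b$. Since $\sigma(x_0, x_1, -)$ is order-preserving (it is built from $\oplus$ and $\odot$, which are monotone by \cref{l:order-preserving properties}) and $b \le a$, we have $\sigma_b \le \sigma_a$, so $a \land \sigma_b \le a \land \sigma_a = x_0 \odot a \le b$, where the middle equality is the second displayed identity at $c = a$ and the final inequality is the hypothesis $a \odot x_0 \le b$. Combining with the previous step, $a = (a \land \sigma_b) \lor b \le b$, as desired. The main obstacle is spotting the correct reformulation: the crux is that $x_0 \odot a$ and $x_1 \oplus b$ are exactly $\sigma_a \land a$ and $\sigma_b \lor b$, so both hypotheses become statements about the one symmetric term $\sigma$, and monotonicity of $\sigma$ in its last argument—available precisely after reducing to $b \le a$—is what lets the $\odot$-hypothesis dominate $a \land \sigma_b$. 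Once this is seen, all remaining steps are routine distributivity and monotonicity.
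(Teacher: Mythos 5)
Your proof is correct. It shares the paper's overall skeleton---reduce to the case where $a$ and $b$ are comparable, then exploit the identities $x_1 \oplus c = \sigma(x_0,x_1,c) \lor c$ and $x_0 \odot c = \sigma(x_0,x_1,c) \land c$ that \cref{ax:A6,ax:A7} yield for a good pair---but the two halves are executed differently. For the reduction, the paper replaces $b$ by $a \land b$, whereas you replace $a$ by $a \lor b$; both are equally routine. The real divergence is in the comparable case: the paper shows $a \odot x_0 = b \odot x_0$ and $a \oplus x_1 = b \oplus x_1$, deduces $\sigma(a,x_0,x_1) = \sigma(b,x_0,x_1) \eqqcolon s$, and then concludes $a = b$ from $a \lor s = b \lor s$ and $a \land s = b \land s$ via the cancellation property of distributive lattices. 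You instead write $a = (a \land \sigma_b) \lor b$ by distributivity and dominate $a \land \sigma_b$ by $a \land \sigma_a = x_0 \odot a \leq b$, using that $\sigma(x_0,x_1,-)$ is order-preserving in its last argument (which follows from \cref{l:order-preserving properties}). Your finish is somewhat shorter and trades the lattice-cancellation trick for monotonicity of $\sigma$; the paper's version has the mild bonus of isolating the equalities $a \odot x_0 = b \odot x_0$ and $a \oplus x_1 = b \oplus x_1$ along the way, but nothing downstream depends on that. Both arguments use only material already available at this point in the paper.
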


\begin{proof}
	Let us first deal with the case $b \leq a$; under this hypothesis, we shall prove $a = b$.
	Since $a \leq b \oplus x_1$, we have 
	\[
		a \odot x_0 \leq (b \oplus x_1) \odot x_0 \stackrel{\text{\cref{l:switch of plus and dot if good}}}{ = } \sigma(b, x_0, x_1).
	\]
	Since $a \odot x_0 \leq \sigma(b, x_0, x_1)$ and $a \odot x_0 \leq b$, we have 
	\[
		a \odot x_0 \leq b \land \sigma(b, x_0, x_1) = b \odot (x_0 \oplus x_1) = b \odot x_0.
	\]
	Since $b \leq a$, we have $b \odot x_0 \leq a \odot x_0$.
	Hence, $a \odot x_0 = b \odot x_0$.
	Analogously, $a \oplus x_1 = b \oplus x_1$.
	Hence
	\[
		\sigma(a, x_0, x_1) \stackrel{\text{\cref{l:switch of plus and dot if good}}}{ = } (a \odot x_0) \oplus x_1 = (b \odot x_0) \oplus x_1 \stackrel{\text{\cref{l:switch of plus and dot if good}}}{ = } \sigma(b, x_0, x_1).
	\]
	Set $s \df \sigma(a, x_0, x_1) = \sigma(b, x_0, x_1)$.
	To prove $a = b$ it is enough to prove $a \lor s = b \lor s$ and $a \land s = b \land s$.
	We have
	\[
		a \lor s = a \oplus (x_0 \odot x_1) = a \oplus x_1 = b \oplus x_1 = b \oplus (x_0 \odot x_1) = b \lor s,
	\]
	and
	\[
		a \land s = a \odot (x_0 \oplus x_1) = a \odot x_0 = b \odot x_0 = b \odot (x_0 \oplus x_1) = b \land s.
	\]
	Hence, $a = b$.

	If we do not assume $b \leq a$, we may replace $b$ with $a \land b$, because
	\[
		a \leq (a \oplus x_1) \land (b \oplus x_1) = (a \land b) \oplus x_1,
	\]
	and $a \odot x_0 \leq a \land b$.
	Since $a \land b \leq a$, by the previous part we have $a \land b = a$, i.e., $a \leq b$.
\end{proof}

\begin{theorem} \label{t:good pairs in sub irr}
	Let $A$ be a subdirectly irreducible {\mvm}.
	Then, for all $x, y \in A$, either $x \oplus y = 1$ or $x \odot y = 0$.
\end{theorem}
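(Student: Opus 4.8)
The plan is to argue by contradiction, combining the fact that a subdirectly irreducible MV-monoidal algebra is totally ordered (\cref{t:sub irr is totally ordered}) with the two principal congruences of \cref{n:congruences} and the approximation \cref{l:good approximation for leq}. Write $v \df x \oplus y$ and $u \df x \odot y$; by \cref{l:oplus odot is a good pair} the pair $(v, u)$ is good. Suppose, toward a contradiction, that $v \neq 1$ and $u \neq 0$.

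First I would produce two non-identity congruences. By \cref{l:are-congr}, $\sim_v^\top$ is an MVM-congruence with $v \sim 1$; since $v \neq 1$, it differs from $\Delta$. Dually, $\sim_\bot^u$ is an MVM-congruence with $u \sim 0$, and $u \neq 0$ makes it differ from $\Delta$ as well. Since $A$ is subdirectly irreducible, its monolith (the least non-identity congruence) is contained in every non-identity congruence, hence in $\sim_v^\top \cap \sim_\bot^u$; in particular this intersection is strictly larger than $\Delta$. Thus there exist $p \neq q$ with $p \sim_v^\top q$ and $p \sim_\bot^u q$, and by \cref{t:sub irr is totally ordered} we may assume $p < q$.

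Next I would unwind the two relations. Because $p < q$, the definition of $\sim_\bot^u$ yields an $n \in \N$ with $q \leq p \oplus (\underbrace{u \oplus \dots \oplus u}_{n})$, the other defining inequality being automatic; dually, $\sim_v^\top$ yields an $m \in \N$ with $q \odot (\underbrace{v \odot \dots \odot v}_{m}) \leq p$. Monotonicity (\cref{l:order-preserving properties}) shows that the $\oplus$-powers of $u$ increase while the $\odot$-powers of $v$ decrease, so setting $k \df \max(n, m, 1)$ preserves both inequalities, giving $q \leq p \oplus u^{\oplus k}$ and $q \odot v^{\odot k} \leq p$, where $u^{\oplus k}$ and $v^{\odot k}$ abbreviate the $k$-fold $\oplus$-sum and $\odot$-product. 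Applying \cref{l:bipartite} with all left entries equal to $v$ and all right entries equal to $u$---each pair $(v, u)$ being good---shows that $(v^{\odot k}, u^{\oplus k})$ is a good pair. Feeding this good pair, together with the two inequalities, into \cref{l:good approximation for leq} with $a = q$ and $b = p$ gives $q \leq p$, contradicting $p < q$.

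The main obstacle is choosing the right data for the approximation lemma: one must recognize that the iterated operations $(v^{\odot k}, u^{\oplus k})$ again form a good pair---precisely what \cref{l:bipartite} supplies---so that \cref{l:good approximation for leq} can collapse the strict gap $p < q$. Once the reduction to a totally ordered algebra and to a single pair related by both congruences is in place, the remaining work (extracting the two inequalities and unifying their exponents by monotonicity) is routine.
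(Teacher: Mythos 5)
Your proof is correct and takes essentially the same route as the paper: both arguments hinge on showing that the intersection of the congruences $\sim_\bot^{x\odot y}$ and $\sim_{x\oplus y}^{\top}$ is the identity, by applying \cref{l:bipartite} to iterates of the good pair $(x\oplus y,\, x\odot y)$ and then feeding the resulting good pair into \cref{l:good approximation for leq}. The only difference is organizational: the paper shows directly that any $a,b$ related by both congruences satisfy $a\leq b$ and $b\leq a$ (so it needs neither the monolith nor \cref{t:sub irr is totally ordered}), whereas you argue by contradiction through a comparable pair $p<q$; your detour through total orderedness is sound but avoidable.
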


\begin{proof}
	Set $u\df x \oplus y$ and $v\df x \odot y$.
	We claim ${\sim_\bot^v} \cap {\sim_u^\top} = \Delta$.
	Indeed, let $a, b\in A$ with $a\sim_\bot^v b$ and $a\sim_u^\top b$.
	Then, there exist $n,m\in \N$ such that
	\begin{align*}
		a	& \leq b \oplus (\underbrace{v \oplus \dots \oplus v}_{m\text{ times}}),	& a \odot (\underbrace{u \odot \dots \odot u}_{n \text{ times}})	& \leq b.
	\end{align*}
	Since $(u,v)$ is a good pair by \cref{l:oplus odot is a good pair}, then, by \cref{l:bipartite}, also
	\[
		(u \odot \dots \odot u,v \oplus \dots \oplus v)
	\]
	is so.
	By \cref{l:good approximation for leq}, $a \leq b$.
	Analogously, $b \leq a$, and therefore $a = b$.
	This settles the claim ${\sim_\bot^v} \cap {\sim_u^\top} = \Delta$.
	By \cref{l:are-congr}, $\sim_\bot^{x \odot y}$ and $\sim_{x \oplus y}^\top$ are MVM-congruences, $x \odot y\sim_\bot^{x \odot y}0$ and $x \oplus y\sim_{x \oplus y}^\top 1$.
	Since $A$ is subdirectly irreducible, either $\sim_\bot^v = \Delta$ or $\sim_u^\top = \Delta$.
	In the former case we have $v = 0$, i.e.\ $x \odot y = 0$; in the latter one we have $u = 1$, i.e.\ $x \oplus y = 1$.
\end{proof}
\begin{corollary}
	Let $(x_0, x_1)$ be a good pair in a subdirectly irreducible {\mvm}.
	Then, either $x_0 = 1$ or $x_1 = 0$.
\end{corollary}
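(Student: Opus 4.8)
The plan is to apply \cref{t:good pairs in sub irr} directly to the two components of the good pair and then invoke the defining equations of a good pair to finish. This should be immediate; the real content of the result lives in the theorem, and the corollary is merely a translation of its conclusion into the language of good pairs.

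Concretely, I would instantiate \cref{t:good pairs in sub irr} with $x \df x_0$ and $y \df x_1$. The theorem guarantees that either $x_0 \oplus x_1 = 1$ or $x_0 \odot x_1 = 0$. Now I recall the definition of a good pair: the assumption that $(x_0, x_1)$ is good means precisely that $x_0 \oplus x_1 = x_0$ and $x_0 \odot x_1 = x_1$. Substituting these identities into the two alternatives yields the two cases of the conclusion.

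In the first case, $x_0 = x_0 \oplus x_1 = 1$, so $x_0 = 1$. In the second case, $x_1 = x_0 \odot x_1 = 0$, so $x_1 = 0$. This establishes the dichotomy asserted by the corollary. I do not anticipate any obstacle here: there are no nontrivial computations, no appeal to further lemmas, and no case analysis beyond the clean two-way split supplied by the theorem. The only thing to be careful about is orienting the good-pair equations correctly---$x_0 \oplus x_1 = x_0$ feeds the ``$\oplus = 1$'' branch and $x_0 \odot x_1 = x_1$ feeds the ``$\odot = 0$'' branch---but this is purely bookkeeping.
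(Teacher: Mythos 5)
Your proof is correct and matches the paper's (implicit) argument exactly: the corollary is stated without a written proof precisely because it follows by instantiating \cref{t:good pairs in sub irr} at $x = x_0$, $y = x_1$ and substituting the good-pair identities $x_0 \oplus x_1 = x_0$ and $x_0 \odot x_1 = x_1$. Nothing to add.
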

\begin{corollary} \label{c:good sequence in sub irr}
	Every good sequence in a subdirectly irreducible {\mvm} is of the form $(1, \dots, 1, x, 0, 0, \dots)$.
\end{corollary}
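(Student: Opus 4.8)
The plan is to deduce this quickly from the preceding corollary, which asserts that in a subdirectly irreducible {\mvm} every good pair $(x_0, x_1)$ satisfies $x_0 = 1$ or $x_1 = 0$ (itself an immediate consequence of \cref{t:good pairs in sub irr}). Fix a good sequence $(x_0, x_1, x_2, \dots)$ in such an algebra $A$. The whole argument is bookkeeping on top of that corollary, so I expect no serious obstacle; the only points requiring a moment of care are the propagation of zeros and the existence of the index at which the sequence stops being $1$.

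First I would record that zeros propagate: if $x_n = 0$ for some $n$, then $x_{n+1} = 0$. Indeed, $(x_n, x_{n+1})$ is a good pair, so $x_n \oplus x_{n+1} = x_n$, i.e.\ $0 \oplus x_{n+1} = 0$; since $0$ is the unit of $\oplus$ (\cref{ax:A2}), this forces $x_{n+1} = 0$. By induction, once a term vanishes all later terms vanish.

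Next I would locate the first term different from $1$. Since $A$ is subdirectly irreducible it is nontrivial, so $0 \neq 1$; as the good sequence is eventually $0$, at least one term differs from $1$. Let $k$ be the least index with $x_k \neq 1$. By minimality of $k$ we have $x_0 = \dots = x_{k-1} = 1$. Applying the preceding corollary to the good pair $(x_k, x_{k+1})$ and using $x_k \neq 1$ then forces $x_{k+1} = 0$.

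Finally, by the propagation observation, $x_m = 0$ for all $m \geq k+1$. Writing $x \df x_k$, the sequence is therefore $(1, \dots, 1, x, 0, 0, \dots)$ with exactly $k$ leading ones (the cases $k = 0$, or $x \in \{0, 1\}$, being subsumed by this description), as required. The substantive content lives entirely in \cref{t:good pairs in sub irr} and its good-pair corollary; the present statement is the routine consequence of combining them with the componentwise structure of good sequences.
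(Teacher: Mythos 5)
Your proposal is correct, and it is exactly the routine argument the paper leaves implicit (the paper states this corollary, and the preceding one on good pairs, without proof as immediate consequences of \cref{t:good pairs in sub irr}). The two points you flag — propagation of zeros via $x_n \odot x_{n+1} = x_{n+1}$ or $0 \oplus x_{n+1} = 0$, and the existence of a least index $k$ with $x_k \neq 1$ because the sequence is eventually $0$ and $0 \neq 1$ — are precisely the bookkeeping needed, and you handle both correctly.
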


\section{Independence of the axioms}

With the help of Mace4 \cite{prover9-mace4}, we verified that, once one of the equivalent \cref{ax:A4,ax:A5} is removed, the axioms of {\mvms} are independent.
In particular, each of the following properties does not follow from the conjunction of the other ones.

\begin{enumerate}

	\item
		$ \lor $ and $ \land $ satisfy the axioms of a lattice.
		
	\item
		$ \lor $ left- and right-distributes over $ \land $ and $ \land $ left- and right-distributes over $ \lor $.
		
	\item 
		$\oplus$ is associative.
		
	\item 
		$\odot$ is associative. 
		
	\item 
		$\oplus$ is commutative. %x \oplus y = y \oplus x$.
	
	\item 
		$\odot$ is commutative. %x \odot y = y \odot x$.
		
	\item 
		$x \oplus 0 = x$ and $0 \oplus x = x$.
		
	\item 
		$x \odot 1 = x$ and $1 \odot x = x$.
		
	\item 
		$\oplus$ left- and right-distributes over $ \lor $. %$(x \lor y) \oplus z = (x \oplus z) \lor (y \oplus z)$ and $x \oplus (y \lor z) = (x \oplus y) \lor (x \oplus z)$.
	
	\item 
		$\odot$ left- and right-distributes over $ \land $. %$(x \land y) \odot z = (x \odot z) \land (y \odot z)$ and $x \odot (y \land z) = (x \odot y) \land (x \odot z)$.
		
	\item 
		$\oplus$ left- and right-distributes over $ \land $. %$(x \land y) \oplus z = (x \oplus z) \land (y \oplus z)$ and $x \oplus (y \land z) = (x \oplus y) \land (x \oplus z)$.
	
	\item 
		$\odot$ left- and right-distributes over $ \lor $. %$(x \lor y) \odot z = (x \odot z) \lor (y \odot z)$ and $x \odot (y \lor z) = (x \odot y) \lor (x \odot z)$.
	
	\item 
		$(x \odot y) \oplus ((x \oplus y) \odot z) = (x \oplus (y \odot z)) \odot (y \oplus z)$ and\\
	$(x \oplus y) \odot ((x \odot y) \oplus z) = (x \odot (y \oplus z)) \oplus (y \odot z)$.
	
	\item 
		$(x \odot y) \oplus z = ((x \oplus y) \odot ((x \odot y) \oplus z)) \lor z$ and\\
		$x \oplus (y \odot z) = x \lor ((x \oplus (y \odot z)) \odot (y \oplus z))$.
	\item	
		$(x \oplus y) \odot z = ((x \odot y) \oplus ((x \oplus y) \odot z)) \land z$ and\\
		$x \odot (y \oplus z) = x \land ((x \odot (y \oplus z)) \oplus (y \odot z))$.
\end{enumerate}
%

%============================== ACKNOWLEDGEMENTS ==============================%

\subsection*{Acknowledgements}

The author is grateful to V.\ Marra for some comments and suggestions.

%%%%%%%%%%%%%%%%%%%%%%%%%%%%%%%   BIBLIOGRAPHY   %%%%%%%%%%%%%%%%%%%%%%%%%%%%%%%

\end{document}